\documentclass[11pt,letterpaper,reqno]{amsart}

\usepackage[margin=1in,marginparwidth=24mm, marginparsep=1mm]{geometry}

\usepackage{caption}
\usepackage{graphics}
\usepackage{graphicx}
\usepackage{float}
\usepackage{color}
\usepackage{marginnote}
\usepackage[dvipsnames]{xcolor}
\usepackage{tcolorbox} 
\usepackage{amsfonts}
\usepackage{amsmath, bm, bbm}
\usepackage{amstext}
\usepackage[shortlabels]{enumitem}
\usepackage{amsopn}
\usepackage{amsbsy}
\usepackage[numbers,sort]{natbib} 
\usepackage{empheq}
\usepackage{amscd}
\usepackage{amsxtra}
\usepackage{amsthm}
\usepackage{amssymb}
\usepackage{graphicx}
\usepackage{esint}
\usepackage{geometry}
\usepackage{abstract,lipsum,mathrsfs}
\usepackage[new]{old-arrows}
\usepackage{environ,etoolbox}
\newcommand{\h}{\hspace}
\newcommand{\p}{\partial}

\numberwithin{equation}{section}
\usepackage[hidelinks]{hyperref}
\usepackage[makeroom]{cancel}
\allowdisplaybreaks

\makeatletter
\def\l@subsection{\@tocline{2}{0pt}{2.5em}{3.5em}{}}
\makeatother
\makeatletter
\newcommand\Const[3]{%
	\@ifundefined{#1-#2}%
	{\stepcounter{#3}\expandafter\xdef\csname #1-#2\endcsname{\arabic{#3}}}%
	{}%
	\ifnum\pdfstrcmp{#1}{eps}=0 
	\varepsilon_{\csname #1-#2\endcsname}%
	\else
	#1_{\csname #1-#2\endcsname}%
	\fi
}
\makeatother

\newcommand\C[1]{\Const{C}{#1}{Ccnt}} 
\newcommand\R[1]{\Const{R}{#1}{Rcnt}} 
\newcommand\eps[1]{\Const{eps}{#1}{Ecnt}} 
\usepackage{scalerel,stackengine}
\newcommand\pig[1]{\scalerel*[5.5pt]{\big#1}{ 
		\ensurestackMath{\addstackgap[1.15pt]{\big#1}}}}
\newcommand\pigl[1]{\mathopen{\pig{#1}}}
\newcommand\pigr[1]{\mathclose{\pig{#1}}}

\makeatletter 
\newcommand{\customlabel}[2]{%
	\protected@write \@auxout {}{\string \newlabel {#1}{{#2}{\thepage}{#2}{#1}{}} }%
	\hypertarget{#1}{}
}
\makeatother

\newcommand{\mres}{\mathbin{\vrule height 1.6ex depth 0pt width
		0.13ex\vrule height 0.13ex depth 0pt width 1.3ex}}

\newcommand\specialcomment[3]{%
	\newtoggle{#1}\toggletrue{#1}
	\NewEnviron{#1}{\iftoggle{#1}{\BODY}{}}
}
\newcommand{\excludecomment}[1]{\togglefalse{#1}}

\specialcomment{com}{}{} 
\excludecomment{com} 

\newtheorem{thm}{Theorem}[section]

\newtheorem{lem}[thm]{Lemma}
\newtheorem{prop}[thm]{Proposition}

\newtheorem{rmk}[thm]{Remark}

\newcommand{\pcm}[1]{{\color{purple}#1}}

\newcommand{\cm}[1]{}  

\def\Xint#1{\mathchoice  
	{\XXint\displaystyle\textstyle{#1}}%
	{\XXint\textstyle\scriptstyle{#1}}%
	{\XXint\scriptstyle\scriptscriptstyle{#1}}%
	{\XXint\scriptscriptstyle\scriptscriptstyle{#1}}%
	\!\int}
\def\XXint#1#2#3{{\setbox0=\hbox{$#1{#2#3}{\int}$}
		\vcenter{\hbox{$#2#3$}}\kern-.5\wd0}}

\def\dashint{\Xint-}

\title{Planar Degenerate Anchoring in Landau--de Gennes Energy}

\author[H.~M.~Tai ]{\textnormal{Ho Man Tai}\ \ \ }
\address{School of Mathematics and Statistics, The University of Sydney, Sydney, NSW 2006, Australia}
\email{homan.tai@sydney.edu.au}

\author[Y.~Yu]{\ \ \textnormal{Yong Yu}}
\address{Department of Mathematics, The Chinese University of Hong Kong, Hong Kong}
\email{yongyu@cuhk.edu.hk}

\date{\today}

\keywords{Landau–de Gennes, planar degenerate anchoring, harmonic maps, tangential anchoring, boojum, degree of tangent maps, free boundary}
\subjclass[2020]{35B36, 35A21, 35A15, 49S05}

\begin{document}
	\maketitle
    \thispagestyle{empty}
	\begin{abstract}
		\noindent{\bf Abstract:} The aim of this article is twofold. First, in the large-body limit and when the temperature is below the nematic–isotropic transition threshold, we verify that the $\mathbb S^2$-valued energy-minimizing harmonic map on a bounded smooth domain $\Omega \subset \mathbb R^3$ with tangential boundary condition is a singular limit of the Landau–de Gennes energy minimizers subject to the Fournier-Galatola planar degenerate anchoring \cite{FP05}. This harmonic map is referred to as the canonical harmonic map. Our second aim is to address the local structure of the canonical harmonic map near the boundary singularities, which we call boojums. We show that the tangent map of the canonical harmonic map near a boojum is uniquely characterized by a half bubble with a hedgehog or an anti-hedgehog structure, up to a planar rotation. Comparing to the interior counterpart studied by Brezis-Coron-Lieb in \cite{BCL86}, for which the full $\mathrm{SO}(3)$ group action can be applied to the tangent map near an interior singularity, we can only apply planar rotations to the tangent map near a boojum to maintain the tangential boundary condition. The degeneracy of the group action from $\mathrm{SO}(3)$ to $\mathrm{SO}(2)$ makes it challenging to investigate the local structure of the boojum singularity. On the other hand, the boundary condition for the half bubble is Dirichlet on the curved boundary and tangential on the flat boundary. We need to extend the Schoen-Uhlenbeck bubbling analysis in \cite{SU82, SU83} for energy-minimizing harmonic maps with Dirichlet boundary conditions to our current case with the mixed-type boundary conditions.
	\end{abstract}

	\tableofcontents
	
	\section{Introduction} The Landau--de Gennes (LdG for short) theory is a comprehensive continuum theory for the mathematical modeling of nematic liquid crystals \cite{de1993physics}. It takes into account both uniaxial and biaxial phases. In addition, it can describe higher-dimensional disclinations, which may induce many significant optical properties of liquid crystals.

	\subsection{Motivations and a review on related works}\label{lit. rev}
The radial hedgehog solution of the LdG equation was first discussed by Schopohl--Sluckin  \cite{SS87} in 1988. It admits an isotropic point disclination at the origin. In certain parameter regimes, the hedgehog solution is unstable. Higher-dimensional disclinations may arise and stabilize the critical points of the LdG energy. For example, the half-degree biaxial ring and the split-core segment of strength 1 are two topologically different disclinations. They were identified through numerical simulations by Penzenstadler--Trebin \cite{PT89} in 1989 and by Gartland--Mkaddem \cite{MG00} in 2000, respectively. Rigorous mathematical proofs for the existence of ring and split-core disclinations in LdG theory have been given by the second author in \cite{Y20} for the limiting case ($a \to \infty$) and by Tai--Yu in \cite{TY23} for the case with large but finite $a$. We also refer the reader to \cite{DMP21, DMP24a, DMP24b} by Dipasquale--Millot--Pisante for the LdG theory with Lyuksyutov constraint.  More studies on LdG disclinations include, but are not limited to, those of Canevari \cite{C17}, Bauman--Park--Phillips \cite{BPP12}, Di Fratta--Robbins--Slastikov--Zarnescu \cite{DRJSZ16}, and Ignat--Nguyen--Slastikov--Zarnescu \cite{INSZ13, INSZ15, INSZ16}. \vspace{0.2pc}
	
	In the previously mentioned articles, the disclinations are located in the interior of the bulk domain. In practice, disclinations on the bounding interface are also of significant interest for both fundamental research and technological applications. A boojum singularity is a topologically stable point disclination that forms on the bounding interface. To study the boojums, we need appropriate boundary conditions for the order parameters. In general, the alignment of the director field on the bounding interface depends on the anchoring effect, which can be prescribed in a strong or weak sense. Strong anchoring is given by the Dirichlet conditions imposed on the interface. Weak anchoring can be realized by penalizing the free energy with a surface integral. Usually, weak anchoring conditions are more realistic than strong ones. Boojums are believed to occur under the weak anchoring condition of Nobili--Durand \cite{nobili1992disorientation} or the weak planar degenerate anchoring of Fournier--Galatola \cite{FP05}. This article focuses on the latter degenerate case. The bulk is a simply connected, bounded, smooth domain of dimension 3. Particular attention will be paid to the large-body limit. Our motivations primarily stem from the boojum-type singularities observed in the physics literature. See Volovik--Lavrentovich \cite{VL83} and Lavrentovich \cite{L98}. \vspace{0.2pc}
	
	Before introducing the weak planar degenerate anchoring condition used in this article, let us first discuss some existing works on the non-degenerate weak anchoring condition, specifically the Nobili--Durand anchoring condition. In this case, the surface integral in the total energy measures the $L^2$-distance between the order parameter and a given map defined on the boundary. It is non-degenerate in the sense that the corresponding director field favors some particular directions prescribed by the given map. Alama--Bronsard--Galv\~{a}o--Sousa \cite{ABG15} studied the LdG model with non-degenerate weak anchoring in 2D domains. The original problem is reduced to the Ginzburg--Landau model with a complex-valued order parameter. As the length scale $\epsilon$ tends to $0$ and the anchoring strength scales with $\epsilon$ at a specific rate, vortices can appear on the boundary of the domain. The topological degrees of the vortices are $+1$ for bounded simply connected domains, and are $-1$ for annular domains or exterior domains. Bauman--Phillips--Wang \cite{BPW19} examined a similar problem in higher-dimensional domains. They showed that the minimizers converge to a generalized harmonic map smoothly in the interior, away from a $(n-2)$-rectifiable set. Under a suitable  $\eta$--smallness condition, they eliminated the possibility of forming vortices on the boundary. There are also some results for the region surrounding a spherical colloidal particle. In \cite{ABL16}, Alama--Bronsard--Lamy verify the existence of a Saturn ring disclination in the LdG theory under the non-degenerate weak anchoring condition. The coexistence of boojums and Saturn rings in the axially symmetric LdG theory is shown in a recent preprint \cite{huangyu} by Huang--Yu.  \vspace{0.2pc}
	
	In this article, we adopt the weak planar degenerate anchoring introduced by Fournier--Galatola \cite{FP05}. The surface energy density is quartic and favors planar anchoring. Consequently, limiting minimizers must be discontinuous on the boundary due to the Poincar\'{e}--Hopf theorem. A theoretical investigation in this direction was carried out by Alama--Bronsard--Golovaty in \cite{ABG20}. They studied the Ginzburg–Landau energy in a 2D thin-film domain. To promote oblique alignments, they introduced a surface energy density up to the fourth order. The authors identified a critical rate for the anchoring strength as the length scale $\epsilon$ tends to $0$. When the anchoring strength converges faster than this critical rate, interior vortices form. For slower convergence, boojum pairs appear on the boundary, and no interior vortices are present. Ignat--Kurzke \cite{IGNAT2021108928,IgantKurzke23} incorporated a boundary term into the 2D Ginzburg--Landau energy  that is quadratic in the normal component of the order parameter. They derived the renormalized energy for boundary vortices and characterized their asymptotic locations in the ground states, using the global Jacobian method and $\Gamma$-convergence. On closed Riemannian surfaces, Ignat--Jerrard \cite{ignat2021renormalized} determined the structure of canonical harmonic unit tangent vector fields with prescribed vortices, including the relevant topological constraints. They also derived the corresponding renormalized energy governing the interaction of these vortices. \vspace{0.2pc}

    In the recent work \cite{BCS25} by Bronsard--Colinet--Stantejsky, the Oseen--Frank model is examined in the 3D unit ball under the tangential boundary condition. The set of defects is shown to have finitely many boundary points. Under additional assumptions, these defects can be precisely located. In our work, harmonic maps into $\mathbb S^2$ with tangential boundary conditions serve as canonical harmonic maps of the LdG energy with Fournier--Galatola surface potential. In the large-body limit, our problem reduces to one closely related to \cite{BCS25}. However, their arguments use the symmetry of the unit ball to construct reflection-based extensions, which are crucial for establishing their regularity and monotonicity formulas. In contrast, our present work develops the bubbling analysis of minimizing harmonic maps with tangential boundary conditions in general domains of $\mathbb{R}^3$. We also determine an explicit half bubble at a boojum, which is uniquely characterized by a hedgehog or an anti-hedgehog profile, up to a planar rotation around the inner normal direction at the boojum. 
    
	\subsection{Model setting and main contributions}\label{sec. Model Setting}
	In this article, $\Omega$ is a simply connected, bounded, smooth domain in $\mathbb{R}^3$ with $\p \Omega$ diffeomorphic to $\mathbb S^2$. The order parameter takes values in the space:  \begin{equation*}\mathcal{S}_0:=\left\{ \h{1pt}Q \in \mathbb{R}^{3\times 3}: Q=Q^\top \h{5pt} \text{and} \h{5pt} \textup{tr}(Q)=0 \h{1pt}\right\}.\end{equation*} Given $Q\in H^1\big(\Omega;\mathcal{S}_0\big)$, we consider the LdG energy in the form:
	\begin{align}
		\mathcal{E}_L(Q) := \int_{\Omega} 
		\left(\dfrac{1}{2}\h{1pt}\big|\nabla Q \big|^2
		+\dfrac{1}{L^{2}} \h{1pt}f_B(Q) \right)
		+ \dfrac{1}{L} \int_{\p\Omega} f_S(Q) \h{1.5pt}\mathrm d\h{.5pt}\mathscr{H}^2.
		\label{def ldg energy}
	\end{align} 
	Here, the bulk energy density $f_B:\mathcal{S}_0 \to \mathbb{R}$ is read as  
	$$f_B(Q):=f^*_B(Q) -\displaystyle\min_{ \mathcal{S}_0}f^*_B,\quad\text{where}\h{5pt} f^*_B(Q):=\dfrac{a}{2} \textup{tr}(Q^2)
	-\dfrac{b}{3}\textup{tr}(Q^3)
	+\dfrac{c}{4}\pigl[\textup{tr}(Q^2)\pigr]^2.$$
	It penalizes the order parameters for their non-uniaxiality. In the definition of $f_B^*$, the parameter $a \in \mathbb R$ depends on both temperature and the material. $b$ and $c$ are positive material constants for nematic rod-like molecules. We assume throughout the article that $b^2 > 27 a c$ to ensure that the minimum of $f_B$ is attained at a uniaxial state. In this regime, the temperature is below the nematic-isotropic transition threshold. We point out that, as discussed in \cite{gartlandscaling}, the energy \eqref{def ldg energy} can be obtained by scaling a domain with the size of $L^{-1}$. The limit problem where $L \to 0$ is interpreted as the large-body limit. \vspace{0.2pc}
	
	For the surface energy density $f_S$, we use the Fournier–Galatola surface potential introduced in \cite{FP05}. More specifically, letting $Q \in  \mathcal{S}_0$ and denoting by $\mathbf{I}_3$ the $3 \times 3$ identity matrix, we define
	\begin{equation}\label{defn s_0}Q^{(s_0)}:=Q+ \frac{s_0 }{3} \hspace{1pt}\mathbf{I}_3, \quad\text{where} \h{5pt} s_0:=\frac{b + \sqrt{b^2 - 24ac}}{4c}.\end{equation}
	Denoting by $\nu(x)$ the outward unit normal vector at $x \in \p\Omega$ and by $\widetilde{P}(x):=\mathbf{I}_3-\nu(x)\otimes\nu(x)$ the orthonormal projection onto the tangent plane at $x$, we define, in the sense of trace, that $$Q^\perp(x):= \widetilde{P}(x)\h{1pt}Q(x), \quad \text{for $x \in \p\Omega$ and $Q \in  H^1\big(\Omega;\mathcal{S}_0\big)$}. $$  The surface energy density $f_S(Q)$ is then given by
	\begin{align*}
		f_S(Q) :&= 
		s_1 \left|\h{1pt}Q^{(s_0)} -\big[Q^{(s_0)} \big]^\perp \h{1pt}\right|^2
		+s_2 \pigl(\h{1pt}\big|\h{1pt}Q^{(s_0)} \h{1pt}\big|^2-s_0^2\h{1pt}\pigr)^2,
	\end{align*}
	where $s_1,s_2>0$ are two parameters. Unlike the surface energy density used in \cite{BPW19, ABG15, CLR15, HW18}, which forces the energy minimizers to coincide with a given map on the boundary as the anchoring strength tends to infinity, the surface energy density $f_S$ only confines the director field to the tangent plane of the boundary in the limit situation, allowing discontinuities of the director field on the boundary. See Lemma \ref{lem H1 conv}. \vspace{0.2pc}

	We now summarize the main results of this work.
	\begin{thm} Suppose $b$, $c$, $s_1$ and $s_2$ are given positive material constants. The constant $a\in \mathbb{R}$ satisfies $b^2>27ac$. Then the following holds: \begin{itemize}
			\item[$(1).$] For each $L>0$, there exists at least one minimizer, denoted by $Q_L \in H^1\big(\Omega;\mathcal{S}_0\big)$, such that \begin{equation}\label{min pro ldg}\mathcal{E}_L(Q_L) = \inf\Big\{\h{1.5pt}\mathcal{E}_L(Q) : Q \in H^1\big(\Omega;\mathcal{S}_0\big)\h{1.5pt} \Big\}.\end{equation}
			\item[$(2).$] Up to a subsequence, $\big\{Q_L\big\}$ converges to $s_0\big(u^* \otimes u^* - \frac{1}{3} \mathbf{I}_3\h{1pt}\big)$ strongly in $H^1\big(\Omega;\mathcal{S}_0\big)$ as $L \to 0^+$.  Here, the constant $s_0$ is given in \eqref{defn s_0}. $u^*$ is a harmonic map which satisfies \begin{equation}\label{var. pro. limi} \int_\Omega \big|\h{0.5pt} \nabla u^* \h{0.5pt}\big|^2 = \inf \left\{\h{1.5pt} \int_\Omega |\h{0.5pt} \nabla u \h{0.5pt}|^2 : u \in H^1\big(\Omega;\mathbb{S}^2\big) \h{6pt}\text{and}\h{7pt} u \cdot \nu = 0 \h{4pt}\text{on}\h{5pt}\partial \Omega \h{1.5pt}\right\}.
			\end{equation}
			\item[$(3).$] Any energy minimizer, still denoted by $u^*$, of the variational problem \eqref{var. pro. limi} has only finitely many singularities on $\overline{\Omega}$. The number of singularities of $u^*$ is even, both in $\Omega$ and on $\partial \Omega$. Note that $u^*$ might have no singularity in $\Omega$, but it must have singularities on $\partial \Omega$.\vspace{0.2pc}
			\item[$(4).$] Let $z_*$ be a singularity of $u^*$ on $\partial \Omega$. There is a diffeomorphism $\varphi$ between a neighborhood of $z_*$ in $\overline{\Omega}$ and an upper half ball centered at $0$ such that $\varphi(z_*) = 0$. Moreover, the tangent map of $u^* \circ \varphi^{-1} (x)$ near $0$ must be of degree $\pm 1$ and equal to either $\frac{x}{| \h{0.5pt}x \h{0.5pt}|}$ or $- \frac{x}{| \h{0.5pt}x \h{0.5pt}|}$, up to a rotation around the $x_3$-axis. Here, the degree is defined by extending the tangent map across the equatorial plane $\big\{ x_3 = 0\big\}$, using the even-even-odd extension (see Section \ref{sin str}).  
		\end{itemize}   
		\label{thm main result}
	\end{thm}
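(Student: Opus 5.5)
The plan follows the four parts of Theorem \ref{thm main result} in order.

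For part (1) I will use the direct method. The bulk term satisfies $f_B\ge 0$ and the surface term $f_S\ge0$, so $\mathcal E_L$ is bounded below and coercive on $H^1(\Omega;\mathcal S_0)$: the quartic part of $f_B$ controls $\|Q\|_{L^4}$, hence $\|Q\|_{L^2}$. Weak lower semicontinuity is standard. The Dirichlet term is convex, and the bulk term is handled by Rellich compactness plus Fatou. For the surface term, the trace map $H^1(\Omega)\to L^p(\partial\Omega)$ is compact for $p<4$. Since $f_S$ is quartic, at exponent $4$ I only have continuity, so I will apply Fatou on an a.e.-convergent subsequence of traces.

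For part (2) I will build a competitor. Choose a tangential field $u_0\in H^1(\Omega;\mathbb S^2)$ with $u_0\cdot\nu=0$ on $\partial\Omega$; for instance, smooth away from two boundary points with $|\nabla u_0|\sim 1/|x-z_i|$, which is still $H^1$ in 3D. Set $Q_0=s_0(u_0\otimes u_0-\tfrac13\mathbf I_3)$. Then $f_B(Q_0)=0$ and $f_S(Q_0)=0$, because $Q_0^{(s_0)}=s_0u_0\otimes u_0$ has norm $s_0$ and is fixed by the tangential projection. This gives $\mathcal E_L(Q_L)\le \tfrac12\int|\nabla Q_0|^2=s_0^2\int|\nabla u_0|^2$. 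Consequently $Q_L$ is bounded in $H^1$, $\int f_B(Q_L)\le CL^2$ and $\int_{\partial\Omega}f_S(Q_L)\le CL$. Passing to the limit, $Q_L\rightharpoonup Q_*$ with $f_B(Q_*)=0$ a.e. and $f_S(Q_*)=0$ on $\partial\Omega$. The first condition gives $Q_*=s_0(u^*\otimes u^*-\tfrac13\mathbf I_3)$, as in the interior result of Majumdar--Zarnescu. The second gives $(u^*\cdot\nu)u^*\otimes\nu$-type terms vanishing, i.e. $u^*\cdot\nu=0$; this is where Lemma \ref{lem H1 conv} is used.

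To remove the sign ambiguity I use simple connectivity and the $H^1$ lifting of Ball--Zarnescu, which lifts the line field to $u^*\in H^1(\Omega;\mathbb S^2)$. Lower semicontinuity then gives $s_0^2\int|\nabla u^*|^2\le\liminf\mathcal E_L(Q_L)\le s_0^2\int|\nabla u|^2$ for every admissible $u$. Hence $u^*$ minimizes \eqref{var. pro. limi}, and equality of energies upgrades weak convergence to strong $H^1$ convergence.

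For part (3) I will establish regularity for minimizers with tangential (free-type) boundary conditions. Flatten $\partial\Omega$ locally. The tangential condition then becomes $u_3=0$ on $\{x_3=0\}$, and the even-even-odd reflection turns minimality into an almost-minimality or harmonic-map-type system with controlled error coming from the metric. From this I will derive a boundary monotonicity formula and an $\varepsilon$-regularity theorem à la Schoen--Uhlenbeck, with the smallness constant coming from a boundary Bochner/Caccioppoli estimate. Dimension reduction then shows the singular set is discrete, hence finite in $\overline\Omega$. Compactness of minimizers under blow-up is also needed, via the Luckhaus lemma adapted to the half-ball.

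The parity claims follow from degree counting. Interior singularities have degree $\pm1$ by Brezis--Coron--Lieb, and boundary ones have degree $\pm1$ by (4). Consider the field restricted to $\partial\Omega_\delta$, i.e. $\partial\Omega$ with small half balls excised. Because $u^*$ is tangent to $\partial\Omega\cong\mathbb S^2$, Poincaré--Hopf forces boundary singularities to exist: the boundary indices sum to $\chi=2$, and each index is $\pm1$ by the local structure. Summing over the boundary therefore gives an even count. For the interior count, I use that the total degree of $u^*$ on $\partial\Omega_\delta$ is zero for a tangent field, which is the degree of a map to $\mathbb S^2$ landing in tangent directions and homotopic to a non-surjective map. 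The interior degrees sum to zero as well, so their number is even.

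For part (4) I take blow-ups at a boojum $z_*$. Monotonicity gives tangent maps $\phi(x)=\phi(x/|x|)$, which are minimizing on the upper half ball with $\phi_3=0$ on the flat part. Their even-even-odd extension $\tilde\phi$ is a homogeneous harmonic map on $\mathbb R^3$ with the reflection symmetry $\tilde\phi(x',-x_3)=R\tilde\phi(x',x_3)$, where $R=\mathrm{diag}(1,1,-1)$.

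The main obstacle is showing that the extension is itself minimizing, or at least stable enough to use Brezis--Coron--Lieb's classification $\tilde\phi=\pm Tx/|x|$ with $T\in O(3)$. I would first try a reflection/symmetrization comparison argument: for any competitor on the ball, split it into upper and lower halves, reflect the cheaper half, and project its third component to zero on the plane with small error. This should show $\tilde\phi$ minimizes among maps with the reflection symmetry. Then I would adapt BCL's argument (degree $\pm1$ and energy $8\pi$ on the sphere forces conformality), restricted to the symmetric class via the Schoen--Uhlenbeck bubbling analysis with mixed boundary conditions.

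Once $\tilde\phi=\pm Tx/|x|$, the symmetry forces $TR=RT$ and, after a sign choice, the third axis is preserved. Hence $T$ is a rotation about the $x_3$-axis, possibly composed with a reflection, which is absorbed into the $\pm$ sign. This gives the claimed form.
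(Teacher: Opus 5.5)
Your parts (1) and (2) follow the paper essentially verbatim: a tangential competitor, $f_B(Q^*)=0$ and $f_S(Q^*)=0$ in the limit, the Ball--Zarnescu lifting, and equality of energies giving strong $H^1$ convergence.

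\textbf{The genuine gap is in (4), which is the core of the theorem.} Your reflection argument does not show that the even--even--odd extension $\tilde\phi$ is minimizing among all competitors. A competitor $v$ on $B_1$ with $v=\tilde\phi$ on $\partial B_1$ has no reason to satisfy $v_3=0$ on $\{x_3=0\}$. Gluing $v|_{B_1^+}$ to its mirror image $Sv(S\,\cdot)$, with $S=\mathrm{diag}(1,1,-1)$, is therefore not an $H^1$ map, and forcing $v_3=0$ on the plane is not a small-energy correction.

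What you actually get, trivially, is minimality among maps commuting with $S$. In that class only rotations about the $x_3$-axis are admissible perturbations. Brezis--Coron--Lieb obtain $|\deg|=1$ precisely by averaging a stability inequality over all of $\mathrm{SO}(3)$. Averaging over $\mathrm{SO}(2)$ only gives $|\deg u^*|\le 1+\pi/2$, i.e.\ $|\deg u^*|\le 2$ (Lemma~\ref{lem deg<4}). Your step ``degree $\pm1$ and energy $8\pi$ forces conformality'' therefore presupposes the conclusion.

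The paper needs three ingredients your plan lacks:
\begin{enumerate}
\item The moving-center variation in equatorial directions (Lemma~\ref{lem energy of moving centre}). Together with the symmetry, this gives $N(u^*)=0$.
\item The Blaschke-product representation of an equator-preserving degree-$2$ harmonic map. With $N(u^*)=0$ it reduces to the two normal forms of Lemma~\ref{lem: deg 2 explicit form}.
\item Explicit estimates of the stability functional for a well-chosen planar rotation. These give $\sqrt{\pi}\le F_{f_R}<1.5519$, a contradiction.
\end{enumerate}
Even once $|\deg u^*|=1$ is known, conformality only yields an equator-preserving M\"obius map $z\mapsto (z-z_0)/(1-\overline{z_0}z)$, up to planar rotation. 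It is again $N(u^*)=0$ that forces $z_0=0$.

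\textbf{Your interior parity argument in (3) also fails.} The claim that the degree of $u^*$ on $\partial\Omega_\delta$ vanishes ``for a tangent field'' is false. The surface $\partial\Omega_\delta$ contains the small hemispheres around the boojums, where $u^*$ is not tangent to $\partial\Omega$. By the even--even--odd symmetry, each such cap contributes, in the limit, $-d_j/2$ with the orientation of $\partial\Omega_\delta$. For example, on the unit ball the pull-back of the area form by a unit tangent field is $\mathrm{div}_{\partial\Omega}u^*\,\mathrm d\mathscr H^2$. So the tangential part contributes nothing as $\delta\to0$, and the interior degrees sum to $-\tfrac12(d_1+d_2)$. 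This vanishes only if the two boojums have opposite degrees, which you have not shown.

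Your boundary count is fine, and in fact sharper than you state. Every $\pm Rx/|x|$ restricts on the flat boundary to a field of index $+1$, so Poincar\'e--Hopf gives exactly two boojums.

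The paper gives no separate argument for the interior parity either; it only says (3) follows from dimension reduction. This step therefore needs a real proof.

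Finally, flattening alone does not turn the constraint into $u_3=0$, since $T_{\varphi^{-1}(y)}\partial\Omega$ varies with $y$. You need either a point-dependent rotation of the target, whose first-order energy terms must then be controlled, or the paper's characteristic projection $P$ together with its extension estimates.
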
 The boundary singularities in Item (3) of Theorem \ref{thm main result} are guaranteed to exist since the tangent bundle of $\mathbb{S}^2$ is not parallelizable. So, it is impossible to have a continuous unit vector field tangent to $\partial \Omega$ at all points. In the physics paper \cite{VL83}, it is shown that the tangential anchoring on a nematic droplet leads to boojum singularities. Moreover, two unit-index boojums are predicted in \cite{VL83} for the spherical case. Our results in Items (3) and (4) of Theorem \ref{thm main result} provide rigorous justifications for the predictions in \cite{VL83}. 

\subsection{Challenges and summary of main ideas in our proofs}\label{cha}
The energy monotonicity and partial regularity of minimizing harmonic maps associated with \eqref{var. pro. limi} have been established by Scheven  \cite{S06tangentplane}. In fact, more general boundary conditions are discussed in \cite{S06tangentplane}. To study the local structure of the boojums, we need to extend the bubbling analysis developed by Schoen--Uhlenbeck \cite{SU82,SU83}. See also Hardt--Lin \cite{HL89free}, Duzaar--Steffen \cite{DS89, DS89optimal}, Duzaar--Grotowski \cite{Duzaar1996,duzaar1994energy} and Moser--Roberts \cite{moser2022partial} for related works on free boundary problems. We now explain the main challenges arising from the tangential constraint that do not occur under other boundary conditions.    \vspace{0.2pc}

\noindent\textit{Challenge 1}: The first difficulty is to extend the energy minimizers across the boundary and to identify the weak equation satisfied by the extended maps. This step differs substantially from the reflection arguments used in \cite{moser2022partial,HL89free,DS89,DS89optimal} for free boundary problems. For example, \cite{DS89} constructs a reflection using the projection of the nearest point onto a fixed target submanifold. The projections and the distance functions are fixed in the neighborhood of a given boundary point. In contrast, under tangential boundary conditions, both the relevant projections and the associated distance functions depend on the boundary points. In the strip region near the boundary, this dependence generates additional terms involving the derivatives along the boundary's tangential directions. Instead of the nearest-point projection, in Lemma \ref{lem existence of pi}, a carefully chosen projection \(P\) is introduced, using the method of characteristics, to cancel these additional terms. Moreover, the projection and the induced extended maps adapt well to our tangential boundary conditions. We refer the reader to Lemma \ref{lem eq of extended u} for the precise discussions, and \eqref{def ext of tilde u} for the construction of the extended maps. \vspace{0.2pc}


\noindent\textit{Challenge 2}: The second difficulty is to construct perturbations of minimizers for bubbling analysis. These perturbations must agree with the prescribed data in the interior while satisfying tangential boundary conditions on $\partial\Omega$. Our construction is genuinely new compared to the free-boundary problems in \cite{HL89free,DS89,DS89optimal}. In these works, the boundary values lie in a fixed target submanifold, allowing constant extensions in certain regions. However, in our tangential setting, the boundary value at $x$ must lie in $T_{x}\p\Omega$, the tangent plane of $\p \Omega$ at $x$. It varies along the boundary. Constant extensions fail to preserve the tangential constraint. The Luckhaus-type extension used in Duzaar--Grotowski \cite{Duzaar1996,duzaar1994energy} does not have a trivial modification in our case. To tackle this difficulty, in Proposition \ref{lem ext from sphere to ball}, we extend the arguments of Schoen--Uhlenbeck \cite{SU82, SU83} to obtain our extension map and the associated estimates. Specifically, we introduce some novel and carefully designed projections onto the relevant tangent planes. See the proofs of Lemmas \ref{lem compact of rescaled map} and \ref{lem prop of tangent map}. For these projections to be well defined, one must control the distance from the image of a minimizer to the corresponding tangent plane in terms of Dirichlet energy. This requires us to control some additional terms when we estimate the extension map in Proposition \ref{lem ext from sphere to ball}. We note that the additional terms involve the $L^2$-integral of the distance function to the unit circle on the tangent plane. This distance function, denoted by $\textup{dist}\left(p,\mathbb{S}^2\cap T_{x}\p\Omega\right)$, depends not only on the location $p$, but also on the boundary point $x$. It is quite different from the classical free-boundary setting, where the corresponding distance function depends only on \(p\). Controlling these additional terms is a new ingredient in our analysis. \vspace{0.2pc}  

\noindent\textit{Challenge 3}: The third difficulty is to classify the precise local structure of the boojum singularity. 
For the tangent map near an interior singularity, Brezis--Coron--Lieb utilize the Haar measure on SO(3) and an averaging argument to rule out all bubbles with $|\h{0.5pt}d\h{0.9pt}| \geq 2$. Here, $d$ is the topological degree of the bubble. See \cite[Section VII]{BCL86}. Consequently, the local structure near an interior singularity is uniquely characterized by the hedgehog or anti-hedgehog profile, up to a rigid rotation in $\mathrm{SO}(3)$. Our problem is different. Up to a diffeomorphism, the tangent map at the boojum singularity is given by a minimizing $0$-homogeneous half bubble on the upper-half ball $B_1^+$. On the flat boundary of $B_1^+$, it satisfies a free boundary condition with the image lying in the equator of $\mathbb{S}^2$. The free boundary condition breaks the SO(3)-symmetry. To maintain the boundary condition, only planar rotations preserving the equator remain admissible. However, through averaging over $\mathrm{SO}(2)$ and taking the maximum over $[-1, 1\h{0.5pt}]$ for the degenerate mapping variable, we can only rule out the half bubbles with \(| \h{.5pt} d \h{.9pt}|\ge 3\). See Lemma \ref{lem deg<4}. A new strategy is needed to show \(| \h{.5pt} d \h{.9pt}|\neq 2\). First, we express the tangent map by Blaschke products, using the stereographic charts. Then, the moving-center variation eliminates the non-minimizing possibilities, showing that there are only two standard forms in the $d = 2$ case. More specifically, up to a planar rotation and some M\"{o}bius transformations, only $z^2$ or $z + z^{-1}$ can serve as our degree 2 half bubble. See Lemma \ref{lem: deg 2 explicit form}. Then we rule out the degree 2 case (hence the degree -2 case), using a novel stability estimate. Eventually, the tangent map near a boojum singularity is shown to be either a hedgehog or an anti-hedgehog, up to a planar rotation. This classification of tangent maps already covers a particular case of free boundary problems, which appears to be open in the existing literature. Our approach can also be extended to classify tangent maps that arise in general free boundary problems for harmonic maps.

	\subsection{Notations}\label{notation}
	We list the notation used in this article. 
	\begin{itemize}
		\item[$\mathrm{(1).}$] As a convention, repeated indices in formulas and equations are summed. Given a vector field $X$, the notation $X_j$ denotes its $j$-th component. \vspace{0.4pc}
		\item[$\mathrm{(2).}$] Conventionally, we write $A \lesssim_{\,c_1,c_2,\ldots,c_n}B$ to mean that there exists a positive constant $c$ depending on $c_1,c_2,\ldots,c_n$ such that $A\leq c B$. When the implicit constant $c$ is universal, we simply write $A \lesssim B$.\vspace{0.4pc}
		\item[$\mathrm{(3).}$] $\Omega$ is a simply connected, bounded, smooth domain in $\mathbb{R}^3$. The boundary $\p \Omega$ is diffeomorphic to the unit sphere $\mathbb S^2$. Given $x \in \p \Omega$, the space $T_x\h{0.5pt}\p\Omega$ consists of all tangent vectors of $\p\Omega$ at $x$. We use $T(\p\Omega)$ to denote the set $$\Big\{ f : \p \Omega \to \mathbb R^3 : f \h{3pt}\text{is measurable and $f(x) \in T_x \h{0.5pt}\p \Omega$ \h{1pt}for $\mathscr H^2$-a.e.\h{3pt}$x \in \p \Omega$}\Big\}. $$Here and throughout the following, $\mathscr{H}^d$ is the $d$-dimensional Hausdorff measure. In addition, we define $H_T^1(\Omega; \mathbb S^2)$ to be the configuration space consisting of all $\mathbb S^2$-valued maps with finite Dirichlet energy and tangential boundary conditions. \vspace{0.4pc}
		
		\item[$\mathrm{(4).}$] The set $B_r^+(x) := \big\{y\in B_r(x):y_3>0\big\}$ is the open upper part of the open ball $B_r(x)$. Its curved and flat boundaries are denoted by $\p^+ B_r^+(x)$ and $\p^0 B_r^+(x)$, respectively. $B_r^-(x) := B_r(x) \setminus \overline{B_r^+(x)}$ represents the open lower part of $B_r(x)$. Given two radii $r < R$ and $x \in \mathbb R^3$, the annulus $B_R(x) \setminus \overline{B_{r}(x)}$ and its upper part $B_R^+(x) \setminus \overline{B_{r}^+(x)}$ are denoted by $A_{r,R}(x)$ and $ A_{r,R}^+(x)$, respectively. In all these notations, if $x = 0$, then we simply use the notations $B_r^+$, $B_r^-$, $A_{r, R}$ and $A_{r, R}^+$ to denote $B_r^+(0)$, $B_r^-(0)$, $A_{r, R}(0)$ and $A_{r, R}^+(0)$, respectively. \vspace{0.4pc}
		
		\item[$\mathrm{(5).}$] Denote by $P_0$ the plane $\big\{y \in \mathbb R^3 : y_3 = 0\big\}$. Letting $y=(y_1,y_2,y_3)^\top \in \mathbb{R}^3$, we use $y^0$, $(y)^0$ and $[\h{0.5pt}y\h{0.5pt}]^0$ interchangeably to represent the projection $(y_1,y_2,0)^\top \in P_0$. In addition, we use $\widehat{y}$ to denote the normalized vector of $y$ if $y \neq 0$. $y' = (y_1, y_2)$ denotes the tangential variables. We also introduce the reflection of a vector $y \in \mathbb R^n$. That is $y^\star:=(y_1, ... , y_{n-1}, -y_n)^\top$, where $y = (y_1, ..., y_{n-1}, y_n)$. The dimension $n$ is equal to $2$ or $3$.  \vspace{0.4pc}
		
		\item[$\mathrm{(6).}$] The Dirichlet energy of a vector field $u$ on $U \subset \mathbb R^3$ is defined by $$\mathcal{E}^*_{u, \h{0.5pt}U}:=\int_U \big|\nabla u \big|^2.$$ The normalized Dirichlet energy of $u$ on $B_r(x)$ and $B_r^+(x)$ are respectively denoted by $$\widehat{\mathcal{E}}^*_{r, \h{0.5pt}x}(u) :=\frac{1}{r}\int_{B_r(x)} \big|\nabla u \big|^2  \h{15pt}\text{and}\h{15pt}\widehat{\mathcal{E}}^*_{r, \h{0.5pt}x; \h{0.5pt}+}(u):=\frac{1}{r}\int_{B_r^+(x)} \big|\nabla u \big|^2.$$ Given a function $v$ on $\p^+ B_\rho^+$, its surface energy is defined and evaluated by \begin{align*}
			\mathcal{E}^*_{v, \h{0.5pt}\p^+B^+_\rho} := \int_0^{\frac{\pi}{2}} \mathrm d \xi \int_0^{2 \pi} \sin \xi \h{1pt}\big| \h{0.5pt} \p_\xi v \h{0.5pt}\big|^2 +  \frac{1}{\sin \xi} \h{1pt}\big| \h{0.5pt} \p_\theta v \h{0.5pt}\big|^2 \mathrm d \theta,
		\end{align*}where $\theta$ is the azimuthal angle. $\xi$ is the polar angle. $v = v(\rho, \xi, \theta)$ is expressed in terms of the polar coordinates.
		\vspace{0.4pc}
		
		\item[$\mathrm{(7).}$] Given a point $p\in \mathbb{R}^3$ and a subset $U \subseteq\mathbb{R}^3$, the function $\textup{dist}(p,U)$ gives the shortest distance between $p$ and $U$ measured in the standard Euclidean metric in $\mathbb{R}^3$. In particular, if $x \in \p \Omega$, then the distance function $\textup{dist}\left(p,\mathbb{S}^2\cap T_{x}\p\Omega\right)$ is simply denoted by $\textup{dist}_{x}(p)$. For a subset $F \subseteq \mathbb R^3$ and a positive constant $\delta$, we use $U_{\delta}(F)$ to denote the $\delta$-neighborhood of $F$ in $\mathbb{R}^3$ such that $\textup{dist}(p, F)<\delta$ for any $p\in U_{\delta}(F)$. \vspace{0.4pc}
		
		\item[$\mathrm{(8).}$] Let $\varphi$ be the boundary flattening map introduced in Section \ref{bdry flatten}. It is a diffeomorphism between $\varphi^{-1}\big(B_{2R_0'}^+\big)$ and $B_{2R_0'}^+$. Given $p \in \mathbb R^3$ and $y \in \p^0 B_{2R_0'}^+$, the vector $\Pi\left(y,p\right) \in \mathbb R^3$ is the shortest distance projection of $p$ onto the unit circle in $T_{\varphi^{-1}(y)}\h{0.5pt}\p\Omega$ centered at $0$.   \vspace{0.4pc}
		
		\item[$\mathrm{(9).}$]  
		Suppose $U$ is a subset of $\mathbb{R}^3$ satisfying $\mathscr{H}^k(U) < \infty$. Here, $k=1, \,2, \,3$. Letting $q \in \mathbb R^3$ and $v \in L^2(U; \mathbb R^3)$, we define $$W^{\h{0.5pt}q}_{v, \h{0.5pt}U}:=\int_{U} |\h{.5pt}v - q\h{.5pt}|^2 \h{1.5pt} \mathrm d \mathscr{H}^k. $$
		Suppose in addition that $\psi$ is a map from $U$ to $\p \Omega$. Then, we define $$    W_{v, \h{0.5pt}\psi, \h{0.5pt}U}:=\int_{U} \left(\textup{dist}_{\h{0.5pt}\psi(x)}\big(v(x)\big) \right)^2\mathrm d \mathscr{H}^k_x.$$ 
		The notation $W^{\h{0.5pt}q}_{v,\h{0.5pt}\psi, \h{0.5pt}U}$  denotes the sum $W^{\h{0.5pt}q}_{v, \h{0.5pt}U}+W_{v,\h{0.5pt}\psi, \h{0.5pt}U}$. If $U$ is an open subset in $\mathbb R^3$, then $$M_U(v):=\dashint_U v : =\frac{1}{|\h{0.5pt}U\h{0.5pt}|}\int_U v$$ is the average of $v$ over $U$. Here, $|\h{0.5pt}U\h{0.5pt}|$ is the Lebesgue measure of $U$. \vspace{0.4pc}
		
		\item[$\mathrm{(10).}$] Define $$D_r:= \Big\{y\in\mathbb{R}^2:y_1^2+y_2^2<r^2 \Big\} \hspace{15pt} \text{and} \hspace{15pt} D_r^+:=D_r\cap\Big\{ y_2>0\Big\}.$$ The curved and flat boundaries of $D_r^+$ are denoted by $\p^+D_r^+$ and $\p^0D_r^+$, respectively.
		
		If $\mathcal D$ is an open set in the $(x, y)$-plane, then we denote by $\overline{\nabla}$ the gradient operator $(\partial_x, \p_y)$. For a function $v$ on $\mathcal D$, we let $$\mathcal{E}^*_{v, \h{0.5pt}\mathcal D} := \int_\mathcal{D}  \big|\h{0.5pt}\overline{\nabla} v \h{0.5pt}\big|^2. $$ We also define the following energies for functions on $\p D_\rho$ and $\p^+ D^+_\rho$, respectively: $$\mathcal{E}^*_{v, \h{0.5pt}\p D_\rho}:= \frac{1}{\rho}\int_0^{2\pi} \big|\h{0.5pt}\p_\theta v \h{0.5pt}\big|^2  \mathrm d\theta \h{20pt}\text{and} \h{20pt} \mathcal{E}^*_{v, \h{0.5pt}\p^+ D^+_\rho}:=\frac{1}{\rho}\int_0^{\pi} \big|\h{0.5pt}\p_\theta v \h{0.5pt}\big|^2  \mathrm d\theta.$$ Here, $v = v(\rho, \theta)$. \vspace{0.6pc}
	\end{itemize}
	
	\section{Minimizers of \texorpdfstring{$\mathcal{E}_L$}{TEXT}-energy}
	Throughout the remainder of the article, we fix the constants $a$, $b$, $c$, $s_1$, and $s_2$. Moreover, we assume $b^2 > 27 ac$. In this section, we study some basic properties of the $\mathcal E_L$-energy minimizers. \begin{lem}[\bf Existence of minimizer]\label{ex. min}
		Given $L>0$, there is a $Q_L \in H^1(\Omega;\mathcal{S}_0)$ satisfying \eqref{min pro ldg}.
	\end{lem}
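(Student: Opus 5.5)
We argue by the direct method of the calculus of variations. Fix $L>0$. Since $f_B\ge 0$ by construction ($f_B=f_B^*-\min f_B^*$, and $f_B^*$ is coercive because $c>0$, so the minimum is attained) and $f_S\ge 0$ as a sum of squares with $s_1,s_2>0$, the functional $\mathcal E_L$ is bounded below by $0$. It is also finite somewhere, e.g. at $Q\equiv 0$, where $\mathcal E_L(0)=L^{-2}|\Omega|f_B(0)+L^{-1}\mathscr H^2(\p\Omega)f_S(0)<\infty$. We may therefore take a minimizing sequence $\{Q_k\}\subset H^1(\Omega;\mathcal S_0)$ with $\mathcal E_L(Q_k)\to m:=\inf\mathcal E_L$.

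\textbf{Step 1 (uniform $H^1$ bound).} The Dirichlet part gives $\sup_k\|\nabla Q_k\|_{L^2}<\infty$ directly. For the $L^2$ part we use the bulk term. Because $f_B(Q)\ge \tfrac c8|Q|^4-C$ for some constant $C$ (the quartic term dominates the cubic and quadratic ones), we get
\[
\int_\Omega|Q_k|^4\le C\big(L^2\mathcal E_L(Q_k)+|\Omega|\big)
\]
uniformly in $k$. Hölder's inequality on the bounded set $\Omega$ then bounds $\|Q_k\|_{L^2}$. Hence $\{Q_k\}$ is bounded in $H^1(\Omega;\mathcal S_0)$. Note that $\mathcal S_0$ is a closed linear subspace of $\mathbb R^{3\times3}$, so weak limits remain $\mathcal S_0$-valued.

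\textbf{Step 2 (compactness).} Passing to a subsequence, we have $Q_k\rightharpoonup Q_L$ weakly in $H^1$ and $Q_k\to Q_L$ strongly in $L^p(\Omega)$ for $p<6$ by Rellich--Kondrachov, and almost everywhere. By compactness of the trace map $H^1(\Omega)\to L^q(\p\Omega)$ for $q<4$, the traces converge strongly in $L^q(\p\Omega)$ and, after a further subsequence, $\mathscr H^2$-a.e. on $\p\Omega$.

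\textbf{Step 3 (lower semicontinuity).} Each term is handled separately.
\begin{itemize}
\item The Dirichlet term is weakly lower semicontinuous by convexity.
\item For the bulk term, $f_B(Q_k)\to f_B(Q_L)$ a.e. and $f_B\ge 0$, so Fatou's lemma gives $\int f_B(Q_L)\le\liminf\int f_B(Q_k)$.
\item For the surface term, $f_S(x,Q)$ is continuous in $Q$ (with $\nu$ smooth) and nonnegative, and the traces converge a.e. on $\p\Omega$, so Fatou's lemma applies again.
\end{itemize}
Summing gives $\mathcal E_L(Q_L)\le\liminf_k\mathcal E_L(Q_k)=m$, so $Q_L$ is a minimizer.

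The only mildly delicate point is the surface term. $f_S$ is quartic in $Q$, while traces of $H^1$ maps lie only in $L^4(\p\Omega)$ with non-compact embedding, so $f_S(Q_k)\to f_S(Q_L)$ in $L^1(\p\Omega)$ cannot be expected. We sidestep this by using a.e. convergence of traces (from compactness into $L^q$, $q<4$) together with Fatou's lemma, which only requires $f_S\ge0$. We also need $f_S(Q_L)$ to be well defined and finite: this holds because the trace of $Q_L$ lies in $L^4(\p\Omega)$, and in any case finiteness follows from the Fatou inequality itself.
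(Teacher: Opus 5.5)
Your proposal is correct and follows the route the paper indicates. The paper omits the proof, saying only that it is the direct method combined with a lower bound on $f_B$ as in Davis--Gartland. You fill that in with the same ingredients: the quartic coercivity bound $f_B(Q)\ge \tfrac{c}{8}|Q|^4-C$ for the $H^1$ bound, weak $H^1$ compactness with a.e.\ convergence in $\Omega$ and of the traces on $\p\Omega$, and lower semicontinuity from convexity of the Dirichlet term plus Fatou's lemma for the nonnegative bulk and surface densities.
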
 The proof of this lemma follows from the standard direct method in the calculus of variations and an estimate of the lower bound of $f_B(Q)$ as in the proof of \cite[Corollary 4.4]{DG98}. We omit it here. The minimizer $Q_L$ is a weak solution to the boundary value problem in the following remark:
	\begin{rmk}We define \begin{equation*}
			f_B'(Q) := a\h{0.5pt}Q
			-b\left(Q^2-\dfrac{1}{3}\h{0.5pt}|\h{0.5pt}Q\h{0.5pt}|^2\h{0.5pt} \mathbf{I}_3\right) 
			+c\h{0.5pt}|\h{0.5pt}Q\h{0.5pt}|^2Q
		\end{equation*}and let 
        \medmuskip=1.5mu\thickmuskip=1.5mu\begin{equation*}
			f_S'(Q) := s_1  \left(Q
			\left(\nu\otimes\nu\right)
            +\left(\nu\otimes\nu\right)Q
            -\dfrac{ 2\textup{tr} \pig[Q\left(\nu\otimes\nu\right)\pig]}{3}\mathbf{I}_3\right) 
			+\dfrac{2}{3} s_0s_1\left(\nu\otimes\nu-\dfrac{1}{3}\mathbf{I}_3\right)+4s_2  \left(|\h{0.5pt}Q\h{0.5pt}|^2-\dfrac{2}{3} s_0^2\right)Q.
		\end{equation*}
		Then $Q_L$ satisfies the weak formulation of the boundary value problem:
		\begin{equation}\label{bdv}
			\left\{\h{-3pt}\begin{aligned}
				L^{2}\h{1pt}\Delta Q&\h{1.5pt}=\h{1.5pt}f_B'(Q)  & \h{15pt} &\text{in $\Omega$};\\[1.5mm]
				-L\h{1pt}\p_\nu Q  &\h{1.5pt}=\h{1.5pt}f_S'(Q) 
				& \h{15pt} &\text{on $\p\Omega$.}
			\end{aligned}\right.
		\end{equation}
		More specifically,	we have\begin{align}
			\int_{\Omega} \nabla Q_L \colon \nabla \psi+\dfrac{1}{L^{2}} \h{1pt}f'_B(Q_L)\h{0.5pt}\cdot \h{0.5pt}\psi + \dfrac{1}{L}\int_{\p\Omega}
			f'_S(Q_L)\h{0.5pt}\cdot\h{0.5pt}\psi \h{1.5pt} = \h{1.5pt}0, \h{15pt}\text{for any $\psi \in H^1\big(\Omega;\mathcal{S}_0\big)$.} 
			\label{eq. weak sol.}
		\end{align}  
		Here, given two symmetric matrices $A$ and $B$, their inner product is defined by $A \cdot B := \mathrm{tr}(AB)$. For two $\mathcal S_0$-valued tensor fields $Q_1$ and $Q_2$, we define $$\nabla Q_1 \colon \nabla Q_2:=\sum^3_{i=1}\p_{i}\h{0.5pt}Q_1 \cdot \p_{i}\h{0.5pt}Q_2.$$ 
	\end{rmk}

	In the next, we investigate the uniform upper bound of $|\h{0.5pt}Q_L\h{0.5pt}|$. 
	\begin{lem}
		There is a positive constant $\C{1}$ independent of $L$ such that for any $L > 0$, we have $
		|\h{0.5pt}Q_L\h{0.5pt}|\leq \C{1}$ at almost every point in $\Omega$. In addition, $|\h{0.5pt}Q_L\h{0.5pt}|\leq \C{1}$ on $\p\Omega$ in the sense of trace.
		\label{lem existence of min. and L^infty bound}
	\end{lem}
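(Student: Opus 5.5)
A truncation argument on the minimizer should give the bound. Write $Q_L$ for a minimizer provided by Lemma~\ref{ex. min}. For $M>0$ we use the radial truncation
\[
T_M(Q):=\begin{cases} Q, & |Q|\le M,\\[1mm] \dfrac{M}{|Q|}\,Q, & |Q|>M,\end{cases}
\]
which maps $\mathcal S_0$ into $\mathcal S_0$, since it only rescales and therefore preserves symmetry and tracelessness. $T_M$ is the nearest-point projection onto the closed convex ball of radius $M$, so it is $1$-Lipschitz. Hence $T_M(Q_L)\in H^1(\Omega;\mathcal S_0)$ with $|\nabla T_M(Q_L)|\le|\nabla Q_L|$ pointwise a.e. Because the map is Lipschitz, its trace on $\p\Omega$ equals $T_M$ applied to the trace of $Q_L$.

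Next we show that for $M$ large, independent of $L$, truncation does not increase either potential. For the bulk term, $f_B^*(Q)\ge \tfrac c4|Q|^4-\tfrac{|a|}{2}|Q|^2-\tfrac b3|Q|^3$, using $|\mathrm{tr}(Q^3)|\le |Q|^3$. A cleaner route is to restrict to the ray: for a fixed unit direction $Q=tN$,
\[
\frac{d}{dt}f_B^*(tN)=a t-b t^2\,\mathrm{tr}(N^3)+c t^3\ \ge\ t\bigl(ct^2-bt-|a|\bigr)>0 \quad\text{for } t>M_1:=\frac{b+\sqrt{b^2+4c|a|}}{2c}.
\]
Thus $f_B(tN)$ is increasing in $t$ for $t\ge M_1$, and so $f_B(T_M Q)\le f_B(Q)$ whenever $M\ge M_1$.

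The surface density requires the most care, because of the shift $Q^{(s_0)}=Q+\tfrac{s_0}{3}\mathbf I_3$ and the projection term; this is the step I expect to be the main obstacle. I again differentiate along rays $Q=tN$ with $|N|=1$ and $\mathrm{tr}N=0$. Since $\mathrm{tr}N=0$, we have $\mathbf I_3\cdot N=0$, hence $|Q^{(s_0)}|^2=t^2+s_0^2/3$. The quartic part is $s_2(t^2-\tfrac23 s_0^2)^2$, and its derivative $4s_2t(t^2-\tfrac23s_0^2)$ is positive for $t>s_0\sqrt{2/3}$. The $s_1$ part is $s_1|(\nu\otimes\nu)(tN+\tfrac{s_0}{3}\mathbf I_3)|^2=s_1|tN\nu+\tfrac{s_0}{3}\nu|^2$ (as $|\nu\otimes w|=|w|$), which is a quadratic $s_1\bigl(t^2|N\nu|^2+\tfrac{2s_0t}{3}\nu\cdot N\nu+\tfrac{s_0^2}{9}\bigr)$. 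Its $t$-derivative is at least $-\tfrac{2s_1s_0}{3}$, because $|\nu\cdot N\nu|\le1$. So the total derivative is at least $4s_2t(t^2-\tfrac23 s_0^2)-\tfrac23 s_1s_0>0$ once $t\ge M_2$, for some $M_2$ depending only on $s_0,s_1,s_2$. This holds uniformly in $\nu$, so $f_S(T_MQ)\le f_S(Q)$ pointwise on $\p\Omega$ for $M\ge M_2$.

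Now fix $\C{1}:=\max\{M_1,M_2\}+1$ and $M=\C{1}$. Comparing $\mathcal E_L(T_M(Q_L))\le \mathcal E_L(Q_L)$ with minimality gives equality. To conclude $|Q_L|\le M$, I would use strict monotonicity. On the set $\{|Q_L|>M\}$ in $\Omega$, the bulk inequality is strict, so equality of energies forces this set to have measure zero. Here all three terms are individually non-increasing, so equality of the sums forces equality termwise, and $f_B(T_MQ)<f_B(Q)$ there; this uses strict increase of $t\mapsto f_B(tN)$ for $t>M_1$. The same reasoning with the strict inequality for $f_S$ on $\{x\in\p\Omega:|Q_L(x)|>M\}$ shows that this set is $\mathscr H^2$-null, which gives the bound in the sense of trace. (Alternatively, the bulk bound alone gives $|Q_L|\le M$ a.e. in $\Omega$, and traces of functions bounded a.e. by $M$ are bounded by $M$.) All constants depend only on $a,b,c,s_1,s_2$, not on $L$.
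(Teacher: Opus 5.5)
Your proof is correct and follows the same truncation argument as the paper. Both truncate radially at a level independent of $L$, use the pointwise gradient bound, and use strict decrease of $f_B$ and $f_S$ along rays $t\mapsto tN$ for large $t$ to contradict minimality on a set of positive measure. The differences are minor: you check the bulk monotonicity by an explicit ray derivative instead of citing \cite[Theorem 1]{M10}, and you treat $f_S$ with a derivative bound instead of the paper's difference quotient $\bigl(f_S(Q_L)-f_S(\widetilde{Q_L})\bigr)/(|Q_L|-k_1)$.
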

	
	\begin{proof}Recall $Q^{(s_0)}$ defined in \eqref{defn s_0}. It turns out $\big|\h{0.5pt}Q^{(s_0)}\h{0.5pt}\big|^2=|\h{0.5pt}Q\h{0.5pt}|^2+\frac{1}{3}s_0^2$. The surface energy density $f_S(Q)$ can be expressed as
		\begin{align}
			f_S(Q) 
			= s_1\left(\nu^\top Q^2\nu+\frac{2}{3} s_0 \h{0.5pt}\nu^\top Q\h{0.5pt}\nu+\dfrac{s_0^2}{9}\right)
			+s_2\left(\h{0.5pt}|\h{0.5pt}Q\h{0.5pt}|^2-\frac{2}{3}s_0^2\right)^2.
			\label{192}
		\end{align}	Here, $\nu$ is understood as a column vector.	
		
		Fix $k_1>0$, we define the truncation, denoted by $\widetilde{Q_L}$, of $Q_L$ as follows: $$\widetilde{Q_L} := Q_L \h{15pt}\text{on $\Big\{\h{0.5pt} \big|\h{0.5pt}Q_L \h{0.5pt}\big|\leq k_1\Big\}$}; \h{20pt} \widetilde{Q_L} := k_1\h{0.5pt}\frac{Q_L}{\big|\h{0.5pt}Q_L\h{0.5pt}\big|}  \h{15pt}\text{on $\Big\{\h{0.5pt} \big|\h{0.5pt}Q_L \h{0.5pt}\big| > k_1\Big\}$}.$$  
		On the set $\big\{x\in \p\Omega :  |\h{0.5pt}Q_L\h{0.5pt}|>k_1 \big\}$, we can apply \eqref{192} to obtain
		\begin{align*}
			&\dfrac{f_S(Q_L)  -f_S(\widetilde{Q_L})}{|\h{0.5pt}Q_L\h{0.5pt}|-k_1}  \\[1.5mm]
			&\h{15pt}= 
			s_1 \left(\Big(|\h{0.5pt}Q_L\h{0.5pt}| +   k_1 \Big)\h{1pt} \left|\h{1pt} \frac{ Q_L }{|\h{0.5pt}Q_L\h{0.5pt}|} \h{1pt}\nu \h{1pt}\right|^2
			+\dfrac{2}{3}s_0\h{1pt} \nu^\top \frac{Q_L}{|\h{0.5pt}Q_L\h{0.5pt}|} \h{1pt}\nu \right)
			+s_2 \Big(|\h{0.5pt}Q_L\h{0.5pt}|  +   k_1 \Big) \left(|\h{0.5pt}Q_L\h{0.5pt}|^2+k_1^2-\dfrac{4}{3}s_0^2\right).
		\end{align*}
		The last equality then yields
		\begin{align*}
			\dfrac{f_S(Q_L)  -f_S(\widetilde{Q_L})}{|\h{0.5pt}Q_L\h{0.5pt}|-k_1}  
			&\geq s_1  \left(  k_1 \left|\h{1pt} \frac{ Q_L }{|\h{0.5pt}Q_L\h{0.5pt}|} \h{1pt}\nu \h{1pt}\right|^2
			- \dfrac{2}{3}s_0\h{1pt}  \left|\h{1pt} \frac{Q_L}{|\h{0.5pt}Q_L\h{0.5pt}|} \h{1pt}\nu \h{1pt}\right| \h{1pt} \right)   +2 s_2 \Big(|\h{0.5pt}Q_L\h{0.5pt}| +   k_1 \Big) \left(k_1^2-\dfrac{2}{3}s_0^2\right). \end{align*}
		Let $k^2_1 \geq \frac{2}{3} s_0^2$. The last estimate can be reduced to \begin{align*}
			\dfrac{f_S(Q_L)  -f_S(\widetilde{Q_L})}{|\h{0.5pt}Q_L\h{0.5pt}|-k_1}  
			&\geq - \frac{s_0^2}{9\h{0.5pt}k_1}\h{1pt}s_1  + 2s_2  k_1\left(k_1^2-\dfrac{2}{3}s_0^2\right) \geq - \frac{1}{6}\h{1pt}s_1\h{0.5pt}k_1   + 2s_2  k_1\left(k_1^2-\dfrac{2}{3}s_0^2\right).\end{align*} If we take $k_1$ sufficiently large with largeness depending on $s_0, s_1, s_2$, then it holds $$f_S(Q_L) > f_S(\widetilde{Q_L})  \h{15pt} \text{on $\big\{x\in \p\Omega : |\h{0.5pt}Q_L\h{0.5pt}|>k_1\big\}$}.$$ On $\big\{x\in \Omega : |\h{0.5pt}Q_L\h{0.5pt}|>k_1\big\}$, we compute that
		\begin{align*}
			\big|\nabla \widetilde{Q_L}\big|^2 
			= k_1^2\sum^3_{i=1}
			\dfrac{\big|\h{0.5pt}\p_iQ_L\h{0.5pt}\big|^2}{|\h{0.5pt}Q_L\h{0.5pt}|^2}
			-\left(\dfrac{\mathrm{tr}\big(Q_L\h{0.5pt} \p_iQ_L\big)}{|\h{0.5pt}Q_L\h{0.5pt}|^2}\right)^2
			\leq\dfrac{k_1^2}{|\h{0.5pt}Q_L\h{0.5pt}|^2} \h{1pt}\big|\nabla Q_L\big|^2
			< \big|\nabla Q_L\big|^2.
		\end{align*}
		Moreover, following the same arguments as in \cite[Theorem 1]{M10}, we have $$f_B(Q_L) > f_B(\widetilde{Q_L}) \h{15pt}\text{ on $\big\{x\in \Omega : |\h{0.5pt}Q_L\h{0.5pt}|>k_1\big\}$},$$ provided that $k_1$ is suitably large. We now fix the large constant $k_1$. If $\big\{x\in \Omega : |\h{0.5pt}Q_L\h{0.5pt}|>k_1\big\}$ has positive Lebesgue measure or $\big\{x\in \p\Omega : |\h{0.5pt}Q_L\h{0.5pt}|>k_1\big\}$ has positive $\mathscr{H}^2$-measure, then it follows that $ \mathcal{E}_L(Q_L) > \mathcal{E}_L(\widetilde{Q_L})$. This contradicts the fact that $Q_L$ is an $\mathcal E_L$-energy minimizer. This concludes the uniform boundedness of $|\h{0.5pt}Q_L \h{0.5pt}|$.
	\end{proof}
	
	Lemma \ref{lem existence of min. and L^infty bound} induces that $Q_L$ solves the boundary value problem \eqref{bdv} in the classical sense.
	\begin{prop}[\bf Regularity of minimizers]
		Suppose $Q_L$ is the $\mathcal E_L$-energy minimizer. Then $Q_L \in C^{1, \frac{1}{2}}\big (\overline{\Omega};\mathcal{S}_0\big)\cap C^{2,\frac{1}{2}} \big(\Omega;\mathcal{S}_0\big)$. It is a classical solution to the boundary value problem \eqref{bdv}.
	\end{prop}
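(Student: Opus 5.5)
The plan is to bootstrap regularity for the weak formulation \eqref{eq. weak sol.}, starting from the $L^\infty$ bound of Lemma \ref{lem existence of min. and L^infty bound}. Since $|Q_L|\le \C{1}$ a.e. in $\Omega$ and on $\p\Omega$, the nonlinearities satisfy
\[
|f_B'(Q_L)| \lesssim 1 \ \text{ in } \Omega,
\qquad
|f_S'(Q_L)| \lesssim 1 \ \text{ on } \p\Omega ,
\]
with constants depending on $a,b,c,s_0,s_1,s_2$ and $\|\nu\|_{C^1}$. The weak formulation thus says that $Q_L$ is a weak solution of a linear Neumann problem
\[
\Delta Q_L = F \ \text{ in } \Omega, \qquad \p_\nu Q_L = G \ \text{ on } \p\Omega,
\]
with $F = L^{-2} f_B'(Q_L) \in L^\infty(\Omega)$ and $G = -L^{-1} f_S'(Q_L) \in L^\infty(\p\Omega)$. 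One point to check is that test functions range over all of $H^1(\Omega;\mathcal S_0)$. Since $f_B'$ and $f_S'$ are traceless symmetric, and we have verified the projection onto $\mathcal S_0$ is harmless, the equation holds componentwise.

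The first step is Hölder continuity up to the boundary. I would flatten $\p\Omega$ locally and apply De Giorgi--Nash--Moser for the resulting divergence-form equation with conormal data in $L^\infty$. Alternatively, I would use the Neumann Green's function or Morrey/Campanato estimates, since $L^\infty$ Neumann data gives $W^{1,p}$ regularity for all $p<\infty$. By the $W^{1,p}$ theory for the Neumann problem (e.g. via Agmon--Douglis--Nirenberg or Lieberman's oblique-derivative theory), $Q_L \in W^{1,p}(\Omega)$ for every $p<\infty$, hence $Q_L \in C^{0,\alpha}(\overline\Omega)$ for all $\alpha<1$. The trace of $Q_L$ on $\p\Omega$ is then $C^{0,\alpha}$. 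Because $\nu$ is smooth and $f_S'$ is polynomial in $Q$, $G \in C^{0,\alpha}(\p\Omega)$, and clearly $F \in C^{0,\alpha}(\overline\Omega)$.

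The second step applies Schauder theory for the Neumann problem. With $F \in C^{0,\alpha}$ and $G \in C^{0,\alpha}$, the boundary estimate gives $Q_L \in C^{1,\alpha}(\overline\Omega)$. Taking $\alpha=\frac12$ gives the stated $C^{1,\frac12}(\overline\Omega)$. Even $G \in L^\infty$ with $F\in L^\infty$ yields $C^{1,\beta}$ for $\beta<1$, so the exponent is not delicate. In the interior, $F \in C^{1,\frac12}$, so interior Schauder estimates give $Q_L \in C^{2,\frac12}(\Omega)$ and even more by bootstrapping. Then $L^2\Delta Q_L = f_B'(Q_L)$ holds pointwise. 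Integrating \eqref{eq. weak sol.} by parts against arbitrary smooth test functions yields $-L\p_\nu Q_L = f_S'(Q_L)$ pointwise on $\p\Omega$, since $Q_L\in C^1(\overline\Omega)$. This is the classical sense.

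The main obstacle is the first step, namely passing from $H^1 \cap L^\infty$ to Hölder continuity up to the boundary with nonlinear Robin-type data. The key observation is that the $L^\infty$ bound makes the boundary nonlinearity a bounded datum, reducing everything to linear Neumann theory. I would first try a local flattening combined with even reflection across the flat boundary, which turns the conormal problem into an interior divergence-form equation with a bounded measure-type (surface) right-hand side in $W^{-1,p}$ for all $p$. Meyers-type or $W^{1,p}$ estimates for equations with smooth coefficients then give $\nabla Q_L \in L^p$ for every $p<\infty$.
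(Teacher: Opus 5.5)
Your argument is correct, but it follows a different route from the paper's. The paper stays in the $L^p$--Sobolev scale. Since $Q_L\in H^1\cap L^\infty$, the datum $f_S'(Q_L)$ lies in $H^{1/2}(\p\Omega)$, and Grisvard's $W^{2,p}$ theorem for the Neumann problem \cite[Theorem 2.4.2.7]{G85} gives $Q_L\in H^2\subset W^{1,6}\subset C^{0,1/2}(\overline\Omega)$. Traces of $W^{1,6}$ maps lie in $W^{5/6,6}(\p\Omega)$, so the same theorem with $p=6$ gives $Q_L\in W^{2,6}\subset C^{1,1/2}(\overline\Omega)$, and interior Schauder finishes.

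You instead use the weak $W^{1,p}$ theory with bounded Neumann data, followed by the $C^{1,\alpha}$ Schauder estimate for the Neumann (conormal) problem with $C^{0,\alpha}$ data. Your route makes the nonlinear boundary term trivial, since composing with a polynomial is harmless in H\"older spaces. The paper, by contrast, needs this composition in $H^{1/2}$ and $W^{5/6,6}$, which is where the $L^\infty$ bound enters. The price is that you rely on a less textbook-standard boundary estimate (Lieberman's oblique-derivative theory or a Campanato argument), whereas the paper uses one standard theorem twice.

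Two side remarks in your proposal are wrong, although your main line does not use them.

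\textbf{Bounded Neumann data.} $G\in L^\infty(\p\Omega)$ does not yield $C^{1,\beta}$. In the half-space model, the boundary values of the tangential derivatives are Riesz transforms of $G$, which have a logarithmic singularity where $G$ jumps. So the solution of the linear problem need not even be Lipschitz, and the H\"older continuity of $G$ obtained in your first step is genuinely needed.

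\textbf{Reflection after flattening.} Even reflection after the graph flattening of Section~\ref{bdry flatten} does not produce smooth coefficients. The components $\overline{a}_{13},\overline{a}_{23}$ must be odd, and $a_{13}=-\p_{x_1}h\neq 0$ in general. They therefore jump across $\{y_3=0\}$, and Meyers' estimate only gives exponents near $2$. You would need boundary-normal coordinates, where $a_{13}=a_{23}=0$ and the reflected coefficients are Lipschitz, or estimates for coefficients that are piecewise smooth across a flat interface. Citing the $W^{1,p}$ Neumann theory directly, as in your main line, avoids this issue.
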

	
	\begin{proof}
		Since $Q_L \in H^1\big(\Omega;\mathcal{S}_0\big)$, we have $f'_S(Q_L)\in H^{\frac{1}{2}}\big(\p\Omega;\mathcal{S}_0\big)$ by the trace theorem and the $L^\infty$-boundedness in Lemma \ref{lem existence of min. and L^infty bound}. In addition, we can rewrite the equation of $Q_L$ in \eqref{bdv} as follows: \begin{equation*}
			L^{2}\h{1pt}\Delta Q_L - Q_L =f_B'(Q_L) - Q_L   \h{15pt} \text{in $\Omega$}.
		\end{equation*}It then follows that $L^{2} \Delta Q_L - Q_L \in L^\infty(\Omega)$ still by the $L^\infty$-boundedness of $Q_L$ in Lemma \ref{lem existence of min. and L^infty bound}. We now apply Theorem 2.4.2.7 in \cite{G85} and obtain $Q_L \in H^2(\Omega; \mathcal S_0)$. 
		
		By the Sobolev embedding and Morrey's inequality, it follows that $Q_L \in W^{1, 6}(\Omega; \mathcal S_0) \hookrightarrow C^{\frac{1}{2}}(\overline{\Omega}; \mathcal S_0)$. Therefore, $Q_L \in C^{2, \frac{1}{2}}(\Omega; \mathcal S_0)$ by the standard Schauder estimate and the equation of $Q_L$ in \eqref{bdv}. 
		
		Note that the trace operator maps $W^{1, 6}(\Omega; \mathcal S_0)$ to $W^{\frac{5}{6}, 6}(\p\Omega; \mathcal S_0)$. This fact, combined with the $L^\infty$-boundedness in Lemma \ref{lem existence of min. and L^infty bound}, implies that $f'_S(Q_L)\in W^{\frac{5}{6}, 6}\big(\p\Omega;\mathcal{S}_0\big)$. Applying Theorem 2.4.2.7 in \cite{G85} again yields $Q_L \in W^{2, 6}(\Omega; \mathcal S_0)$, which, together with Morrey's inequality, implies that $\nabla Q_L \in C^{\frac{1}{2}}(\overline{\Omega}; \mathcal S_0)$. This completes the proof.
	\end{proof}
	
	\section{Strong \texorpdfstring{$H^1$}{TEXT}-limit of \texorpdfstring{$Q_L$}{TEXT}}
	We first characterize all $ Q \in H^1(\Omega; \mathcal S_0)$ which satisfy $f_B(Q) = 0$ in $\Omega$ almost everywhere and $f_S(Q) =0$ on $\partial \Omega$ in the sense of trace. Due to \cite{M10}, the zero set of $f_B$ consists of all the matrices in
	\begin{align}
		\mathcal{S}_* := \left\{ Q \in \mathcal{S}_0 : Q = s_0 \left(n \otimes n - \frac{1}{3} \mathbf{I}_3\right)\h{5pt} \text{for some} \h{5pt} n \in \mathbb{S}^2\right\}.
		\label{def +ve uniaxial Q tensor}
	\end{align} Therefore, we only need to find all $Q \in H^1(\Omega; \mathcal S_*)$ with $f_S(Q) = 0$ on $\p \Omega$ in the sense of trace.
	\begin{lem} Given $Q \in H^1(\Omega;\mathcal{S}_*) $, we represent it by \begin{equation}\label{rep of Q}Q =s_0\left(n^Q  \otimes n^Q  - \frac{1}{3} \mathbf{I}_3\right), \h{20pt}\text{for some $n^Q\in H^1(\Omega;\mathbb{S}^2)$.}\end{equation}  Then in the sense of trace, $f_S(Q)=0$ on $\p\Omega$ is equivalent to $n^Q\in T (\p\Omega)$.
		\label{lem. f_s}
	\end{lem}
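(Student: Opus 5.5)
The plan is a pointwise computation: plug the uniaxial representation \eqref{rep of Q} into the expanded form \eqref{192} of $f_S$ and reduce both surface terms to quantities in $n^Q\cdot\nu$. Both $f_S(Q)$ and the condition $n^Q\in T(\p\Omega)$ are pointwise conditions on the trace, holding $\mathscr H^2$-a.e. on $\p\Omega$. We may therefore fix a point $x\in\p\Omega$ where the traces of $Q$ and $n^Q$ are defined and satisfy \eqref{rep of Q}, with $|n^Q(x)|=1$, and verify there that $f_S(Q)(x)=0$ if and only if $n^Q(x)\cdot\nu(x)=0$.

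A preliminary remark on traces. The lifting $n^Q\in H^1(\Omega;\mathbb S^2)$ is given, and its trace is $\mathbb S^2$-valued a.e. Moreover, $Q\mapsto n^Q\otimes n^Q$ is the composition with a smooth map, so the trace of $Q$ equals $s_0\big(n^Q\otimes n^Q-\frac13\mathbf I_3\big)$ evaluated on the trace of $n^Q$. (One can approximate $n^Q$ by smooth maps and use the continuity of the trace operator together with the boundedness of $n^Q$.) With this in hand the problem is purely algebraic.

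Now write $n=n^Q(x)$, $\nu=\nu(x)$ and $t=n\cdot\nu$, and substitute $Q=s_0(n\otimes n-\frac13\mathbf I_3)$ into \eqref{192}.
\begin{itemize}
\item The second term vanishes. Indeed, $|Q|^2=s_0^2(1-\frac23+\frac13)=\frac23 s_0^2$, so $s_2(|Q|^2-\frac23 s_0^2)^2=0$.
\item For the first term, $Q\nu=s_0(t\,n-\frac13\nu)$, which gives
\[
\nu^\top Q^2\nu=|Q\nu|^2=s_0^2\Big(t^2-\tfrac23t^2+\tfrac19\Big)=s_0^2\Big(\tfrac13t^2+\tfrac19\Big)
\]
and $\nu^\top Q\nu=s_0(t^2-\frac13)$.
\end{itemize}
Hence
\[
f_S(Q)=s_1s_0^2\Big(\tfrac13t^2+\tfrac19+\tfrac23t^2-\tfrac29+\tfrac19\Big)=s_1s_0^2\,t^2 .
\]
As a cross-check, the same value comes directly from the original definition: $Q^{(s_0)}=s_0\,n\otimes n$, so $Q^{(s_0)}-\widetilde P Q^{(s_0)}=s_0\,t\,\nu\otimes n$, whose squared norm is $s_0^2t^2$.

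Since $s_1>0$ and $s_0>0$ (because $b>0$ and $b^2>27ac>24ac$ when $ac>0$, and trivially otherwise), we get $f_S(Q)=0$ a.e. on $\p\Omega$ if and only if $n^Q\cdot\nu=0$ a.e. That is exactly $n^Q\in T(\p\Omega)$. The only point requiring care is the trace identification above; the sign-ambiguity of $n^Q$ is harmless, since the condition depends only on $t^2$.
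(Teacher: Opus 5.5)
Your proof is correct and follows the paper's argument. You substitute the uniaxial representation into \eqref{192}, use $|Q|^2=\frac23 s_0^2$ to kill the $s_2$-term, and reduce the $s_1$-term to $s_1s_0^2(n^Q\cdot\nu)^2$. The only difference is that you justify the trace identity $\operatorname{tr}_{\partial\Omega}(n^Q\otimes n^Q)=\operatorname{tr}_{\partial\Omega}n^Q\otimes \operatorname{tr}_{\partial\Omega}n^Q$ yourself, by a bounded smooth approximation argument, whereas the paper cites Ball--Zarnescu \cite[Proposition 3]{BZ11} for it.
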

	We point out that the representation  \eqref{rep of Q} is proved by Ball--Zarnescu in \cite[Theorem 2]{BZ11}.
	\begin{proof}[\bf Proof of Lemma \ref{lem. f_s}] 
		By the representation of $Q$ in \eqref{rep of Q}, we have $|\h{0.5pt}Q\h{0.5pt}|^2=\frac{2}{3} s^2_0$ on $\p \Omega$ in the sense of trace. Recall \eqref{192}. It turns out  $$f_S(Q) = s_1 s_0^2 \h{1pt} \big(n^Q  \cdot \nu \big)^2 \h{20pt}\text{$\mathscr H^2$-a.e. on $\p \Omega$}.$$ Here we also use \cite[Proposition 3]{BZ11}. The proof is completed.
	\end{proof}
	
	Next, we prove the strong $H^1$-convergence of $Q_L$ as $L \to 0^+$. 	\begin{prop}\label{lem H1 conv}
		There exists $u^* \in H_T^1(\Omega; \mathbb S^2)$ such that, up to a subsequence, $Q_L$ converges strongly to \begin{equation}\label{rep. Q*}Q^* := s_0\Big(u^* \otimes u^* - \frac{1}{3} \mathbf{I}_3\Big)\end{equation} in $H^1$ as $L \to 0^+$. In addition, $u^*$ is a Dirichlet energy minimizer in  $H_T^1(\Omega; \mathbb S^2)$.
	\end{prop}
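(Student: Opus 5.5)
The argument is the standard large-body $\Gamma$-type argument: a uniform energy bound from a competitor, weak compactness, identification of the limit through Lemma \ref{lem. f_s}, and then an upgrade to strong convergence by matching energies.

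\emph{Step 1: uniform bound and weak limit.} Since $\p\Omega\cong\mathbb S^2$, pick any $u_0\in H^1_T(\Omega;\mathbb S^2)$. This class is nonempty: a tangent vector field with two point zeros of degree $+1$ has finite $H^1$ energy in 3D after $0$-homogeneous extension near the zeros. Set $Q_0:=s_0(u_0\otimes u_0-\frac13\mathbf I_3)$. Then $f_B(Q_0)=0$, and $f_S(Q_0)=s_1s_0^2(u_0\cdot\nu)^2=0$ by the computation in the proof of Lemma \ref{lem. f_s}. Minimality gives
$$\mathcal E_L(Q_L)\le \mathcal E_L(Q_0)=\tfrac12\int_\Omega|\nabla Q_0|^2=s_0^2\int_\Omega|\nabla u_0|^2 ,$$
uniformly in $L$. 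Lemma \ref{lem existence of min. and L^infty bound} adds an $L^\infty$ bound, so a subsequence converges weakly in $H^1$ and strongly in $L^2(\Omega)$ and $L^2(\p\Omega)$ to some $Q^*$. The bound also gives $\int_\Omega f_B(Q_L)\le CL^2$ and $\int_{\p\Omega}f_S(Q_L)\le CL$. Both $f_B,f_S\ge0$ are continuous and the sequence is bounded, so Fatou's lemma yields $f_B(Q^*)=0$ a.e. in $\Omega$ and $f_S(Q^*)=0$ a.e. on $\p\Omega$. Hence $Q^*\in H^1(\Omega;\mathcal S_*)$. By \cite[Theorem 2]{BZ11} (the domain is simply connected), $Q^*=s_0(u^*\otimes u^*-\frac13\mathbf I_3)$ with $u^*\in H^1(\Omega;\mathbb S^2)$, and by Lemma \ref{lem. f_s} we get $u^*\in H^1_T$. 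We also use the identity $|\nabla Q^*|^2=2s_0^2|\nabla u^*|^2$.

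\emph{Step 2: minimality and strong convergence.} The competitor $u_0$ in Step 1 was arbitrary. Weak lower semicontinuity then gives
$$s_0^2\int_\Omega|\nabla u^*|^2=\tfrac12\int_\Omega|\nabla Q^*|^2\le\liminf\tfrac12\int|\nabla Q_L|^2\le\limsup\mathcal E_L(Q_L)\le s_0^2\int_\Omega|\nabla u_0|^2$$
for every $u_0\in H^1_T$. Choosing $u_0=u^*$ shows that all of these inequalities are equalities. Therefore $u^*$ minimizes the Dirichlet energy in $H^1_T(\Omega;\mathbb S^2)$ and $\|\nabla Q_L\|_{L^2}\to\|\nabla Q^*\|_{L^2}$. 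Norm convergence together with weak convergence gives strong $H^1$ convergence. As a by-product, the penalty terms $L^{-2}\int f_B$ and $L^{-1}\int f_S$ tend to $0$.

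\emph{Main obstacle.} The only genuinely nontrivial points are the existence of a finite-energy tangential competitor and the lifting $Q^*\mapsto u^*$ with $H^1$ regularity. The lifting is handled by Ball--Zarnescu on simply connected domains. For the competitor, I would build it near the boundary from a smooth tangent field on $\p\Omega$ with two zeros, normalized away from those zeros. Near each zero I would use a hedgehog-type $0$-homogeneous profile, whose energy $\int|x|^{-2}$ is finite in 3D, and extend into the interior by any $H^1$ map, for instance a $0$-homogeneous extension from an interior point.
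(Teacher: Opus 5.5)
Your proposal is correct and follows essentially the same route as the paper. The steps match: a uniform bound from a tangential competitor, weak compactness, identification of the limit via \cite[Theorem 2]{BZ11} and Lemma \ref{lem. f_s}, and strong convergence obtained by inserting $u_0=u^*$ into the chain of inequalities.

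There are two minor differences. First, the paper shows $H^1_T(\Omega;\mathbb S^2)\neq\emptyset$ by applying the Hardt--Lin extension theorem \cite[Theorem 6.2]{HL87} to the explicit $H^{1/2}$ tangent field $(-\cos\xi\cos\theta,-\cos\xi\sin\theta,\sin\xi)^\top$, which has two polar singularities. You instead build the competitor by hand; this works provided the hedgehog profiles are set up in flattened coordinates, so that tangency holds on the curved boundary. Second, the paper uses dominated convergence where you use Fatou's lemma.
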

	\begin{proof}We claim $H_T^1(\Omega; \mathbb S^2) \neq \emptyset$. By \cite[Theorem 6.2]{HL87}, if there is a map $u \in H^{\frac{1}{2}}(\p \Omega; \mathbb S^2) \cap T(\p \Omega)$, then $u$ can be extended to $\Omega$ with the extension in $H_T^1(\Omega; \mathbb S^2)$. Since $\p \Omega$ is diffeomorphic to the sphere $\mathbb S^2$, we only need to construct a map $u \in H^{\frac{1}{2}}(\mathbb S^2; \mathbb S^2)$ with $u(x)$ tangent to $\mathbb S^2$ at $x$, for $\mathscr H^2$-a.e. $x$ in $\mathbb S^2$. Let $(r,\xi,\theta)$ be the spherical coordinates in $\mathbb{R}^3$, where $r$ is the radial variable, $\theta \in [\h{0.5pt}0,2\pi)$ is the azimuthal angle, and $\xi \in [\h{0.5pt}0,\pi]$ is the polar angle. At the point
\(
(\sin\xi \cos\theta,\ \sin\xi \sin\theta,\ \cos\xi)^\top,
\)
we assign the tangent vector
\[
(-\cos\xi \cos\theta,\ -\cos\xi \sin\theta,\ \sin\xi)^\top.
\]
The vector field obtained is in $H^{\frac{1}{2}}(\mathbb S^2; \mathbb S^2)$. It has two singularities at the north and south poles.

		Fix $u \in H_T^1(\Omega; \mathbb S^2)$ and denote by $Q_u$ the matrix $s_0\Big( u  \otimes u  - \frac{1}{3} \mathbf{I}_3\Big)$. The minimality of  $Q_L$ and Lemma \ref{lem. f_s} then infer
		\begin{align}
			\dfrac{1}{2} \int_{\Omega} |\h{0.5pt}\nabla Q_L \h{0.5pt}|^2  
			\leq \mathcal{E}_L( Q_L)
			\leq \mathcal{E}_L( Q_u) 
			= \dfrac{1}{2} \int_{\Omega} |\nabla Q_u|^2
			= s_0^2 \int_{\Omega} |\nabla u |^2.
			\label{506}
		\end{align}
		Therefore, $\|Q_L\|_{H^1(\Omega)}$ is uniformly bounded in $L$ due to the last estimate and Lemma \ref{lem existence of min. and L^infty bound}. There exists  $Q^* \in H^1(\Omega;\mathcal{S}_0)$ such that, up to a subsequence, $Q_L$ converges to $Q^*$ weakly in $H^1(\Omega;\mathcal{S}_0)$ as $L \to0^+$. Without loss of generality, we assume that along the same subsequence, $Q_L$ converges strongly to $Q^*$ in both $L^2(\Omega;\mathcal{S}_0)$ and $L^2(\p\Omega;\mathcal{S}_0)$. Furthermore, $Q_L$ converges to $Q^*$ almost everywhere in $\Omega$ and $\mathscr H^2$-a.e. on $ \p\Omega$ along the same subsequence.
		
		In addition, \eqref{506} also implies that $$
		L^{-1}\int_{\Omega}
		f_B(Q_L)
		+  \int_{\p\Omega}  f_S(Q_L) \h{2pt}\mathrm{d} \mathscr H^2
		\leq s_0^2 \h{1pt} L \int_{\Omega} |\nabla u|^2 \to 0 \h{20pt}\text{as $L \to 0^+$.}$$  Lebesgue's dominated convergence theorem then implies $f_B(Q^*)=0$ a.e. in $\Omega$ and $f_S(Q^*)=0$ on $ \p\Omega$ in the sense of trace. Using the fact that $f_B(Q^*) = 0$ a.e. in $\Omega$ and \cite[Theorem 2]{BZ11}, we get the representation of $Q^*$ in \eqref{rep. Q*} for some $u^* \in H^1(\Omega; \mathbb S^2)$. In addition, by \cite[Proposition 3]{BZ11} and Lemma \ref{lem. f_s}, the fact that $f_S(Q^*) = 0$ on $\p \Omega$ in the sense of trace implies that $u^* \in T(\p \Omega)$. Therefore, $u^* \in H_T^1(\Omega; \mathbb S^2)$.
		
		Due to the weak lower semi-continuity and \eqref{506},
		\begin{align*}
			2s_0^2\int_{\Omega} |\nabla u^*|^2 =\int_{\Omega} |\nabla Q^*|^2
			\leq \liminf_{L \h{0.5pt}\to \h{0.5pt}0^+}\int_{\Omega}  |\nabla Q_L|^2 
			\leq \limsup_{L \h{0.5pt}\to \h{0.5pt} 0^+}\int_{\Omega}  |\nabla Q_L|^2 
			\leq 2s_0^2 \int_{\Omega} |\nabla u|^2.
		\end{align*}Here, $Q_L$ is the same subsequence as before. Hence, $u^*$ is a Dirichlet energy minimizer in $H_T^1(\Omega; \mathbb S^2)$. Replacing $u$ on the rightmost side above by $u^*$, we obtain \begin{align*}
			\liminf_{L \h{0.5pt}\to \h{0.5pt} 0^+}\int_{\Omega}  |\nabla Q_L|^2 
			= \limsup_{L \h{0.5pt}\to \h{0.5pt} 0^+}\int_{\Omega}  |\nabla Q_L|^2 
			=\int_{\Omega} |\nabla Q^*|^2, 
		\end{align*}
		which implies that $Q_L$ converges to $Q^*$ strongly in $H^1(\Omega;\mathcal{S}_0)$ as $L \to 0^+$ along the subsequence.
	\end{proof} 
	
	\section{Boundary flattening map and the associated projection}\label{bdry flatten}
	Fix an arbitrary $x_0 \in \p \Omega$. Through translation and rotation, we assume $V_0 \cap \p \Omega$ is represented by $x_3 = h(x_1, x_2)$, where $V_0$ is a small neighborhood of $x_0$. $h$ is the smooth height function of the surface $V_0 \cap \p \Omega$. The points in $V_0 \cap \Omega$ satisfy $x_3 > h(x_1, x_2)$. Without loss of generality, we can also assume the first two coordinates of $x_0$ are equal to $0$. Therefore, $x_0 = (0, 0, h(0, 0))^\top$. 
	
	We define $$\varphi(x):=\big(x_1,x_2,x_3-h(x_1,x_2)\big)^\top \h{20pt}\text{for any $x \in V_0 \cap \Omega$}.$$ This map is a diffeomorphism between $V_0 \cap \Omega$ and $\varphi(V_0 \cap \Omega)$. Moreover, $V_0 \cap \p \Omega$ is mapped to  \begin{equation*}P_0 := \Big\{(x', 0)^\top : x' \in \mathbb R^2\Big\},\end{equation*} while $\varphi(V_0 \cap \Omega)$ lies above the plane $P_0$.  Take $R_0'$ sufficiently small. The map $\varphi$ then induces a diffeomorphism between $U_0 := \varphi^{-1}\big(B_{2R_0'}^+\big)$ and $B_{2R_0'}^+$. This diffeomorphism is our boundary flattening map.  
	
	Suppose $u$ is a Dirichlet energy minimizer in the configuration space $H_T^1(\Omega; \mathbb S^2)$. If we define
	\begin{align}
		\widetilde{u}\left(y\right) := u\left(\varphi^{-1}(y)\right) \h{15pt}\text{for any $y \in B_{2 R_0'}^+$,}
		\label{def transformed u}
	\end{align} then $\widetilde{u}$ is a minimizer of the following energy functional \begin{align}
		\widetilde{\mathcal{E}}_{v,\h{0.5pt} B_{2R_0'}^+} := \int_{B_{2R_0'}^+} a_{jl} \h{1.5pt}
		\p_{y_j} v \cdot
		\p_{y_l} v  
		\label{def dirichlet energy on transformed domain}
	\end{align}
	over the configuration space $$\left\{v\in H^1\big(B_{2R_0'}^+;\mathbb{S}^2\big) : v = \widetilde{u} \h{7pt}\text{$\mathscr H^2$-a.e. on $\p^+ B^+_{2R_0'}$}\h{6pt}\text{and}\h{7pt}v(\cdot)\in T_{\varphi^{-1}\left(\cdot\right)}\left(\p\Omega\right)\h{5pt}\text{$\mathscr H^2$-a.e. on}\h{4pt}  \p^0 B^+_{2R_0'}\right\}.$$ The coefficient matrix $(a_{jl})$ in \eqref{def dirichlet energy on transformed domain} is defined with its entries given by $$ a_{jl} (y):= \nabla_x \h{0.5pt} \varphi_j \cdot \nabla_x \h{0.5pt}\varphi_l \h{1.5pt} \Big|_{x \h{0.8pt}=\h{0.8pt} \varphi^{-1}(y)} \h{15pt}\text{for any $j, l = 1, 2, 3$ and $y \in B_{2R_0'}^+$.}$$ Since the boundary $\p \Omega$ is smooth, there are positive constants $\lambda_{\Omega}$ and $\Lambda_{\Omega}$ such that
	\begin{align}
		\lambda_{\Omega} \leq a_{jl}\left(y\right)p_j\h{0.5pt}p_l \leq \Lambda_{\Omega} \h{15pt}\text{for any $y\in B_{2R_0'}^+$ and $p \in \mathbb S^2$.}
		\label{ineq bdd of aij}
	\end{align} 
	Consequently, the Jacobian $\textup{det}\big(\nabla_y \h{1pt}\varphi^{-1}\big)$ is also bounded from below and above on $B_{2 R_0'}^+$ by positive constants depending only on $\Omega$. 
	
	Next, using the method of characteristics, we introduce a crucial projection map to $P_0$.  
	
	\begin{lem}\label{lem existence of pi}
		\sloppy There exist  $R_0'' \in \big(0, R_0'\h{1pt}\big]$ and a $C^2$ projection map $P : \overline{B^+_{R_0''}} \to P_0$ such that \begin{itemize}
			\item[$\mathrm{(1).}$] For each $i = 1, 2, 3$, it satisfies \begin{equation*} \big[\h{1.5pt}\nabla\varphi^{-1}\big|_{P(y)}\nabla P (y)\h{1.5pt}\big]_{i\h{0.5pt}k}\h{1.5pt}a_{k\h{0.5pt}3}(y)=0, \h{15pt}\text{ for all $y \in B^+_{R_0''}$}. \vspace{0.2pc}\end{equation*}
			\item[$\mathrm{(2).}$]  $P(y)=y$ for any  $y \in \p^0B^+_{R_0''}$. \vspace{0.4pc}
			\item[$\mathrm{(3).}$] There exists a positive constant $C_{\Omega}$ depending only on $\Omega$ such that $$\big|\h{0.5pt}\nabla P \h{0.5pt}\big| + \big|\h{0.5pt}\nabla^2 P\h{0.5pt}\big| \leq C_{\Omega} \h{15pt}\text{on $B^+_{R_0''}$.}$$
		\end{itemize}  
	\end{lem}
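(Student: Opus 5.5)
Since $\varphi^{-1}$ is a diffeomorphism, the matrix $\nabla\varphi^{-1}\big|_{P(y)}$ is invertible. Hence condition (1) is equivalent to
\[
\p_{y_k}P_m(y)\,a_{k3}(y)=0\qquad\text{for each component } m=1,2,3 .
\]
Because $P$ takes values in $P_0$, we have $P_3\equiv0$, so only $m=1,2$ matter. My plan is therefore to solve a first-order linear transport equation $X\cdot\nabla P_m=0$ by the method of characteristics. The vector field is
\[
X(y):=\big(a_{13}(y),a_{23}(y),a_{33}(y)\big)^\top ,
\]
and the initial condition is $P(y',0)=(y',0)^\top$ on $P_0$, which gives item (2).

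First I note that $a_{jl}$ is built from $\nabla\varphi$, where $\varphi(x)=(x_1,x_2,x_3-h(x_1,x_2))^\top$ and $h$ is smooth. The coefficients therefore extend smoothly to the whole ball $B_{2R_0'}$ (not just the upper half), with all derivatives bounded by constants depending only on $\Omega$. Moreover, $a_{33}=|\nabla_x\varphi_3|^2=1+|\nabla h|^2\ge1$, so $X$ is uniformly transversal to $P_0$. Let $\Phi(s,z')$ denote the flow of $X$ starting from $(z',0)$, that is, $\partial_s\Phi=X(\Phi)$ and $\Phi(0,z')=(z',0)^\top$. By ODE theory, $\Phi$ is smooth (at least $C^2$, in fact $C^\infty$) on $(-\delta,\delta)\times D_\delta$. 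Its Jacobian at $(0,0)$ has columns $e_1$, $e_2$, $X(0)$, with determinant $a_{33}(0)\ge1$.

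The inverse function theorem then makes $\Phi$ a $C^2$ diffeomorphism from a neighborhood of $(0,0)$ onto a neighborhood of the origin. Quantitatively, the size of that neighborhood depends only on bounds for $\nabla a$, $\nabla^2 a$ and on $\lambda_\Omega,\Lambda_\Omega$. I choose $R_0''\le R_0'$ so small that $\overline{B^+_{R_0''}}$ lies inside this image, and I write $y=\Phi(s(y),z'(y))$. I then define
\[
P(y):=(z'(y),0)^\top ,
\]
i.e.\ the foot point on $P_0$ of the characteristic through $y$. Since $P$ is constant along integral curves of $X$, it satisfies $X\cdot\nabla P=0$, which is (1). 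Points of $P_0$ correspond to $s=0$, so $P(y)=y$ there, which is (2). Shrinking $R_0''$ if necessary guarantees $P(y)\in B_{2R_0'}\cap P_0$, so that $\nabla\varphi^{-1}|_{P(y)}$ is defined.

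Item (3) follows because $P$ is the composition of the $C^2$ inverse map $\Phi^{-1}$ with a linear projection, with bounds on a fixed compact set determined by $h$. The main (though mild) obstacle is the quantitative choice of $R_0''$ depending only on $\Omega$. To handle it, I would use the uniform $C^2$ bounds on $X$ and the lower bound $a_{33}\ge1$ in a quantitative inverse function theorem, or alternatively a Gronwall estimate showing that $|\nabla\Phi-\nabla\Phi(0,0)|$ stays small for $|s|,|z'|$ small.
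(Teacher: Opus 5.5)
Your proposal is correct and follows the paper's own route: reduce (1) to a linear transport equation and solve it by the method of characteristics.

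The paper multiplies out $\nabla\varphi^{-1}|_{P(y)}\nabla P(y)\,(a_{13},a_{23},a_{33})^\top$ explicitly. This reduces (1) to $n_1\partial_{y_1}P_i+n_2\partial_{y_2}P_i+|n|^2\partial_{y_3}P_i=0$ with $P_i=y_i$ on the flat boundary, $i=1,2$. It then cites the local existence theorem for first-order PDE in Evans, §3.2 (noncharacteristic because $a_{33}\ge 1$). You reach the same reduction through the invertibility of $\nabla\varphi^{-1}$, and you carry out the characteristic construction explicitly (flow of $X$ plus the inverse function theorem), which is what that cited theorem does.
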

	\begin{proof}
		For any $y \in B_{2 R_0'}^+$, direct computations yield \begin{align*} &a_{13}(y)=-\p_{x_1}h \h{1pt}\big|_{x\h{0.5pt}=\h{0.5pt}\varphi^{-1}(y)}, \h{16pt}a_{23}(y)=-\p_{x_2}h \h{1pt}\big|_{x\h{0.5pt}=\h{0.5pt}\varphi^{-1}(y)}, \\[2mm] &a_{33}(y)=1+ \big(\p_{x_1}h\big)^2 \Big|_{x\h{0.5pt}=\h{0.5pt}\varphi^{-1}(y)} +\big(\p_{x_2}h\big)^2 \Big|_{x\h{0.5pt}=\h{0.5pt}\varphi^{-1}(y)}. \end{align*} Define $n(x):= \big(-\p_{x_1}h, -\p_{x_2}h,1\big)^\top$ to be the normal to $\p \Omega$ at $x$. It then turns out\vspace{0.2pc}
		\begin{align*}
			&\nabla\varphi^{-1}\big|_{P(y)}\nabla P (y) \begin{pmatrix}
				a_{13}(y) \\[1mm]
				a_{23}(y) \\[1mm]
				a_{33}(y)
			\end{pmatrix}  \\[2mm]
			&\h{15pt}= \begin{pmatrix}
				1 & 0 & 0 \\[1mm]
				0 & 1 & 0 \\[1mm]
				-n_1\big(P(y)\big) & -n_2\big(P(y)\big) & \h{4pt}1
				\h{4pt}\end{pmatrix}
			\begin{pmatrix}
				\p_{y_1}P_1  & \p_{y_2}P_1  & \p_{y_3}P_1  \\[1mm]
				\p_{y_1}P_2  & \p_{y_2}P_2  & \p_{y_3}P_2  \\[1mm]
				0 & 0 & 0 
			\end{pmatrix}
			\begin{pmatrix}
				n_1 \big(\varphi^{-1}(y)\big) \\[1mm]
				n_2 \big(\varphi^{-1}(y)\big)\\[1mm]
				|\h{0.5pt}n \h{0.5pt}|^2\big(\varphi^{-1}(y)\big) 
			\end{pmatrix}\\[2mm]
			&\h{15pt}=
			\begin{pmatrix}
				F\big(y,\nabla P_1\big) \\[1mm]
				F\big(y,\nabla P_2\big) \\[1mm]
				-n_1\big(P(y)\big)F\big(y,\nabla P_1\big)
				-n_2\big(P(y)\big)F \big(y,\nabla P_2\big)
			\end{pmatrix},			
		\end{align*} 
		where $P_1$, $P_2$ are the first two components of $P$. For a given function $\eta$, $$F\big(y,\nabla \eta\big):= n_1 \big(\varphi^{-1}(y)\big)\h{1pt}\p_{y_1} \eta  + n_2 \big(\varphi^{-1}(y)\big)\h{1pt}\p_{y_2}\eta  
		+| \h{0.5pt}n \h{0.5pt}|^2\big(\varphi^{-1}(y)\big) \h{1pt}\p_{y_3}\eta.$$ Applying \cite[Theorem 2, Section 3.2]{E10}, we can find a $R_0''\in \big(0,R_0'\h{0.5pt}\big]$ such that for $i = 1, 2$, the boundary value problems: $$F\big(y,\nabla P_i\big) =0 \h{15pt}\text{on $B_{R^{''}_0}^+$}; \h{20pt} P_i = y_i \h{15pt}\text{on $\p^0 B_{R^{''}_0}^+$}$$ have solutions. The proof is then completed.
	\end{proof}
	
	Recall items (7)-(8) in Section \ref{notation}. Now, we assume \begin{equation}\label{range of yp}(y, p) \in \p^0 B_{4R_0^*}^+ \times U_{4\delta_0} \left(\h{1pt} \mathbb{S}^2\cap T_{x_0}\p\Omega \h{1pt}\right) \h{15pt}\text{for some $\delta_0 < \frac{1}{10}$ and $R_0^*< \min\left\{\frac{1}{10}, \frac{\delta_0}{2}, \frac{R_0^{''}}{2} \right\}$.}\end{equation}Here, $R_0^*$ is chosen sufficiently small with the smallness depending on $\delta_0$ and $\Omega$. The shortest-distance projection $\Pi(y,\cdot)$ is therefore uniquely well defined in $ U_{4\delta_0}\left(\h{1pt}\mathbb{S}^2\cap T_{x_0}\p\Omega\h{1pt}\right)$ with its image equal to $\mathbb{S}^2 \cap T_{\varphi^{-1}(y)}\p\Omega$. Note that $\varphi$ is the boundary flattening map. 
	\begin{lem}\label{lem. regularity of proj. and dist}
		If we keep taking $R_0^*$ small enough with the smallness depending on $\delta_0$ and $\Omega$, then the following properties also hold for the projection $\Pi$: \begin{itemize}
			\item[$\mathrm{(1).}$] $\Pi$ is smooth at any $(y, p)$ satisfying \eqref{range of yp}. In addition, the $C^2$-norm of $\Pi$ on 
			\begin{equation}\label{defn of set K}U_0^* := \p^0 B_{4R_0^*}^+ \times U_{4\delta_0} \left(\h{1pt} \mathbb{S}^2\cap T_{x_0}\p\Omega \h{1pt}\right)\end{equation}  is bounded from above  by a positive constant depending only on $\Omega$. As a consequence, $$\big|\h{1pt}\Pi(y,p)-\Pi(z,p)\h{1pt}\big| \leq C_\Omega \h{1pt} |\h{0.5pt}y - z\h{0.5pt}| \leq \frac{1}{2} $$  for any
			$y, z \in \p^0B_{2R_0^*}^+$ and $p  \in U_{2\delta_0}\left(\h{1pt}\mathbb{S}^2 \cap T_{x_0} \p\Omega\h{1pt}\right)$.\vspace{0.4pc}
			\item[$\mathrm{(2).}$] The $L^\infty$-norms of 
			$$\nabla_p \h{1pt} \textup{dist}_{ \varphi^{-1}(y)}(p) \h{15pt} \text{and}\h{15pt}  \nabla_y \h{1pt} \textup{dist}_{ \varphi^{-1}(y)}(p)$$ on $U_0^*$ are bounded from above by a positive constant depending only on $\Omega$. Moreover, \begin{equation*}\sup 
				\Big\{\textup{dist}_{ \varphi^{-1}(y)}(p) :  p \in \mathbb{S}^2 \cap T_{\varphi^{-1}(w)} \partial \Omega \h{1pt}\Big\} \leq C_\Omega \h{1.5pt}|\h{0.5pt}y - w\h{0.5pt}| \leq \frac{1}{2} \h{15pt}\text{for any 
					$y, w \in \partial^0 B_{4R_0^*}^+$.} \end{equation*}
			\item[$\mathrm{(3).}$] It holds $$ \mathbb{S}^2 \cap T_{\varphi^{-1}(y)}\h{1pt}\p\Omega \subseteq U_{\delta_0\h{0.5pt}/\h{0.5pt}4}\left(\h{1pt}\mathbb{S}^2 \cap T_{x_0} \p\Omega \h{1pt} \right) \h{15pt}\text{ for any $ y \in \p^0B_{4R_0^*}^+ $}.$$
		\end{itemize}
		
		Applying the continuity of $P$ (see Lemma \ref{lem existence of pi}), we can find a $R_0 \leq R_0^*$ such that \begin{equation}\label{set relation of P}P(y) \in \p^0 B^+_{R_0^*/4} \h{15pt}\text{
				for any $y \in \overline{B_{4R_0}^+}$.}\end{equation} The smallness of $R_0$ depends only on $\Omega$.
	\end{lem}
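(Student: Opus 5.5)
The plan is to make everything explicit. For $y\in\p^0B^+_{4R_0^*}$ put $x=\varphi^{-1}(y)=(y',h(y'))$ and let $\nu(y)$ be the unit normal $n(x)/|n(x)|$. Then $\mathbb S^2\cap T_{x}\p\Omega$ is the great circle orthogonal to $\nu(y)$. For $p$ with $p-(p\cdot\nu)\nu\neq 0$, the nearest-point projection onto it is
\begin{equation*}
\Pi(y,p)=\frac{p-(p\cdot\nu(y))\nu(y)}{\big|p-(p\cdot\nu(y))\nu(y)\big|},
\end{equation*}
and
\begin{equation*}
\textup{dist}_{x}(p)=\Big(\big(|p_T|-1\big)^2+(p\cdot\nu)^2\Big)^{1/2},\qquad p_T:=p-(p\cdot\nu)\nu .
\end{equation*}
Since $h$ is smooth, $y\mapsto\nu(y)$ is smooth, with $C^2$ norm bounded on $\p^0B^+_{2R_0'}$ by a constant depending only on $\Omega$, and $\nu(0)=\nu(x_0)$.

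First I would show that the tangential part $|p_T|$ stays away from $0$ on $U_0^*$. If $p\in U_{4\delta_0}(\mathbb S^2\cap T_{x_0}\p\Omega)$, then $|p\cdot\nu(0)|<4\delta_0$ and $|p|\in(1-4\delta_0,1+4\delta_0)$. Moreover $|\nu(y)-\nu(0)|\le C_\Omega|y|\le 4C_\Omega R_0^*$, which is at most $\delta_0$ once $R_0^*$ is small depending on $\delta_0$ and $\Omega$. Hence $|p_T|\ge 1-10\delta_0>0$ because $\delta_0<1/10$. So $\Pi$ is a composition of smooth maps with denominator bounded below, and its $C^2$ norm on $U_0^*$ is bounded by a constant depending on $\Omega$ (with $\delta_0<1/10$ fixed). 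This gives the first half of item (1). The Lipschitz bound in $y$ follows from the mean value theorem on the convex set $\p^0B^+_{2R_0^*}$. The smallness $\le 1/2$ comes from $C_\Omega|y-z|\le 4C_\Omega R_0^*\le 1/2$ after shrinking $R_0^*$.

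For item (2), note that $\textup{dist}_x(p)=|p-\Pi(y,p)|$. This is Lipschitz in $(y,p)$ with constant $1+\|\Pi\|_{C^1}$, and it is smooth away from the circle, so its gradients are bounded a.e. by $C_\Omega$. For the supremum bound, take $p\in\mathbb S^2\cap T_{\varphi^{-1}(w)}\p\Omega$. Then $\textup{dist}_{\varphi^{-1}(w)}(p)=0$, so
\begin{equation*}
\textup{dist}_{\varphi^{-1}(y)}(p)\le |p\cdot\nu(y)|=|p\cdot(\nu(y)-\nu(w))|\le C_\Omega|y-w|.
\end{equation*}
Here I would check directly that $\textup{dist}\le|p\cdot\nu|$ when $|p|=1$, which follows from the explicit formula above: $1-|p_T|\le (p\cdot\nu)^2\le|p\cdot\nu|$. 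The bound $\le 1/2$ then again follows from smallness of $R_0^*$.

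For item (3), take $q$ in $\mathbb S^2\cap T_{\varphi^{-1}(y)}\p\Omega$. Then $|q|=1$ and $|q\cdot\nu(0)|\le|\nu(y)-\nu(0)|\le 4C_\Omega R_0^*$. The previous estimate with $w=0$ gives $\textup{dist}(q,\mathbb S^2\cap T_{x_0}\p\Omega)\le 4C_\Omega R_0^*<\delta_0/4$ after a further shrinking of $R_0^*$. Finally, for the choice of $R_0$: $P$ is continuous on $\overline{B^+_{R_0''}}$ with $P(0)=0$ and $|\nabla P|\le C_\Omega$ by Lemma \ref{lem existence of pi}, so $|P(y)|\le C_\Omega|y|$. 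Choosing $4R_0C_\Omega<R_0^*/4$ gives \eqref{set relation of P}. This last step contains no real obstacle; the only delicate point in the whole proof is the uniform lower bound on $|p_T|$, which is what makes the projection nondegenerate, and it is settled by the choice of $R_0^*$ relative to $\delta_0$.
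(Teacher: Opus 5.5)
Your approach is the natural one and is essentially what the paper intends. The paper omits this proof as straightforward, and later, when deriving the equation for the extended map $\mathcal U$, it uses exactly your representation $\Pi(y,p)=\Pi^*(x,p)/|\Pi^*(x,p)|$ with $\Pi^*(x,p)=p-(p\cdot\hat n(x))\hat n(x)$.

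\textbf{A false inequality.} For $|p|=1$ you assert $\textup{dist}_x(p)\le|p\cdot\nu|$, but the inequality goes the other way. If $p$ makes angle $\theta\in[0,\pi/2]$ with the great circle, then $|p\cdot\nu|=\sin\theta$, while $\textup{dist}_x(p)=2\sin(\theta/2)\ge\sin\theta$, strictly for $\theta>0$. Your own bound $1-|p_T|\le(p\cdot\nu)^2$ only yields
$\textup{dist}_x(p)^2=(1-|p_T|)^2+(p\cdot\nu)^2\le 2(p\cdot\nu)^2$, that is, $\textup{dist}_x(p)\le\sqrt2\,|p\cdot\nu|$. This corrected bound is all you need. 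The supremum estimate in item (2), and its use with $w=0$ in item (3), go through with $C_\Omega$ replaced by $\sqrt2\,C_\Omega$ and $R_0^*$ shrunk accordingly.

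\textbf{Dependence on $\delta_0$ in item (1).} Your bound $|p_T|\ge 1-10\delta_0$ degenerates as $\delta_0\to 1/10$, so as written your $C^2$ bound also depends on $\delta_0$. Keep instead the sharper estimate, using $|\nu(y)-\nu(0)|\le\delta_0$ and $|p|<1+4\delta_0$:
$|p_T|\ge |p|-|p\cdot\nu(y)|>1-9\delta_0-4\delta_0^2>0.06$.
This lower bound is uniform in $\delta_0<1/10$, so the $C^2$ bound depends only on $\Omega$, as the statement asserts.

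The remaining steps are fine: the Lipschitz/a.e.\ gradient bounds for $\textup{dist}$, and the choice of $R_0$ via $|P(y)|\le C_\Omega|y|$.
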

	The proof of this lemma is straightforward. We omit it here. Using this projection, we can extend the minimizer $\widetilde{u}$ defined on the upper half-ball $B^+_{R_0}$ to the whole ball $B_{R_0}$. Let $\kappa \in \big(\big(2\delta_0\big)^{-1}, \infty\big)$ be a fixed constant throughout this paper. We then choose $\varphi_\kappa \in C^2\big(\h{1pt}[0,\infty), [0,1]\h{1pt}\big)$ satisfying the following properties:
	\begin{align}
		&\varphi_\kappa = 1 \h{15pt} \text{on } \left[ \h{1pt} 0, (4\kappa)^{-1}\h{1pt}\right] , \h{25pt} \varphi_\kappa \h{2pt}= \h{2pt}0 \h{30pt} \text{on } \left[\h{1pt} (2\kappa)^{-1},\infty\right),\nonumber\\[1mm]
		-8\kappa\h{1pt}\leq\h{2pt} &\varphi_\kappa' \leq 0 \h{15pt} \text{on } \h{2.5pt}[\h{1pt}0,\infty), \h{44pt} \big|\h{1pt}\varphi_\kappa''\h{1pt}\big| \leq 128\kappa^2 \h{8.5pt} \text{on } \h{2.5pt} [\h{1pt}0,\infty).
		\label{prop of varphi}
	\end{align}
	We define, for $y \in B_{R_0}\setminus B_{R_0}^+$, the following map
	\begin{align}\label{defn of ex}
		&\widetilde{u}^{\,\textup{ex}}(y):=\\[1mm]
        &\varphi_\kappa\pig( \textup{dist}_{ \varphi^{-1}\circ P(y^\star) }(\widetilde{u}(y^\star))\pig)
		\pig[2\Pi(P(y^\star),\widetilde{u}(y^\star)) - \widetilde{u}(y^\star)\pig]+ \left[1 - \varphi_\kappa\pig( \textup{dist}_{ \varphi^{-1}\circ P(y^\star) }(\widetilde{u}(y^\star))\pig)\right]\widetilde{u}(y^\star). \nonumber  
	\end{align} Note that if $y\in B^+_{R_0}$ satisfies $\varphi_\kappa\pig( 
    \textup{dist}_{ \varphi^{-1}\circ P(y) }(\widetilde{u}(y))\pig) > 0$, then it follows that $$\textup{dist}_{ \varphi^{-1}\circ P(y) }(\widetilde{u}(y))\leq (2\kappa)^{-1}< \delta_0.$$ Thus the projection in $\widetilde{u}^{\,\textup{ex}}$ is well-defined, due to the choice of $\delta_0$ and $R_0^*$ in Lemma \ref{lem. regularity of proj. and dist}. Now, we define the extension of $\widetilde{u}$ by
	\begin{equation}
		\mathcal{U}(y):=\widetilde{u}(y)\mathbbm{1}_{B^+_{R_0}}(y)+\widetilde{u}^{\,\textup{ex}}(y)\mathbbm{1}_{B_{R_0}\setminus B_{R_0}^+}(y).
		\label{def ext of tilde u}
	\end{equation}  
	\sloppy On $\p^0B^+_{R_0}$, we note that $\mathcal{U}(y)=\widetilde{u}^{\,\textup{ex}}(y)=\widetilde{u}(y)$ since $\Pi(P(y^\star),\widetilde{u}(y^\star))=\widetilde{u}(y)$ in the sense of trace. We also observe that
	\begin{align}
		&\left|\h{1pt}\varphi_\kappa\pig( \textup{dist}_{ \varphi^{-1}\circ P(y) }(p)\pig)\big[\Pi( P (y),p) - p\big]\h{1pt}\right|\leq \delta_0<\dfrac{1}{10} \quad
		\text{and}\quad \nonumber\\[1.5mm]
		&\left|\h{1pt}\varphi_\kappa'\pig( \textup{dist}_{ \varphi^{-1}\circ P (y) }(p)\pig)\big[\Pi( P (y),p) - p\big]\h{1pt}\right|\leq 4
		\label{ineq. varphi' dist(y,p)}, \h{20pt}\text{for any $y \in \overline{B^+_{R_0}}$ and $p \in \mathbb{R}^3$. }
	\end{align}
	In the following lemma, we derive the equation satisfied by the extended map $\mathcal{U}$.
	\begin{lem}
		There is a constant $\C{6}=\C{6}(\Omega)>0$ such that for any Dirichlet energy minimizer $u$ in the configuration space $H_T^1(\Omega; \mathbb S^2)$ and any $\psi \in H^1_0(B_{R_0};\mathbb{R}^3)
		\cap L^\infty(B_{R_0};\mathbb{R}^3)$, the corresponding extension $\mathcal{U}$ belongs to $ H^1(B_{R_0};\mathbb{R}^3)$ and solves the equation: 
        	\begin{align}
			\int_{B_{R_0}}\overline{a}_{ij}\h{1pt}\p_{y_i} \mathcal{U} 
			\cdot \p_{y_j} \psi +\int_{B_{R_0}}J\big[y,\widetilde{u}(\cdot)\big] \cdot  \psi(y) \h{1.5pt}\mathrm d\h{.5pt}y=0.
			\label{eq. of widetilde of u, whole ball}
		\end{align}Furthermore, for any $r>0$ and $a\in \mathbb{R}^3$ satisfying $B_r(a)\subset B_{R_0}$, it holds
		\begin{align}\int_{B_r(a)} \big|\nabla\mathcal{U} \big|^2
			\leq
			\int_{B_r^+(a)} \big|\nabla\widetilde{u}\big|^2
			+\C{6}
			\int_{B_r^+(a^\star)}\big|\nabla\widetilde{u}\big|^2
			+\C{6} \h{1pt}r^3.
			\label{ineq. of widetilde of u whole ball<half ball}
		\end{align} 
		\sloppy Here, $\overline{a}_{ij}$ is the extension of $a_{ij}$ such that it is even across the plane $\{y_3=0\}$ when both $i$ and $j$ take values in $\{1,2\}$ or $i=j=3$. The remaining components are odd across the plane $\{y_3=0\}$. The operator $J$ in \eqref{eq. of widetilde of u, whole ball} is defined in \eqref{1827} and satisfies \begin{align}\label{J}\pig|J\big[y,\widetilde{u}(\cdot)\big]\pig|\leq \C{6} \Big(1+\big|\h{0.5pt}\nabla\widetilde{u}(y) \h{0.5pt}\big|^2 \Big)\mathbbm{1}_{B_{R_0}^+}(y)
		+\C{6}\h{0.5pt}\kappa \Big(1+ \big|\h{0.5pt}\nabla\widetilde{u}(y^\star) \h{0.5pt}\big|^2\Big)\mathbbm{1}_{B_{R_0}^-}(y)\end{align} for almost every $y\in B_{R_0}$. 
		\label{lem eq of extended u}
	\end{lem}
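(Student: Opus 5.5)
The plan is to derive the weak equation on $B_{R_0}$ by splitting a test function $\psi$ into its restrictions to $B_{R_0}^+$ and $B_{R_0}^-$. The lower-half integral is transported to the upper half by the reflection $y\mapsto y^\star$, and the transported piece is fed into the Euler--Lagrange equation of $\widetilde u$. That equation comes from the minimality of $\widetilde u$ for $\widetilde{\mathcal E}$ under the tangential constraint on $\p^0B^+_{R_0}$.

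Membership $\mathcal U\in H^1(B_{R_0};\mathbb R^3)$ is immediate. The map $\widetilde u^{\,\textup{ex}}$ is a composition of $\widetilde u(y^\star)$ with smooth functions of $(P(y^\star),p)$, whose $C^2$ bounds are given by Lemma \ref{lem existence of pi} and Lemma \ref{lem. regularity of proj. and dist}. Its traces from above and below agree on $\p^0B^+_{R_0}$ because $\Pi(P(y),\widetilde u(y))=\widetilde u(y)$ there. The chain rule gives
\[
|\nabla\widetilde u^{\,\textup{ex}}(y)|\le |\nabla\widetilde u(y^\star)|+C\,|\nabla\widetilde u(y^\star)|+C,
\]
where the bounded terms come from $\nabla_yP$, $\nabla_y\Pi$ and $\varphi_\kappa'$, and are controlled using \eqref{ineq. varphi' dist(y,p)}. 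Squaring, integrating over $B_r^-(a)$ and changing variables gives \eqref{ineq. of widetilde of u whole ball<half ball}; the $r^3$ term comes from the constants. On the upper half, $\mathcal U=\widetilde u$ contributes exactly $\int_{B_r^+(a)}|\nabla\widetilde u|^2$.

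For the equation, I first record the Euler--Lagrange identity of $\widetilde u$ on $B^+_{R_0}$. Admissible variations are
\[
v_t=\frac{\widetilde u+t\phi}{|\widetilde u+t\phi|},
\]
where $\phi$ vanishes on $\p^+B^+_{R_0}$ and $\phi(y)\in T_{\varphi^{-1}(y)}\p\Omega$ on $\p^0B^+_{R_0}$. This yields
\[
\int a_{ij}\,\p_i\widetilde u\cdot\p_j\phi=\int a_{ij}\,(\p_i\widetilde u\cdot\p_j\widetilde u)\,\widetilde u\cdot\phi .
\]
For general $\psi\in H^1_0\cap L^\infty$, I set $\Phi(y)=\psi(y)+\big(D_p\widetilde u^{\,\textup{ex}}\big)^{\!\top}\psi(y^\star)$ on $B^+$. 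Here $D_p$ denotes the derivative, with respect to $p$ at $p=\widetilde u(y)$, of the map $p\mapsto\varphi_\kappa(\cdot)\,[2\Pi-p]+(1-\varphi_\kappa)\,p$. The aim is to write $\int_{B^-}\overline a_{ij}\,\p_i\mathcal U\cdot\p_j\psi$, after the change of variables $y\to y^\star$ (which produces the parity of $\overline a_{ij}$), as $\int_{B^+}a_{ij}\,\p_i\widetilde u\cdot\p_j(\cdots)$ plus lower-order terms. Using the Euler--Lagrange identity with the tangential part of $\Phi$, all remaining terms are bounded by $C(1+|\nabla\widetilde u|^2)|\psi|$. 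The $\kappa$ factor enters through $\varphi_\kappa'$. These terms, together with the Lagrange-multiplier term, are collected into $J$, which gives \eqref{J}.

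The main obstacle is the boundary term on $\p^0B^+_{R_0}$ that appears when the equation is tested with a $\Phi$ that is not tangential. Near the boundary $\varphi_\kappa\equiv1$, where $2\Pi-\mathrm{Id}$ acts as a reflection across the tangent circle. So on $\p^0$ the normal component of $\psi$ doubles and the tangential component cancels, and what remains is a conormal flux $a_{k3}\,\p_k\widetilde u$. Derivatives of $\Pi(P(y),\cdot)$ in $y$ produce extra conormal contributions $\nabla_y\Pi\,\nabla P\,(a_{k3})$. Lemma \ref{lem existence of pi}(1) states precisely that $\nabla\varphi^{-1}\nabla P\,(a_{k3})=0$, so these contributions vanish. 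The conormal derivative of $\widetilde u$ paired with the normal direction is then exactly the natural boundary condition of the free-boundary problem, and it is absorbed. I would verify this cancellation carefully, first for smooth $\widetilde u$ via a mollification-free density argument in $\psi$, and then justify the integration by parts through the weak formulation alone.
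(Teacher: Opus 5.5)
Your plan is not the paper's route. The paper never integrates by parts up to $\p^0B^+_{R_0}$; it inserts the cutoff $\eta^*_\varepsilon(y_3)$ and shows that the strip term $\int_{B_{R_0}}\overline a_{i3}\,\p_{y_3}\eta^*_\varepsilon\,\p_{y_i}\mathcal U\cdot\psi$ tends to $0$. A direct version of your plan does work and is shorter. Write $\mathcal U(y^\star)=G(y,\widetilde u(y))$ for $y\in B^+_{R_0}$, with $G(y,p):=p+2\varphi_\kappa\big(\textup{dist}_{\varphi^{-1}\circ P(y)}(p)\big)\big[\Pi(P(y),p)-p\big]$. Test the Euler--Lagrange identity with $\Phi=\psi+(\nabla_pG)^{\top}\psi(\cdot^\star)$, and integrate the explicit $y$-derivative term $\int_{B^+_{R_0}}a_{ij}\,(\p_{y_i}G)\cdot\p_{y_j}[\psi(\cdot^\star)]$ by parts. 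Your Part 1 and your use of Lemma \ref{lem existence of pi}(1) are fine. The gap is in the step you single out as the main obstacle: it rests on a wrong computation and a false principle, and as written it does not go through.

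On $\p^0B^+_{R_0}$, in the trace sense, $\varphi_\kappa=1$, $\varphi_\kappa'=0$ and $\Pi(y,\widetilde u)=\widetilde u$. Hence $\nabla_pG=2\nabla_p\Pi-\mathbf I_3$ and $\Phi=2(\nabla_p\Pi)^{\top}\psi=2(\tau\cdot\psi)\,\tau$, where $\tau$ is the unit tangent at $\widetilde u$ to the circle $\mathbb S^2\cap T_{\varphi^{-1}(y)}\p\Omega$.
\begin{itemize}
\item The component along the circle doubles, and the components along $\hat n$ and along $\widetilde u$ cancel. This is the reverse of what you wrote.
\item In particular $\Phi\cdot\hat n=0$. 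Indeed $\nabla_p\Pi\,\hat n=0$ everywhere, since $\Pi$ depends on $p$ only through its projection onto the tangent plane. This is exactly the check the paper makes for its test function $\varphi_\kappa(\nabla_p\Pi)^{\top}[\psi+\psi(\cdot^\star)]$.
\item So $\Phi$ is itself admissible, the Euler--Lagrange identity applies to all of $\Phi$, and no conormal flux of $\widetilde u$ ever appears.
\end{itemize}
Your proposed mechanism, pairing $a_{k3}\p_{y_k}\widetilde u$ with a normal field and invoking the natural boundary condition, would fail. The weak formulation only controls the pairing of this flux with fields in $T_{\varphi^{-1}(y)}\p\Omega$. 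Its $\hat n$-component is the Lagrange multiplier of the constraint $u\cdot\nu=0$ and is generally nonzero. For example, the half hedgehog $\widehat y$ has $\p_{y_3}\widehat y=e_3/|y|$ on $\{y_3=0\}$. So ``testing with the tangential part of $\Phi$'' and integrating the rest by parts against this flux cannot close.

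With the correct trace, the only boundary integral is $-\int_{\p^0B^+_{R_0}}a_{i3}\,(\p_{y_i}G)(y,\widetilde u)\cdot\psi$, which comes from integrating the $y$-derivative term by parts. It vanishes identically: $G$ depends on $y$ only through $P(y)$, and Lemma \ref{lem existence of pi}(1) (with $\nabla\varphi^{-1}$ invertible) gives $a_{i3}\,\p_{y_i}P\equiv0$. The remaining interior terms then produce $J$ with the bound \eqref{J}.
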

	\begin{proof}We  restrict \( \Pi(z, p) \) on 
		\( \p^0B_{4R_0^*}^+ \times U_{4\delta_0}\big(\mathbb{S}^2 \cap T_{x_0} \p\Omega\big) \) and simply write $$\varphi_\kappa=\varphi_\kappa\pig( \textup{dist}_{ \varphi^{-1}\circ P(y) }(\widetilde{u}(y))\pig), \h{20pt} \varphi'_\kappa=\varphi'_\kappa\pig( \textup{dist}_{ \varphi^{-1}\circ P(y) }(\widetilde{u}(y))\pig).$$
		
		\noindent{\bf Part 1. Proof of \eqref{ineq. of widetilde of u whole ball<half ball}:} The definition of $\mathcal{U}$ in \eqref{def ext of tilde u} yields
		\begin{align*}
			 \nabla \mathcal{U}(y^\star)
			=\,&  
			2\varphi_\kappa'
			\pig[\Pi( P (y),\widetilde{u}(y)) - \widetilde{u}(y)
			\pig]
			\left[ 	\nabla_y\textup{dist}_{ \varphi^{-1}\circ P(y) }(p)\big|_{p=\widetilde{u}(y)}\right]^\top  \nonumber\\[1mm]
			&\h{10pt}+2\varphi_\kappa' 
			\pig[\Pi( P (y),\widetilde{u}(y)) - \widetilde{u}(y)
			\pig]
			\left\{ \big[\nabla\widetilde{u}(y)\big]^\top \nabla_p\textup{dist}_{ \varphi^{-1}\circ P(y) }(\widetilde{u}(y))\right\}^\top
			\nonumber\\[1mm]
			&\h{10pt}+2\varphi_\kappa 
			\pig[\nabla_z\Pi( P (y),\widetilde{u}(y))
			\nabla P (y)
			+\nabla_p\Pi( P (y),\widetilde{u}(y))
			\nabla\widetilde{u}(y)
			- \nabla\widetilde{u}(y) \pig] 
			+ \nabla\widetilde{u}(y).
		\end{align*}
        Here, $$\nabla_z\Pi( P (y),\widetilde{u}(y))=\nabla_z\Pi(z,p)\Big|_{z= P (y), \h{1.5pt}p=\widetilde{u}(y)} \h{15pt}\text{and} \h{15pt} \nabla_p\Pi( P (y),\widetilde{u}(y))=\nabla_p\Pi(z,p)\Big|_{z= P (y), \h{1.5pt}p=\widetilde{u}(y)}.$$ 
        For $y\in B^+_r(a)$ such that $B_r(a)\subset B_{R_0}$, we use \eqref{ineq. varphi' dist(y,p)} and Lemma \ref{lem. regularity of proj. and dist} to observe that
        		\begin{align}
			\big| \nabla \mathcal{U}(y^\star)\big|^2 
			\lesssim\,&_\Omega \,
			1+ \big|\nabla\widetilde{u}(y)\big|^2.
			\label{1443}
		\end{align}
		\eqref{ineq. of widetilde of u whole ball<half ball} follows from this estimate. \vspace{0.4pc}
		
		\noindent{\bf Part 2. Proof of \eqref{eq. of widetilde of u, whole ball}:} We let $\varepsilon > 0$ and define $\eta_{\varepsilon}^*(y)= \eta_{\varepsilon}^*(y_3): B_{R_0} \to [\h{1pt}0,\infty)$ as follows:
        \begin{align*}
\eta_{\varepsilon}^*(y)= 0\h{10pt}\text{if $|\h{0.5pt}y_3\h{0.5pt}|\le \varepsilon$}; \h{20pt}\eta_{\varepsilon}^*(y)= \varepsilon^{-1}|\h{0.5pt}y_3\h{0.5pt}|-1 \h{10pt}\text{if $\varepsilon \le |y_3|\le 2\varepsilon$}; \h{20pt}\eta_{\varepsilon}^*(y)= 1\h{10pt}\text{if $|\h{0.5pt}y_3\h{0.5pt}|\ge 2\varepsilon$}.             
        \end{align*}
For any $\psi \in C^1_0(B_{R_0};\mathbb{R}^3)$, it holds
		\begin{align}
			\int_{B_{R_0}} \overline{a}_{ij} \h{1pt}\partial_{y_i} \mathcal{U} \cdot \partial_{y_j}\big(\eta_{\varepsilon}^* \psi\big)
			= \int_{B_{R_0}} \overline{a}_{ij} \h{1pt}\eta_{\varepsilon}^* \h{1pt}\partial_{y_i}  \mathcal{U} \cdot \partial_{y_j}\psi 
			+\int_{B_{R_0}} \overline{a}_{i3}\h{1.5pt}\partial_{y_3} \eta_{\varepsilon}^* \h{1.5pt}\partial_{y_i}  \mathcal{U} \cdot\psi.
			\label{eq:3.17}
		\end{align}
		Changing variables, we evaluate  
		\begin{align*}
			\int_{B_{R_0}\setminus B_{R_0}^+} \overline{a}_{i3}  \h{1.5pt}\partial_{y_3} \eta_{\varepsilon}^* \h{1.5pt}\partial_{y_i} \mathcal{U} \cdot \psi 
			=\,&\int_{B_{R_0}\setminus B_{R_0}^+}\overline{a}_{i3} \h{1.5pt} \partial_{y_3} \eta_{\varepsilon}^* \h{1.5pt} \partial_{y_i}\widetilde{u}^{\,\textup{ex}} \cdot \psi\\[1mm]
			=\,& \varepsilon^{-1}\int_{B_{R_0}\cap \h{1pt} \{\h{0.5pt}\varepsilon\h{0.5pt}<\h{0.5pt}y_3\h{0.5pt}<\h{0.5pt}2\varepsilon\h{0.5pt}\}} a_{i3}(y) \h{1.5pt}\psi(y^\star)\cdot \partial_{y_i} \big[\widetilde{u}^{\,\textup{ex}}(y^\star)\big] \h{1.5pt}\mathrm d\h{.5pt}y.
		\end{align*}
		Using the definition in \eqref{def ext of tilde u} and the last equality, we then obtain
		\begin{align}
			 &\int_{B_{R_0}} \overline{a}_{i3} \h{1.5pt}\partial_{y_3} \eta_{\varepsilon}^* \h{1.5pt}\partial_{y_i} \mathcal{U} \cdot \psi 
			= \varepsilon^{-1}\int_{B_{R_0}\cap \h{1pt}\{\h{.5pt}\varepsilon\h{.5pt}<\h{.5pt}y_3\h{.5pt}<\h{.5pt}2\varepsilon\h{.5pt}\}} a_{i3}(y)\h{1pt}\Big\{\psi(y)\cdot \partial_{y_i} \widetilde{u}(y)
			+\psi(y^\star)\cdot \partial_{y_i} \big[\widetilde{u}^{\,\textup{ex}}(y^\star)\big]\h{1pt}\Big\} \h{1.5pt}\mathrm d\h{.5pt}y\nonumber\\[1mm]
			&= \varepsilon^{-1}\int_{B_{R_0}\cap \h{1pt}\{\h{.5pt}\varepsilon\h{.5pt}<\h{.5pt}y_3\h{.5pt}<\h{.5pt}2\varepsilon\h{.5pt}\}} a_{i3}(y) 
			\pig[\psi(y)-\psi(y^\star)\pig]\cdot \partial_{y_i} \Big\{\varphi_\kappa \pig[ \widetilde{u}(y) - \Pi( P (y),\widetilde{u}(y)) \pig]\Big\} \h{1.5pt}\mathrm d\h{.5pt}y\nonumber\\[1mm]
			&+ \varepsilon^{-1}\int_{B_{R_0}\cap \h{1pt} \{\h{.5pt}\varepsilon\h{.5pt}<\h{.5pt}y_3\h{.5pt}<\h{.5pt}2\varepsilon\h{.5pt}\}} a_{i3}(y) 
			\pig[\psi(y)+\psi(y^\star)\pig]\cdot \partial_{y_i} \pig[\left(1 - \varphi_\kappa\right) \widetilde{u}(y)+\varphi_\kappa\Pi( P (y),\widetilde{u}(y)) \pig]\h{1.5pt}\mathrm d\h{.5pt}y.
			\label{eq:3.18}
		\end{align}
		
		\noindent{\bf Part 2.1. Estimates of $\textup{I}(\varepsilon)$ and $\textup{II}(\varepsilon)$:} Define the two integrals in the last equality of \eqref{eq:3.18} by $\textup{I}(\varepsilon)$ and $\textup{II}(\varepsilon)$, respectively. In this part, we estimate these two integrals. \vspace{0.2pc}
        
		First, the fact that $\psi \in C^1_0(B_{R_0};\mathbb{R}^3)$ implies
		\begin{align*}
			\big|\h{0.5pt}\textup{I}(\varepsilon)\h{0.5pt}\big|
			\lesssim\,&_{\Omega}\,  \|\nabla \psi\|_{L^\infty} \int_{B_{R_0}\cap \h{1pt}\{\h{0.5pt}\varepsilon\h{0.5pt}<\h{0.5pt}y_3\h{0.5pt}<\h{0.5pt}2\varepsilon\h{0.5pt}\}}\left|\h{1pt} \nabla_{y} \Big\{\varphi_\kappa \big[\h{0.5pt} \widetilde{u}(y) - \Pi( P (y),\widetilde{u}(y)) \h{0.5pt}\big]\right\}\Big| \h{2pt}\mathrm d\h{.5pt}y.
		\end{align*}
        Then, direct computations and inequalities in \eqref{ineq. varphi' dist(y,p)} imply
		\begin{align*}
			\big|\h{0.5pt}\textup{I}(\varepsilon)\h{0.5pt}\big|
			\lesssim\,&_{\Omega, \h{1pt}\psi}\,   \int_{B_{R_0}\cap \h{1pt}\{\h{0.5pt}\varepsilon\h{0.5pt}<\h{0.5pt}y_3\h{0.5pt}<\h{0.5pt}2\varepsilon\h{0.5pt}\}}\Big|\h{1pt}\varphi_\kappa'
			\h{1pt}\nabla_{y}\textup{dist}_{ \varphi^{-1}\circ P(y) }(\widetilde{u}(y)) \h{1pt}\Big|\h{2pt}
			\Big|\h{1pt} \widetilde{u}(y)-\Pi(P(y),\widetilde{u}(y)) \h{1pt}\Big| \nonumber\\[1mm]
			&+ \int_{B_{R_0}\cap \h{1pt}\{\h{0.5pt}\varepsilon\h{0.5pt}<\h{0.5pt}y_3\h{0.5pt}<\h{0.5pt}2\varepsilon\h{0.5pt}\}} \varphi_\kappa
			\Big[\h{1.5pt}\big| \nabla_{y}\widetilde{u} \big|
			 + \big| \nabla_{y} P\big| \h{1pt}\big| \h{0.5pt}\nabla_z\Pi( P (y),\widetilde{u}(y)) \h{0.5pt}\big|
			+ \big| \nabla_{y}\widetilde{u} \big| \h{1pt} \big|\h{0.5pt} \nabla_p\Pi( P (y),\widetilde{u}(y)) \h{0.5pt}\big| \h{1.5pt} \Big]\nonumber\\[1mm]
			\lesssim\,&_{\Omega}\,  \int_{B_{R_0}\cap \h{1pt}\{\h{.5pt}\varepsilon\h{.5pt}<\h{.5pt}y_3\h{.5pt}<\h{.5pt}2\varepsilon\h{.5pt}\}}1+ \big|\h{1pt}\nabla\widetilde{u}\h{1pt}\big|.
		\end{align*}
		
        Second, we estimate $\textup{II}(\varepsilon)$ by  	\begin{align*}
			&\big|\h{1pt}\textup{II}(\varepsilon)\h{1pt}\big|
			\lesssim_{\,\Omega} \h{1pt} \varepsilon^{-1}\left|\h{1pt}\int_{B_{R_0}\cap \h{1pt}\{\h{0.5pt}\varepsilon\h{0.5pt}<\h{0.5pt}y_3\h{0.5pt}<\h{0.5pt}2\varepsilon\h{0.5pt}\}}a_{i3}(y)\h{1pt}\varphi_\kappa\h{1pt}\Big[\psi(y)+\psi(y^\star)\Big]\cdot \p_{y_i}\Big[\h{1pt}\Pi( P (y),\widetilde{u}(y))\h{1pt}\Big]\h{1pt}\right|   \nonumber\\[1mm]
			&\h{5pt} +  \varepsilon^{-1} \| \h{0.5pt}\psi \h{0.5pt}\|_{L^\infty}  \int_{B_{R_0}\cap \h{1pt}\{\h{0.5pt}\varepsilon\h{0.5pt}<\h{0.5pt}y_3\h{0.5pt}<\h{0.5pt}2\varepsilon\h{0.5pt}\}} \left(1-\varphi_\kappa\right) \big|\h{.5pt}\nabla_{y}\widetilde{u}\h{.5pt}\big| + \Big|\h{1pt}\varphi_\kappa'
			\h{1pt}\nabla_{y}\textup{dist}_{ \varphi^{-1}\circ P(y) }(\widetilde{u}(y)) \h{1pt}\Big|\h{2pt}
			\Big|\h{1pt} \widetilde{u}(y) - \Pi( P (y),\widetilde{u}(y)) \h{1pt}\Big|.
		\end{align*}\normalsize
       Denoting by $\textup{III}(\varepsilon)$ the first term on the right-hand side above and applying the Cauchy-Schwarz inequality, we obtain	\begin{align*}
			\big|\h{0.5pt}\textup{II}(\varepsilon)\h{0.5pt}\big|
			&\lesssim_{\,\Omega, \h{1pt}\psi\,} \textup{III}(\varepsilon) + \varepsilon^{-1}\left(\int_{B_{R_0}\cap \h{1pt} \{\h{0.5pt}0\h{0.5pt}<\h{0.5pt}y_3\h{0.5pt}<\h{0.5pt}2\varepsilon\h{0.5pt}\}} \big|\h{0.5pt}1-\varphi_\kappa \h{0.5pt}\big|^2\right)^{\frac{1}{2}}
			\left(\int_{B_{R_0}\cap \h{1pt}\{\h{0.5pt}\varepsilon\h{0.5pt}<\h{0.5pt}y_3\h{0.5pt}<\h{0.5pt}2\varepsilon\h{0.5pt}\}} \big|\h{0.5pt}\nabla\widetilde{u} \h{0.5pt}\big|^2 \right)^{\frac{1}{2}}\nonumber\\[1mm]
            &+ \varepsilon^{-1} \int_{B_{R_0}\cap \h{1pt} \{\h{0.5pt}0\h{0.5pt}<\h{0.5pt}y_3\h{0.5pt}<\h{0.5pt}2\varepsilon\h{0.5pt}\}}\Big|\h{1pt}\varphi_\kappa'
			\big[\h{1pt} \widetilde{u}(y) - \Pi( P (y),\widetilde{u}(y)) \h{1pt}\big] \h{1pt}\Big| \nonumber\\[1mm]
			&+ \varepsilon^{-1}\left(\int_{B_{R_0}\cap \h{1pt} \{\h{0.5pt}0\h{0.5pt}<\h{0.5pt}y_3\h{0.5pt}<\h{0.5pt}2\varepsilon\h{0.5pt}\}}\Big|\h{1pt}\varphi_\kappa'
			\big[\h{1pt} \widetilde{u}(y) - \Pi( P (y),\widetilde{u}(y)) \h{1pt}\big] \h{1pt}\Big|^2\right)^{\frac{1}{2}}
			\left(\int_{B_{R_0}\cap \h{1pt}\{\h{0.5pt}\varepsilon\h{0.5pt}<\h{0.5pt}y_3\h{0.5pt}<\h{0.5pt}2\varepsilon\h{0.5pt}\}} \big|\h{0.5pt}\nabla\widetilde{u} \h{0.5pt}\big|^2 \right)^{\frac{1}{2}}.
		\end{align*}\normalsize
		As $\widetilde{u} - \Pi( P,\widetilde{u} )=0$ and $1-\varphi_\kappa=0$ on $\p^0B_{R_0}^+$ in the sense of trace, by \eqref{prop of varphi} and the Poincar\'e inequality with respect to $y_3$, it turns out that
		\begin{align*}
			\big|\h{1pt}\textup{II}(\varepsilon) \h{1pt}\big|
			\lesssim_{\,\Omega,\h{1pt}\psi\,} \textup{III}(\varepsilon) &+ \kappa \h{1pt}\varepsilon^{\frac{1}{2}} 
			\left(\int_{B_{R_0}\cap \h{1pt}\{\h{0.5pt}0\h{0.5pt}<\h{0.5pt}y_3\h{0.5pt}<\h{0.5pt}2\varepsilon\h{0.5pt}\}}1+ \big|\h{1pt}\p_{y_3}\widetilde{u}\h{1pt}\big|^2 \right)^{\frac{1}{2}} \nonumber\\[1mm]
            &+\kappa \left(\int_{B_{R_0}\cap \h{1pt} \{\h{0.5pt}0\h{0.5pt}<\h{0.5pt}y_3\h{0.5pt}<\h{0.5pt}2\varepsilon\h{0.5pt}\}}1+ \big|\h{1pt}\p_{y_3}\widetilde{u}\h{1pt}\big|^2 \right)^{\frac{1}{2}}
			\left(\int_{B_{R_0}\cap \h{1pt} \{\h{0.5pt}\varepsilon\h{0.5pt}<\h{0.5pt}y_3\h{0.5pt}<\h{0.5pt}2\varepsilon\h{0.5pt}\}} \big|\h{1pt}\nabla\widetilde{u} \h{1pt}\big|^2 \right)^{\frac{1}{2}}.
		\end{align*}
        
		Third, for the test function $\eta_\varepsilon^* \h{1pt} \Phi $ where $$\Phi \in H^1\big(B^+_{R_0};\mathbb{R}^3\big)
		\cap L^\infty\big(B^+_{R_0};\mathbb{R}^3\big)\cap\Big\{\psi:\psi = 0 \h{4pt}\text{on $\p^+B^+_{R_0}$ }\Big\},$$ the equation of $\widetilde{u}$ gives 
         \begin{align*}
			\int_{B_{R_0}^+}  \eta_\varepsilon^* \Big[\h{1pt}a_{ij} \h{1pt}
			\p_{y_i} \widetilde{u} \cdot \p_{y_j} \Phi
            - \left(\widetilde{u} \cdot \Phi \right) \left(a_{ij}\h{1pt}\p_{y_i} \widetilde{u} \cdot
			\p_{y_j} \widetilde{u} \right) \Big]
			+	\int_{B_{R_0}\cap \h{1pt} \{\h{0.5pt}\varepsilon\h{0.5pt}<\h{0.5pt}y_3\h{0.5pt}<\h{0.5pt}2\varepsilon\h{0.5pt}\}}\p_{y_j}\eta_\varepsilon^* \h{1pt}a_{ij} 
			\h{1pt}\Phi \cdot \p_{y_i} \widetilde{u}
			= 0.
		\end{align*}Therefore,  
        \begin{align}
        \lim_{\varepsilon \h{0.5pt}\to \h{0.5pt} 0^+} \varepsilon^{-1}\int_{B_{R_0}\cap \h{1pt}\{\h{0.5pt}\varepsilon\h{0.5pt}<\h{0.5pt}y_3\h{0.5pt}<\h{0.5pt}2\varepsilon\h{0.5pt}\}} a_{i3} \h{1pt}\Phi \cdot 
		\p_{y_i} \widetilde{u} = - \int_{B_{R_0}^+}  a_{ij} \h{1pt}
			\p_{y_i} \widetilde{u} \cdot \p_{y_j} \Phi
            - \left(\widetilde{u} \cdot \Phi \right) \left(a_{ij}\h{1pt}\p_{y_i} \widetilde{u} \cdot
			\p_{y_j} \widetilde{u} \right).
        \label{683}
        \end{align} 
        We choose $\Phi$ with its $j$-th component given by $\varphi_\kappa\big[\h{1pt}\psi(y)+\psi(y^\star)\h{1pt}\big]\cdot \p_{p_j}\Pi( P (y),p)\h{1pt}\Big|_{p\h{0.5pt}=\h{0.5pt}\widetilde{u}(y)}$. Note that  $$\Pi(y,p)=\frac{\Pi^*(x,p)}{\big|\h{1pt}\Pi^*(x,p)\h{1pt}\big|}\h{2pt}\bigg|_{x\h{1pt}=\h{1pt}\varphi^{-1}(y)} \quad\text{on} \quad
		 B_{4R_0^*}^+ \times U_{4\delta_0}\big(\mathbb{S}^2 \cap T_{x_0} \p\Omega\big),$$ where $\Pi^*(x,p):=p- \big(p\cdot \hat{n}(x) \big)\h{1pt} \hat{n}(x)$ with $n(x)= \big(-\p_{x_1}h(x),-\p_{x_2}h(x),1\big)^\top$. Then for $y\in \p^0B^+_{R_0}$,  
        \begin{align*}
            &\Phi(y)\cdot \hat{n}(\varphi^{-1}(y))\\[1mm]
            &\h{10pt}=2 \h{0.5pt}\varphi_\kappa \h{1pt}\psi_k(y) \h{1pt}  \p_{p_j}\Pi_k( y,p)
            \h{1pt}\hat{n}_j(x)\\[1mm]
            &\h{10pt}=
2\h{0.5pt}\varphi_\kappa \h{1pt} \psi_k(y)\left(\frac{\big(\delta_{jk}-\hat n_j(x)\h{1pt}\hat n_k(x)\big)\h{1pt}\hat n_j(x)}{\big|\h{0.5pt}\Pi^*(x,p)\h{0.5pt}\big|}
-\frac{\Pi_k^*(x,p)\,\Pi_\ell^*(x,p)\big(\delta_{j\ell}-\hat n_j(x)\hat n_\ell(x)\big)\hat n_j(x)}{\big|\h{0.5pt}\Pi^*(x,p)\h{0.5pt}\big|^3}\right) = 0.
        \end{align*}Here, the variable $p$ is evaluated at $\widetilde{u}(y)$ and $x=\varphi^{-1}(y)$.
        By \eqref{683} and the weak formulation of the equation of $\widetilde{u}$, the right-hand side of \eqref{683} is equal to $0$. Hence,
		\begin{align*}
			\lim_{\varepsilon\h{0.5pt}\to\h{0.5pt} 0^+} \textup{III}(\varepsilon)
			= \lim_{\varepsilon\h{0.5pt}\to\h{0.5pt} 0^+} \varepsilon^{-1}\left|\h{1pt}\int_{B_{R_0}\cap \h{1pt} \{\h{0.5pt}\varepsilon\h{0.5pt}<\h{0.5pt}y_3\h{0.5pt}<\h{0.5pt}2\varepsilon\h{0.5pt}\}}
			a_{i3} \h{1pt}
			\varphi_\kappa\big[\h{1pt}\psi(y)+\psi(y^\star)\h{1pt}\big]\cdot \p_{z_j}\Pi( P (y),\widetilde{u}(y))\h{1pt}\p_{y_i} P _j
			\h{2pt}\mathrm d\h{.5pt}y \h{1pt}\right|.
		\end{align*}
        Using Item (1) in Lemma \ref{lem existence of pi} yields \begin{align*}
         a_{i3}(y) \h{1pt}\p_{z_j}\Pi( P (y), \cdot) \h{1pt}\p_{y_i} P _j 
         = 
		a_{i3}(y) \h{1pt}
\p_{z_j}\big(\varphi^{-1}\big)_k\h{1pt}\Big|_{ z \h{1pt}= \h{1pt}P (y)}\p_{y_i} P _j \h{2pt} \p_{x_k}\frac{\Pi^*(x, \cdot)}{\big|\h{0.5pt}\Pi^*(x, \cdot)\h{0.5pt}\big|}\h{1.5pt}\bigg|_{x\h{1pt}=\h{1pt}\varphi^{-1}( P (y))} =0 \h{10pt}\text{in $B^+_{R_0}$.}
         \end{align*}
We therefore conclude that $\displaystyle\lim_{\varepsilon\h{0.5pt}\to \h{0.5pt} 0^+} \textup{III}(\varepsilon)=0$. \vspace{0.4pc}
		
		\noindent{\bf Part 2.2. Estimate of equation \eqref{eq:3.17}:} From the arguments in Part 2.1, we may pass $\varepsilon \to 0^+$ in  \eqref{eq:3.17} and obtain the following limit:
		\begin{align}
			\lim_{\varepsilon\h{0.5pt}\to\h{0.5pt}0^+}\int_{B_{R_0}} \overline{a}_{ij} \h{1pt}  \partial_{y_i} \mathcal{U}  \cdot \partial_{y_j}\big(\eta_{\varepsilon}^* \h{0.5pt}  \psi \big) = \int_{B_{R_0}} \overline{a}_{ij}\h{1pt}  \partial_{y_i} \mathcal{U}  \cdot \partial_{y_j}\psi  .
			\label{1767}
		\end{align}  
		By the equation of $\widetilde{u}$ and the definition of $\widetilde{u}^{\,\textup{ex}}$ in \eqref{def ext of tilde u}, we write the integral on the left-hand side as the sum of integrals of the following three terms
		\begin{align*}
		\int_{B_{R_0}^+} \big(\widetilde{u}\cdot \psi\big) \h{1pt}\eta_{\varepsilon}^* \h{1pt} a_{ij}\h{1pt} \partial_{y_i} \widetilde{u} \cdot \partial_{y_j}\widetilde{u}   
			+  a_{ij} \h{1pt} \partial_{y_i} \big(\widetilde{u}^{\,\textup{ex}}(y^\star)- \widetilde{u}\big)\cdot \partial_{y_j}\big(\eta_{\varepsilon}^*(y^\star) \psi(y^\star)\big) + a_{ij} \h{1pt}\partial_{y_i} \widetilde{u} \cdot \partial_{y_j}\big(\eta_{\varepsilon}^*(y^\star) \psi(y^\star)\big).
		\end{align*}
		Letting $F(y,p):=\varphi_\kappa\pig( \textup{dist}_{ \varphi^{-1}\circ P(y) }(p)\pig) \big(\Pi( P (y),p)-p\big)$, we rewrite the last integral by
        	\begin{align*}
		\int_{B_{R_0}^+} \Big(a_{ij} \h{1pt} \partial_{y_i} \widetilde{u} \cdot \partial_{y_j}\widetilde{u} \Big) \h{1pt}\Big(\widetilde{u} \cdot \big(\h{1pt}\eta_{\varepsilon}^* \h{1pt} \psi +\eta_{\varepsilon}^*(y^\star)\h{1pt} \psi(y^\star) \h{1pt}\big) \Big) + 2\int_{B_{R_0}^+} a_{ij} \h{1pt} \partial_{y_i} \pig(F(y,\widetilde{u}(y))\pig)\cdot \partial_{y_j}\big(\h{1pt}\eta_{\varepsilon}^*(y^\star) \psi(y^\star)\h{1pt}\big).
		\end{align*}
        Taking $\varepsilon \to 0^+$ and using \eqref{1767} then yield
		\begin{align}
			&\int_{B_{R_0}} \overline{a}_{ij} \h{1pt} \partial_{y_i} \mathcal{U}  \cdot \partial_{y_j} \psi \nonumber\\[1mm] 
			&\h{10pt}= \int_{B_{R_0}^+} \Big(a_{ij} \h{1pt} \partial_{y_i} \widetilde{u}  \cdot \partial_{y_j}\widetilde{u} \Big)\h{1pt}\Big( \widetilde{u}\cdot \big(\h{1pt}\psi +\psi(y^\star)\h{1pt}\big) \Big) +\lim_{\varepsilon\h{0.5pt}\to\h{0.5pt}0^+}2\int_{B_{R_0}^+} a_{ij} \h{1pt} \partial_{y_i} \pig(F(y,\widetilde{u}(y))\pig)\cdot \partial_{y_j} \Psi_{\varepsilon}.
			\label{1787}
		\end{align}
		Here, $\Psi_{\varepsilon}(y):=\eta_{\varepsilon}^*(y^\star) \psi(y^\star)$. We now consider the integral in the last term of \eqref{1787}. First, it holds
		\begin{align*}
\int_{B^+_{R_0}}a_{ij}\h{1pt}\p_{y_i}\pig(F(y,\widetilde{u}(y))\pig)\cdot\p_{y_j}\Psi_{\varepsilon} = \int_{B^+_{R_0}}a_{ij} \left(\p_{y_i}F(y, p) \h{1.5pt}\Big|_{p \h{1pt}=\h{1pt} \widetilde{u}(y)} + \p_{y_i}\widetilde{u}_k \h{1.5pt}\p_{p_k}F(y, p)\h{1.5pt}\Big|_{p \h{1pt}=\h{1pt}\widetilde{u}(y)} \right)\cdot\p_{y_j}\Psi_{\varepsilon}.
		\end{align*}
		Use the test function $\pig[\h{1pt}\nabla_p F(y,p)\h{1pt}\pigr]_{p \h{1pt} = \h{1pt}\widetilde{u}(y)}^\top \Psi_\varepsilon$ in the weak formulation of the equation of $\widetilde{u}$ and integrate by parts. It turns out that
		\begin{align*}
			&\h{-13pt}\int_{B^+_{R_0}}a_{ij}\h{1pt}\p_{y_i}\pig(F(y,\widetilde{u}(y))\pig)\cdot\p_{y_j}\Psi_{\varepsilon}\\[1mm]
			=\,&-\int_{B^+_{R_0}}\left\{\p_{y_j}a_{ij} \h{1pt}\p_{y_i}F(y, p)\h{1.5pt}\Big|_{p \h{1pt} = \h{1pt} \widetilde{u}(y)}
			+a_{ij}\h{1pt}\p_{y_i\h{0.5pt}y_j}F(y, p)\h{1.5pt}\Big|_{p \h{1pt}=\h{1pt} \widetilde{u}(y)}
			+2a_{ij} \h{1pt}\p_{y_j}\widetilde{u}_k \h{1pt}\p_{y_i \h{0.5pt}p_k}F(y,p)\h{1.5pt}\Big|_{p \h{1pt}=\h{1pt} \widetilde{u}(y)}\right\}\cdot\Psi_{\varepsilon}\\[1mm]
			&+\int_{B^+_{R_0}}a_{ij} \left\{\p_{y_i}\widetilde{u} \cdot\p_{y_j}\widetilde{u} \left(\widetilde{u}_k \h{1pt}\p_{p_k}F(y, p)\h{1.5pt}\Big|_{p \h{1pt}=\h{1pt} \widetilde{u}(y)} \right) 
			- \p_{y_i}\widetilde{u}_k \h{1pt}\p_{y_j}\widetilde{u}_l \h{1pt}
			\p_{p_l \h{0.5pt}p_k}F(y, p) \h{1.5pt}\Big|_{p \h{1pt}= \h{1pt}\widetilde{u}(y)}\right\}\cdot \Psi_{\varepsilon}.
		\end{align*}
		Passing $\varepsilon \to 0^+$ in the above equality and using \eqref{1787}, we then get
		\begin{align*}
			&\int_{B_{R_0}} \overline{a}_{ij} \h{1pt} \partial_{y_i} \mathcal{U}  \cdot \partial_{y_j} \psi = \int_{B_{R_0}^+} \Big(a_{ij} \h{1pt} \partial_{y_i} \widetilde{u}  \cdot \partial_{y_j}\widetilde{u} \Big)\h{1pt}\Big( \widetilde{u}\cdot \big(\h{1pt}\psi +\psi(y^\star)\h{1pt}\big) \Big) \\[1mm]&-2\int_{B^+_{R_0}}\left\{\p_{y_j}a_{ij} \h{1pt}\p_{y_i}F(y, p)\h{1.5pt}\Big|_{p \h{1pt} = \h{1pt} \widetilde{u}(y)}
			+a_{ij}\h{1pt}\p_{y_i\h{0.5pt}y_j}F(y, p)\h{1.5pt}\Big|_{p \h{1pt}=\h{1pt} \widetilde{u}(y)}
			+2a_{ij} \h{1pt}\p_{y_j}\widetilde{u}_k \h{1pt}\p_{y_i \h{0.5pt}p_k}F(y,p)\h{1.5pt}\Big|_{p \h{1pt}=\h{1pt} \widetilde{u}(y)}\right\}\cdot \psi(y^\star)\\[1mm]
			&+2\int_{B^+_{R_0}}a_{ij} \left\{\p_{y_i}\widetilde{u} \cdot\p_{y_j}\widetilde{u} \left(\widetilde{u}_k \h{1pt}\p_{p_k}F(y, p)\h{1.5pt}\Big|_{p \h{1pt}=\h{1pt} \widetilde{u}(y)} \right) 
			- \p_{y_i}\widetilde{u}_k \h{1pt}\p_{y_j}\widetilde{u}_l \h{1pt}
			\p_{p_l \h{0.5pt}p_k}F(y, p) \h{1.5pt}\Big|_{p \h{1pt}= \h{1pt}\widetilde{u}(y)}\right\}\cdot \psi(y^\star).
		\end{align*}
		Applying the change of variables induces
		\begin{align}\label{1827}
			& \int_{B_{R_0}} \overline{a}_{ij} \h{1pt} \partial_{y_i} \mathcal{U}  
			\cdot \partial_{y_j} \psi = - \int_{B_{R_0}} J\big[y,\widetilde{u}(\cdot)\big]\cdot \psi(y)\h{1.5pt}\mathrm d\h{.5pt}y := \int_{B_{R_0}^+} a_{ij} \h{1pt}
			 \partial_{y_i} \widetilde{u} \cdot \partial_{y_j}\widetilde{u} \left(\widetilde{u} 
			 \cdot \psi \right) \\[1mm]
			&+\int_{B_{R_0}^-} 
			\Big\{a_{ij} \h{1pt} \partial_{y_i} \widetilde{u}
			 \cdot \partial_{y_j}\widetilde{u} \h{2pt}
			 \widetilde{u}\Big\}_{y^\star}
			 \cdot \psi -2\int_{B_{R_0}^-}\left\{\p_{y_j}a_{ij} \h{1pt}\p_{y_i}F(y, p)\h{1.5pt}\Big|_{p \h{1pt} = \h{1pt} \widetilde{u}(y)} \right\}_{y^\star} \cdot \psi \nonumber\\[1mm]
             &-2\int_{B_{R_0}^-}\left\{a_{ij}\h{1pt}\p_{y_i\h{0.5pt}y_j}F(y, p)\h{1.5pt}\Big|_{p \h{1pt}=\h{1pt} \widetilde{u}(y)}
			+2a_{ij} \h{1pt}\p_{y_j}\widetilde{u}_k \h{1pt}\p_{y_i \h{0.5pt}p_k}F(y,p)\h{1.5pt}\Big|_{p \h{1pt}=\h{1pt} \widetilde{u}(y)}\right\}_{y^\star}\cdot \psi  \nonumber\\[1mm]
			&+2\int_{B_{R_0}^-}a_{ij} (y^\star) \left\{\p_{y_i}\widetilde{u} \cdot\p_{y_j}\widetilde{u} \left(\widetilde{u}_k \h{1pt}\p_{p_k}F(y, p)\h{1.5pt}\Big|_{p \h{1pt}=\h{1pt} \widetilde{u}(y)} \right) 
			- \p_{y_i}\widetilde{u}_k \h{1pt}\p_{y_j}\widetilde{u}_l \h{1pt}
			\p_{p_l \h{0.5pt}p_k}F(y, p) \h{1.5pt}\Big|_{p \h{1pt}= \h{1pt}\widetilde{u}(y)}\right\}_{y^\star}\cdot \psi. \nonumber
		\end{align}
		\sloppy By Lemma \ref{lem existence of pi}, it follows that \begin{align*}\pig|J\big[y,\widetilde{u}(\cdot)\big]\pig|\lesssim_{\,\Omega\,} \Big(1+ \big|\h{0.5pt}\nabla\widetilde{u}(y) \h{0.5pt}\big|^2\Big)\mathbbm{1}_{B_{R_0}^+}(y)
		+\kappa \Big(1+ \big|\h{0.5pt}\nabla\widetilde{u}(y^\star) \h{0.5pt}\big|^2\Big)\mathbbm{1}_{B_{R_0}^-}(y) \h{15pt}\text{for a.e. $y\in B_{R_0}$.}\end{align*} 
	The proof is complete.
\end{proof}

	\section{An extension proposition}
    
The following proposition, which extends a mapping defined on an upper hemisphere to an upper half-ball, will be used in Section \ref{bubbling analysis} to construct comparison mappings in our bubbling analysis. Recalling the diffeomorphism $\varphi$ in Section \ref{bdry flatten} and $R_0$ in Lemma \ref{lem. regularity of proj. and dist}, we prove
    \begin{prop}
		Fix $\big(q, \widetilde{a}\big) \in \mathbb{R}^3 \times \p^0 B^+_{R_0}$ and let $\{r_k\}$ be a positive sequence tending to zero as $k \to \infty$. Define  \begin{align}\label{defn of varphi_k}\varphi_k(y):=\varphi^{-1}\big(\widetilde{a}+r_k \h{1pt}y^0\big) \h{15pt}\text{for all $k \geq 100$}. \end{align} There exist constants $\eps{7}>0$, $\mu>2$, $\epsilon_0\in\left(0,\frac{1}{2}\right)$, $\delta'\in\left(0,\frac{\eps{7}}{2}\right)$, and $\C{18}>0$, all depending only on $\Omega$, such that the following holds: \vspace{0.2pc} 
        
        If $k\in\mathbb{N}$, $\rho\in \left(0, \frac{1}{2}\right)$, $\epsilon\in (0,\epsilon_0)$, $\delta\in(0,\delta')$, and $v \in H^1\big(\p^+B_{\rho}^{+};\mathbb{S}^2\big)$ satisfy \vspace{0.2pc}  
        \begin{align*}   
        &\mathrm{(a).} 	\h{10pt} v(\cdot) \in T_{\varphi_k(\cdot)}\p \Omega \h{3pt}\text{ on $\p B_{\rho} \cap \big\{y_3=0\big\}$ in the sense of trace}\h{0.5pt};\\[1.5mm]
		&\mathrm{(b).} \h{10pt} 	r_k^2\h{1pt}\epsilon^{2-\mu}\left( r_k^2 \left( \epsilon^{2-\mu}+\epsilon^{-2}   
			\right)
			+ \mathcal{E}^*_{v,\h{0.5pt}\p^+ B_\rho^+}  
			+ \rho^{-2}\h{1pt}\epsilon^{-2}
			\left(\epsilon^{2-\mu}  
			+  \epsilon ^{-2}\right) 
			W^{\h{0.5pt}q}_{ v,\h{0.5pt} \varphi_k,\h{0.5pt} \p^+ B_\rho^+ }\right)  \leq \delta^2; \\[1.5mm] 
        &\mathrm{(c).} \h{10pt}\epsilon^{2-\mu}\sqrt{\delta}
        +4 \h{0.5pt}\epsilon^{3-\mu} \h{1pt}\rho\h{.8pt}r_k\h{1pt}
				\leq \sqrt{\eps{7}}; \h{72pt}\mathrm{(d).} \h{10pt} \mathcal{E}^*_{v,\h{0.5pt}\p^+B_{\rho}^{+}} 
			\h{1pt}W^{\h{0.5pt}q}_{ v,\h{0.5pt} \varphi_k,\h{0.5pt} \p^+ B_\rho^+ }
			\leq \delta^2\h{0.5pt}\rho^2 \h{0.5pt}\epsilon^\mu,
		\end{align*}
		then there exists an extension $\overline{v} \in H^1\big(B^+_\rho; \mathbb{S}^2\big)$ with the following properties: 
		\begin{align*}
        &\mathrm{(1).} \h{10pt}
        \text{$\overline{v}=v$ on $\p^+B_{\rho}^{+}$ and $\overline{v}(\cdot) \in U_{\delta_0/2}\big(\mathbb{S}^2 \cap T_{\varphi_k(\cdot)}\p \Omega\big)$ on $\p^0 B^+_\rho$  in the sense of trace}\h{0.5pt};\\[1.5mm]
			&\mathrm{(2).} \h{10pt} \mathcal{E}^*_{\h{1pt}\overline{v},\h{0.5pt}B_{\rho}^{+}}  \leq \C{18} \left(\rho\h{1pt}\epsilon \h{1pt} \mathcal{E}^*_{v,\h{0.5pt}\p^+B^+_\rho}
			+\rho^{-1}\h{0.5pt}\epsilon^{-\mu} \h{1pt}
			W^{\h{0.5pt}q}_{ v,\h{0.5pt} \varphi_k,\h{0.5pt} \p^+ B_\rho^+ }
			+\rho \h{1pt}r_k^2 \left( \epsilon^{-1}
			\frac{ \log \epsilon}{\log \alpha} +\epsilon^{-2}
			+ \epsilon^{2-\mu} \right)\right);\\[1.5mm]
			&\mathrm{(3).} \h{10pt} W^{\h{0.5pt}q}_{\overline{v},\h{0.5pt}	\varphi_k,\h{0.5pt}B_{\rho}^{+}}   
			\leq \C{18}\left( \rho \h{1pt} \epsilon^{2-\mu} \h{1pt}
			W^{\h{0.5pt}q}_{ v,\h{0.5pt} \varphi_k,\h{0.5pt} \p^+ B_\rho^+ }
			+\rho^3 r_k^2 \left(1+\epsilon^{4-\mu}\right) 
			+\rho^3 \h{0.5pt}\epsilon \h{1pt} r_k^2 \left(\frac{ \log \epsilon}{\log \alpha}+1\right)\right) .
		\end{align*}The constant $\alpha \in (0,1)$ in  items (2) and (3) is a universal constant obtained in Lemma \ref{lem ext to annulus}.
		\label{lem ext from sphere to ball}
	\end{prop}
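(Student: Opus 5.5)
We follow the Schoen--Uhlenbeck extension lemma \cite{SU82,SU83}, adapted to the half-ball and the moving tangent planes $T_{\varphi_k(\cdot)}\p\Omega$. By scaling, $y\mapsto \rho y$, assume $\rho=1$; the factors $\rho$, $\rho^{-1}$, $\rho^3$ in (2)--(3) then come from homogeneity, since $\varphi_k$ only sees $r_k\rho y^0$ and $r_k\rho\le r_k$. The construction has three layers. On the thin outer shell $A^+_{1-\epsilon,1}$ we take the radial linear interpolation
$$w(r\omega)=\frac{r-(1-\epsilon)}{\epsilon}\,v(\omega)+\frac{1-r}{\epsilon}\,q_\omega .$$
Here $q_\omega$ is chosen as follows. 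On the curved part $q_\omega=q$; on the part of the shell near the flat boundary we replace it by $\Pi\big(\widetilde a+r_k\omega^0,\,q\big)$, which we blend in with the cut-off $\varphi_\kappa$ as in \eqref{defn of ex}. In this way the trace on $\p^0B^+_1$ stays in $U_{\delta_0/2}(\mathbb S^2\cap T_{\varphi_k}\p\Omega)$. This requires $\mathrm{dist}_{\varphi_k(y)}(q)$ to be small. That smallness follows from (d) and (b), via a one-dimensional Sobolev/Poincar\'e estimate on the equatorial circle and on the hemisphere, because $|v-q|^2$ and $\mathrm{dist}^2$ are controlled in $L^\infty$ by $\mathcal E^*\,W+W$. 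Lemma \ref{lem. regularity of proj. and dist}(2) (which gives $\mathrm{dist}_{\varphi_k(y)}(p)-\mathrm{dist}_{\varphi_k(z)}(p)\lesssim r_k|y-z|$) handles the variation of the tangent plane; this produces the $r_k^2$ error terms.

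The shell energy is bounded by $\epsilon\,\mathcal E^*_{v}+\epsilon^{-1}W^q_v+$(errors $r_k^2\epsilon^{-1}$, $r_k^2\epsilon^{-2}$). We then choose among a family of concentric spheres of thickness $\epsilon$ by an averaging argument, in the spirit of Lemma \ref{lem ext to annulus} with the universal $\alpha$, on scales $\epsilon,\epsilon\alpha,\dots$ up to $|\log\epsilon|/|\log\alpha|$ steps. The pigeonhole step yields a good sphere on which the interpolated map $w$ has values within $\sqrt{\eps{7}}$ of $\mathbb S^2$. This is where (c) is used, via a Morrey/Sobolev bound with constant $\epsilon^{2-\mu}\sqrt\delta$ together with the drift term $4\epsilon^{3-\mu}\rho r_k$. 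The $\frac{\log\epsilon}{\log\alpha}$ factors in (2)--(3) record the cost of this pigeonholing. We then set $\overline v=w/|w|$ on the shell, which is legitimate since $|w|\ge 1/2$, and extend homogeneously of degree zero, or by the constant $q/|q|$ (tangentially projected near $\p^0$), into $B^+_{1-\epsilon}$. The energy of this interior piece is $\lesssim \epsilon^{-\mu}W^q$, with $\mu>2$ absorbing the losses from normalization and from the $\epsilon$-scale in (b).

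Items (1)--(3) are then checked directly. For (1), $\overline v=v$ on $\p^+B^+_\rho$ by construction, and the flat trace lies in the $\delta_0/2$-neighbourhood by the projection step. For (2), the energy bound sums the shell estimate, the averaging loss, and the interior estimate. For (3), we bound $|\overline v-q|^2+\mathrm{dist}^2_{\varphi_k}(\overline v)$ pointwise on the shell by $|v-q|^2+\mathrm{dist}^2+r_k^2$ and integrate, which produces $\epsilon^{2-\mu}W$ plus $\rho^3r_k^2$-type terms.

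The main obstacle is keeping $|w|$ bounded below and the flat trace near the moving circle $\mathbb S^2\cap T_{\varphi_k(y)}\p\Omega$ at the same time. The linear interpolation between $v$ and a single vector $q$ can leave the tangent planes, so $\mathrm{dist}_{\varphi_k(y)}$ changes with $y$ along the interpolation. The hypotheses (b)--(d) are tailored exactly to absorb these $r_k$-dependent errors. We would first try to handle this by performing the interpolation inside the tangent plane, using $\Pi$ applied to both endpoints, and estimating the commutator with Lemma \ref{lem. regularity of proj. and dist}(1).
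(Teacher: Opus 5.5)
Your construction breaks down at its central step. You need $|w|\ge\frac12$ in the shell, and the flat trace near the moving circle, and you derive both from an asserted $L^\infty$ bound of $|v-q|^2$ and $\mathrm{dist}^2$ by $\mathcal E^*W+W$. That is a one-dimensional inequality; on the two-dimensional hemisphere it is false.

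Worse, (b)--(d) do not even make $W^{q}_{v,\varphi_k,\partial^+B^+_\rho}$ small. They only control the product $\mathcal E^*_{v,\partial^+B^+_\rho}W^{q}_{v,\varphi_k,\partial^+B^+_\rho}$ and terms carrying a factor $r_k^2$. For instance, take $e\in\mathbb S^2\cap T_{\varphi^{-1}(\widetilde a)}\partial\Omega$, $v(y):=\Pi(\widetilde a+r_k y^0,e)$ and $q=-e$. Then (a) holds, $\mathcal E^*_{v,\partial^+B^+_\rho}\lesssim r_k^2\rho^2$, $W^{q}_{v,\varphi_k,\partial^+B^+_\rho}\approx 8\pi\rho^2$, and (b)--(d) hold once $r_k$ is small. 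Yet $|w|\lesssim r_k$ on the middle sphere of the shell, where both interpolation weights equal $\frac12$, so $w/|w|$ has no energy bound.

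Even when $W^q$ is small, as in the application in Lemma \ref{lem compact of rescaled map}, a logarithmic excursion of $v$ from $\widehat q$ to $-\widehat q$ on a tiny disc costs arbitrarily little Dirichlet energy and $L^2$-mass. So $w$ can still vanish. Selecting one good sphere by pigeonholing says nothing about $|w|$ elsewhere in the shell. On the flat part you would need $\sup|v-q|$ small along the equator, which the trace does not provide. Projecting from a generic point instead of normalizing (Hardt--Kinderlehrer--Lin) does not rescue this either, because it gives no control of the flat trace required in (1) where $|w|$ is small.

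The missing idea is the Schoen--Uhlenbeck energy-decay mechanism, which the paper implements as Lemma \ref{lem ext to annulus} and then iterates. There one never interpolates to a fixed vector. The construction runs as follows.
\begin{enumerate}
\item The hemisphere is covered by geodesic discs and half-discs of radius comparable to $\sigma$.
\item Boundary circles are chosen by Fubini, so that the one-dimensional inequality bounds the oscillation by $(\mathcal E^*W^q)^{1/4}$ around a point of that circle, not around $q$.
\item The circle data are extended harmonically and normalized (Lemma \ref{lem ext for dim2}), and the cylinders and half-cylinders are filled by Lemmas \ref{lem ext to cylinder} and \ref{lem ext to cylinder tan}.
\item A Besicovitch selection yields $\mathcal E^*_{v_{\mathrm{in}}}\le \alpha\,\mathcal E^*_v+C\beta^{-1}\sigma^{-2}(W^q+r_k^2\rho^2)$ and $W^q_{v_{\mathrm{in}}}\le C(W^q+\sigma^2r_k^2)$.
\end{enumerate}

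The proposition then follows from a case split. Suppose first that $M_1^{2s-1}(W^q)^2\le(\epsilon\rho)^4\delta^2$, where $\alpha^s\in[\alpha\epsilon,\epsilon]$. One applies Lemma \ref{lem ext to annulus} $s$ times on annuli of width $\epsilon\rho$ and extends the last inner trace homogeneously.
\begin{itemize}
\item The $W$-terms grow like $M_1^s\le M_1\epsilon^{2-\mu}$ with $\mu:=2-\log M_1/\log\alpha$. This is where $\mu$ actually comes from, and $\frac{\log\epsilon}{\log\alpha}$ simply counts the steps.
\item Hypotheses (b), (d) and the case assumption keep the smallness hypothesis of Lemma \ref{lem ext to annulus} valid at every step.
\item Hypothesis (c) keeps the equatorial trace within $\sqrt{\eps{7}}$ of the moving circle, since that tolerance is multiplied by a fixed constant at each step.
\end{itemize}
Otherwise, (d) forces $\mathcal E^*_v\le M_1\epsilon^{-\mu}\rho^{-2}W^q$, and the homogeneous extension $z\mapsto v(\rho\widehat z)$ already satisfies (2)--(3); this case contains the example above.

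Your proposal has neither the decay lemma nor this case split, and without them (1)--(3) cannot be reached.
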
 
    We first introduce the following lemma (proved in \cite[Lemma 4.1]{SU82}), which extends a mapping defined on the boundary of a cylinder to its interior. 
	\begin{lem}\label{lem ext to cylinder}
		Let $\sigma>0$ and $\rho \in[\h{0.5pt}\sigma,2\sigma\h{0.5pt}]$. Given \( v^\textup{top}, v^\textup{bot} \in H^1(D_\rho; \mathbb{S}^2) \) and \( v^\textup{lat} \in H^1(\p D_\rho; \mathbb{S}^2) \), if $$v^\textup{top} = v^\textup{bot} = v^\textup{lat} \h{15pt} \text{on $\p D_\rho$ in the sense of trace,}$$ then there exists an extension \( \overline{v} \in H^1\big(C_{\rho, \h{0.5pt}\sigma}; \mathbb{S}^2\big) \), where $C_{\rho, \h{0.5pt}\sigma}:=D_\rho\times (-\sigma,\sigma)$, such that $$  \overline{v}(\cdot, \sigma) = v^\textup{top}(\cdot) \h{15pt}\text{and}\h{15pt}   \overline{v}(\cdot, - \sigma)  = v^\textup{bot}(\cdot) \h{20pt}\text{on $D_\rho$, in the sense of trace.}$$  Moreover, $ \overline{v}(y, z) = v^\textup{lat}(y)$, for $(y, z) \in \p D_\rho \times [-\sigma,\sigma]$. We also have \begin{align*}
			&\mathrm{(1).} \h{10pt}\mathcal{E}^*_{\h{1pt}\overline{v}, \h{0.8pt}C_{\rho, \h{0.5pt}\sigma}}
			\leq C\sigma \left(\mathcal{E}^*_{v^\textup{top}, \h{0.5pt}D_\rho} 
			+ \mathcal{E}^*_{v^\textup{bot}, \h{0.5pt}D_\rho}  
			+ \sigma \h{1pt}\mathcal{E}^*_{v^\textup{lat}, \h{0.5pt}\p D_\rho}  \right);\\[1mm]
			&\mathrm{(2).} \h{10pt} W^{\h{0.5pt}q}_{\overline{v}, \h{0.8pt}C_{\rho, \h{0.5pt}\sigma}}
			\leq C\sigma \left(W^{\h{0.5pt}q}_{v^\textup{top}, \h{0.5pt}D_\rho} 
			+ W^{\h{0.5pt}q}_{v^\textup{bot}, \h{0.5pt}D_\rho}  
			+ \sigma \h{1pt} W^{\h{0.5pt}q}_{v^\textup{lat}, \h{0.5pt}\p D_\rho}  \right).
		\end{align*}In $\mathrm{(2)}$ above, $q$ is an arbitrary vector in $\mathbb R^3$. The constant $C$ is universal and independent of $q$. 
	\end{lem}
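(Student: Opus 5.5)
The plan is to take the \emph{cone (degree-zero homogeneous) extension} from the center of the cylinder. This is the construction of \cite[Lemma 4.1]{SU82}. It preserves the $\mathbb S^2$-constraint automatically and reduces both estimates to elementary scaling computations.

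\textbf{Boundary datum.} Let $\Sigma:=\p C_{\rho,\sigma}$. Define a map $w:\Sigma\to\mathbb S^2$ by
\[
w=v^{\textup{top}} \text{ on } D_\rho\times\{\sigma\},\qquad w=v^{\textup{bot}} \text{ on } D_\rho\times\{-\sigma\},\qquad w(y,z)=v^{\textup{lat}}(y) \text{ on } \p D_\rho\times[-\sigma,\sigma].
\]
The compatibility assumption $v^{\textup{top}}=v^{\textup{bot}}=v^{\textup{lat}}$ on $\p D_\rho$ gives $w\in H^1(\Sigma;\mathbb S^2)$. Its tangential energy satisfies
\[
\int_\Sigma|\nabla_T w|^2=\mathcal E^*_{v^{\textup{top}},D_\rho}+\mathcal E^*_{v^{\textup{bot}},D_\rho}+2\sigma\,\mathcal E^*_{v^{\textup{lat}},\p D_\rho}.
\]
This holds because $\mathcal E^*_{\cdot,\p D_\rho}$ already carries the factor $\rho^{-1}$, so it is the arclength Dirichlet integral, and $w$ is constant in $z$ on the lateral part. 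Likewise,
\[
\int_\Sigma|w-q|^2=W^q_{v^{\textup{top}},D_\rho}+W^q_{v^{\textup{bot}},D_\rho}+2\sigma W^q_{v^{\textup{lat}},\p D_\rho}.
\]

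\textbf{Construction.} $C_{\rho,\sigma}$ is star-shaped with respect to $0$, and every ray from $0$ meets $\Sigma$ exactly once. Set $\overline v(x):=w(\pi(x))$, where $\pi(x)$ is the point where the ray through $x$ meets $\Sigma$. Then $\overline v$ is $0$-homogeneous and $\mathbb S^2$-valued. On $\Sigma$ it equals $w$, which gives the top, bottom and lateral trace identities; in particular $\overline v(y,z)=v^{\textup{lat}}(y)$ on the lateral boundary.

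\textbf{Energy estimate.} Write $x=t\xi$ with $\xi\in\Sigma$ and $t\in(0,1)$. Then $\mathrm d x=t^2(\xi\cdot n(\xi))\,\mathrm dt\,\mathrm d\mathscr H^2_\xi$ and $|\nabla\overline v(t\xi)|=t^{-1}|\nabla\overline v(\xi)|$, so
\[
\mathcal E^*_{\overline v,C_{\rho,\sigma}}=\int_0^1\!\mathrm dt\int_\Sigma(\xi\cdot n)\,|\nabla\overline v(\xi)|^2\,\mathrm d\mathscr H^2 .
\]
We have $\xi\cdot n=\sigma$ on the top and bottom faces and $\xi\cdot n=\rho\le2\sigma$ on the lateral face. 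Hence it remains to bound the full gradient of the homogeneous extension at $\xi\in\Sigma$ by a universal constant times $|\nabla_T w(\xi)|$.

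This comparison is the one step needing care, because of the corners of the cylinder. It follows from uniform transversality of the rays to $\Sigma$: the angle between $\xi$ and $n(\xi)$ is bounded away from $\pi/2$ uniformly in $\rho/\sigma\in[1,2]$, since $\xi\cdot n\ge\sigma$ and $|\xi|\le\sqrt{\rho^2+\sigma^2}\le\sqrt5\,\sigma$. The radial derivative of $\overline v$ vanishes. The tangential derivative along $\Sigma$ is $\nabla_Tw$, and the derivative in the remaining direction, transverse to the ray, is controlled by $\nabla_Tw$ with constant depending only on that angle. The corner set has measure zero and does not affect the $H^1$ computation. Summing the three faces then gives estimate (1) with a universal $C$.

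\textbf{$W^q$ estimate.} In the same coordinates,
\[
W^q_{\overline v,C_{\rho,\sigma}}=\int_0^1t^2\,\mathrm dt\int_\Sigma(\xi\cdot n)|w-q|^2\le\frac{2\sigma}{3}\int_\Sigma|w-q|^2 .
\]
Inserting the boundary identity for $\int_\Sigma|w-q|^2$ gives (2) with $C$ independent of $q$.

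Finally, $\overline v\in H^1$ is clear: the energy bound is finite, and near the apex the integrand is handled by the $t$-integration as computed, since $t^{-2}$ from the gradient cancels against the $t^2$ in the volume element.
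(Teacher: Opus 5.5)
Your proposal is correct and matches the paper's approach: the paper cites \cite[Lemma 4.1]{SU82}, whose proof is exactly this degree-zero homogeneous (cone) extension from the center. The paper's own proof of the half-cylinder analogue, Lemma \ref{lem ext to cylinder tan}, uses the same construction through the radially extended bi-Lipschitz map $\overline{\varphi_1}$, and your transversality bound $|\xi|/(\xi\cdot n)\le\sqrt5$ together with the polar-coordinate computations for (1) and (2) supplies the details the citation leaves implicit.
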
 
	
	We modify the preceding lemma to deal with the tangential boundary conditions on $\p \Omega$. To simplify the notation, the half-cylinder $D_\rho^+ \times (-\sigma,\sigma)$ is denoted by $C_{\rho,\h{0.5pt}\sigma, \h{0.5pt}+}$. The closure of $C_{\rho,\h{0.5pt}\sigma, \h{0.5pt}+}$ is denoted by $\overline{C_{\rho,\h{0.5pt}\sigma, \h{0.5pt}+}}$. The top, bottom, lateral faces, and the flat boundary of $ C_{\rho,\h{0.5pt}\sigma,\h{0.5pt}+}$ are respectively denoted by $$C^\textup{top}_{\rho,\h{0.8pt}\sigma, \h{0.8pt}+}:=\overline{D^+_\rho} \times\{\sigma\}, \h{5pt} C^\textup{bot}_{\rho,\h{0.8pt}\sigma, \h{0.8pt}+}:=\overline{D^+_\rho} \times\{-\sigma\}, \h{5pt}
	C^\textup{lat}_{\rho,\h{0.8pt}\sigma,\h{0.8pt}+}:=\overline{\p^+D^+_\rho} \times[-\sigma,\sigma], \h{5pt}C^\textup{flat}_{\rho,\h{0.8pt}\sigma, \h{0.8pt}+} := \big(\p^0 D^+_\rho\big) \times [-\sigma,\sigma]. $$  Suppose $\varphi_0$ is a bi-Lipschitz homeomorphism from $\overline{C_{\rho,\h{0.5pt}\sigma, \h{0.5pt}+}}$ to a subset of $\overline{B^+_{R_0}}$, satisfying $\varphi_0\big(C^\textup{flat}_{\rho,\h{0.8pt}\sigma, \h{0.8pt}+}\big) \subseteq \p^0 B^+_{R_0}.$ Then, on $\overline{C_{\rho,\h{0.5pt}\sigma, \h{0.5pt}+}}$, we define the map $\varphi_* :=\varphi^{-1} \circ [\h{0.8pt}\varphi_0\h{0.5pt} ]^0$. 
	
	\begin{lem}\label{lem ext to cylinder tan}
		Fix $\sigma>0$ and $\rho\in[\h{0.5pt}3\sigma,4\sigma\h{0.5pt}]$. Let $v^\textup{top}, v^\textup{bot} \in H^1(D_{\rho}^+; \mathbb{S}^2)$   and   $v^\textup{lat} \in H^1(\p^+ D_\rho^+; \mathbb{S}^2)$ be the maps that satisfy
		$$v^\textup{top} = v^\textup{bot} = v^\textup{lat} \h{15pt} \text{on $\p^+ D_\rho^+$ in the sense of trace.}$$
		Denoting by $\mathbbm{1}_U$ the characteristic function of a set $U$, and $y' = (y_1, y_2)$ for $y \in \mathbb R^3$, we define \begin{align}\label{defn of v}v(y):=v^\textup{top}(y')\h{1pt}\mathbbm{1}_{C^\textup{top}_{\rho, \h{0.5pt}\sigma, \h{0.5pt}+}}(y)
			+v^\textup{bot}(y') \h{1pt}\mathbbm{1}_{C^\textup{bot}_{\rho,\h{0.5pt}\sigma,\h{0.5pt}+}}(y)
			+v^\textup{lat}(y') \h{1pt}\mathbbm{1}_{C^\textup{lat}_{\rho,\h{0.5pt}\sigma,\h{0.5pt}+}}(y)\end{align} for any $y \in C^\textup{top}_{\rho, \h{0.5pt}\sigma, \h{0.5pt}+}\cup 
		C^\textup{bot}_{\rho, \h{0.5pt}\sigma, \h{0.5pt}+}\cup
		C^\textup{lat}_{\rho, \h{0.5pt}\sigma, \h{0.5pt}+}$.  Recall $\delta_0$ in \eqref{range of yp} and let $\delta_1\in \big[\h{0.5pt}0, \frac{\delta_0}{4} \h{0.5pt}\big]$. If $v$ satisfies \begin{align}\label{assmption of v} v(\cdot) \in U_{\delta_1}\left(\mathbb{S}^2\cap T_{\varphi_{*}(\cdot)}\p \Omega\right) \h{20pt} \text{on $\partial \h{1pt}C^\textup{flat}_{\rho,\h{0.5pt}\sigma, \h{0.5pt}+}$, in the sense of trace,} \end{align}  then there exists an extension \( \overline{v} \in H^1\big(C_{\rho,\h{0.5pt}\sigma,\h{0.5pt}+}; \mathbb{S}^2\big) \) such that $$  \overline{v} = v \h{20pt}\text{on $C^\textup{top}_{\rho, \h{0.5pt}\sigma, \h{0.5pt}+}\cup 
			C^\textup{bot}_{\rho, \h{0.5pt}\sigma, \h{0.5pt}+}\cup
			C^\textup{lat}_{\rho, \h{0.5pt}\sigma, \h{0.5pt}+}$, in the sense of trace.}$$  Moreover, there is a constant $\C{15}=\C{15}(\Omega)>0$ such that 
		\begin{align*} &\mathrm{(1).} \h{10pt}\overline{v} (\cdot)\in U_{\delta_1+L_{\varphi_0}\sigma}\left(\mathbb{S}^2\cap T_{\varphi_{*}(\cdot)}\p \Omega\right) \h{20pt}\text{ on $C^\textup{flat}_{\rho,\h{0.5pt}\sigma,\h{0.5pt}+} \setminus \{0\}$, in the sense of trace};\\[1.5mm]
			&\mathrm{(2).} \h{10pt} \mathcal{E}^*_{\h{1pt}\overline{v},\h{0.8pt}C_{\rho,\h{0.5pt}\sigma,\h{0.5pt}+}}
			\leq \C{15}\h{0.5pt}\sigma \h{0.5pt}\left(\mathcal{E}^*_{v^\textup{top}, \h{0.5pt}D_\rho^+} 
			+ \mathcal{E}^*_{v^\textup{bot}, \h{0.5pt}D_\rho^+}  
			+ \sigma \h{0.5pt}\mathcal{E}^*_{v^\textup{lat},\h{0.5pt}\p^+ D_\rho^+}\right);\\[1.5mm]
			&\mathrm{(3).} \h{10pt} W_{\overline{v},\h{1pt}\varphi_*, \h{1pt} C_{\rho,\h{0.5pt}\sigma,\h{0.5pt}+}}^{\h{0.5pt}q}
			\leq \C{15}\h{0.5pt}\sigma\left(W_{v,\h{1pt}\varphi_*, \h{1pt}C^\textup{top}_{\rho,\h{0.8pt}\sigma, \h{0.8pt}+}}^{\h{0.5pt}q} 
			+ W_{v,\h{1pt}\varphi_*, \h{1pt}C^\textup{bot}_{\rho,\h{0.8pt}\sigma, \h{0.8pt}+}}^{\h{0.5pt}q}\right) \\[1.5mm]
			&\h{95pt}+ \C{15}\h{0.5pt}\sigma^2 \left( W_{v,\h{1pt}\varphi_*, \h{1pt}\p^+ D_\rho^+\times\{\sigma\}}^{\h{0.5pt}q}
			+ \sigma\displaystyle\sup_{y^1,\h{1pt}y^2\h{1.5pt}\in\h{1.5pt} C_{\rho,\h{0.5pt}\sigma, \h{0.5pt}+}}
			\left|\h{1pt} \varphi_0 (y^1)- \varphi_0 (y^2) \h{1pt}\right|^2\right). \end{align*}
		The constant $L_{\varphi_0}$ is defined by $\C{15}\h{1pt}\big\|\h{0.5pt}\nabla \varphi_0\h{0.5pt}\big\|_{L^\infty(C^\textup{flat}_{\rho,\h{0.5pt}\sigma, \h{0.5pt}+})}$. The $3$-vector $q$ is arbitrary.
	\end{lem}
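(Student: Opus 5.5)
The plan is to reduce Lemma \ref{lem ext to cylinder tan} to the full-cylinder case, Lemma \ref{lem ext to cylinder}, by an even reflection across the flat face $C^\textup{flat}_{\rho,\sigma,+}$. The steps are: reflect the data, extend on the full cylinder, restrict back to the half-cylinder, and then check the boundary condition.

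\textbf{Step 1 (reflection of the data).} Reflect the $2$D variables by $y'\mapsto (y')^\star$. Set $\hat v^{\textup{top}}(y'):=v^\textup{top}(y')$ for $y_2\ge0$ and $\hat v^{\textup{top}}(y'):=v^\textup{top}((y')^\star)$ for $y_2<0$. Define $\hat v^{\textup{bot}}$ and $\hat v^{\textup{lat}}$ in the same way. These maps lie in $H^1(D_\rho;\mathbb S^2)$ and $H^1(\p D_\rho;\mathbb S^2)$, they agree on $\p D_\rho$, and reflection doubles their energies and $W^q$-quantities. The condition $\rho\in[3\sigma,4\sigma]\subset[\sigma,2\cdot 4\sigma]$ lets us apply Lemma \ref{lem ext to cylinder} after harmlessly renormalizing $\sigma$. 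We obtain $\hat{\overline v}$ on $C_{\rho,\sigma}$, and its restriction to $C_{\rho,\sigma,+}$ gives (2) immediately. It also gives the $W^q$ part of (3), since $W^q_{\cdot,D_\rho}$ of the reflected data equals twice the half-disc quantity.

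\textbf{Step 2 (the tangential-distance part of (3)).} We must bound $W_{\overline v,\varphi_*,C_{\rho,\sigma,+}}$. Here I would revisit the Schoen--Uhlenbeck construction. Their extension is built by $0$-homogeneous (cone) extensions on sub-cylinders together with averaging/Fubini selections. Each value $\overline v(y)$ is therefore a boundary value $v(z)$ at some point $z$ on the top, bottom, or lateral faces, with $|y-z|\lesssim\sigma$. Using Lemma \ref{lem. regularity of proj. and dist}(2),
\[
\textup{dist}_{\varphi_*(y)}(\overline v(y))\le \textup{dist}_{\varphi_*(z)}(v(z))+C_\Omega\,|\varphi_0(y)-\varphi_0(z)|.
\]
Squaring and integrating via the same coarea bookkeeping that gives Lemma \ref{lem ext to cylinder}(2) yields the $\sigma W$-terms on the top and bottom faces and the $\sigma^2 W$ term on the slice $\p^+D^+_\rho\times\{\sigma\}$. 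It also yields the error $\sigma^3\sup|\varphi_0(y^1)-\varphi_0(y^2)|^2$, where the volume of the cylinder contributes $\sim\sigma^3$.

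\textbf{Step 3 (property (1)) — expected main obstacle.} On $C^\textup{flat}_{\rho,\sigma,+}\setminus\{0\}$, the trace of $\overline v$ at $y$ is the radial (cone) value of boundary data coming from $\p C^\textup{flat}_{\rho,\sigma,+}$. By symmetry of the reflected construction, the flat face is mapped along rays to points $z\in\p C^\textup{flat}$, with $|y-z|\le C\sigma$ after the reflection. I must make sure that the Schoen--Uhlenbeck cone extension is chosen centred on the flat face, which the reflection symmetry guarantees. The assumption \eqref{assmption of v} gives $v(z)\in U_{\delta_1}(\mathbb S^2\cap T_{\varphi_*(z)}\p\Omega)$. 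By Lemma \ref{lem. regularity of proj. and dist}(1)--(2), moving the base point from $\varphi_*(z)$ to $\varphi_*(y)$ costs at most $C_\Omega\|\nabla\varphi_0\|_{L^\infty}\sigma$, which gives (1) with $L_{\varphi_0}=\C{15}\|\nabla\varphi_0\|_{L^\infty}$. The delicate point is that the Schoen--Uhlenbeck extension also involves a projection back to $\mathbb S^2$ after averaging. If that step is used, values on the flat face may not literally equal boundary values. In that case I would instead apply their construction only with homogeneous extensions, which preserve values exactly, or control the averaging deviation by $\delta_1$ via smallness. This matching of traces on $C^\textup{flat}$ is where I expect the real work to lie.
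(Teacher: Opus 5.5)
Your argument goes through once you commit to the fallback in Step 3, and at that point it is the paper's proof. The paper takes a bi-Lipschitz map $\overline{\varphi_1}:\overline{B^+_\sigma}\to\overline{C_{\rho,\sigma,+}}$ that is radial from $0\in C^{\textup{flat}}_{\rho,\sigma,+}$ and sends the equator onto $\p C^{\textup{flat}}_{\rho,\sigma,+}$. It then sets $\overline v:=v\circ\varphi_2$, which is the degree-zero homogeneous extension of $v$ centred at the origin. This is exactly the restriction to the half-cylinder of the cone extension of your reflected data.

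The items then follow as in your proposal:
\begin{itemize}
\item (2) and the $W^q$ part of (3) come from changing variables through $\overline{\varphi_1}$. The bi-Lipschitz constants are uniform because $\rho/\sigma\in[3,4]$.
\item (1) holds because rays from $0$ inside the flat face end on its rim, which is where \eqref{assmption of v} is imposed. The base-point shift is bounded by $\|\nabla\varphi_0\|_{L^\infty}$ times a length $\lesssim\sigma$, and the segment stays in the convex flat face.
\item The $W_{\varphi_*}$ part of (3) follows from your pointwise base-point inequality. The lateral face is reduced to the slice $\p^+D^+_\rho\times\{\sigma\}$, which costs the $\sigma^3\sup|\varphi_0(y^1)-\varphi_0(y^2)|^2$ term, since $v^{\textup{lat}}$ is independent of $y_3$ but $\varphi_*$ is not.
\end{itemize}

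The reflection therefore buys nothing. The statement of Lemma \ref{lem ext to cylinder} says nothing about the values of the extension, so (1) and the $\varphi_*$-terms force you to open the construction anyway, and after that it is simpler to work directly on the half-cylinder.

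Three corrections:
\begin{itemize}
\item Your worry about averaging and reprojection is misplaced. Lemma \ref{lem ext to cylinder} (Schoen--Uhlenbeck, Lemma 4.1) is a single homogeneous extension transported by a bi-Lipschitz map; it involves no averaging. Normalization enters only in the disc lemma (Lemma \ref{lem ext for dim2}), and Fubini selections and coverings only in Lemma \ref{lem ext to annulus}.
\item Your alternative of controlling an averaging deviation ``by smallness'' is not available. This lemma has no small-energy hypothesis, so the homogeneous extension is the only viable choice.
\item You cannot renormalize $\sigma$ alone, because the cylinder height is tied to $\sigma$. Either stretch $y_3\mapsto 2y_3$, so that with $\sigma'=2\sigma$ you have $\rho\in[\tfrac32\sigma',2\sigma']\subset[\sigma',2\sigma']$, or simply note that the constants depend only on the range of $\rho/\sigma$.
\end{itemize}
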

	\begin{proof}
		There exists a bi-Lipschitz homeomorphism 
		$\varphi_1: \partial^+ B_\sigma^+ \to \p\h{1pt} C_{\rho,\h{0.5pt}\sigma, \h{0.5pt}+}\setminus C^\textup{flat}_{\rho,\h{0.5pt}\sigma,\h{0.5pt}+}$ such that the circle $\p B_\sigma\cap\big\{y_3=0\big\}$ is mapped onto $  \p \h{1pt} C^\textup{flat}_{\rho, \h{0.5pt}\sigma, \h{0.5pt}+} $. It can be extended to a bi-Lipschitz homeomorphism 
		$\overline{\varphi_1} :\overline{B_\sigma^+} \to \overline{C_{\rho,\h{0.5pt}\sigma, \h{0.5pt}+}}$ by setting $$
		\overline{\varphi_1}(y) := \frac{|\h{0.5pt}y\h{0.5pt}|}{\sigma} \h{1pt} \varphi_1(\sigma\h{0.5pt}\widehat{y}\h{0.5pt}), \h{20pt}\text{where $y \in \overline{B_\sigma^+}$}.$$   Accordingly, $\p^0B_\sigma^+$ is mapped to $C^\textup{flat}_{\rho,\h{0.5pt}\sigma, \h{0.5pt}+}$ under $\overline{\varphi_1}$. Suppose $\{ e_r, e_\xi, e_\theta\}$ is the standard spherical orthonormal basis of $\mathbb R^3$. We compute the Jacobian matrix \begin{align*}
			&\nabla\overline{\varphi_1}  =\dfrac{1}{\sigma}\h{0.5pt}\varphi_1 \h{0.5pt} e_r^\top
			+\nabla^{\textup{tan}} \varphi_1, \h{10pt}
			\text{where $\nabla^{\textup{tan}} \varphi_1
				= \frac{1}{\sigma}\,\partial_\xi \varphi_1\h{0.5pt} e_\xi^\top
				+ \frac{1}{\sigma\sin\xi}\,\partial_\theta \varphi_1\h{0.5pt} e_\theta^\top.$}		\end{align*} Scaling the domain and range spaces of $\overline{\varphi_1}$ and $\varphi_1$ by $\sigma$, we can find positive constants $\lambda$ and $\Lambda$ such that \begin{align}\label{bound of jacobian} \lambda\h{0.5pt} \mathbf{I}_3 \h{1.5pt}\leq \h{1.5pt} \nabla\overline{\varphi_1}\big(\nabla\overline{\varphi_1}\big)^\top, \h{3pt}\nabla^{\textup{tan}} \varphi_1 \big(\nabla^{\textup{tan}} \varphi_1\big)^{\top}  \h{1.5pt}\leq \h{1.5pt} \Lambda\h{0.5pt} \mathbf{I}_3 \h{15pt}\text{on $\overline{B_\sigma^+}$ and $\p^+ B_\sigma^+$, respectively.}  
		\end{align}Using $\overline{\varphi_1}$ and $\varphi_1$, we define $$
		\varphi_2
		:= \varphi_1 \left( \sigma \h{1pt} \widehat{\overline{\varphi_1}^{\,-1}}\right) \h{20pt}\text{on $C_{\rho,\sigma,+} \setminus \{0\}$}.$$ Recalling $v$ in \eqref{defn of v}, we then let $\overline{v} := v \circ \varphi_2$. The definitions of $\overline{\varphi_1}$ and $\varphi_2$ yield $$y=\frac{\left|\h{1pt}\overline{\varphi_1}^{\,-1}(y)\h{1pt}\right|}{\sigma} \h{1pt} \varphi_2(y) \h{20pt} \text{for any $y \in C_{\rho,\h{0.5pt}\sigma, \h{0.5pt}+} \setminus \{0\}$.}$$ Thus, $\varphi_2$ is the identity map on  $ \p \h{0.5pt} C_{\rho, \h{0.5pt}\sigma,\h{0.5pt}+}\setminus C^\textup{flat}_{\rho,\h{0.5pt}\sigma, \h{0.5pt}+}$, implying $\overline{v} =v $ on  $ \p \h{0.5pt} C_{\rho, \h{0.5pt}\sigma,\h{0.5pt}+}\setminus C^\textup{flat}_{\rho,\h{0.5pt}\sigma, \h{0.5pt}+}$. \vspace{0.2pc}
		
		It turns out that $$\widehat{\overline{\varphi_1}^{\,-1}} \in \p B_1 \cap\{y_3=0\} \h{20pt}\text{on $C^\textup{flat}_{\rho, \h{0.5pt}\sigma, \h{0.5pt}+}\setminus\{0\}$.}$$ Hence, $\varphi_2 : C^\textup{flat}_{\rho, \h{0.5pt}\sigma, \h{0.5pt}+}\setminus\{0\} \to \partial \h{1pt} C^\textup{flat}_{\rho, \h{0.5pt}\sigma, \h{0.5pt}+}$. The assumption \eqref{assmption of v} on $v$ then implies  $$\overline{v} = v \circ \varphi_2   \in  U_{\delta_1}\left(\mathbb{S}^2\cap T_{\varphi_{*}\circ \h{1pt}\varphi_2} \h{1pt}\p \Omega\h{1pt}\right) \h{20pt}\text{on $C^\textup{flat}_{\rho, \h{0.5pt}\sigma, \h{0.5pt}+} \setminus\{0\} $}.$$ Fix $y \in C^\textup{flat}_{\rho,\sigma,+}\setminus\{0\}$ and utilize Item (2) in Lemma \ref{lem. regularity of proj. and dist}. It follows
		\begin{align*}
			\textup{dist}_{\h{0.5pt} \varphi_*(y)}(\overline{v} (y))
			&\leq \inf \Big\{
			\big|\h{0.5pt}\overline{v} (y) - p \h{0.5pt}\big| : p\in \mathbb{S}^2\cap T_{\varphi_{*}\circ\h{1pt}\varphi_2(y)} \h{1pt}\p \Omega \Big\}\\[1mm]
			&\quad+\sup \Big\{\textup{dist}_{ \h{0.5pt} \varphi_*(y)}(p) : p\in \mathbb{S}^2\cap T_{\varphi_{*}\circ \h{1pt}\varphi_2(y)} \h{1pt}\p \Omega \Big\}\\[1mm]
			&\leq \delta_1
			+ C_\Omega\sup_{y\h{1pt}\in \h{1pt} C^\textup{flat}_{\rho,\h{0.5pt}\sigma, \h{0.5pt}+}}\big|\h{1pt}\varphi_0(y)-\varphi_0\circ\varphi_2(y) \h{1pt}\big| \h{2pt}\leq\h{2pt} \delta_1
			+ \sigma\h{1pt}C_\Omega \h{1pt}\big\|\h{0.5pt}\nabla \varphi_0\h{0.5pt}\big\|_{L^\infty(C^\textup{flat}_{\rho,\sigma,+})}.
		\end{align*}
		Then item (1) holds. \vspace{0.2pc}
		
		The relation  $\overline{v} \h{0.3pt} \circ \h{0.3pt} \overline{\varphi_1}(y) 
		=v \h{0.3pt}\circ \h{0.3pt} \varphi_1(\sigma\widehat{y}\h{0.5pt})$ for any $y\in B_\sigma^+ \setminus\{0\}$ implies $$\mathcal{E}^*_{\h{1pt}\overline{v}\h{1pt}\circ \h{1pt}\overline{\varphi_1}, \h{0.8pt}B^+_\sigma}
		= \sigma \h{1pt} \mathcal{E}^*_{v\h{1pt}\circ\h{1pt}\varphi_1, \h{1pt}\p^+ B^+_\sigma}.$$  Changing variables by $\overline{\varphi_1}$ and $\varphi_1$, we obtain the desired estimate in item (2), using \eqref{bound of jacobian}. \vspace{0.2pc} 
		
		Following the same arguments above yields \begin{align}\label{W control}W_{\overline{v},\h{0.5pt}C_{\rho,\h{0.5pt}\sigma, \h{0.5pt}+}}^{\h{0.5pt}q}
			\hspace{1.5pt}\lesssim\hspace{1.5pt}
			\sigma W_{v\h{1pt}\circ\h{1pt}\varphi_1, \h{0.5pt}\p^+ B^+_\sigma}^{\h{0.5pt}q} \hspace{1.5pt}\lesssim \hspace{1.5pt}
			\sigma \left(W_{v, \h{0.5pt}C^\textup{top}_{\rho,\h{0.8pt}\sigma, \h{0.8pt}+}}^{\h{0.5pt}q} 
			+ W_{v, \h{0.8pt}C^\textup{bot}_{\rho,\h{0.8pt}\sigma, \h{0.8pt}+}}^{\h{0.5pt}q} + \sigma \h{1pt} W_{v, \h{0.8pt}\p^+ D_\rho^+\times\{\sigma\}}^{\h{0.5pt}q}\right).  \end{align}For $W_{\overline{v},\h{0.8pt}\varphi_*, \h{0.8pt}C_{\rho,\h{0.5pt}\sigma, \h{0.5pt}+}}$, we use (1) in Lemma \ref{lem. regularity of proj. and dist} and estimate as follows:
		\begin{align*}
			W_{\overline{v},\h{0.8pt}\varphi_*, \h{0.8pt}C_{\rho,\h{0.5pt}\sigma, \h{0.5pt}+}}
			&\hspace{1.5pt}\lesssim\hspace{1.5pt} W_{v\left(\varphi_1(\sigma\,\widehat{\cdot}\,)\right),\h{1pt}\varphi_*\h{1pt}\circ\h{1.2pt}\overline{\varphi_1}, \h{1pt}B^+_\sigma}\nonumber\\[1mm]
			&\h{1.5pt}\lesssim_{\,\Omega\,} W_{v\left( \varphi_1(\sigma\,\widehat{\cdot}\,)\right),\h{1pt}\varphi_*\left(\varphi_1(\sigma\,\widehat{\cdot}\,)\right),\h{1pt}B^+_\sigma
			}
			+\sigma^3\h{1pt}\sup_{B^+_\sigma} \left|\h{1pt}[\h{0.5pt}\varphi_0\h{0.5pt}]^0\circ\varphi_1(\sigma\,\widehat{\cdot}\,) - [\h{0.5pt}\varphi_0\h{0.5pt}]^0\circ\overline{\varphi_1}(\cdot) \h{1pt}\right|^2. \end{align*}
		Since $\varphi_1(\sigma\,\widehat{\cdot}\,)$ is homogeneous of degree $0$, it turns out
		\begin{align}\label{3212}
			W_{\overline{v},\h{0.8pt}\varphi_*, \h{0.8pt}C_{\rho,\h{0.5pt}\sigma, \h{0.5pt}+}}
			&\hspace{1.5pt}\lesssim_{\, \Omega\,}\hspace{1.5pt} \sigma \h{1pt} W_{
				v \h{1pt}\circ\h{1pt} \varphi_1, \h{1pt}\varphi_*\h{1pt}\circ\h{1pt}\varphi_1, \h{1pt}\p^+B^+_\sigma}
			+\sigma^3			\displaystyle\sup_{y^1,\h{1pt}y^2\h{1pt}\in \h{1pt} C_{\rho,\sigma,+}}
			\left|\h{1pt} \varphi_0 (y^1)- \varphi_0 (y^2) \h{1pt}\right|^2\\[1mm]
			&\h{-35pt}\lesssim\sigma \h{1pt} W_{v,\h{0.8pt}\varphi_*, \h{0.8pt}C^\textup{top}_{\rho,\h{0.5pt}\sigma, \h{0.5pt}+}} 
			+ \sigma \h{1pt} W_{v,\h{0.8pt}\varphi_*, \h{0.8pt}C^\textup{bot}_{\rho,\h{0.5pt}\sigma, \h{0.5pt}+}} + \sigma \h{1pt} W_{v,\h{0.8pt}\varphi_*, \h{0.8pt}C^\textup{lat}_{\rho,\h{0.5pt}\sigma, \h{0.5pt}+}} +\sigma^3			\displaystyle\sup_{y^1,\h{1pt}y^2\h{1pt}\in \h{1pt} C_{\rho,\sigma,+}}
			\left|\h{1pt} \varphi_0 (y^1)- \varphi_0 (y^2) \h{1pt}\right|^2. \nonumber
		\end{align}
		Notice that $v$ is independent of the third spatial variable on $C^\textup{lat}_{\rho,\sigma,+}$. It follows that 
		\begin{align*}
			W_{v,\h{0.8pt}\varphi_*, \h{0.8pt}C^\textup{lat}_{\rho,\h{0.5pt}\sigma, \h{0.5pt}+}} &= \int_{C^\textup{lat}_{\rho,\sigma,+}} \left[ \textup{dist}_{\varphi_*(y)}\big(v^\textup{lat}(y')\big) \right] ^2\h{1.5pt}\mathrm d\h{.5pt}\mathscr{H}_y^2 \\
            &\lesssim\h{1.5pt} \sigma \int_{\p^+D^+_\rho\times\{\sigma\}} \left[ \textup{dist}_{\varphi_* (y)}\big(v^\textup{lat}(y')\big) \right] ^2\h{1.5pt}\mathrm d\h{.5pt}\mathscr{H}_y^1 \\[1mm]
			&\quad+\int_{C^\textup{lat}_{\rho,\sigma,+}} \left| \textup{dist}_{\varphi_* (y)}\big(v^\textup{lat}(y')\big) 
			-\textup{dist}_{\varphi_* (y)}\big(v^\textup{lat}(y')\big) \h{1pt}\pigr|_{y_3=\sigma}\right| ^2 
			\h{1.5pt}\mathrm d\h{.5pt}\mathscr{H}_y^2. \end{align*}
		Using Item (1) in Lemma \ref{lem. regularity of proj. and dist} infers
		\begin{align*}
			W_{v,\h{0.8pt}\varphi_*, \h{0.8pt}C^\textup{lat}_{\rho,\h{0.5pt}\sigma, \h{0.5pt}+}}&\lesssim_{\,\Omega\,} \int_{C^\textup{lat}_{\rho,\h{0.5pt}\sigma, \h{0.5pt}+}} \left|\h{1pt} \varphi_0 (y)- \varphi_0 (y', \sigma)\h{1pt} \right| ^2 \h{1.5pt}\mathrm d\h{.5pt}\mathscr{H}_y^2
			+ \sigma \int_{\p^+D^+_\rho\times\{\sigma\}} \left[ \textup{dist}_{\varphi_*(y)}\big(v^\textup{lat}(y')\big) \right] ^2\h{1.5pt}\mathrm d\h{.5pt}\mathscr{H}_y^1\nonumber\\[1mm]
			&\lesssim\h{1.5pt} \rho\h{1pt}\sigma\sup_{y\h{0.5pt}\in \h{0.5pt} C^\textup{lat}_{\rho,\h{0.5pt}\sigma,\h{0.5pt}+}}
			\left|\h{1pt} \varphi_0 (y)- \varphi_0 (y', \sigma) \h{1pt}\right|^2
			+ \sigma \int_{\p^+D^+_\rho\times\{\sigma\}} \left[ \textup{dist}_{\varphi_*(y)}\big(v^\textup{lat}(y')\big) \right] ^2\h{1.5pt}\mathrm d\h{.5pt}\mathscr{H}_y^1. 
		\end{align*}
		The desired estimate in item (3) follows from \eqref{W control}-\eqref{3212} and the last estimate.
	\end{proof}
	Our next step is to show an extension lemma from $\p^+ D_\rho^+$ to $D^+_\rho$. The proof proceeds as follows. We first extend a given map from $\p^+D^+_\rho$ to $\p D_\rho$ and then extend it from $\p D_\rho$ to $D_\rho$, using harmonic extension. A small energy assumption is imposed to ensure that the extended map remains within a small neighborhood of the tangent plane. \vspace{0.2pc}
	
	In addition to the boundary flattening map $\varphi$, the following maps are also used in Lemma \ref{lem ext for dim2}. $\varphi_3$ is a bi-Lipschitz homeomorphism from $\overline{D^+_\rho}$ to a closed subset in $\overline{B^+_{R_0}}$. It satisfies $$\varphi_3 \left(\p^0D^+_\rho\right) \subset  \p^0 B^+_{R_0}.$$ Using $\varphi$ and $\varphi_3$, we define  $\varphi_\dagger :=\varphi^{-1}\circ\varphi_3$ on $\overline{D^+_\rho }$. Letting $\varphi_4$ be a bi-Lipschitz homeomorphism from $D_\rho^+$ to $D_\rho$ which is the identity map when restricted to $\p^+D^+_\rho$, we define $\varphi_5:=\varphi_3 \circ \varphi_4^{-1}$. The map $\varphi_\ddagger$ from $D_\rho$ to $\p \Omega$ is given by $\varphi^{-1}\circ[\h{0.5pt}\varphi_5\h{0.5pt}]^0$. Moreover, we define $\varphi_{\ddagger,\,\star} : D_\rho \to \p \Omega$ as the map that satisfies $\varphi_{\ddagger,\,\star} (y) = \varphi_\ddagger (y^\star)$ for any $y \in D_\rho$. 
	\begin{lem}\label{lem ext for dim2} 
		There exist positive constants $\eps{6}$ and $\C{16}$ depending only on $\Omega$ with which we have \begin{enumerate} 
			\item[$\mathrm{(1).}$] Fix  $\rho>0$ and let $v \in H^1\big(\p D_{\rho}; \mathbb{S}^2\big)$ satisfying
			\begin{align}\label{small prod energy}\mathcal{E}^*_{v,\,\p D_{\rho}}
			W_{v,\,\p D_{\rho}}^{\h{0.5pt}q} \leq \eps{6},\end{align}
			then there exists an extension $\overline{v} \in H^1(D_\rho;\mathbb{S}^2)$ such that $\overline{v}= v$ on $\p D_\rho$ in the sense of trace. Moreover, the following two estimates hold $$\mathrm{(a).}\h{4pt}\pigl(\mathcal{E}^*_{\h{1pt}\overline{v},\,D_{\rho}}\pigr)^2  
			\leq \C{16} \h{1pt} \mathcal{E}^*_{v,\,\p D_{\rho}} W^{\h{0.5pt}q}_{v,\,\p D_{\rho}}; \h{20pt}\mathrm{(b).} \h{4pt}  W^{\h{0.5pt}q}_{\overline{v},\,D_{\rho}}   
			\leq \C{16} \h{1pt}\rho\h{1pt} W^{\h{0.5pt}q}_{v,\,\p D_{\rho}}.$$

			\item[$\mathrm{(2).}$] If, for some $\varepsilon \in \big(0, \eps{6}\h{0.5pt}\big]$ and $\rho>0$, the mapping $v \in H^1\big(\p^+D_{\rho}^{+}; \mathbb{S}^2\big)$ satisfies  \begin{align}\label{bdry of v}v(\cdot)  \in U_{\sqrt{\varepsilon}}\h{1pt}\big(\h{1pt}\mathbb{S}^2\cap T_{\varphi_\dagger(\cdot)}\p \Omega\h{1pt}\big)  \h{15pt} \text{on $\p D_{\rho} \cap \big\{y_2=0\big\}$}, \end{align}  and the estimate
			\[\left(\h{1pt}\rho \h{1pt}\big\|\nabla \varphi_5\big\|^2_{L^\infty(D_\rho)}\h{1pt}W_{v,\,\varphi_{\ddagger,\star}, \, \p^+D_\rho^{+}} \right)^{\frac{1}{2}}
			+	\left( \mathcal{E}^*_{v, \,\p^+D^+_\rho} \h{1pt}W_{v,\,\varphi_{\ddagger,\star}, \,\p^+D_\rho^{+}}^{\h{0.5pt}q}\right)^{\frac{1}{2}} \leq \varepsilon,
			\]
			then there exists an extension $\overline{v} \in H^1\big(D^+_\rho;\mathbb{S}^2\big)$ such that \begin{align*} \h{20pt}\mathrm{(a).} \h{5pt}\overline{v}=v \h{15pt}\text{ on $\p^+D_{\rho}^{+}$}; \h{20pt} \mathrm{(b).} \h{5pt}\overline{v}(\cdot) \in U_{\C{16}\sqrt{\varepsilon}}\h{1pt}\big(\h{1pt}\mathbb{S}^2\cap T_{\varphi_\dagger(\cdot)}\p \Omega\h{1pt}\big) \h{15pt} \text{on $\p^0 D^+_\rho$}, \end{align*}  in the sense of trace. Moreover, it holds that  
			\begin{align*}
				&\mathrm{(c).}\h{5pt}\left( \mathcal{E}^*_{\h{1pt}\overline{v},\,D_\rho^+}\right) ^2
				\leq
				\C{16} \,\mathcal{E}^*_{v,\,\p^+ D_\rho^+} \h{1.5pt}  W^{\h{0.5pt}q}_{v,\h{0.5pt}\varphi_{\ddagger,\star},\h{0.5pt}\p^+D_\rho^{+}};\\[1.5mm]
				&\mathrm{(d).}\h{5pt}W_{\overline{v},\,\varphi_{\ddagger,\star} \h{1pt}\circ\h{1pt}\varphi_4, \, D_\rho^+}^{\h{0.5pt}q}
				\leq \C{16}\bigg\{ \rho \h{1pt}  W^{\h{0.5pt}q}_{v,\,\varphi_{\ddagger,\,\star},\,\p^+D_\rho^{+}}
				+\int_{D_\rho^+} \left[ \h{1pt} \textup{dist}_{\varphi_{\ddagger,\,\star}\h{1pt}\circ\h{1.5pt} \varphi_4 }\left( q\right) \h{1pt} \right]^2
				\h{1.5pt}\mathrm d\h{.5pt}\mathscr{H}^2  \bigg\}.
			\end{align*}
            The $3$-vector $q$ is arbitrary.
		\end{enumerate}
	\end{lem}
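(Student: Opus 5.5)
For part (1), I will follow the standard small-energy extension of Schoen--Uhlenbeck. Let $h$ be the harmonic extension of $v$ into $D_\rho$. By scaling, assume $\rho = 1$.

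The first estimate is a pointwise bound on $|h - q|$. By the one-dimensional Sobolev embedding on $\p D_1$,
\begin{equation*}
\sup_{\p D_1} |v - q|^2 \lesssim W^{q}_{v,\p D_1} + \Big(\mathcal{E}^*_{v,\p D_1} W^{q}_{v,\p D_1}\Big)^{1/2}.
\end{equation*}
The same bound then holds for $\sup_{D_1} |h - q|$ by the maximum principle. To make $h$ stay close to $\mathbb{S}^2$, I first replace $q$ by its nearest point on $\mathbb{S}^2$; the smallness hypothesis \eqref{small prod energy} together with $|v| = 1$ justifies this. Once $\mathrm{dist}(h, \mathbb{S}^2) \le 1/2$, I set $\overline{v} := h/|h|$.

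Estimate (b) follows from the $L^2$ bound for the Poisson kernel, $\int_{D_1} |h - q|^2 \lesssim \int_{\p D_1} |v - q|^2$. For (a), I use the identity $\int_{D_1} |\nabla h|^2 = \int_{\p D_1} (h - q)\cdot \p_r h$. By the $H^{1/2}$--$H^{-1/2}$ duality and interpolation, this is at most $C\|v - q\|_{L^2}\|\p_\theta v\|_{L^2}$. Since $|\nabla \overline{v}| \le 2|\nabla h|$, squaring gives (a).

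For part (2), I reduce to part (1) by reflection. The first step is to extend $v$ from $\p^+ D^+_\rho$ to all of $\p D_\rho$ by the formula
\begin{equation*}
\widetilde v(y) := 2\Pi\big(\varphi_{\ddagger,\star}(y), v(y^\star)\big) - v(y^\star).
\end{equation*}
The condition \eqref{bdry of v} is what keeps $\Pi$ well defined near the two endpoints $\p D_\rho\cap\{y_2=0\}$. On the reflected arc the tangential derivative of $\widetilde v$ is controlled by $|\p_\theta v| + \|\nabla\varphi_5\|_{L^\infty}\,\rho\cdot(\text{bounded})$, using Lemma~\ref{lem. regularity of proj. and dist}.

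The quantity $|\widetilde v - q|$ on the reflected arc is bounded by $|v - q| + 2\,\mathrm{dist}_{\varphi_{\ddagger,\star}}(v)$. This is why the combined quantity $W^q_{v,\varphi_{\ddagger,\star},\p^+D^+_\rho}$, rather than $W^q$ alone, enters the hypotheses. Consequently the product $\mathcal{E}^*_{\widetilde v,\p D_\rho} W^q_{\widetilde v,\p D_\rho}$ is bounded by the two-term hypothesis, which gives $\lesssim \varepsilon^2 \le \eps{6}$.

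Next I apply part (1) to $\widetilde v$ and pull the resulting extension back by $\varphi_4$, setting $\overline{v} := \overline{\widetilde v}\circ\varphi_4$. This preserves the values on $\p^+ D^+_\rho$, since $\varphi_4$ is the identity there, and it changes energies only by bi-Lipschitz constants. This yields (a) and (c).

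Estimate (d) comes from (1b) applied to $\overline{\widetilde v}$. I compare $\mathrm{dist}_{\varphi_{\ddagger,\star}\circ\varphi_4}(\overline v)$ with $|\overline v - q| + \mathrm{dist}_{\varphi_{\ddagger,\star}\circ\varphi_4}(q)$ using the Lipschitz property of the distance function; this produces the extra integral of $\mathrm{dist}(q)^2$ appearing in (d).

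The main obstacle is (b), the pointwise statement on $\p^0 D^+_\rho$, which is the preimage of the diameter under $\varphi_4$. I plan to prove it from the $L^\infty$ estimate for the harmonic extension:
\begin{equation*}
\sup_{D_\rho} |h - q| \lesssim \sqrt{\varepsilon} + \Big(W^q_{v,\varphi_{\ddagger,\star},\p^+D^+_\rho}/\rho\Big)^{1/2}.
\end{equation*}
Both terms are small: the second because the $\rho\|\nabla\varphi_5\|^2$ hypothesis in effect controls $\rho^{-1}W$ after rescaling.

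Then, for a point $y$ of the diameter, I estimate
\begin{equation*}
\mathrm{dist}_{\varphi_\dagger(y)}(\overline v(y)) \le \mathrm{dist}_{\varphi_\dagger(y)}(v(e)) + |\overline v(y) - v(e)| + |\varphi_\dagger(y) - \varphi_\dagger(e)|\,C_\Omega,
\end{equation*}
where $e$ is an endpoint of the diameter. The first term is $\le \sqrt{\varepsilon}$ by \eqref{bdry of v}, and the second is $\lesssim \sqrt{\varepsilon}$ by the oscillation bound. The third term is controlled by Lemma~\ref{lem. regularity of proj. and dist}(2) together with the gradient hypothesis on $\varphi_5$. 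I expect that bookkeeping the scaling of $\rho$ in this last term is where care is needed.
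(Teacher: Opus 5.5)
Your part (2) has a genuine gap, and it starts with the reflection. Hypothesis \eqref{bdry of v} only places $v(e)$, at the endpoints $e$ of the arc, within $\sqrt{\varepsilon}$ of the tangent circle, not on it. Since $e^\star=e$ and $\varphi_{\ddagger,\star}(e)=\varphi_\dagger(e)$, your map $\widetilde v=2\Pi(\varphi_{\ddagger,\star}(\cdot),v(\cdot^\star))-v(\cdot^\star)$ jumps there by $2\,\textup{dist}_{\varphi_\dagger(e)}(v(e))$. So $\widetilde v\notin H^1(\partial D_\rho)$, and part (1) cannot be applied. The reflection also needs $\Pi$ to be defined along the whole arc, not only near the endpoints, and you have not justified that.

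Even where $\widetilde v$ is defined and continuous, differentiating $\Pi(\varphi_{\ddagger,\star}(\cdot),\cdot)$ adds $\rho\|\nabla\varphi_5\|^2_{L^\infty}$ to $\mathcal{E}^*_{\widetilde v,\partial D_\rho}$. The resulting cross term $\rho\|\nabla\varphi_5\|^2_{L^\infty}W^q_{v,\partial^+D^+_\rho}$ is not bounded by the hypotheses, which only control $\rho\|\nabla\varphi_5\|^2$ against $W_{v,\varphi_{\ddagger,\star},\partial^+D^+_\rho}$. Nor does it appear on the right of (c). For constant $v$ the right side of (c) vanishes, yet $\widetilde v$ is non-constant as soon as the tangent planes vary along the arc.

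The paper instead uses the even reflection $V(y):=v(y^\star)$. It is continuous, with $\mathcal{E}^*_{V,\partial D_\rho}=2\mathcal{E}^*_{v,\partial^+D^+_\rho}$ and $\int_{\partial D_\rho}|V-q|^2=2W^q_{v,\partial^+D^+_\rho}$. Hence (a) and (c) follow from the Fourier/Cauchy--Schwarz bound with no $\varphi_5$ term. For the normalization, use the oscillation bound $\sup|v-v(y^1)|^2\lesssim(\mathcal{E}^*W^q)^{1/2}$, with $|v(y^1)-q|^2$ equal to the mean of $|v-q|^2$. Moving $q$ onto $\mathbb{S}^2$, as in your part (1), does not help, because only the product $\mathcal{E}^*W^q$ is small.

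Your treatment of (b) rests on a misreading of $\varphi_4$. It is a bi-Lipschitz homeomorphism $D^+_\rho\to D_\rho$ fixing $\partial^+D^+_\rho$, so it carries $\partial^0D^+_\rho$ onto the lower semicircle $\partial D_\rho\cap\{y_2<0\}$, not onto the diameter. Thus (b) concerns the reflected boundary data, and an interior oscillation argument is both unnecessary and insufficient. It leaves the base-point term $|\varphi_\dagger(y)-\varphi_\dagger(e)|\lesssim\rho\|\nabla\varphi_5\|_{L^\infty}$, which no hypothesis bounds by itself. Likewise $W^q/\rho$ need not be small, since $q$ is arbitrary.

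The missing step is a pointwise bound along the whole upper arc, measured against the \emph{moving} base point. Integrate $\frac{d}{ds}[\textup{dist}_{\varphi_{\ddagger,\star}}(v)]^2$ from an endpoint, use $|\frac{d}{ds}\textup{dist}_{\varphi_{\ddagger,\star}}(v)|\lesssim|\nabla\varphi_5|+|\nabla^{\textup{tan}}v|$, and apply Cauchy--Schwarz:
\[
\big[\textup{dist}_{\varphi_{\ddagger,\star}}(v)\big]^2\lesssim_{\Omega}\varepsilon+\Big[W_{v,\varphi_{\ddagger,\star},\partial^+D^+_\rho}\Big(\rho\|\nabla\varphi_5\|^2_{L^\infty(D_\rho)}+\mathcal{E}^*_{v,\partial^+D^+_\rho}\Big)\Big]^{1/2}\lesssim_{\Omega}\varepsilon\quad\text{on } \partial^+D^+_\rho .
\]
This is exactly where both terms of the hypothesis enter. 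Note that $\varphi_{\ddagger,\star}(y^\star)=\varphi_\ddagger(y)$, and that $\varphi_\ddagger\circ\varphi_4=\varphi^{-1}\circ[\varphi_3]^0=\varphi_\dagger$ on $\partial^0D^+_\rho$. Hence $\overline v=\widehat V\circ\varphi_4$, where $\widehat V$ is the normalized harmonic extension of $V$, satisfies (b) immediately. Estimate (d) then follows as you describe.
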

	\begin{proof} We only need to prove item (2). The proof of item (1) can be found in \cite[Lemma 4.2]{SU82}. 
    
    First, we extend $v$ from $\p^+ D_\rho^+$ to $\p D_\rho$ so that the extension, denoted by $\mathbbm{v}$, is even with respect to the variable $y_2$. The map $\mathbbm{v}$  is continuous on $\p D_\rho$ as $\mathcal{E}^*_{v, \,\p^+D_{\rho}^{+}}$ is finite. On $\p^+D_\rho^{+}$ and in light of the boundary condition \eqref{bdry of v}, we can apply the fundamental theorem of calculus and (2) in Lemma \ref{lem. regularity of proj. and dist} to obtain
		\begin{align*}
\left[\h{1pt}\textup{dist}_{\varphi_{\ddagger,\,\star} }\left(v  \right)\h{1pt}\right]^2 
\h{1.5pt}\lesssim_{\,\Omega}\h{1.5pt} 
\varepsilon
+  \int_{\p^+D_\rho^{+}} \textup{dist}_{\varphi_{\ddagger,\,\star}\left(z\right)}\big(v(z)\big) 
			\Big(\h{1.5pt}\big|\nabla \varphi_5 (z^\star)\big| 
			+ \big|\nabla^{\textup{tan}} v(z)\big| \h{1.5pt} 
			\Big)\, \h{1.5pt}\mathrm d\h{.5pt}\mathscr{H}^1_z,
		\end{align*}
 where, without ambiguity, we still use $\nabla $ to denote the gradient on $D_\rho$. The tangential derivative of $v$ on $\p^+ D_\rho^+$ is also denoted by $\nabla^{\textup{tan}} v$, which is equal to $\rho^{-1}\,\partial_\theta v\h{1pt} e_\theta^\top$ with $\theta$ being the argument angle on $\mathbb{R}^2$. Note that in 2D, we use the same notation $e_\theta$ to denote the 2D angular unit vector.  Applying H\"{o}lder's inequality to the last estimate, we further get 
        \begin{align}
\left[\h{1pt}\textup{dist}_{\varphi_{\ddagger,\,\star}}(v)\h{1pt}\right]^2  &\h{1.5pt}\lesssim_{\,\Omega\,}\h{1.5pt} \varepsilon
+ \bigg( \int_{\p^+D_\rho^{+}} \pig[\textup{dist}_{\varphi_{\ddagger,\,\star}}(v)\pigr]^2 \h{1.5pt}\mathrm d\h{.5pt}\mathscr{H}^1 \bigg)^{\frac{1}{2}}
			\bigg( \int_{\p^+D_\rho^{+}}\big|\h{1pt}\nabla \varphi_5(z^\star)\h{1pt}\big|^2+ \big|\h{1pt}\nabla^{\textup{tan}} v(z)\h{1pt}\big|^2 \h{1.5pt}\mathrm d\h{.5pt}\mathscr{H}^1_z\bigg)^{\frac{1}{2}} \nonumber\\[1mm]
			&\h{1.5pt}\leq\h{1.5pt} \varepsilon
            + \left[\h{1pt}W_{v,\,\varphi_{\ddagger,\,\star},\,\p^+D_\rho^{+}} \, \left(\rho\h{1pt}\big\|\h{1pt}\nabla \varphi_5\h{1pt}\big\|^2_{L^\infty(D_\rho)}+\mathcal{E}^*_{v,\,\p^+D_\rho^{+}}\right)\h{1pt}\right]^{1/2}
			.
			\label{3166}
		\end{align}
		It then turns out $\mathbbm{v}(y) \in U_{C_\Omega\sqrt{\varepsilon}}
        \left(\mathbb{S}^2\cap T_{\varphi_\ddagger(y)}\p \Omega\right)$ for $y\in\p^-D_{\rho}^{-} $, where $C_\Omega>0$ is a constant depending on $\Omega$.\vspace{0.2pc}
		
		Now, we extend $\mathbbm{v}$ to the whole disk $D_\rho$ by the harmonic extension. We still denote this extension by $\mathbbm{v}$ without ambiguity. To estimate its Dirichlet energy, we first note that
		\begin{equation}
			\int_{\p D_\rho} |\h{0.5pt}\mathbbm{v} - q\h{0.5pt}|^2 
			= 2\int_{\p^+ D_\rho^+} |\h{0.5pt}v - q\h{0.5pt}|^2 
			=  2\h{1pt}W^{\h{0.5pt}q}_{v,\,\p^+D_\rho^{+}}.
			\label{2188}
		\end{equation}
Since $\mathbbm{v}$ is harmonic on $D_\rho$, we can write it in terms of a Fourier series as follows:
\[
\mathbbm{v}(r,\theta)
= a_0 + \sum_{n\,\ge\,1}r^n\left(a_n\cos n\theta + b_n \sin n\theta \right),
\qquad \text{where $r\in[\h{0.5pt}0,\rho\h{0.5pt}]$ and $\theta\in[\h{0.5pt}0,2\pi\h{0.5pt}]$.}
\]
For any $n \in \mathbb N$, $a_{n-1}$ and $b_n$ are constant vectors. Direct computations then show that
\begin{align*}
\mathcal{E}^*_{\mathbbm{v},\,D_\rho}=\int_{D_\rho} \big|\h{0.5pt}\nabla \mathbbm{v} \h{0.5pt}\big|^2
&= \pi\sum_{n \, \geq \, 1} n \h{1pt}\rho^{2n}\left[\h{1pt}\left|\h{0.5pt}a_n\h{0.5pt}\right|^2+\left|\h{0.5pt}b_n \h{0.5pt}\right|^2\right]. 
\end{align*}
We also have
\begin{align*}
\mathcal{E}^*_{\mathbbm{v},\,\p D_\rho}=\int_{\partial D_\rho} \big|\h{0.5pt}\nabla^{\textup{tan}}\mathbbm{v} \h{0.5pt}\big|^2
=  \pi  \sum_{n\geq 1} n^2\rho^{2n - 1}\,\left[\h{0.5pt}\left|\h{0.5pt} a_n \h{0.5pt}\right|^2+\left|\h{0.5pt} b_n\h{0.5pt}\right|^2\h{0.5pt}\right],\end{align*} and \begin{align*} 
\int_{\partial D_\rho} |\h{0.5pt}\mathbbm{v}-q\h{0.5pt}|^2
= 2\pi\rho\, |a_0-q|^2 + \pi \sum_{n\,\geq\,1}\rho^{2n + 1}\left[\h{0.5pt}\left| \h{0.5pt}a_n\h{0.5pt}\right|^2+ \left|\h{0.5pt}b_n\h{0.5pt}\right|^2\h{0.5pt}\right].
\end{align*}
By the Cauchy-Schwarz inequality, \eqref{2188} and the fact \(
		\mathcal{E}^*_{\mathbbm{v},\,\p D_\rho}
		= 2 \h{0.5pt} \mathcal{E}^*_{v,\,\p^+ D_\rho^+}
		\), it follows that
		\begin{equation}
			\left( \mathcal{E}^*_{\mathbbm{v},\,D_\rho}\right) ^2 
			\h{1.5pt}\leq\h{1.5pt} \mathcal{E}^*_{\mathbbm{v},\,\p D_\rho} \left(\int_{\p D_\rho} |\mathbbm{v} - q|^2\right)
			\h{1.5pt}\lesssim \h{1.5pt} \mathcal{E}^*_{v,\,\p^+ D_\rho^+} \h{1pt} W^{\h{0.5pt}q}_{v,\,\p^+D_\rho^{+}}.
			\label{3195}
		\end{equation}
        
		Next, we show that the normalization of $\mathbbm{v}$ is well-defined. As there is $y^1\in \p^+D^+_\rho$ such that $$\left|\h{0.5pt}v(y^1)-q \h{0.5pt}\right|^2 = \big(\pi\rho\big)^{-1}\int_{\p^+D^+_\rho}|\h{0.5pt}v-q \h{0.5pt}|^2,$$ according to the triangle inequality, it turns out that
		\begin{align*}
			\int_{\p^+D^+_\rho}\left|\h{0.5pt}v-v(y^1)\h{0.5pt}\right|^2
			\h{1.5pt}\leq \h{1.5pt} 2\int_{\p^+D^+_\rho}|\h{0.5pt}v-q\h{0.5pt}|^2
			+2\pi\rho \left|\h{0.5pt}v(y^1)-q \h{0.5pt}\right|^2
			\h{1.5pt}=\h{1.5pt} 4\int_{\p^+D^+_\rho}|\h{0.5pt}v-q \h{0.5pt}|^2.
		\end{align*}
		Therefore,
		\begin{align*}
			\max_{\p D_\rho } \left|\h{0.5pt}\mathbbm{v} -v(y^1)\h{0.5pt}\right|^2 
			= \sup_{ \p^+D^+_\rho } \left|\h{0.5pt}v -v(y^1)\h{0.5pt}\right|^2
			\h{1.5pt}\leq \h{1.5pt} 2\int_{\p^+D^+_\rho}\left|\h{0.5pt}v-v(y^1)\h{0.5pt}\right| \h{1pt}\left|\h{0.5pt}\nabla^{\textup{tan}} v \h{0.5pt}\right| \h{1.5pt}\lesssim \h{1.5pt}  	\left(\mathcal{E}^*_{v,\,\p^+D^+_\rho} \h{1.5pt} W^{\h{0.5pt}q}_{v,\,\p^+D^+_\rho}\right)^{\frac{1}{2}}.
		\end{align*} 
		Utilizing the last estimate and the maximum principle, we deduce that 
		\begin{align*}\displaystyle\sup_{D_\rho} \left|\h{0.5pt}\mathbbm{v} -\mathbbm{v}(y^1)\h{0.5pt}\right|^2
			=\displaystyle\max_{\p D_\rho} \left|\h{0.5pt}\mathbbm{v} -\mathbbm{v}(y^1) \h{0.5pt}\right|^2
			\h{1.5pt}\lesssim \h{1.5pt}  	\left(\mathcal{E}^*_{v,\,\p^+D^+_\rho} \h{1pt} W^{\h{0.5pt}q}_{v,\,\p^+D^+_\rho}\right)^{\frac{1}{2}}.
		\end{align*} 
		\sloppy Since $\mathbbm{v}(y^1) \in \mathbb{S}^2$, we can then define the normalized vector field $\widehat{\mathbbm v}$, choosing $\eps{6}$ small enough. The smallness depends only on $\Omega$. Using \eqref{3195}, we obtain $$\left( \mathcal{E}^*_{\,\widehat{\mathbbm{v}},\,D_\rho}\right) ^2
		\h{1.5pt}\lesssim\h{1.5pt} \left( \mathcal{E}^*_{\mathbbm{v},\,D_\rho}\right) ^2
		\h{1.5pt}\lesssim \h{1.5pt}   \mathcal{E}^*_{v,\,\p^+ D_\rho^+} \h{1pt}  W_{v,\,\p^+D_\rho^{+}}^{\h{0.5pt}q}.$$ Define $\overline{v}:=\widehat{\mathbbm{v}}\h{1pt}\circ\h{1pt}\varphi_4:D^+_\rho \to \mathbb{S}^2$ as our extension of $v$. We have $  \mathcal{E}^*_{\h{1pt}\widehat{\mathbbm{v}}\h{1pt}\circ\h{1pt}\varphi_4,\,D_\rho^+} 
		\h{1.5pt}\lesssim\h{1.5pt}  \mathcal{E}^*_{\h{1pt}\widehat{\mathbbm{v}},\h{0.5pt}D_\rho}$, which concludes the estimate (c) in Item (2) of this lemma.

		With regard to the estimate of \( W^{\h{0.5pt}q}_{\widehat{\mathbbm{v}},\,\varphi_{\ddagger,\star},\,D_\rho}\), we first use the triangle inequality to obtain
\begin{align*} W^{\h{0.5pt}q}_{\widehat{\mathbbm{v}},\,\varphi_{\ddagger,\star},\, D_\rho} 
			\h{1.5pt}\lesssim\h{1.5pt} \int_{D_\rho}  \left|\h{0.5pt} \widehat{\mathbbm{v}} - \mathbbm{v} \h{0.5pt} \right|^2 
			+| \h{0.5pt}\mathbbm{v}  - q \h{0.5pt}|^2 
			+ \Big[ \h{1pt} \textup{dist}_{\varphi_{\ddagger,\,\star}} ( \mathbbm{v} ) \h{1pt} \Big] ^2 \h{2pt}\mathrm d\h{.5pt}\mathscr{H}^2. \end{align*} Note that $\left|\h{0.5pt} \widehat{\mathbbm{v}} - \mathbbm{v} \h{0.5pt} \right| \leq \textup{dist}_{\varphi_{\ddagger,\,\star}} ( \mathbbm{v})$ and $\textup{dist}_{\varphi_{\ddagger,\,\star}} ( \mathbbm{v}) \leq | \h{0.5pt}\mathbbm{v}  - q \h{0.5pt}| + \textup{dist}_{\varphi_{\ddagger,\,\star}} (q).$ Moreover,  $$
		\int_{D_\rho} \big|\h{1pt}\mathbbm{v}  - q \h{1pt}\big|^2 \h{2pt}\mathrm d\h{.5pt}\mathscr{H}^2  \h{1pt}\leq \h{1pt} \frac{\rho}{2} \int_{\p D_\rho} |\mathbbm{v}  - q|^2 \h{1.5pt}\mathrm d\h{.5pt}\mathscr{H}^1 
		$$ since $\mathbbm v$ is harmonic. We then have \begin{align*}
			 W^{\h{0.5pt}q}_{\widehat{\mathbbm{v}},\,\varphi_{\ddagger,\star},\, D_\rho} \h{1.5pt}\lesssim \h{1.5pt}  \rho\int_{\p D_\rho} |\h{0.5pt}\mathbbm{v}  - q \h{0.5pt}|^2 \h{2pt}\mathrm d\h{.5pt}\mathscr{H}^1 
			+\int_{D_\rho} \Big[\h{1pt} \textup{dist}_{\varphi_{\ddagger,\,\star} } ( q ) \Big]^2
			\h{2pt}\mathrm d\h{.5pt}\mathscr{H}^2.
		\end{align*}
		By \eqref{2188}, it turns out that $$
		W_{\h{0.5pt}\widehat{\mathbbm{v}},\,\varphi_{\ddagger,\,\star},\,D_\rho}^{\h{0.5pt}q} \h{1.5pt}\lesssim\h{1.5pt} \rho \h{0.5pt} W_{\h{0.5pt}v,\,\varphi_{\ddagger,\,\star},\,\p^+D_\rho^{+}}^{\h{0.5pt}q}
		+\int_{D_\rho} \pig[ \h{1pt}\textup{dist}_{\varphi_{\ddagger,\,\star}} ( q ) \h{1pt}\pigr]^2
		\h{2pt}\mathrm d\h{.5pt}\mathscr{H}^2.
		$$ This estimate and the relation: $$
		W_{\h{0.5pt}\widehat{\mathbbm{v}}\h{1pt}\circ\h{1pt}\varphi_4,\,\varphi_{\ddagger,\,\star}\h{1pt}\circ\h{1.5pt}\varphi_4,\,D_\rho^+}^{\h{0.5pt}q}
		\h{1.5pt}\lesssim \h{1.5pt} W_{\h{0.5pt}\widehat{\mathbbm{v}},\,\varphi_{\ddagger,\,\star},\,D_\rho}^{\h{0.5pt}q}
		\h{1.5pt}\lesssim \h{1.5pt} \rho \h{0.5pt}  W_{v,\,\varphi_{\ddagger,\,\star},\,\p^+D_\rho^{+}}^{\h{0.5pt}q}
		+\int_{D_\rho^+} \pig[ \h{1pt} \textup{dist}_{\varphi_{\ddagger,\,\star}\h{1pt}\circ\h{1pt}\varphi_4} ( q ) \h{1pt} \pigr]^2
		\h{2pt}\mathrm d\h{.5pt}\mathscr{H}^2$$ complete the proof.
	\end{proof}

	Next, we extend a map on a hemisphere to a hemispherical shell. The key idea is to transform certain regions of the hemispherical shell to (half) cylinders. The top face of each (half) cylinder is diffeomorphic to a subset of the outer hemisphere. By these diffeomorphisms, the map defined on the outer hemisphere can be pulled to the tops of the (half) cylinders. We extend the map constantly on the top face to the lateral surface of each (half) cylinder, obtaining boundary values for the bottom face. Then we use harmonic extensions in Lemma \ref{lem ext for dim2} to define the values on the bottom faces of the (half) cylinders. With boundary values on the top, bottom, and lateral surfaces of the (half) cylinders, we then apply Lemmas \ref{lem ext to cylinder} and \ref{lem ext to cylinder tan} to induce extensions over the whole cylinders and half-cylinders, respectively. The extension from the hemisphere to the hemispherical shell can then be obtained by a covering argument. On the remaining regions of the hemispherical shell, those not covered by these (half) cylinders, the extended map is defined by constant extension. 
	
	\begin{lem}\label{lem ext to annulus} Fix $\big(q, \widetilde{a}\big) \in\mathbb{R}^3 \times \p^0 B^+_{R_0}$ and let $\{r_k\}_{k \h{0.5pt}\in\h{0.5pt}\mathbb{N}}$ be a sequence in $(0,1)$. In addition, we recall $\varphi_k$ in \eqref{defn of varphi_k} and $\eps{6}$ in \eqref{small prod energy}. Then, there are a universal constant $\alpha\in (0,1)$  and constants $$\eps{7}\in\left(0, \frac{\eps{6}\wedge \delta_0^2}{4}\right), \h{20pt}
		\beta\in(0,1), \h{20pt} \C{17}>2, \h{20pt}\text{depending only on $\Omega$, }$$
		such that for any $\rho\in(0,1)$, $\sigma \in \left(\eps{7}\rho, \frac{\rho}{4}\right)$, $\delta_2\in \big[\h{0.5pt}0,\sqrt{\eps{7}}\h{0.5pt}\big]$, and $v \in H^1\big(\p^+ B_\rho^+; \mathbb{S}^2\big)$ satisfying 
        \begin{align*}
        &\mathrm{(1).}\h{10pt}r_k^2 	\left( r_k^2 \h{1pt}\sigma^2+\mathcal{E}^*_{v,\h{0.5pt}\p^+ B_\rho^+}
		+ W_{v,\h{0.5pt}\varphi_k,\h{0.5pt}\p^+ B_\rho^+} \right) 
		+\sigma^{-2}\h{1pt}\mathcal{E}^*_{v,\h{0.5pt}\p^+ B_\rho^+} \h{1pt}
		W^{\h{0.5pt}q}_{v,\h{0.5pt}\varphi_k,\h{0.5pt}\p^+ B_\rho^+} 
		\leq \delta_2^4;\\[1mm]
        &\mathrm{(2).}\h{10pt} \text{$v(\cdot) \in U_{\delta_2}\big(\mathbb{S}^2\cap T_{\varphi_k(\cdot)}\p \Omega\big)$  on $\p B_{\rho}\cap \{y_3=0\}$ in the sense of trace, }
        \end{align*} 
		we can find an extension map $\overline{v} \in H^1\big(A_{\rho-\sigma, \h{1pt}\rho}^+; \mathbb{S}^2\big)$ with the following properties:
		\begin{align*}
			&\mathrm{(a).}\h{5pt} \text{$\overline{v}\h{1pt}\big|_{\p^+B_\rho^+} = v$ and $\overline{v}(\cdot)\in U_{\C{17} \left(\delta_2+r_k\h{0.5pt}\sigma\right)}\big(\mathbb{S}^2\cap T_{\varphi_k(\cdot)}\p \Omega\big)$ on $\p^0 A_{\rho-\sigma,\h{1pt}\rho}^+$ in the sense of trace};\\[1mm]
            &\mathrm{(b).}\h{5pt}	\mathcal{E}^*_{\h{1pt}\overline{v},\h{0.5pt} A_{\rho-\sigma,\rho}^+}
			\leq \C{17}\left( \sigma \mathcal{E}^*_{v,\h{0.5pt}\p^+B^+_\rho}
			+ \beta^{-1}\sigma^{-1}\h{1pt}W^{\h{0.5pt}q}_{v,\h{0.5pt}\varphi_k,\h{0.5pt}\p^+B^+_\rho} 
			+  \beta^{-1}\sigma^{-1}
			r_k^2\h{1pt}\rho^2\right);\\[1mm]   &\mathrm{(c).}\h{5pt}W^{\h{0.5pt}q}_{\overline{v},\h{0.5pt}\varphi_k,\h{0.5pt}A_{\rho-\sigma,\rho}^+}
			\leq \C{17} \sigma \left( W^{\h{0.5pt}q}_{v,\h{0.5pt}\varphi_k,\h{0.5pt}\p^+B^+_\rho}
			+ r_k^2\h{1pt}\rho^2\right). 
		\end{align*} 
		Furthermore, the boundary value $v_\textup{in}:=\overline{v}\h{1.5pt}\big|_{\p^+B_{\rho-\sigma}^+}$ satisfies 
		\begin{align*}
		&\mathrm{(d).}\h{5pt} v_\textup{in} \in H^1\big(\p^+B_{\rho-\sigma}^+; \mathbb{S}^2\big);\\[1mm]
        &\mathrm{(e).}\h{5pt} 
        \text{$v_\textup{in}(\cdot)\in U_{\C{17} (\delta_2+r_k\h{0.5pt}\sigma)}(\mathbb{S}^2\cap T_{\varphi_k(\cdot)}\p \Omega\big)$ on $\p B_{\rho-\sigma} \cap \big\{y_3=0\big\}$ in the sense of trace}\h{0.5pt};\\[1mm]
&\mathrm{(f).}\h{5pt}\mathcal{E}^*_{v_\textup{in},\h{0.5pt}\p^+B_{\rho-\sigma}^+}
			\leq \alpha \h{1pt}\mathcal{E}^*_{v,\h{0.5pt}\p^+B_{\rho}^+}
			+ \C{17} \h{1pt}\beta^{-1}\sigma^{-2}
			\left( W^{\h{0.5pt}q}_{v,\h{0.5pt}\varphi_k,\h{0.5pt}\p^+B^+_\rho} 
			+r_k^2\h{1pt}\rho^2\right); \\[1mm]
			&\mathrm{(g).}\h{5pt} W^{\h{0.5pt}q}_{ v_\textup{in},\h{0.5pt}\varphi_k,\h{0.5pt}\p^+B_{\rho-\sigma}^+}
			\leq \C{17}\Big(W^{\h{0.5pt}q}_{v,\h{0.5pt}\varphi_k,\h{0.5pt}\p^+B_{\rho}^+}
			+\sigma^2\h{1pt}r_k^2\Big).
		\end{align*}
	\end{lem}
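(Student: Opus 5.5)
The construction follows the Schoen--Uhlenbeck covering argument of \cite[Lemma 4.3]{SU82}, adapted to the half-shell $A^+_{\rho-\sigma,\rho}$; the paper has already sketched this strategy. After scaling I work with $\rho$ and a shell of thickness $\sigma$.

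\textbf{Step 1: covering.} I cover $\p^+B^+_\rho$ by curvilinear patches of diameter comparable to $\sigma$. Patches meeting the equator $\p B_\rho\cap\{y_3=0\}$ are modeled on half-disks $D^+_{\rho'}$ with $\rho'\in[3\sigma,4\sigma]$. The remaining patches are modeled on full disks $D_{\rho'}$ with $\rho'\in[\sigma,2\sigma]$. Each patch generates a (half) cylinder $C_{\rho',\sigma}$ or $C_{\rho',\sigma,+}$, sitting in the shell through a bi-Lipschitz map $\varphi_0$ with uniformly bounded constants. This map sends the top face to the patch on $\p^+B^+_\rho$ and the flat face $C^{\rm flat}$ into $\p^0 A^+_{\rho-\sigma,\rho}$. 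The radii $\rho'$ will be chosen by a Fubini/averaging argument over a one-parameter family of grids, shifted by a factor $\beta$. This guarantees that on the lateral circles (half-circles) the traces satisfy
\begin{equation*}
\mathcal{E}^*_{v,\p D_{\rho'}}\lesssim \sigma^{-1}\mathcal{E}^*_{v,\text{patch}}, \qquad W^{q}_{v,\varphi_k,\p D_{\rho'}}\lesssim \beta^{-1}\sigma^{-1}W^{q}_{v,\varphi_k,\text{patch}}.
\end{equation*}
The factor $\beta^{-1}$ comes from the set of good radii having measure $\ge\beta\sigma$; this is where $\beta$ enters (b) and (f).

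\textbf{Step 2: face data and cylinder extension.} On the top face I pull back $v$. On the lateral face I take $v$ constant in the height variable. On the bottom face I use Lemma \ref{lem ext for dim2}: item (1) for interior patches, item (2) for equatorial patches. The smallness hypotheses \eqref{small prod energy} and those of item (2) follow from assumption (1) of the present lemma, with constants absorbed into $\eps{7}$. The $r_k$-terms control $\|\nabla\varphi_5\|$ and the variation of the tangent plane, via $\varphi_k$ having gradient $O(r_k)$. Hypothesis (2) gives \eqref{bdry of v} on the flat part. Then Lemma \ref{lem ext to cylinder} and Lemma \ref{lem ext to cylinder tan} fill in the cylinders, and I glue across overlaps. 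Gluing is consistent because the lateral data were chosen independent of height, as in \cite{SU82}. On the leftover regions of the shell I extend constantly in the radial variable.

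Property (a) follows from item (1) of Lemma \ref{lem ext to cylinder tan}, with $L_{\varphi_0}\sigma\lesssim r_k\sigma$ once composed with $\varphi_k$, together with item (2b) of Lemma \ref{lem ext for dim2}. Properties (b) and (c) follow by summing items (2)--(3) of the cylinder lemmas over the finitely overlapping cover. The term $\sigma^3\sup|\varphi_0(y^1)-\varphi_0(y^2)|^2$ becomes $\sigma r_k^2\rho^2$ after summation. The $\int[\mathrm{dist}(q)]^2$ term in (2d) is bounded by $\sigma^2 r_k^2$ plus the $W$ term, by Lemma \ref{lem. regularity of proj. and dist}(2).

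\textbf{Step 3: inner trace (d)--(g).} The bottom faces tile $\p^+B^+_{\rho-\sigma}$ up to the constant-extension regions, which gives (d), (e) and (g) directly. The decay (f) is the main obstacle. Its energy bound comes from item (2c) of Lemma \ref{lem ext for dim2},
\begin{equation*}
\mathcal{E}^*_{\overline v,D^+}\lesssim \bigl(\mathcal{E}^*_{v,\p^+D^+}\,W^q\bigr)^{1/2}\le \tfrac{\alpha'}{2}\,\sigma\,\mathcal{E}^*_{v,\p^+D^+}+C\sigma^{-1}W^q.
\end{equation*}
Here $\alpha'$ is small, and summing over patches with the averaging from Step 1 gives the bound $\alpha\,\mathcal{E}^*_{v,\p^+B^+_\rho}+C\beta^{-1}\sigma^{-2}(W^q+r_k^2\rho^2)$. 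The delicate point is that the constant-extension regions and overlaps contribute exactly $\mathcal{E}^*_{v}$ restricted to a fraction of the sphere. I would therefore arrange the grid, as in \cite[Lemma 4.3]{SU82}, so that this fraction is a fixed universal proportion strictly less than one, making $\alpha<1$ universal.
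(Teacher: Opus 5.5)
Your outline matches the paper's: a cover by $\sigma$-sized disks and equatorial half-disks, a Fubini choice of radii, Lemma \ref{lem ext for dim2} on the bottom faces, Lemmas \ref{lem ext to cylinder} and \ref{lem ext to cylinder tan} on the (half-)cylinders, and height-constant extension elsewhere. Once the construction is well defined, (a)--(e) and (g) follow as you say.

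The gap is in (f), which is the heart of the lemma.

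First, you cannot ``glue across overlaps''. The cylinder extensions over two overlapping patches are different maps on their intersection. Height-independence of the lateral data only makes a cylinder extension compatible with the height-constant extension \emph{outside} that cylinder. So only pairwise disjoint patches can be filled, and the unfilled region carries its full energy to the inner hemisphere.

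Second, no geometric arrangement of the grid controls how much of $\mathcal{E}^*_{v,\p^+B^+_\rho}$ lies in the unfilled region: all of it may sit in the gaps. What is needed is a pigeonhole on the energy. The paper uses the bounded overlap of $\{\mathcal{D}_\sigma(x^i)\}\cup\{\mathcal{D}^+_{3\sigma}(b^i)\}$ and Besicovitch's theorem to split this family into $N_0$ (universal) subfamilies $\mathscr{B}_1,\dots,\mathscr{B}_{N_0}$, each consisting of pairwise disjoint sets. It then picks $\mathscr{B}_{i_0}$ carrying at least $N_0^{-1}\mathcal{E}^*_{v,\p^+B^+_\rho}$, modifies only the cylinders over $\mathscr{B}_{i_0}$, and extends constantly everywhere else. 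This gives
\[
\mathcal{E}^*_{v_{\mathrm{in}},\p^+B^+_{\rho-\sigma}} \le \bigl(1-N_0^{-1}+C_\Omega\beta\bigr)\,\mathcal{E}^*_{v,\p^+B^+_\rho} + C_\Omega(\beta\sigma^2)^{-1}W^q_{v,\varphi_k,\p^+B^+_\rho} + C_\Omega\, n_b\,\beta^{-1}r_k^2 ,
\]
and a universal $\alpha<1$ follows by taking $\beta$ small depending on $\Omega$. Averaging over shifts of a disjoint grid could also work, but only if the averaging is done against the energy density. Your Step 1 uses the grid averaging to choose radii, not for this purpose.

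Relatedly, you misattribute $\beta$. A Fubini/Chebyshev choice of the radii costs only a universal constant, not $\beta^{-1}$. In the paper $\beta$ is exactly your AM--GM weight $\alpha'$ in
\[
\bigl(\mathcal{E}^*_{v,\p^+\mathcal{D}}\,W^{q}_{\p^+\mathcal{D}}\bigr)^{1/2}\le \beta\sigma\,\mathcal{E}^*_{v,\p^+\mathcal{D}}+(\beta\sigma)^{-1}W^{q}_{\p^+\mathcal{D}} ,
\]
which turns each bottom-face energy into $C_\Omega\beta\,\mathcal{E}^*_{v}+C_\Omega(\beta\sigma^2)^{-1}W^q$ on the enlarged patch. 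That is the only source of the factor $\beta^{-1}$ in (b) and (f). It is also why $\beta$ must be tuned small against $N_0^{-1}$ and $C_\Omega$: otherwise the coefficient $1-N_0^{-1}+C_\Omega\beta$ need not be below one.
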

	\begin{proof}
		Let $\rho \in (0,1)$ and $\sigma\in \big(\eps{7}\rho,\rho/4\big)$. $\big\{x^1, \dots, x^{n_x} \big\}\cup\big\{b^1, \dots, b^{n_b}\big\}$ is the maximal set of points on $\overline{\p^+B^+_\rho}$ satisfying the following:
		\begin{enumerate}[(i).]  \setlength\itemsep{.5em}
			\item  Denote by $\textup{dist}_{\p B_\rho}$ the geodesic distance on $\p B_\rho$. Then, $$\textup{dist}_{\p B_\rho}\left(x^i,\p B_\rho\cap\big\{y_3=0\big\}\right)\geq 2\sigma\h{15pt}\text{ for $i = 1,2, ..., n_x$}.$$ In addition, $\textup{dist}_{\p B_\rho}\big(x^i,x^j\big) \geq \sigma$ for $i,j=1,2,\ldots,n_x$ with $i \neq j$;
			
			\item $b^i \in \p B_\rho\cap\big\{y_3=0\big\}$ and $\textup{dist}_{\p B_\rho}\big(b^i,b^{\h{1pt}j}\big) \geq 3\sigma$ for $i,j=1,2,\ldots,n_b$ with $i \neq j$;
			
			\item \sloppy Let \(\mathcal{D}_{\sigma}(x^i)\) be the geodesic disk in \(\partial B_\rho\) with radius \(\sigma\) and center \(x^i\).  \(\mathcal{D}^{+}_{3\sigma}(b^i)\) is the half geodesic disk (intersection with \(\partial^{+}B_\rho^+\)) with radius \(3\sigma\) and center \(b^i\). Then, 
            $$x^i \not\in  \Bigg(\bigcup_{j \h{1pt} \neq \h{1pt} i}\mathcal{D}_{\sigma}(x^j) \Bigg)\h{2pt} \bigcup \h{2pt}\Bigg(\bigcup_\nu \mathcal{D}^+_{3\sigma}(b^\nu)\Bigg), \h{15pt}b^l \not\in  \Bigg(\bigcup_{j }\mathcal{D}_{\sigma}(x^j)\Bigg) \h{2pt} \bigcup \h{2pt}\Bigg(\bigcup_{\nu \h{1pt}\neq\h{1pt} l} \mathcal{D}^+_{3\sigma}(b^\nu)\Bigg), $$for any $i = 1,2, ..., n_x$ and $l = 1,2, ..., n_b$; 
			
			\item \(
			\p^+B^+_\rho \subseteq \Big(\bigcup_{i=1}^{n_x} \mathcal{D}_{\sigma}(x^i)\Big)
			\bigcup \h{1pt} \Big(\bigcup_{i=1}^{n_b}\mathcal{D}^+_{3\sigma}(b^i)\Big)
			\). \vspace{0.4pc}
		\end{enumerate}

		\noindent \textbf{Part 1. Estimates on half disks:} Fix $i =1,2,\ldots,n_b$. By Fubini's theorem, there is $\rho_i \in [3\sigma, 4\sigma]$ such that the restriction $v^{*,i} := v\h{1pt}|_{\partial^+ \mathcal{D}_i^+}$ lies in $H^1(\partial^+ \mathcal{D}_i^+;\mathbb{S}^2)$. Here,  we simply denote by $\mathcal{D}_i^+$ the half geodesic disk $\mathcal{D}_{\rho_i}^+(b^i)$. Letting $\varphi_{*,i} : \overline{D^+_{4\sigma}} \to \overline{\mathcal{D}_{4\sigma}^+(b^i)}$ be a bi-Lipschitz homeomorphism satisfying \begin{align*}&\varphi_{*,i}\big(\overline{D_{\rho_i}^+}\big)
		=\overline{\mathcal{D}_i^+}, \h{88pt} 
		\varphi_{*,i}\big(\p^0D^+_{4\sigma}\big) =\p^0\mathcal{D}_{4\sigma}^+(b^i),\\[1mm] 
        &\varphi_{*,i}\big(\p^+D^+_{4\sigma}\big) =\p^+\mathcal{D}_{4\sigma}^+(b^i),\h{40pt}
		\varphi_{*,i}\big(\p^0D^+_{\rho_i}\big) =\p^0\mathcal{D}_i^+, \h{40pt}\varphi_{*,i}\big(\p^+D^+_{\rho_i}\big) =\p^+\mathcal{D}_i^+,\end{align*} we also have the following estimates:
		\begin{align}
			&\mathcal{E}^*_{v^{*,i},\h{0.5pt}\partial^+ \mathcal{D}_i^+} 
			\lesssim  \sigma^{-1}\h{1pt} \mathcal{E}^*_{v,\h{0.5pt}\mathcal{D}_{4\sigma}^+(b^i)},\h{111pt}W_{v^{*,i},\h{0.5pt}\varphi_k,\h{0.5pt}\p^+\mathcal{D}_i^{+}}\lesssim \sigma^{-1}W_{v,\h{.5pt}\varphi_k,\h{.5pt}\mathcal{D}_{4\sigma}^+(b^i)},\nonumber\\[1.5mm]
			&W^{\h{0.5pt}q}_{v^{*,i},\h{1pt}\varphi_{\ddagger,\star}\h{1pt}\circ\h{1pt}\varphi_{*,i}^{-1},\h{1pt}\partial^+ \mathcal{D}_i^+}
			\lesssim
			\sigma^{-1} \h{1pt}W^{\h{0.5pt}q}_{v,\h{1pt}\varphi_{\ddagger,\star}\h{1pt}\circ\h{1pt}\varphi_{*,i}^{-1}, \h{1pt}\mathcal{D}_{4\sigma}^+(b^i)}, \h{20pt}W^{\h{0.5pt}q}_{v^{*,i},\h{1pt}
				\varphi_k, \h{1pt}\partial^+ \mathcal{D}_i^+} 
			\lesssim  \sigma^{-1} W^{\h{0.5pt}q}_{v,\h{1pt}
				\varphi_k,\h{1pt}\mathcal{D}_{4\sigma}^+(b^i)}.
			\label{3240}
		\end{align} 
		\sloppy Note that $v \in H^1\big(\p^+ B_\rho^+; \mathbb{S}^2\big)$ and $v(\cdot) \in U_{\delta_2}\big(\mathbb{S}^2\cap T_{\varphi_k(\cdot)}\p \Omega\big)$ on $\p B_{\rho}\cap \big\{y_3=0\big\}$ in the sense of trace. We can also assume that $\rho_i$ is chosen such that $$\widetilde{v}^{*,i}(\cdot) \in U_{\delta_2}\big(\mathbb{S}^2\cap T_{\varphi_k\h{1pt}\circ\h{1pt}\varphi_{*,i}(\cdot)}\p \Omega\big) \h{15pt}\text{ on $\p D_{\rho_i}\cap\big\{(y_1, y_2) : y_2=0\big\}$ in the sense of trace.}$$ 
		Here, $\widetilde{v}^{*,i}:=v^{*,i} \h{1pt}\circ\h{1pt} \varphi_{*,i}:\p^+D^+_{\rho_i} \to \mathbb{S}^2$. \vspace{0.4pc}

		\noindent \textbf{Part 1.1. Extensions of $\widetilde{v}^{*,i}$ to half disks:} Recall the mappings $\varphi_4$ and $\varphi_\ddagger$ in Lemma \ref{lem ext for dim2}. First, $\varphi_4$ induces a bi-Lipschitz homeomorphism from $D_{\rho_i}^+$ to $D_{\rho_i}$, which is the identity map when restricted to $\p^+D^+_{\rho_i}$. Second, using the triangle inequality and the definitions of $\varphi_k$, $\varphi_{\ddagger,\star}$, we obtain the following estimate: 
		\begin{align}
			\int_{\p^+\mathcal{D}_i^{+}} \Big[ \textup{dist}_{\varphi_{\ddagger,\star}\h{1pt}\circ\h{1pt}\varphi_{*,i}^{-1}(y)}(v^{*,i}(y)) \Big] ^2\h{1.5pt}\mathrm d
            \h{.5pt}\mathscr{H}^1_y &\h{1.5pt}\lesssim_{\,\Omega\,}\h{1.5pt}\int_{\p^+\mathcal{D}_i^{+}} \Big[ \textup{dist}_{\varphi_k(y)}(v^{*,i}(y)) \Big] ^2\h{1.5pt}\mathrm d
            \h{.5pt}\mathscr{H}^1_y \nonumber\\
			&\h{1.5pt}+\h{1.5pt}\sigma\sup_{y\h{1pt}\in\h{1pt}\p^+\mathcal{D}_i^{+}}
			\left|\h{1pt}\widetilde{a}+r_k\h{0.5pt}y
			-\varphi_3\h{1pt}\circ\h{1pt}\varphi_4^{-1}\h{1pt}\circ\h{1pt}(\varphi_{*,i}^{-1})^\star(y)\h{1pt}\right|^2.
			\label{3501}
		\end{align}	
        If we set $\varphi_3=\widetilde{a}+r_k\h{1pt}\varphi_{*,i}:D^+_{4\sigma} \to \varphi_3(D^+_{4\sigma} )\subseteq B^+_{R_0}$, then \eqref{3501} induces $$W_{v^{*,i},\h{1pt}\varphi_{\ddagger,\star}\h{1pt}\circ\h{1pt}\varphi_{*,i}^{-1},\h{1pt}\p^+\mathcal{D}_i^{+}} \h{1.5pt}\lesssim_{\,\Omega\,}\h{1.5pt}
		W_{v^{*,i},\h{.5pt}\varphi_k,\h{.5pt}\p^+\mathcal{D}_i^{+}}
		+\sigma\h{0.5pt} r^2_k.$$ 
        Together with $\varphi_5=\varphi_3 \h{1pt}\circ\h{1pt} \varphi_4^{-1}$ (see Lemma \ref{lem ext for dim2}) and the fact that $\|\nabla \varphi_5\|_{L^\infty(D_{4\sigma})}\lesssim r_k$, we have
		\begin{align*}
			&\rho_i\|\nabla \varphi_5\|^2_{L^\infty(D_{\rho_i})}
			W_{\widetilde{v}^{*,i},\h{.5pt}\varphi_{\ddagger,\star},\h{.5pt}\p^+D_{\rho_i}^{+}}  
			+  \mathcal{E}^*_{\h{1pt}\widetilde{v}^{*,i},\h{.5pt}\p^+D^+_{\rho_i}}
			W^{\h{0.5pt}q}_{\widetilde{v}^{*,i},\h{.5pt}\varphi_{\ddagger,\star},\h{.5pt}\p^+D_{\rho_i}^{+}} \nonumber\\[1mm]
			&\h{20pt}\lesssim \rho_i \h{1pt} r_k^2 \h{1pt} 	 W_{v^{*,i},\h{.5pt}\varphi_{\ddagger,\star}\h{1pt}\circ\h{1pt}\varphi_{*,i}^{-1},\h{.5pt}\p^+\mathcal{D}_i^{+}} 
			+\mathcal{E}^*_{v^{*,i},\h{.5pt}\p^+\mathcal{D}_i^{+}}
			W^{\h{0.5pt}q}_{v^{*,i},\h{.5pt}\varphi_{\ddagger,\star}\h{1pt}\circ\h{1pt}\varphi_{*,i}^{-1},\h{.5pt}\p^+\mathcal{D}_i^{+}} \nonumber\\[1mm]
			&\h{20pt}\lesssim \rho_i \h{1pt} r_k^2 \h{1pt}	 \Big(	
			W_{v^{*,i},\h{0.5pt}\varphi_k,\h{0.5pt}\p^+\mathcal{D}_i^{+}}
			+\sigma \h{0.5pt}r^2_k \Big) 	 
			+\mathcal{E}^*_{v^{*,i},\h{.5pt}\p^+\mathcal{D}_i^{+}}\Big(
			W^{\h{0.5pt}q}_{v^{*,i},\h{0.5pt}\varphi_k,\h{0.5pt}\p^+\mathcal{D}_i^{+}}
			+\sigma \h{0.5pt}r^2_k \Big).
		\end{align*}
		Using \eqref{3240} and the fact that $\rho_i \in [3\sigma, 4\sigma]$, we further induce that
		\begin{align}
			\rho_i\h{1pt}\|\nabla \varphi_5\|^2_{L^\infty(D_{\rho_i})}
			W_{\widetilde{v}^{*,i},\h{.5pt}\varphi_{\ddagger,\star},\h{.5pt}\p^+D_{\rho_i}^{+}}  
			&+  \mathcal{E}^*_{\h{1pt}\widetilde{v}^{*,i},\h{.5pt}\p^+D^+_{\rho_i}}
			W^{\h{0.5pt}q}_{\widetilde{v}^{*,i},\h{.5pt}\varphi_{\ddagger,\star},\h{.5pt}\p^+D_{\rho_i}^{+}} \nonumber\\[1mm]
			\lesssim r_k^2 	 
			\h{1pt}W_{v,\h{.5pt}\varphi_k,\h{.5pt}\mathcal{D}_{4\sigma}^+(b^i)}
			&+ \sigma^2 r^4_k	 
			+\sigma^{-2}\mathcal{E}^*_{v,\h{.5pt}\mathcal{D}_{4\sigma}^+(b^i)}
			W^{\h{0.5pt}q}_{v,\h{.5pt}\varphi_k,\h{.5pt}\mathcal{D}_{4\sigma}^+(b^i)}
			+r^2_k\h{1pt}\mathcal{E}^*_{v,\h{.5pt}\mathcal{D}_{4\sigma}^+(b^i)}.
			\label{3301}
		\end{align}
		The right-hand side of \eqref{3301} is smaller than $2^{-1}\eps{6}^2$ if we choose $\eps{7}$ in the assumption of this lemma to be small enough. By Lemma \ref{lem ext for dim2}, we obtain an extension of $\widetilde{v}^{*,i}$, denoted by $\overline{\widetilde{v}^{*,i}}\in H^1(D_{\rho_i}^+;\mathbb{S}^2)$, such that 
        \begin{align}
\text{$\overline{\widetilde{v}^{*,i}}=\widetilde{v}^{*,i}$ on $\p^+ D_{\rho_i}^+$, \h{15pt} $\overline{\widetilde{v}^{*,i}}(\cdot)\in 
		U_{C_\Omega \h{1pt} \delta_2}\big(\h{1pt}\mathbb{S}^2\cap T_{\varphi_k\h{1pt}\circ\h{1pt}\varphi_{*,i}(\cdot)}\p \Omega\h{1pt}\big)$ on $\p^0 D_{\rho_i}^+$,}
        \label{924}
		\end{align}
		  both in the sense of trace. Here the positive constant $C_\Omega $ depends only on $\Omega$ and may have varying values from line to line. 
		
		Moreover, for the mapping $\overline{v}^{*,i}:=\overline{\widetilde{v}^{*,i}}\h{1pt}\circ\h{1pt}\varphi_{*,i}^{-1} \in H^1(\mathcal{D}_i^+;\mathbb{S}^2)$, we can apply Lemma \ref{lem ext for dim2} to get 
		\[
		\begin{aligned}
			\pigl( \mathcal{E}^*_{\h{1pt}\overline{v}^{*,i},\h{.5pt}\mathcal{D}_i^+}\pigr) ^2  
			\lesssim
			\Big( \mathcal{E}^*_{\h{1pt}\overline{\widetilde {v}^{*,i}},\h{.5pt}D_{\rho_i}^+}\Bigr) ^2  
			&\lesssim_{\,\Omega\,} (\beta\sigma)^2
			\left(\mathcal{E}^*_{\h{1pt}\widetilde{v}^{*,i},\h{.5pt}\p^+D_{\rho_i}^+}\right)^2
			+(\beta\sigma)^{-2}\left( W^{\h{0.5pt}q}_{\widetilde{v}^{*,i},\h{0.5pt}\varphi_{\ddagger,\star},\h{0.5pt}\p^+D_{\rho_i}^+}\right)^2 \\[1mm]
			&\h{10pt}\lesssim (\beta\sigma)^2
			\left(\mathcal{E}^*_{v^{*,i},\h{.5pt}\p^+\mathcal{D}_i^+}\right)^2
			+(\beta\sigma)^{-2}\left( W^{\h{0.5pt}q}_{v^{*,i},\h{0.5pt}\varphi_{\ddagger,\star}\h{1pt}\circ\h{1pt}\varphi_{*,i}^{-1},\h{0.5pt}\p^+\mathcal{D}^+_i}\right)^2, \h{15pt} 
		\end{aligned}
		\] 
		for any $\beta>0$, and
		\begin{align*}
			W^{\h{0.5pt}q}_{\overline{v}^{*,i},\h{0.5pt}
				\varphi_{\ddagger,\star}\h{1pt}\circ\h{1pt}\varphi_4\h{1pt}\circ\h{1pt}\varphi_{*,i}^{-1},
				\h{0.5pt}\mathcal{D}_i^+}   
			&\lesssim
			W^{\h{0.5pt}q}_{\overline{\widetilde{v}^{*,i}},\h{0.5pt}\varphi_{\ddagger,\star}\h{1pt}\circ\h{1pt}\varphi_4,\h{0.5pt}D_{\rho_i}^+}   \nonumber\\[1mm]
			&\lesssim_{\,\Omega\,} \sigma \h{1pt}W^{\h{0.5pt}q}_{v^{*,i},\h{0.5pt}\varphi_{\ddagger,\star}\h{1pt}\circ\h{1pt}\varphi_{*,i}^{-1},\h{0.5pt}\p^+\mathcal{D}_i^+}
			+\int_{\mathcal{D}_i^+} \pig[ \textup{dist}_{\varphi_{\ddagger,\star}\h{1pt}\circ\h{1pt}\varphi_4\h{1pt}\circ\h{1pt}\varphi_{*,i}^{-1}(\cdot)}(q)\pigr]^2.
		\end{align*}Hence, the inequalities in \eqref{3240} imply
		\begin{align}
			\mathcal{E}^*_{\h{1pt}\overline{v}^{*,i},\h{.5pt}\mathcal{D}_i^+}
			&\lesssim_{\,\Omega\,} 
			\beta \h{1pt}\mathcal{E}^*_{v,\h{.5pt}\mathcal{D}_{4\sigma}^+(b^i)} 
			+\left(\beta\h{0.5pt}\sigma^2\right)^{-1} \h{1pt}W^{\h{0.5pt}q}_{v,\h{.5pt}\varphi_{\ddagger,\star}\h{1pt}\circ\h{1pt}\varphi_{*,i}^{-1},\h{.5pt}\mathcal{D}_{4\sigma}^+(b^i)} 			\label{3275a}
		\end{align}
and
\begin{align}&W^{\h{0.5pt}q}_{\overline{v}^{*,i},\h{0.5pt}
				\varphi_{\ddagger,\star}\h{1pt}\circ\h{1pt}\varphi_4\h{1pt}\circ\h{1pt}\varphi_{*,i}^{-1},
				\h{0.5pt}\mathcal{D}_i^+}  \nonumber\\[1mm] 
			&\lesssim_{\,\Omega\,} W^{\h{0.5pt}q}_{v,\h{0.5pt}\varphi_{\ddagger,\star}
				\h{1pt}\circ\h{1pt}\varphi_{*,i}^{-1},\h{0.5pt}\mathcal{D}_{4\sigma}^+(b^i)} +\sigma^2\sup_{\mathcal{D}_{4\sigma}^+(b^i)}
			\left|\h{1pt}\varphi_{\ddagger,\star}\h{1pt}\circ\h{1pt}\varphi_4\h{1pt}\circ\h{1pt}\varphi_{*,i}^{-1}
			-\varphi_{\ddagger,\star}\h{1pt}\circ\h{1pt}\varphi_{*,i}^{-1} \h{1pt}\right|^2 +\int_{\mathcal{D}_{4\sigma}^+(b^i)} \pig[ \textup{dist}_{\varphi_{\ddagger,\star}\h{1pt}\circ\h{1pt}\varphi_{*,i}^{-1}(\cdot)}(q)\pigr]^2\nonumber\\[1mm]
			&\lesssim_{\,\Omega\,} W^{\h{0.5pt}q}_{v,\h{0.5pt}\varphi_{\ddagger,\star}
				\h{1pt}\circ\h{1pt}\varphi_{*,i}^{-1},\h{0.5pt}\mathcal{D}_{4\sigma}^+(b^i)}
			+\sigma^4r_k^2.
			\label{3275}
		\end{align}\vspace{0.1pc}

		\noindent \textbf{Part 1.2. Extension to the cylinder:} \sloppy Define $e_3 := (0, 0, 1)^\top$ and let $$B^+_{\rho, \h{0.5pt}\sigma}:=\Big\{\h{1pt}y\in \mathbb{R}^3:y=\zeta+\lambda e_3\text{ for some $\zeta \in \overline{\p^+B^+_{\rho}}$ and $\lambda\in(-\sigma,\sigma)$} \h{1pt}\Big\}.$$ To apply Lemma \ref{lem ext to cylinder tan}, we introduce a bi-Lipschitz homeomorphism $\varphi_7:B^+_{\rho,\sigma}\to A^+_{\rho-\sigma,\rho}$ that maps $\overline{\p^+B^+_{\rho}}+\sigma e_3$ identically onto $\overline{\p^+B^+_{\rho}}$ in the sense that $$\varphi_7(\zeta+\sigma e_3)=\zeta \h{20pt}\text{for all $\zeta \in \overline{\p^+B^+_{\rho}}$.}$$  
		We can also extend $\varphi_{*,i}$ from $\overline{D^+_{4\sigma}}$ to $\overline{D^+_{4\sigma}}\times [-\sigma,\sigma]$ by letting $$\overline{\varphi_{*,i}}(y):=\varphi_{*,i}(y_1,y_2)+y_3 \h{1pt} e_3, \h{20pt}\text{where $y = (y_1, y_2, y_3) \in \overline{D^+_{4\sigma}} \times [-\sigma,\sigma]$.}$$ Therefore, the image of $\overline{\varphi_{*,i}}$ is a subset of $\overline{B^+_{\rho,\sigma}}.$ See Figure~\ref{fig 1B} for an illustration.\vspace{0.4pc}\captionsetup[figure]{font=footnotesize}
        \begin{figure}[h]
            \centering
            \includegraphics[scale=0.057]{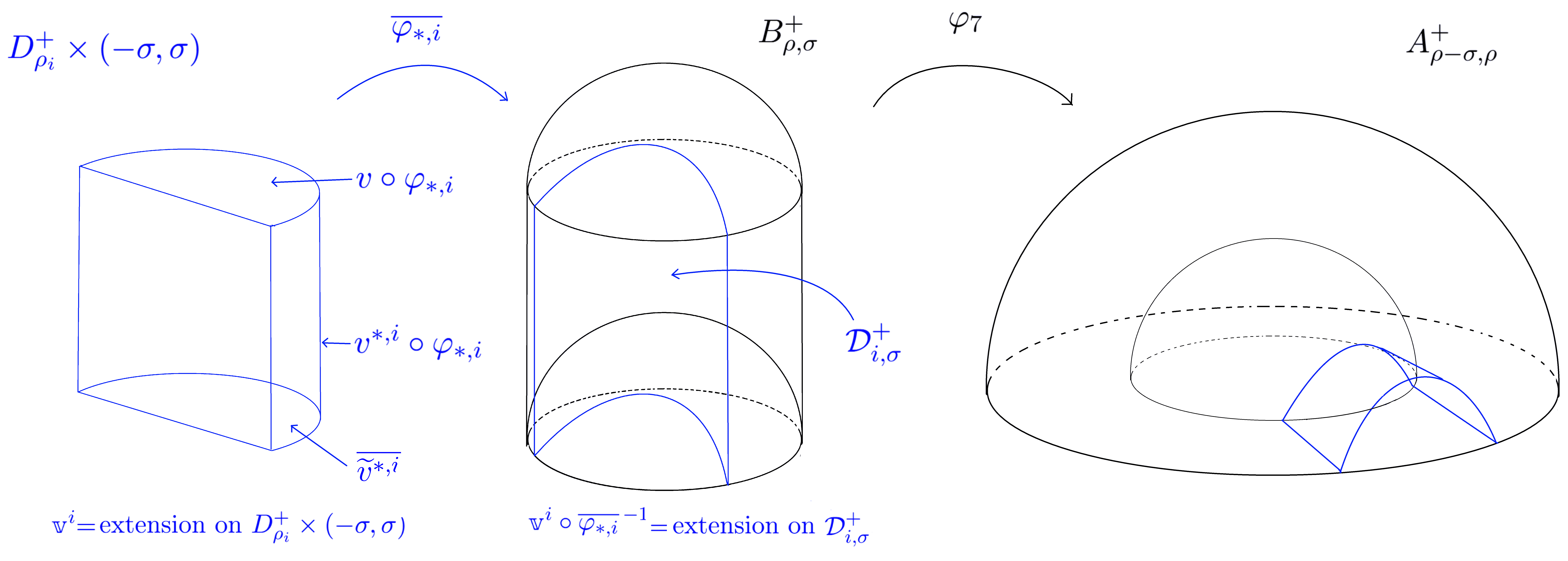}
            \caption{Extension in Part 1.2.}
            \label{fig 1B}
        \end{figure}
		
		To validate the assumptions of Lemma \ref{lem ext to cylinder tan}, we note that the assumptions of this lemma yield $$ v\big(\varphi_{*,i}(y')\big)=v\big(\varphi_7\h{1pt}\circ\h{1pt}\overline{\varphi_{*,i}}\left(y\right)\big)\in U_{\delta_2}\big(\mathbb{S}^2\cap T_{\varphi_k\h{1pt}\circ\h{1pt}\varphi_7\h{1pt}\circ\h{1pt} \overline{\varphi_{*,i}}\left(y\right)}\p \Omega\big), \h{15pt}\text{for any $y\in\p^0D^+_{\rho_i} \times\{\sigma\}$},$$  in the sense of trace. In addition,   \begin{align}\label{y11}\overline{\widetilde{v}^{*,i}}(y')\in U_{C_\Omega \left(\delta_2+r_k\h{0.5pt}\sigma\right)}\big(\mathbb{S}^2\cap T_{\varphi_k \h{1pt}\circ\h{1pt}\varphi_7\h{1pt}\circ\h{1pt}\overline{\varphi_{*,i}}\left(y\right)}\p \Omega\big), \h{15pt}\text{for any $y\in\p^0D^+_{\rho_i} \times\{-\sigma\}$,}\end{align} in the sense of trace, since, by \eqref{924} in Part 1.1, it holds
        $$\overline{\widetilde{v}^{*,i}}(\cdot) \in  U_{C_\Omega \h{1pt}\delta_2}\big(\mathbb{S}^2\cap T_{\varphi_k\h{1pt}\circ\h{1pt}\varphi_{*,i}(\cdot)}\p \Omega\big) \h{15pt}\text{on $\p^0D^+_{\rho_i}$.} $$ We can also prove \eqref{y11} by using the fact that   $$\overline{\widetilde{v}^{*,i}}(y')\in  U_{C_\Omega\h{0.5pt}\delta_2}(\mathbb{S}^2\cap T_{\varphi_k\h{1pt}\circ\h{1pt}\varphi_7\h{1pt}\circ\h{1pt}\overline{\varphi_{*,i}}\left(y\right)}\p \Omega\big), \h{15pt}\text{ for any $y\in\p^0D^+_{\rho_i}\times \{\sigma\}$.}$$ Similarly, $$\overline{\widetilde{v}^{*,i}}(y')
		=\widetilde{v}^{*,i}(y')\in U_{C_\Omega \left(\delta_2+r_k\h{0.5pt}\sigma\right)}\big(\mathbb{S}^2\cap T_{\varphi_k \h{1pt}\circ\h{1pt}\varphi_7\h{1pt}\circ\h{1pt}\overline{\varphi_{*,i}}\left(y\right)}\p \Omega\big), \h{5pt}\text{for any $y\in\big(\p D_{\rho_i}\cap\big\{y_2=0\big\} \big)\times[-\sigma,\sigma]$,} $$in the sense of trace. We now use the assumptions of this lemma to choose $\eps{7}$ sufficiently small such that $C_\Omega \left(\delta_2+r_k\h{0.5pt}\sigma\right)<2^{-2}\delta_0$. By Lemma \ref{lem ext to cylinder tan}, there is an extension over $D^+_{\rho_i} \times (-\sigma,\sigma)$, denoted by $\mathbbm{v}^i$, such that 
        \begin{align*}&\mathbbm{v}^i(y)
=v\circ\varphi_7\circ\overline{\varphi_{*,i}}(y) = v\big(\varphi_{*,i}(y')\big), \h{20pt}\text{for any $y \in D_{\rho_i}^+ \times \{\sigma\}$}; \\[1mm]
&\mathbbm{v}^i(y) = \overline{\widetilde{v}^{*,i}}(y'), \h{122pt}\text{for any $y \in D_{\rho_i}^+ \times \{-\sigma\}$}; \\[1mm]
&\mathbbm{v}^i(y) 
		=\widetilde{v}^{*,i}(y') 
		=\overline{\widetilde{v}^{*,i}}(y'), \h{75pt}\text{for any $ y \in \p^+ D_{\rho_i}^+ \times [-\sigma,\sigma]$}.
        \end{align*}
		
		Set $\varphi_0:=\widetilde{a}+r_k \h{1.5pt}\varphi_7\h{1pt}\circ\h{1pt}\overline{\varphi_{*,i}}$. By Items (1)-(3) in Lemma \ref{lem ext to cylinder tan}, the following estimates hold for $\mathbbm{v}^i$:
		\begin{align} \mathcal{E}^*_{\mathbbm{v}^i,\h{.5pt}D_{\rho_i}^+ \times (-\sigma,\sigma)}
			&\h{1.5pt}\lesssim_{\,\Omega\,}\h{1.5pt} \sigma\Big(\mathcal{E}^*_{v\h{1pt}\circ\h{1pt}\varphi_{*,i},\h{.5pt}D_{\rho_i}^+ } 
			+ \mathcal{E}^*_{\h{1pt}\overline{\widetilde{v}^{*,i}},\h{.5pt}D_{\rho_i}^+}  
			+ \sigma \h{1pt}\mathcal{E}^*_{\h{1pt}\widetilde{v}^{*,i},\h{.5pt}\p^+ D_{\rho_i}^+}  \Big), 			\label{3287a}
		\end{align}
        and
			\begin{align} &W^{\h{0.5pt}q}_{\mathbbm{v}^i,\h{.5pt}\varphi_k\h{1pt}\circ\h{1pt}\varphi_7\h{1pt}\circ\h{1pt}\overline{\varphi_{*,i}}, \h{.5pt}D_{\rho_i}^+\times (-\sigma,\sigma)} \nonumber\\[1.5mm]
			&\h{5pt}\lesssim_{\,\Omega\,}  \sigma\h{1pt}W^{\h{0.5pt}q}_{ v\h{1pt}\circ\h{1pt} \varphi_{*,i},\h{.5pt}
				\varphi_k\h{1pt}\circ\h{1pt}\varphi_7\h{1pt}\circ\h{1pt}\overline{\varphi_{*,i}},\h{.5pt}D_{\rho_i}^+\times\{\sigma\}}
			+ \sigma \h{1pt}W^{\h{0.5pt}q}_{\overline{\widetilde{v}^{*,i}},\h{.5pt}\varphi_k\h{1pt}\circ\h{1pt}\varphi_7\h{1pt}\circ\h{1pt}\overline{\varphi_{*,i}}
            ,\h{.5pt}D_{\rho_i}^+\times\{-\sigma\}}\nonumber
			\\[1.5mm]
			&\h{5pt}+ \sigma^2 \h{1pt}W^{\h{0.5pt}q}_{\widetilde{v}^{*,i},\h{.5pt}\varphi_k\h{1pt}\circ\h{1pt}\varphi_7\h{1pt}\circ\h{1pt}\overline{\varphi_{*,i}},\h{.5pt}\p^+  D_{\rho_i}^+\times\{\sigma\}} + \sigma^3 \h{1pt}r_k^2\displaystyle\sup_{y^1,\h{1pt}y^2 \h{1pt}\in \h{1pt}   D_{\rho_i}^+\times (-\sigma,\sigma)}
			\Big|\h{1pt} \varphi_7\h{1pt}\circ\h{1pt}\overline{\varphi_{*,i}} \left(y^1\right)- \varphi_7\h{1pt}\circ\h{1pt}\overline{\varphi_{*,i}} \left(y^2\right)\Big|^2.
			\label{3287}
		\end{align} Furthermore, it also satisfies the boundary condition: $$\mathbbm{v}^i (\cdot)\in U_{C_\Omega \left(\delta_2+r_k\h{.5pt}\sigma\right)}\big(\mathbb{S}^2\cap T_{\varphi_k\h{1pt}\circ\h{1pt}\varphi_7\h{1pt}\circ\h{1pt}\overline{\varphi_{*,i}}(\cdot)}\p \Omega\big) \h{15pt}\text{ on $ \p^0D^+_{\rho_i} \times [-\sigma,\sigma]$, in the sense of trace.} $$  
		
		\noindent \textbf{Part 2. Estimates on whole disks:} We repeat the extension procedure described in Part 1, but utilizing Lemmas \ref{lem ext to cylinder} and \ref{lem ext for dim2} to extend $v$ to the whole cylinder instead of the half cylinder. Similarly to \eqref{3240}, we can apply Fubini's theorem to obtain $\sigma_i\in[\sigma,2\sigma]$ which satisfies the inequality: 
        \begin{align}\label{1306}
            \mathcal{E}^*_{v^{i},\h{.5pt}\p \mathcal{D}_{i}}
		\h{1pt} \lesssim\h{1.5pt} \sigma^{-1}\mathcal{E}^*_{v,\h{0.5pt}\mathcal{D}_{2\sigma}(x^i)} ,\h{5pt}         W^{\h{0.5pt}q}_{v^{i},\h{0.5pt}\p \mathcal{D}_{i}}
		\h{1.5pt}\lesssim\h{1.5pt} \sigma^{-1} 
		W^{\h{0.5pt}q}_{v,\h{0.5pt}\mathcal{D}_{2\sigma}(x^i)},\h{8pt}\text{where $\mathcal{D}_i:=\mathcal{D}_{\sigma_i}(x^i)$ and $v^{i} := v \h{1pt}\big|_{\partial \mathcal{D}_i}$.}
        \end{align}
         Seen from (1) in the lemma's assumptions, the left-hand side is smaller than $\eps{6}$, provided that $\eps{7}$ is sufficiently small. Applying Item (1) of Lemma \ref{lem ext for dim2}, we obtain an extension $\overline{v}^{i}  \in H^1(\mathcal{D}_i;\mathbb{S}^2)$. Estimates similar to those in \eqref{3275a}-\eqref{3275} can be obtained by (1) of Lemma \ref{lem ext for dim2} and some estimates similar to \eqref{3240}. More precisely, we have the following:
		\begin{align}
			\mathcal{E}^*_{\h{1pt}\overline{v}^{i},\h{0.5pt}\mathcal{D}_i}
			\lesssim_{\,\Omega\,} 
			\beta \h{1pt}    \mathcal{E}^*_{v,\h{0.5pt}\mathcal{D}_{2\sigma}(x^i)} 
			+ \big(\beta\h{0.5pt}\sigma^2\big)^{-1} W^{\h{0.5pt}q}_{v,\h{0.5pt}\mathcal{D}_{2\sigma}(x^i)} 
			\h{15pt}\text{and}\h{15pt}
			W^{\h{0.5pt}q}_{\overline{v}^{i},\h{0.5pt}\varphi_k,\h{0.5pt}\mathcal{D}_i}    
			\lesssim_{\,\Omega\,} W^{\h{0.5pt}q}_{v,\h{0.5pt}\varphi_k,\h{0.5pt}\mathcal{D}_{2\sigma}(x^i)}. 
			\label{3575}
		\end{align}
		Estimates similar to \eqref{3287a}-\eqref{3287} remain valid in an analogous manner for the whole cylinders. \vspace{0.4pc}
		
		\noindent \textbf{Part 3. The extension $\overline{v}$ and its estimates:} Note that the family $\big\{\mathcal{D}_i \big\}_{i=1}^{n_x}\bigcup \big\{\mathcal{D}_i^+\big\}_{i=1}^{n_b}$ forms a cover of $\p^+B^+_{\rho}$. Using Besicovitch's covering theorem \cite[Theorem 2.7]{M99GA}, we can construct families $\mathscr{B}_1,\ldots,\mathscr{B}_{N_0}$ of sets such that each family $\mathscr{B}_i$ consists of disjoint elements from  $ \big\{\mathcal{D}_i\big\}_{i=1}^{n_x}\bigcup \big\{\mathcal{D}_i^+\big\}_{i=1}^{n_b}$. Every element in $\big\{\mathcal{D}_i\big\}_{i=1}^{n_x}\bigcup \big\{\mathcal{D}_i^+\big\}_{i=1}^{n_b}$ belongs to exactly one of $\big\{\mathscr{B}_i\big\}_{i=1}^{N_0}$. Here, $N_0$ is bounded by a universal constant, depending only on the dimension of the domain. $N_0$ is fixed in our study. Since $$\sum_{i=1}^{n_x} \mathcal{E}^*_{v,\h{0.5pt}\mathcal{D}_i}+\sum_{i=1}^{n_b} \mathcal{E}^*_{v,\h{0.5pt}\mathcal{D}_i^+}\geq\mathcal{E}^*_{v,\h{0.5pt}\p^+B^+_\rho},$$ there is an $i_0 \in \big\{1,2,\ldots,N_0\big\}$ such that \begin{align}\label{sum low bd} \sum_{\mathcal{D}_{i} \h{1pt}\in \h{1pt} \mathscr{B}_{i_0}} \mathcal{E}^*_{v,\h{0.5pt}\mathcal{D}_i}
		+\sum_{\mathcal{D}_{i}^+ \h{1pt}\in \h{1pt} \mathscr{B}_{i_0}} \mathcal{E}^*_{v,\h{0.5pt}\mathcal{D}_i^+}\geq  N_0^{-1} \h{1pt}\mathcal{E}^*_{v,\h{0.5pt}\p^+B^+_\rho}. \end{align} 
        
        \sloppy Define $\mathcal{O}_{i_0}$ as the union of all sets in $\mathscr{B}_{i_0}$ and let $$\mathcal{O}_{i_0,\sigma} := \Big\{\h{1pt}y\in \mathbb{R}^3:y=\zeta+\lambda\h{0.5pt} e_3 \h{2pt}\text{ for some $\zeta \in \overline{\mathcal{O}_{i_0}}$ and $\lambda\in(-\sigma,\sigma)$}\Big\}.$$ We extend $v$ from $\p^+B^+_\rho  $ to $B^+_{\rho,\sigma}$ so that the extension, denoted by $\overline{v}$, satisfies $$\overline{v}\h{1pt}\big|_{B^+_{\rho,\sigma}\setminus \h{1pt} \mathcal{O}_{i_0,\sigma}} 
        =v\h{1pt}\big|_{ \p^+B^+_{\rho} \setminus\h{1pt}\mathcal{O}_{i_0}} .$$ It is independent of the last coordinate on $B^+_{\rho,\sigma}\setminus \mathcal{O}_{i_0,\sigma}$. Given $\mathcal{D}_i^+\in\mathscr{B}_{i_0}$, we define the geodesic half-disk cylinder as follows: $$\mathcal{D}_{i,\sigma}^+:=\Big\{y\in \mathbb{R}^3:y=\zeta+\lambda \h{1pt} e_3\text{\h{1.5pt} for some $\zeta \in \overline{\mathcal{D}_i^+}$ and $\lambda\in(-\sigma,\sigma)$}\Big\}.$$
        Then, the extension $\overline{v}$ on $\mathcal{D}_{i,\sigma}^+$ is defined by $$\overline{v}\h{1pt}\big|_{\mathcal{D}_{i,\sigma}^+} := \mathbbm{v}^i\h{1pt}\circ\h{1pt} \overline{\varphi_{*,i}}^{\,-1}\h{1pt}\big|_{\mathcal{D}_{i,\sigma}^+}.$$ A similar extension is defined for $\overline{v}$ on the following geodesic disk cylinder: $$\mathcal{D}_{i,\sigma}:=\Big\{y\in \mathbb{R}^3:y=\zeta+\lambda \h{1pt}e_3\text{ for some $\zeta \in \overline{\mathcal{D}_i}$ and $\lambda\in(-\sigma,\sigma)$}\Big\}.$$ Note that we have the boundary condition: $$\overline{v}\h{1pt}\circ\h{1pt}\varphi^{-1}_7(\cdot)\in U_{ C_\Omega\left(\delta_2+ r_k\h{0.5pt}\sigma\right)}\big(\mathbb{S}^2\cap T_{\varphi_k(\cdot)}\p \Omega\big) \h{15pt}\text{on $\p^0 A_{\rho-\sigma,\rho}^+$, in the sense of trace,}$$  since it holds that $$\mathbbm{v}^i (\cdot)\in U_{ C_\Omega\left(\delta_2+ r_k\h{0.5pt}\sigma\right)}\big(\mathbb{S}^2\cap T_{\varphi_k\h{1pt}\circ\h{1pt}\varphi_7\h{1pt}\circ\h{1pt}\overline{\varphi_{*,i}}\left(\cdot\right)}\p \Omega\big) \h{15pt}\text{ on $\big( \p^0D^+_{\rho_i}\big)  \times [-\sigma,\sigma]$, for $i$ satisfying $ \mathcal{D}_{i}^+ \in\mathscr{B}_{i_0}$.} $$  In addition, using \eqref{3287} and the fact that $\varphi_7\h{1pt}\circ\h{1pt}\overline{\varphi_{*,i}}=\varphi_{*,i}$ on  $D_{\rho_i}^+\times\{\sigma\}$, we obtain 
		\begin{align*}
&W^{\h{0.5pt}q}_{\mathbbm{v}^i\h{1pt}\circ\h{1pt}\overline{\varphi_{*,i}}^{\,-1},\varphi_k\h{1pt}\circ\h{1pt}\varphi_7, \h{0.5pt}\mathcal{D}_{i,\sigma}^+} \\[1mm]&\h{12pt}\lesssim_{\,\Omega\,}\h{1.5pt}  \sigma \h{1pt}W^{\h{0.5pt}q}_{ v\h{1pt}\circ\h{1pt} \varphi_7,\h{0.5pt}\varphi_k\h{1pt}\circ\h{1pt}\varphi_7,\h{0.5pt}\mathcal{D}_i^++\sigma e_3} +  \sigma \h{1pt}W^{\h{0.5pt}q}_{\overline{\widetilde{v}^{*,i}}\h{1pt}\circ \h{1pt} \overline{\varphi_{*,i}}^{\,-1},\h{0.5pt}\varphi_k\h{1pt}\circ\h{1pt}\varphi_7,\h{0.5pt}\mathcal{D}_i^+ -\sigma e_3} \nonumber\\[1mm]
			&\h{12pt}
			+ \sigma^2 \h{1pt} W^{\h{0.5pt}q}_{v^{*,i}\h{1pt}\circ\h{1pt}\varphi_7,\h{0.5pt}\varphi_k\h{1pt}\circ\h{1pt}\varphi_7,\h{0.5pt}\p^+  \mathcal{D}_i^+ +\sigma e_3 } + \sigma^3\h{0.5pt}r_k^2\h{1pt}\displaystyle\sup_{y^1,\h{1pt}y^2\h{1pt}\in\h{1pt}   D_{\rho_i}^+\times (-\sigma,\sigma)}
			\Big| \h{1pt}\varphi_7\h{1pt}\circ\h{1pt}\overline{\varphi_{*,i}} \left(y^1\right)- \varphi_7\h{1pt}\circ\h{1pt}\overline{\varphi_{*,i}} \left(y^2\right)\Big|^2.
		\end{align*}
        As $\varphi_7(\zeta+\lambda e_3)=\zeta$ when $\zeta \in \overline{\p^+B^+_{\rho}}$, it can be simplified to
        \begin{align}
&W^{\h{0.5pt}q}_{\mathbbm{v}^i\h{1pt}\circ\h{1pt}\overline{\varphi_{*,i}}^{\,-1},\varphi_k\h{1pt}\circ\h{1pt}\varphi_7, \h{0.5pt}\mathcal{D}_{i,\sigma}^+} \,\lesssim_{\,\Omega\,}\h{1.5pt}    \sigma \h{1pt}W^{\h{0.5pt}q}_{ v,\h{0.5pt}\varphi_k,\h{0.5pt}\mathcal{D}_i^+}
			+  \sigma \h{1pt}W^{\h{0.5pt}q}_{\overline{\widetilde{v}^{*,i}} \h{1pt}\circ \h{1pt} \overline{\varphi_{*,i}}^{\,-1},\h{0.5pt}\varphi_k\h{1pt}\circ\h{1pt}\varphi_7,\h{0.5pt}\mathcal{D}_i^+ -\sigma e_3} \nonumber\\[1mm]
			&\h{20pt}+  \sigma^2 \h{1pt} W^{\h{0.5pt}q}_{v^{*,i},\h{0.5pt}\varphi_k,\h{0.5pt}\p^+  \mathcal{D}_i^+} +  \sigma^3\h{0.5pt}r_k^2\h{1pt}\displaystyle\sup_{y^1,\h{1pt}y^2 \h{1pt}\in \h{1pt}   D_{\rho_i}^+\times (-\sigma,\sigma)}
			\Big|\h{1pt} \varphi_7\h{1pt}\circ\h{1pt}\overline{\varphi_{*,i}} \left(y^1\right)- \varphi_7\h{1pt}\circ\h{1pt}\overline{\varphi_{*,i}} \left(y^2\right)\Big|^2.
			\label{3591}
		\end{align}
		
        By construction of $\overline{v}$, items (a), (d) and (e) of the lemma hold for the extension $\overline{v}\h{1pt}\circ\h{1pt}\varphi^{-1}_7$. We are left to verify the remaining estimates in the lemma.
        
        Due to \eqref{3287a}-\eqref{3287} and Lemma \ref{lem ext to cylinder}, it turns out that
		\begin{align*}
			\mathcal{E}^*_{\h{1pt}\overline{v},\h{0.5pt}B^+_{\rho,\sigma}}
			&=\mathcal{E}^*_{\h{1pt}\overline{v},\h{0.5pt}B^+_{\rho,\sigma}\setminus \h{0.5pt}\mathcal{O}_{i_0,\sigma}}
			+\mathcal{E}^*_{\h{1pt}\overline{v},\h{0.5pt} \mathcal{O}_{i_0,\sigma}} \h{1.5pt} \lesssim_{\,\Omega\,}\h{1.5pt} \sigma \h{1pt} \mathcal{E}^*_{v,\h{0.5pt}\p^+B^+_\rho \setminus \h{0.5pt} \mathcal{O}_{i_0}}
			\\[2mm]
			&+ \sigma \sum_{ \mathcal{D}_{i}^+ \h{.5pt}\in \h{1pt}\mathscr{B}_{i_0}} \Big(\mathcal{E}^*_{v,\h{0.5pt}\mathcal{D}_i^+} 
			+ \mathcal{E}^*_{\h{1pt}\overline{v}^{*,i},\h{.5pt}\mathcal{D}_i^+}  
			+ \sigma \h{1pt} \mathcal{E}^*_{v^{*,i},\h{0.5pt}\p^+ \mathcal{D}_i^+}  \Big) + \sigma \sum_{\mathcal{D}_{i} \h{0.5pt}\in \h{1pt}\mathscr{B}_{i_0}} \Big(\mathcal{E}^*_{v,\h{0.5pt}\mathcal{D}_i} 
			+ \mathcal{E}^*_{\h{1pt}\overline{v}^i,\h{0.5pt}\mathcal{D}_i}  
			+ \sigma \h{1pt}\mathcal{E}^*_{v^i,\h{0.5pt}\p \mathcal{D}_i}  \Big). 
		\end{align*} 
		It is evident that $n_x+n_b\lesssim (\rho/\sigma)^2$. Moreover, by modifying \cite[Lemma 2.6]{M99GA}, we observe that any point $y \in \p^+B^+_\rho$ belongs to at most a bounded number of disks in $\big\{\mathcal{D}_{2\sigma}(x^i)\big\}_{i=1}^{n_x}\bigcup \big\{\mathcal{D}_{4\sigma}^+(b^i)\big\}_{i=1}^{n_b}$. The number of disks to which $y$ belongs depends only on the dimension of the domain, which we fix in our remaining proof. Consequently, the sum of the characteristic functions: $$\sum_{ \mathcal{D}_{i}^+ \h{0.5pt}\in \h{1pt}\mathscr{B}_{i_0}}\mathbbm{1}_{\mathcal{D}_{4\sigma}^+(b^i)} 
		+\sum_{ \mathcal{D}_{i} \h{0.5pt}\in \h{1pt}\mathscr{B}_{i_0}} \mathbbm{1}_{\mathcal{D}_{2\sigma}(x^i)}$$ is bounded by a universal constant. Now, we apply \eqref{1306}, \eqref{3575}, \eqref{3275a}, and \eqref{3240}, followed by an estimate similar to \eqref{3501}, to obtain that
		\begin{align*}
			\mathcal{E}^*_{\h{1pt}\overline{v},\h{0.5pt}B^+_{\rho,\sigma}}
			&\lesssim_{\,\Omega\,} \sigma \left(1+\beta\right)\mathcal{E}^*_{v,\h{0.5pt}\p^+B^+_\rho}
			+ (\beta\sigma)^{-1}\sum_{\mathcal{D}_{i}^+ \h{1pt}\in \h{1pt}\mathscr{B}_{i_0}} 
W^{\h{0.5pt}q}_{v,\h{0.5pt}\varphi_{\ddagger,\star}\h{1pt}\circ\h{1pt}\varphi_{*,i}^{-1},\h{0.5pt}\mathcal{D}_{4\sigma}^+(b^i)} + (\beta\sigma)^{-1} \sum_{\mathcal{D}_{i} \h{1pt}\in\h{1pt}\mathscr{B}_{i_0}}  
			 W^{\h{0.5pt}q}_{v,\h{0.5pt}\mathcal{D}_{2\sigma} (x^i)} \\[1mm]
			&\lesssim_{\,\Omega\,} \sigma \left(1+\beta\right) \mathcal{E}^*_{v,\h{0.5pt}\p^+B^+_\rho}
			+(\beta\sigma)^{-1} \h{1pt}
			W^{\h{0.5pt}q}_{v,\h{0.5pt}\varphi_k,\h{0.5pt}\p^+B^+_\rho}
			+ n_b \h{1.5pt}\sigma \h{1pt} \beta^{-1} r_k^2 .
		\end{align*} 
		Similarly, we utilize \eqref{3591} and Lemma \ref{lem ext to cylinder}. It then follows that
		\begin{align*}
W^{\h{0.5pt}q}_{\overline{v},\h{0.5pt}\varphi_k\h{1pt}\circ\h{1pt}\varphi_7,\h{0.5pt}B^+_{\rho,\sigma}}
			&= 
			W^{\h{0.5pt}q}_{\overline{v},\h{0.5pt}\varphi_k\h{1pt}\circ\h{1pt}\varphi_7,\h{0.5pt}B^+_{\rho,\sigma}\setminus \h{0.5pt}\mathcal{O}_{i_0,\sigma}}
			+W^{\h{0.5pt}q}_{\overline{v},\h{0.5pt}\varphi_k\h{1pt}\circ\h{1pt}\varphi_7,\h{0.5pt} \mathcal{O}_{i_0,\sigma}} \\[1mm]
            &\lesssim_{\,\Omega\,} 
			\sigma \h{1pt}W^{\h{0.5pt}q}_{v,\h{0.5pt}\varphi_k\h{1pt}\circ\h{1pt}\varphi_7,\h{0.5pt}(\p^+B^+_\rho \h{0.5pt}\setminus \h{0.5pt}  \mathcal{O}_{i_0})+\sigma \h{0.5pt} e_3}  +\sigma \left(\rho \h{1pt} r_k\right)^2 \big(\sigma^2+\rho^2\big)\\[1mm]
			&+ \sigma\sum_{ \mathcal{D}_{i}^+ \h{1pt}\in \h{1pt}\mathscr{B}_{i_0}} \left(W^{\h{0.5pt}q}_{v,\h{0.5pt}\varphi_k\h{1pt},\h{0.5pt}\mathcal{D}_i^+} 
			+ W^{\h{0.5pt}q}_{\overline{\widetilde{v}^{*,i}}\h{1pt}\circ \h{1pt} \overline{\varphi_{*,i}}^{\,-1},\h{0.5pt}\varphi_k\h{1pt}\circ\h{1pt}\varphi_7,\h{0.5pt}\mathcal{D}_i^+ -\sigma e_3} 
			+ \sigma \h{1pt} W^{\h{0.5pt}q}_{v^{*,i},\h{0.5pt}\varphi_k ,\h{0.5pt}\p^+ \mathcal{D}_i^+ }
			+\sigma^4 \h{1pt}r_k^2  \right)\\[1mm]
			&+ \sigma\sum_{\mathcal{D}_{i} \h{1pt}\in \h{1pt}\mathscr{B}_{i_0}} \left(W^{\h{0.5pt}q}_{v,\h{0.5pt}\mathcal{D}_i} 
			+ W^{\h{0.5pt}q}_{\overline{v}^i,\h{0.5pt}\mathcal{D}_i}
			+ \sigma \h{1pt}W^{\h{0.5pt}q}_{v^i,\h{0.5pt}\p \mathcal{D}_i} + \sigma^{-1}\h{1pt}W_{\overline{v},\h{0.5pt}\varphi_k\h{1pt}\circ\h{1pt}\varphi_7,\h{0.5pt}\mathcal{D}_{i,\sigma} } \right).
		\end{align*}
		We handle the term $W_{\overline{v},\h{0.5pt}\varphi_k\h{1pt}\circ\h{1pt}\varphi_7,\h{0.5pt}\mathcal{D}_{i,\sigma} }$ by the same method as in \eqref{3212}. Hence, $$W_{\overline{v},\varphi_k\h{1pt}\circ\h{1pt}\varphi_7,\mathcal{D}_{i,\sigma}}
		\lesssim_{\,\Omega\,} \sigma \h{1pt} W_{v,\varphi_k,\mathcal{D}_{i} }
        +\sigma \h{1pt} W_{\overline{v}^i,\varphi_k\h{1pt}\circ\h{1pt}\varphi_7\left(\cdot\,-\sigma \h{0.5pt}e_3\right),\mathcal{D}_{i}}
        +\sigma^2 \h{1pt} W_{v^i,\varphi_k,\p\mathcal{D}_{i} }+\sigma^5 \h{1pt}r_k^2.$$ Using the above estimates, together with \eqref{3240}, \eqref{3275}, \eqref{1306}, and \eqref{3575}, we deduce
		\begin{align*}
W^{\h{0.5pt}q}_{\overline{v},\h{0.5pt}\varphi_k\h{1pt}\circ\h{1pt}\varphi_7,\h{0.5pt}B^+_{\rho,\sigma}}
			&\lesssim_{\,\Omega\,} 
			\sigma \h{1pt}W^{\h{0.5pt}q}_{v,\h{0.5pt}\varphi_k,\h{0.5pt}\p^+B^+_\rho \setminus \h{0.5pt} \mathcal{O}_{i_0}} 
            +\sigma \left(\rho \h{1pt} r_k\right)^2 \big(\sigma^2+\rho^2\big)\\[1mm]
		& + \sigma \sum_{\mathcal{D}_{i}^+ \h{1pt}\in \h{1pt}\mathscr{B}_{i_0}} \left(W^{\h{0.5pt}q}_{v,\h{0.5pt}\varphi_k\h{1pt},\h{0.5pt}\mathcal{D}_i^+} 
			+ W^{\h{0.5pt}q}_{\overline{\widetilde{v}^{*,i}} \h{1pt}\circ \h{1pt} \overline{\varphi_{*,i}}^{\,-1},\h{0.5pt}\varphi_k\h{1pt}\circ\h{1pt}\varphi_7,\h{0.5pt}\mathcal{D}_i^+ -\sigma \h{0.5pt} e_3} 
			+ \sigma \h{1pt} W^{\h{0.5pt}q}_{v^{*,i},\h{0.5pt}\varphi_k ,\h{0.5pt}\p^+ \mathcal{D}_i^+ }
			+\sigma^4 \h{1pt}r_k^2  \right)\\[1mm]
				&+ \sigma\sum_{\mathcal{D}_{i} \h{1pt}\in \h{1pt}\mathscr{B}_{i_0}} \left(W^{\h{0.5pt}q}_{v,\h{0.5pt}\varphi_k,\h{0.5pt}\mathcal{D}_i} 
			+ W^{\h{0.5pt}q}_{\overline{v}^i,\varphi_k\circ\varphi_7(\cdot\,-\sigma e_3),\mathcal{D}_{i}}  
			+ \sigma W^{\h{0.5pt}q}_{v^{i},\h{0.5pt}\varphi_k,\h{0.5pt}\p \mathcal{D}_i }
			\right)\\[1mm]
			&\lesssim_{\,\Omega\,} \sigma \h{1pt}W^{\h{0.5pt}q}_{v,\h{0.5pt}\varphi_k,\h{0.5pt}\p^+B^+_\rho }
			+\sigma^5\h{1pt}r_k^2 \left(n_b+n_x\right).
		\end{align*}
		
		For the estimate of $\overline{v}$ on $\p^+B^+_\rho  -\sigma e_3$, we use \eqref{3275a}, \eqref{3575} and \eqref{sum low bd} to deduce that
		\begin{align*}
			\mathcal{E}^*_{\h{1pt}\overline{v},\h{0.5pt}\p^+B^+_\rho - \sigma \h{0.5pt} e_3}
			&= \mathcal{E}^*_{v,\h{0.5pt}\p^+B^+_\rho \setminus \h{0.5pt}  \mathcal{O}_{i_0}}
			+\sum_{\mathcal{D}_{i} \h{1pt}\in\h{1pt}\mathscr{B}_{i_0}} \mathcal{E}^*_{\h{1pt}\overline{v}^i,\h{0.5pt}\mathcal{D}_{i}}
			+\sum_{\mathcal{D}_{i}^+ \h{1pt}\in\h{1pt}\mathscr{B}_{i_0}} \mathcal{E}^*_{ \overline{v}^{*,i},\h{0.5pt}\mathcal{D}_i^+}\\[1mm]
			&\leq \left(1- N_0^{-1} \right)\mathcal{E}^*_{v,\h{0.5pt}\p^+B^+_\rho}
			+C_\Omega\sum_{\mathcal{D}_{i} \h{1pt}\in\h{1pt}\mathscr{B}_{i_0}} \left( \beta\h{1pt}\mathcal{E}^*_{v,\h{0.5pt}\mathcal{D}_{2\sigma}(x^i)}
			+\big(\beta\h{0.5pt}\sigma^2\big)^{-1}\h{1pt}W^{\h{0.5pt}q}_{v,\h{0.5pt}\mathcal{D}_{2\sigma}(x^i)}\right)\\[1mm]
			& \h{63pt}	+C_\Omega\sum_{\mathcal{D}_{i}^+ \h{1pt}\in\h{1pt}\mathscr{B}_{i_0}} \left(\beta\h{1pt}\mathcal{E}^*_{v,\h{0.5pt}\mathcal{D}_{4\sigma}^{+}(b^i)}
			+\big(\beta\h{0.5pt}\sigma^2\big)^{-1} \h{1pt}W^{\h{0.5pt}q}_{v,\h{0.5pt}\varphi_{\ddagger,\star}\h{1pt}\circ\h{1pt}\varphi_{*,i}^{-1},\h{0.5pt}\mathcal{D}_{4\sigma}^+(b^i)} \right).
		\end{align*}
        Grouping the terms in the above and applying the triangle inequality yields
        		\begin{align*}
\mathcal{E}^*_{\h{1pt}\overline{v},\h{0.5pt}\p^+B^+_\rho - \sigma \h{0.5pt} e_3}
			&\leq \left(1- N_0^{-1}+ C_\Omega\h{1pt}\beta \right)\mathcal{E}^*_{v,\h{0.5pt}\p^+B^+_\rho}
			+C_\Omega \left(\beta\sigma^2\right)^{-1}
			W^{\h{0.5pt}q}_{v,\h{0.5pt}\varphi_k,\h{0.5pt}\p^+B^+_\rho}
			+ C_\Omega \h{1pt} n_b\h{1pt} \beta^{-1} r_k^2.
		\end{align*}We use the homeomorphism $\varphi_7$ to change the variables on the left side above. Then item (f) of the lemma follows from the extension $\overline{v}\h{1pt}\circ\h{1pt}\varphi^{-1}_7$, by choosing $\beta\in(0,1)$ small enough. The smallness of $\beta$ depends only on $\Omega$. 
        
        In the end, we note that
		\begin{align*}
			&W^{\h{0.5pt}q}_{ \overline{v},\h{0.5pt}\varphi_k\h{1pt}\circ\h{1pt}\varphi_7,\h{0.5pt}\p^+B^+_\rho - \sigma \h{0.5pt}e_3}\\[1mm]
			&\h{5pt}= W^{\h{0.5pt}q}_{\overline{v},\h{0.5pt}\varphi_k\h{1pt}\circ\h{1pt}\varphi_7,\h{0.5pt}(\p^+B^+_\rho \setminus \h{0.5pt} \mathcal{O}_{i_0} ) -\sigma \h{0.5pt} e_3}
			+\sum_{\mathcal{D}_{i} \h{1pt}\in \h{1pt}\mathscr{B}_{i_0}} W^{\h{0.5pt}q}_{ \overline{v},\h{0.5pt}\varphi_k\h{1pt}\circ\h{1pt}\varphi_7,\h{0.5pt}\mathcal{D}_i -\sigma \h{0.5pt}e_3}
			+\sum_{\mathcal{D}_{i}^+ \h{1pt}\in\h{1pt}\mathscr{B}_{i_0}} W^{\h{0.5pt}q}_{ \overline{v},\h{0.5pt}\varphi_k\h{1pt}\circ\h{1pt}\varphi_7,\h{0.5pt}\mathcal{D}_i^+ -\sigma \h{0.5pt} e_3}
			\\[1mm]
			&\h{5pt}\lesssim_{\,\Omega\,} W^{\h{0.5pt}q}_{v,\h{0.5pt}\varphi_k,\h{0.5pt}\p^+B^+_\rho \setminus \h{0.5pt} \mathcal{O}_{i_0}}
			+\sigma^4\h{1pt}r_k^2 \left(1+n_x+n_b\right)
			+\sum_{\mathcal{D}_{i} \h{1pt}\in\h{1pt}\mathscr{B}_{i_0}} 
			W^{\h{0.5pt}q}_{\overline{v}^{i},\h{0.5pt}\varphi_k,\h{0.5pt}\mathcal{D}_i}
			+\sum_{\mathcal{D}_{i}^+ \h{1pt}\in \h{1pt}\mathscr{B}_{i_0}} W^{\h{0.5pt}q}_{\overline{v}^{*,i},\h{0.5pt}\varphi_{\ddagger,\star}\h{1pt}\circ\h{1pt}\varphi_4\h{1pt}\circ\h{1pt}\varphi_{*,i}^{-1},\h{0.5pt}\mathcal{D}_i^+}.
		\end{align*}	
        Estimates \eqref{3275} and \eqref{3575} imply
        		\begin{align*}
			&W^{\h{0.5pt}q}_{ \overline{v},\h{0.5pt}\varphi_k\h{1pt}\circ\h{1pt}\varphi_7,\h{0.5pt}\p^+B^+_\rho - \sigma \h{0.5pt}e_3}\\[1mm]
			&\h{5pt}\lesssim_{\,\Omega\,} W^{\h{0.5pt}q}_{v,\h{0.5pt}\varphi_k,\h{0.5pt}\p^+B^+_\rho \setminus \h{0.5pt} \mathcal{O}_{i_0}}
			+\sigma^4\h{1pt}r_k^2\left(1+n_x+n_b\right)
			+\sum_{\mathcal{D}_{i} \h{1pt}\in\h{1pt}\mathscr{B}_{i_0}} 
			W^{\h{0.5pt}q}_{v,\h{0.5pt}\varphi_k,\h{0.5pt}\mathcal{D}_{2\sigma}(x^i)}
			+\sum_{\mathcal{D}_{i}^+ \h{1pt}\in\h{1pt}\mathscr{B}_{i_0}} W^{\h{0.5pt}q}_{v,\h{0.5pt}\varphi_k,\h{0.5pt}\mathcal{D}_{4\sigma}^+(b^i)}
			\\[1mm]
			&\h{5pt}\lesssim  W^{\h{0.5pt}q}_{v,\h{0.5pt}\varphi_k ,\h{0.5pt}\p^+B^+_\rho  }
			+\sigma^4\h{1pt}r_k^2 \left(n_b+n_x\right).
		\end{align*}	
		The desired estimates of the lemma are satisfied by the extension $\overline{v}\h{1pt}\circ\h{1pt}\varphi^{-1}_7$. 
	\end{proof}	
	Finally, we extend $v$ from the upper hemisphere to the upper half-ball.
	
	\begin{proof}[Proof of Proposition \ref{lem ext from sphere to ball}] Recall the constants $\alpha$, $\C{17}$, and $\eps{7}$ given by Lemma \ref{lem ext to annulus}. Let $\delta'$, $\epsilon_0\in(0,\frac{1}{2})$, and $\mu>2$ be parameters to be determined later. Moreover, we assume that $$ \text{$k\in\mathbb{N}$,\h{5pt} $\rho\in\big(0,\frac{1}{2}\big)$,\h{5pt} $\delta\in(0,\delta')$, \h{5pt} $\epsilon\in (0,\epsilon_0)$, \h{5pt}and\h{5pt} $v \in H^1\big(\p^+B_{\rho}^{+};\mathbb{S}^2\big)$} $$satisfy the assumptions stated in Proposition \ref{lem ext from sphere to ball}. Since $\alpha\in(0,1)$, we can choose $s \in \mathbb{N}$ such that $\alpha^s \in [\alpha\h{0.5pt}\epsilon,\epsilon]$. With a fixed constant $M_1 >0$, the remainder of the proof is divided into two cases. The constant $M_1$ depends only on $\Omega$ and will be determined later. \vspace{0.4pc}
		
		\noindent\textbf{Case 1:} In this case, we assume  \begin{align}\label{assum of case 1} M_1 ^{2s-1} \left( W^{\h{0.5pt}q}_{v,\h{0.5pt}\varphi_k,\h{0.5pt}\p^+ B_\rho^+}\right) ^2
			\leq \left(\epsilon\h{0.5pt}\rho\right)^4 \delta^2.\end{align} First, let us briefly outline the steps to construct the extension mappings. Let $\sigma=\frac{\epsilon\h{0.5pt}\rho}{2}$, and denote the upper half annuli by \[
		A_{\sigma}^{i,+} := A^+_{\rho - 2\h{0.5pt}i\h{0.5pt}\sigma,\h{1pt}\rho - 2\h{0.5pt}(i-1\h{.5pt})\sigma}, \h{20pt}\text{where}\h{2pt}i = 1,2, \dots, s.\]
       In the first step, we apply Lemma \ref{lem ext to annulus} to extend the mapping $v$ defined on the outer boundary of $A_{\sigma}^{1,+}$, obtaining the extension $\overline{v}^1$ on $A_{\sigma}^{1,+}$. The trace of $\overline{v}^1$ on the inner boundary of $A_{\sigma}^{1,+}$ is simply denoted by $v^{1}_\textup{in}$. Inductively, if $v^{i-1}_\textup{in}$ is defined on the outer boundary of $A_{\sigma}^{i,+}$, for some $i \geq 2$, then, using Lemma \ref{lem ext to annulus} again, we get the extension $\overline{v}^i$ on $A_{\sigma}^{i,+}$. Its trace on the inner boundary of $A_{\sigma}^{i,+}$ is denoted by  $v^{i}_\textup{in}$. As a convention, we set $v^{0}_\textup{in}=v$. Furthermore, we radially extend $v^{s}_\textup{in}$ defined on $\p^+B^+_{\rho-2\h{0.5pt}s\h{0.5pt}\sigma}$ into the half ball $B^+_{\rho-2\h{0.5pt}s\h{0.5pt}\sigma}$. The extension in $B^+_{\rho-2\h{0.5pt}s\h{0.5pt}\sigma}$ is $0$-homogeneous. \vspace{0.4pc} 
		
		\noindent\textbf{Part 1.1. Validity of Lemma \ref{lem ext to annulus} in the extension of $v$:} In this part, we prove by induction that we can apply Lemma \ref{lem ext to annulus} $s$ times, as described above, to extend $v$ defined on $\p^+ B_\rho^+$ into $A^+_{\rho - 2 \h{0.2pt}s\h{0.2pt}\sigma, \h{1pt}\rho}$. In what follows, the main task is to verify the assumptions of Lemma \ref{lem ext to annulus} at each iterative step to justify the validity of this construction. \vspace{0.2pc}
        
        In the first step, the assumptions of the proposition, together with the facts that $\sigma=\frac{\epsilon\h{0.5pt}\rho}{2}$, $\delta \in(0,\eps{7})$, and $\mu>2$, induce the following estimate:
		\begin{align*}
			r_k^2\left[\h{1pt}r_k^2\left(2\sigma\right)^2+ \mathcal{E}^*_{v,\h{0.5pt}\p^+ B_\rho^+}+ W_{v,\h{0.5pt}\varphi_k,\h{0.5pt}\p^+ B_\rho^+} \h{1pt}\right] 
			+(2\sigma)^{-2}\h{1pt}\mathcal{E}^*_{v,\h{0.5pt}\p^+ B_\rho^+}
			\h{1pt}W^{\h{0.5pt}q}_{v,\h{0.5pt}\varphi_k,\h{0.5pt}\p^+ B_\rho^+}
			\leq \delta^2
			+\left(\epsilon\rho\right)^{-2} \h{1pt}\delta^2\h{0.5pt}\rho^2\h{0.5pt}\epsilon^\mu
			< 2\delta^2.
		\end{align*}
		The rightmost side above is smaller than $\eps{7}^2$ if  $\sqrt{2}\h{0.5pt} \delta< \eps{7}$. Moreover, since $\alpha\epsilon\leq\alpha^s$, to ensure $$2\sigma<\frac{\rho-\left(i-1\right)\epsilon\h{0.5pt}\rho}{4},\h{15pt}\text{for any $i = 1,2, \dots, s$, }$$ it is sufficient to assume $4\epsilon+ \epsilon \h{1pt}\frac{\log \epsilon}{\log \alpha}  < \frac{1}{2}$.  
		Thus, we can apply Lemma \ref{lem ext to annulus} to obtain an extension $\overline{v}^1$ of $v$ on $A^{1,+}_\sigma$, satisfying the consequences there. In particular, the following conditions hold for the extension $\overline{v}^1 \in H^1\big(A_{\rho-2\h{0.5pt}\sigma,\h{1pt}\rho}^+; \mathbb{S}^2\big)$: 
        $$\overline{v}^1(\cdot)\in U_{2\h{0.5pt}\C{17}\left(\sqrt{\delta}\h{0.5pt}+\h{0.5pt}r_k\h{0.5pt}\sigma\right)}\big(\mathbb{S}^2\cap T_{\varphi_k(\cdot)}\p \Omega\big) \h{20pt}\text{on $\p^0 A_{\rho-2\h{0.5pt}\sigma,\h{1pt}\rho}^+$},$$  and $$v_\textup{in}^1(\cdot):=\overline{v}^1\h{1pt}\Big|_{\p^+B_{\rho-2\h{0.5pt}\sigma}^+}(\cdot) \in U_{2\h{0.5pt}\C{17} \left(\sqrt{\delta}+r_k\h{0.5pt}\sigma\right)}\big(\mathbb{S}^2\cap T_{\varphi_k(\cdot)}\p \Omega\big) \h{20pt}\text{on $\p B_{\rho-2\h{0.5pt}\sigma} \cap \big\{y_3=0\big\}$,}$$  both in the sense of trace.  
		
		Then, we assume without loss of generality that $s\geq 2$ and suppose that we can apply Lemma \ref{lem ext to annulus} up to $i-1$ times for $i=2,3,\ldots,s$. When we apply Lemma \ref{lem ext to annulus} to extend $v^{i-2}_\textup{in}$ from $\p^+B^+_{\rho-2\left(i-2\right)\sigma}$ to $A^{i-1,+}_{\sigma}$, we know that the extension $\overline{v}^{i-1}$ satisfies the following estimates:
		\begin{align*}
			&\mathcal{E}^*_{ v^{i-1}_\textup{in},\h{0.5pt}\p^+B^+_{\rho-2\left(i-1\right)\sigma}} 
			\leq \alpha\h{1pt}\mathcal{E}^*_{ v^{i-2}_\textup{in},\h{0.5pt}\p^+B^+_{\rho-2\left(i-2\right)\sigma}} 
			+ \C{17} \left(\beta\h{1pt}\sigma^2\right)^{-1}\left(
			W^{\h{0.5pt}q}_{ v^{i-2}_\textup{in},\h{0.5pt}\varphi_k,\h{0.5pt}\p^+B^+_{\rho-2\left(i-2\right)\sigma}}    
			+r_k^2\h{1pt}\rho^2\right),\\[1.5mm]
			&W^{\h{0.5pt}q}_{ 
				v^{i-1}_\textup{in},\h{0.5pt}
				\varphi_k,\h{0.5pt}\p^+B^+_{\rho-2(i-1)\sigma}}    
			\leq \C{17}\left( W^{\h{0.5pt}q}_{ v^{i-2}_\textup{in},\h{0.5pt}\varphi_k,\h{0.5pt}\p^+B^+_{\rho-2(i-2)\sigma}}   +\sigma^2r_k^2\right).
		\end{align*}
		Iterating these two inequalities induces the two estimates given below:
		\begin{align*}
			\mathcal{E}^*_{ v^{i-1}_\textup{in},\h{0.5pt}\p^+B^+_{\rho-2(i-1)\sigma}} 
			\leq\,  \alpha^{i-1} \h{1pt}\mathcal{E}^*_{v,\h{0.5pt}\p^+B^+_\rho}
			&+  \C{17}^{i-1} \left(\beta\h{0.5pt}\sigma^2\right)^{-1}\h{1pt}\sum^{i-2}_{j=0}\left(\frac{\alpha}{\C{17}}\right)^j \h{1pt}   
			W^{\h{0.5pt}q}_{v,\h{0.5pt}\varphi_k,\h{0.5pt}\p^+B^+_\rho}
			\nonumber\\[1mm]  
			&+ \C{17} \left(\beta\h{0.5pt}\sigma^2\right)^{-1}r_k^2\h{1pt}\rho^2\h{1pt}\sum^{i-2}_{j=0}\alpha^j
			+\C{17}^2 \h{1pt}\beta^{-1} \h{1pt} r_k^2\h{1pt}\sum^{i-2}_{j=0} 
			\left(\alpha^j \h{1pt} \dfrac{\C{17}^{i-2-j}-1}{\C{17}-1} \right),\end{align*}and            \begin{align*}
			W^{\h{0.5pt}q}_{ v^{i-1}_\textup{in},\h{0.5pt}\varphi_k,\h{0.5pt}\p^+B^+_{\rho-2(i-1)\sigma}}
			\leq\, \h{1pt} \C{17}^{i-1} \h{1pt}W^{\h{0.5pt}q}_{v,\h{0.5pt}\varphi_k,\h{0.5pt}\p^+B^+_\rho}
			+ \C{17}\h{1pt}\sigma^2\h{1pt}r_k^2 \h{1pt} \sum^{i-2}_{j=0} 
			\C{17}^{j}.
		\end{align*}
		Now, we choose $M_1\geq\C{17}$ sufficiently large so that the above estimates can be controlled by
		\begin{align}
			&\mathcal{E}^*_{ v^{i-1}_\textup{in},\h{0.5pt}\p^+B^+_{\rho-2(i-1)\sigma}} 
			\leq\,  \alpha^{i-1} \h{1pt}\mathcal{E}^*_{v,\h{0.5pt}\p^+B^+_\rho}
			+ M_1 ^{i}\h{1pt}\sigma^{-2} \h{1pt}
			W^{\h{0.5pt}q}_{v,\h{0.5pt}\varphi_k,\h{0.5pt}\p^+B^+_\rho}
			+ M_1 \h{1pt}\sigma^{-2}\h{1pt}r_k^2 \h{1pt}\rho^2 
			+M_1 ^i \h{1pt}r_k^2,  \nonumber\\[2mm]
			&W^{\h{0.5pt}q}_{ v^{i-1}_\textup{in},\h{0.5pt}\varphi_k,\h{0.5pt}\p^+B^+_{\rho-2(i-1)\sigma}}
			\leq\, M_1 ^{i-1}W^{\h{0.5pt}q}_{v,\h{0.5pt}\varphi_k,\h{0.5pt}\p^+B^+_\rho}
			+ M_1 ^{i}\h{1pt}\sigma^2\h{1pt}r_k^2 .
			\label{3649}
		\end{align}

		To apply Lemma \ref{lem ext to annulus} to obtain the extension $\overline{v}^{i}$ on $A^{i,+}_\sigma$ of $v^{i-1}_\textup{in}$, we need $J_i\leq \eps{7}^2$. Here,
		\begin{align}
			&J_i:= \left( 2\sigma\right)^{-2} \mathcal{E}^*_{v^{i-1}_\textup{in},\h{0.5pt}\p^+ B_{\rho-2(i-1)\sigma}^+}
			W^{\h{0.5pt}q}_{v^{i-1}_\textup{in},\h{0.5pt}	\varphi_k,\h{0.5pt}\p^+ B_{\rho-2(i-1)\sigma}^+}\nonumber\\[2mm] &\h{46pt}+ \,r_k^2\left(r_k^2\left(2\sigma\right)^2 +\mathcal{E}^*_{v^{i-1}_\textup{in},\h{0.5pt}\p^+ B_{\rho-2(i-1)\sigma}^+}
			+	W_{v^{i-1}_\textup{in},	\h{0.5pt}\varphi_k,\h{0.5pt}\p^+ B_{\rho-2(i-1)\sigma}^+}\right).
			\label{3590}
		\end{align} 
		Plugging \eqref{3649} into \eqref{3590} and setting $I_i:= M_1 \h{1pt}\sigma^{-2}\h{1pt} r_k^2 \h{1pt}\rho^2 
		+M_1 ^i \h{1pt} r_k^2$, we see that
		\begin{align*}
			J_i\leq\,& M_1^{i} \h{1pt}I_i \h{1pt} r_k^2 + \left(2\sigma\right)^{-2}  M_1^{i-1}\h{1pt}  \mathcal{E}^*_{v,\h{0.5pt}\p^+ B_\rho^+} 
			\left( M_1\sigma^2r_k^2 +  W^{\h{0.5pt}q}_{v,\h{0.5pt}\varphi_k,\h{0.5pt}\p^+ B_\rho^+} \right)  \\[2mm]
			&+ \left(2\sigma\right)^{-2}  M_1^{i-1} \h{1pt}W^{\h{0.5pt}q}_{v,\h{0.5pt}\varphi_k,\h{0.5pt}\p^+ B_\rho^+}
			\left( I_i  + M_1 ^{i+1}r_k^2 
			+    M_1 ^{i} \h{1pt} \sigma^{-2} \h{1pt}   W^{\h{0.5pt}q}_{v,\h{0.5pt}\varphi_k,\h{0.5pt}\p^+ B_\rho^+}  \right)\\[2mm]
            &+ r_k^2\left(\h{1pt}r_k^2\left(2\sigma\right)^2 + I_i  + M_1^{i} \h{1pt}\sigma^2\h{1pt}r_k^2 + \mathcal{E}^*_{v,\h{0.5pt}\p^+ B_\rho^+} 
			+ M_1 ^{i-1} \h{1pt} W^{\h{0.5pt}q}_{v,\h{0.5pt}\varphi_k,\h{0.5pt}\p^+ B_\rho^+} + M_1 ^{i}\h{1pt}  \sigma^{-2}\h{1pt}W^{\h{0.5pt}q}_{v,\h{0.5pt}\varphi_k,\h{0.5pt}\p^+ B_\rho^+}
			  \h{1pt}\right).
		\end{align*}
		It is sufficient to check $J_s \leq \eps{7}^2$. Define $\mu:=2- \frac{\log M_1}{\log \alpha}>2$. It turns out that
\begin{align}\label{1380}
M_1 ^s 
		= \alpha^{\frac{s\log M_1}{\log \alpha}} 
		\leq (\alpha\h{0.5pt}\epsilon)^{ \frac{\log M_1}{\log \alpha}}
		=M_1 \h{1pt} \epsilon^{ \frac{\log M_1}{\log \alpha}}
		=M_1 \h{1pt}\epsilon^{2-\mu}.
\end{align}
		Using \eqref{1380} and the fact that $I_s\leq 4\h{0.5pt}M_1 \h{0.5pt}  r_k^2\left(\epsilon^{-2} +\epsilon^{2-\mu}\right)  $, we further estimate $J_s$ by 
		\begin{align*}
			J_s\leq\, & 4 \h{0.5pt}M_1^2\h{1pt} \epsilon^{2-\mu} \h{1pt} r_k^4\left( \epsilon^{-2} + \epsilon^{2-\mu}
			\right) +  \epsilon^{-\mu} \h{.5pt}\rho^{-2}\h{1pt} 
			\mathcal{E}^*_{v,\h{0.5pt}\p^+ B_\rho^+}
			\left(\left(\epsilon \rho\right)^{2} M_1  r_k^2 + W^{\h{0.5pt}q}_{v,\h{0.5pt}\varphi_k,\h{0.5pt}\p^+ B_\rho^+}\right)
			\\[2mm]
			&\h{-13pt}+  M_1\h{0.5pt}r_k^2 \h{1pt} \rho^{-2}
			\left( 4\h{0.5pt}\epsilon^{-2-\mu}+\left(4+M_1\right)\epsilon^{2-2\mu} \right) W^{\h{0.5pt}q}_{v,\h{0.5pt}\varphi_k,\h{0.5pt}\p^+ B_\rho^+} +  4\h{0.5pt}M^{2s-1}_1\h{0.5pt} \left(\epsilon\h{0.5pt}\rho\right)^{-4} 
			\left( W^{\h{0.5pt}q}_{v,\h{0.5pt}\varphi_k,\h{0.5pt}\p^+ B_\rho^+}\right) ^2\\[2mm]
            &\h{-13pt}+ r_k^2\left(\h{1pt}r_k^2\left( \left(\epsilon\h{0.5pt}\rho\right)^2
			+ 4\h{0.5pt}M_1 \big(\epsilon^{-2}  
			+\epsilon^{2-\mu} + \epsilon^{4-\mu}\rho^2
			\big) \right) 
			+ \mathcal{E}^*_{v,\h{0.5pt}\p^+ B_\rho^+} 
			+ \big( \epsilon^{2-\mu} + 4\h{0.5pt}M_1 \h{0.5pt} \epsilon^{-\mu} \h{0.5pt}\rho^{-2} \big)\h{1pt} 
			W^{\h{0.5pt}q}_{v,\h{0.5pt}\varphi_k,\h{0.5pt}\p^+ B_\rho^+}
			\right).
		\end{align*}
		By \eqref{assum of case 1} and Assumption (d) in Proposition \ref{lem ext from sphere to ball}, it follows that
		\begin{align*}
			J_s
			\h{1.5pt}\lesssim\h{1.5pt}&_{\Omega\,}\h{1.5pt} \epsilon^{2-\mu} \h{1pt} r_k^4\left( \epsilon^{-2} + \epsilon^{2-\mu}\right) +  \epsilon^{2-\mu}\h{1pt}r_k^2 \h{1.5pt}  \mathcal{E}^*_{v,\h{0.5pt}\p^+ B_\rho^+}   + r_k^2 \h{1pt}\rho^{-2}
			\left(\epsilon^{-2-\mu} + \epsilon^{2-2\mu} \right) W^{\h{0.5pt}q}_{v,\h{0.5pt}\varphi_k,\h{0.5pt}\p^+ B_\rho^+} 
			\\[2mm]
            &\h{25pt} + \delta^2 + r_k^2\left(r_k^2\left(  \epsilon^{4-\mu}\rho^2
			+\epsilon^{-2}  
			+\epsilon^{2-\mu} \right) 
			+ \mathcal{E}^*_{v,\h{0.5pt}\p^+ B_\rho^+} 
			+ \left(\h{1pt} \epsilon^{2-\mu} + \epsilon^{-\mu}\rho^{-2} \right) 
			W^{\h{0.5pt}q}_{v,\h{0.5pt}\varphi_k,\h{0.5pt}\p^+ B_\rho^+}
			\right).
		\end{align*} 
        Simply grouping the terms and using Assumption (b) of this proposition, we obtain
        		\begin{align*}
			J_s
			\lesssim_{\,\Omega\,} \epsilon^{2-\mu} \h{1pt}r_k^4
			\left(   \epsilon^{-2} + \epsilon^{2-\mu} \right)
			+\epsilon^{2-\mu} \h{1pt}r_k^2 \h{1.5pt}  
			\mathcal{E}^*_{v,\h{0.5pt}\p^+ B_\rho^+}  
			+ r_k^2 \h{1pt} \rho^{-2}\left( \epsilon^{-2-\mu} + \epsilon^{2-2\mu}\right)  
			W^{\h{0.5pt}q}_{v,\h{0.5pt}\varphi_k,\h{0.5pt}\p^+ B_\rho^+} +\delta^2
            \h{1.5pt}\lesssim_{\,\Omega\,} \h{1.5pt} \delta^2.
		\end{align*} 

		On the other hand, Lemma \ref{lem ext to annulus} implies that  
        $$v^{i-1}_{\textup{in}}(\cdot) \in U_{\C{17}^{i-1}\sqrt{\delta}\h{2pt}+\h{2pt}\C{17} \h{0.5pt}\frac{\C{17}^{i-1}-1}{\C{17}-1} \h{0.5pt} \epsilon\h{.5pt}\rho\h{.5pt}r_k }\big(\mathbb{S}^2\cap T_{\varphi_k(\cdot)}\p \Omega\big) \h{15pt}\text{on $\p B_{\rho-2\left(i-1\right)\sigma}\cap \big\{y_3=0\big\}$ in the sense of trace.}$$
		Using \eqref{1380} and Assumption (c) of the proposition induces $$\C{17}^{s-1} \sqrt{\delta}+\C{17} \h{1pt} \frac{\C{17}^{s-1}-1}{\C{17}-1}\h{1pt}\epsilon \h{0.8pt}\rho\h{0.8pt}  r_k
        \h{1pt}\leq\h{1pt} \epsilon^{2-\mu} \sqrt{\delta}
        +4\h{0.5pt} \epsilon^{3-\mu}\h{1pt}\rho \h{0.8pt} r_k
        \h{1pt}\leq\h{1pt} \sqrt{\eps{7}}.$$ Therefore, $v^{i-1}_{\textup{in}}(\cdot)\in U_{\sqrt{\eps{7}}} \big(\mathbb{S}^2\cap T_{\varphi_k(\cdot)}\p \Omega\big)$ on $\p B_{\rho-2\left(i-1\right)\sigma}\cap\big\{y_3=0\big\}$ in the sense of trace when $i \in \big\{1,2,\ldots,s\big\}$. Lemma \ref{lem ext to annulus} can then be applied to obtain the extension $\overline{v}^i$. Applying the above arguments $s$ times, we get the desired extension on $A^+_{\rho-2\h{0.5pt}s\h{0.5pt}\sigma,\h{1pt}\rho}$, denoted by $\overline{v} $, such that $$\overline{v} := \overline{v}^i \h{20pt}\text{ on $A_{\sigma}^{i,+}$, for $i=1,2,\ldots,s$.}$$ Next, we extend  $\overline{v}$ homogeneously from $\p^+ B_{\rho-2\h{0.2pt}s\h{0.2pt}\sigma}^+$ to $B_{\rho-2\h{0.2pt}s\h{0.2pt}\sigma}^+=B_{\rho-s\epsilon\rho}^+$. That is $$\overline{v}(z) := v^s_{\textup{in}}\big(\left(\rho-s\epsilon\rho\right)\widehat{z}\h{1.5pt}\big) \h{20pt}\text{for any $ z \in B_{\rho-s\epsilon\rho}^+$.} $$ Thus, we obtain the mapping $\overline{v}$ on $B_\rho^+$ with $\overline{v}\h{1pt}\big|_{\p^+ B_\rho^+} = v$. Moreover, $$\overline{v}(\cdot)\in U_{\delta_0/2}\big(\mathbb{S}^2\cap T_{\varphi_k(\cdot)}\p \Omega\big) \h{20pt}\text{on $\p^0 B_{\rho}^+$ in the sense of trace},$$ provided that $\delta$ and $r_k$ are sufficiently small. \vspace{0.4pc}

		\noindent\textbf{Part 1.2. Estimates of the extension mapping:} Items (b)-(c) of Lemma \ref{lem ext to annulus} induce 
		\begin{align*}
			&\mathcal{E}^*_{\overline{v}^{i}, A^{i,+}_\sigma}
			\lesssim_{\,\Omega\,}  \sigma \h{1pt}  \mathcal{E}^*_{v^{i-1}_\textup{in},\p^+B^+_{\rho-2\left(i-1\right)\sigma}}
			+ \beta^{-1}\sigma^{-1}
			W^{\h{0.5pt}q}_{v^{i-1}_\textup{in},	\varphi_k,\p^+B^+_{\rho-2\left(i-1\right)\sigma}}  
			+ \beta^{-1}\sigma^{-1}r_k^2\h{0.8pt}\rho^2, \\[1.5mm]
			&W^{\h{0.5pt}q}_{\overline{v}^{i},\varphi_k,A^{i,+}_\sigma} 
			\lesssim_{\,\Omega\,} \sigma \h{1pt}
			W^{\h{0.5pt}q}_{v^{i-1}_\textup{in},	\varphi_k,\p^+B^+_{\rho-2(i-1)\sigma}}
			+ \sigma \h{0.8pt} r_k^2 \h{0.8pt}\rho^2, \h{80pt}\text{for $i=1,2,\ldots,s$.}
		\end{align*}
		 Applying \eqref{3649}, we obtain that
		\begin{align*}
			&\mathcal{E}^*_{\overline{v}^{i}, A^{i,+}_\sigma}
			\lesssim_{\,\Omega} \sigma \h{1pt}  \alpha^{i-1}\h{1pt} \mathcal{E}^*_{v,\h{0.5pt}\p^+B^+_\rho}
			+ M_1 ^i \h{1pt}\sigma^{-1}  
			W^{\h{0.5pt}q}_{v ,	\varphi_k,\p^+B^+_\rho} 
			+ \sigma^{-1}r_k^2 \h{1pt}\rho^2 
			+M_1^i \h{0.8pt} \sigma \h{0.8pt}r_k^2,\\[1.5mm]
			&W^{\h{0.5pt}q}_{\overline{v}^{i},\varphi_k,A^{i,+}_\sigma}  
			\lesssim_{\,\Omega} \sigma \h{1pt}
			M_1 ^{i-1} \h{1pt}W^{\h{0.5pt}q}_{v,	\varphi_k,\p^+B^+_\rho} 
			+ M_1^{i}\h{1pt}\sigma^3\h{1pt}r_k^2 
			+\sigma \h{1pt} r_k^2 \h{0.8pt}\rho^2, \h{45pt}\text{for $i=1,2,\ldots,s$.}
		\end{align*}
		As $s\leq  1+ \tfrac{ \log \epsilon}{\log \alpha}  $, we sum over $i=1,2,\ldots,s$ and then employ \eqref{1380} to yield the following estimates:
		\begin{align*}
			\mathcal{E}^*_{\overline{v},A^+_{\rho-2s\sigma,\rho}}
			\lesssim\,&_{\Omega\,}   \h{1.5pt} \sigma \h{1pt}  \mathcal{E}^*_{v,\h{0.5pt}\p^+B^+_\rho}
			+ \sigma ^{-1} M_1^{s} \h{1pt} 
			W^{\h{0.5pt}q}_{v,	\varphi_k,\p^+B^+_\rho} 
			+  s \h{0.8pt} \sigma^{-1}
			r_k^2 \h{0.8pt} \rho^2 
			+M_1^s\h{0.5pt} \sigma \h{0.5pt} r_k^2 \\[1.5mm]
			\lesssim\,&_{\Omega\,} \h{1pt}  \epsilon\h{0.5pt}\rho \h{1pt} \mathcal{E}^*_{v,\h{0.5pt}\p^+B^+_\rho}
			+ \epsilon ^{1-\mu} \rho^{-1}
			W^{\h{0.5pt}q}_{v,	\varphi_k,\p^+B^+_\rho} 
			+ \left( 1+\tfrac{ \log \epsilon}{\log \alpha}\right)r_k^2 \h{0.8pt}\rho \h{0.8pt}  \epsilon^{-1}
			+\epsilon^{3-\mu} \h{0.8pt} r_k^2 \h{0.8pt}\rho,\end{align*}
		and
		\begin{align*}
			W^{\h{0.5pt}q}_{\overline{v},\varphi_k,A^+_{\rho-2s\sigma,\rho}} 
			\lesssim\,&_{\Omega\,} \h{1pt} \sigma
			\h{1pt}M_1 ^{s} \h{1pt}W^{\h{0.5pt}q}_{v,	\varphi_k,\p^+B^+_\rho} 
			+ M_1^{s}\h{1pt}\sigma^3\h{1pt}r_k^2 
			+\sigma \h{0.8pt} r_k^2 \h{0.8pt}\rho^2\left( 1+\tfrac{ \log \epsilon}{\log \alpha}\right) \\[1.5mm]
			\lesssim\,&_{\Omega\,}  \h{1pt}\epsilon^{3-\mu}\rho \h{1pt} W^{\h{0.5pt}q}_{v,	\varphi_k,\p^+B^+_\rho} 
			+ \epsilon^{5-\mu}\rho^3\h{0.8pt}r_k^2 
			+ \epsilon \h{0.8pt} r_k^2 \h{0.8pt}\rho^3\left( 1+\tfrac{ \log \epsilon}{\log \alpha}\right).
		\end{align*}
		
		Next, we establish the estimates on $B^+_{\rho-2s\sigma}$. 
		Note that \eqref{3649} holds for $i = s + 1$. Hence,
		\begin{align*}
	&\mathcal{E}^*_{\overline{v},B_{\rho-2s\sigma}^{+}}
			\lesssim_{\,\Omega} \,\rho\h{1pt}\mathcal{E}^*_{v^{s}_\textup{in},\p^+ B_{\rho-2s\sigma}^+} \lesssim_{\,\Omega}\, \rho\h{1pt}\alpha^{s} \h{1pt} \mathcal{E}^*_{v,\h{0.5pt}\p^+B^+_\rho}
			+\rho \h{0.5pt}M_1 ^{s} \h{0.5pt}\sigma^{-2}\h{1pt} 
			W^{\h{0.5pt}q}_{v,\h{0.5pt}\varphi_k,\h{0.5pt}\p^+B^+_\rho}
			+ M_1 \h{0.5pt}\sigma^{-2}\h{0.5pt} r_k^2\h{1pt}\rho^3 
			+\rho \h{0.5pt} M_1^s \h{0.5pt} r_k^2, \\[2mm]
			 &W^{\h{0.5pt}q}_{\overline{v},	\varphi_k,B_{\rho-2s\sigma}^{+}} 
			\lesssim_{\,\Omega\,}  \rho \h{1pt}W^{\h{0.5pt}q}_{\overline{v},\varphi_k,\p^+ B_{\rho-2s\sigma}^{+}} 
			+\rho^3\h{1pt}r_k^2 \,\lesssim_{\,\Omega\,}  \rho \h{0.5pt} M_1 ^{s} \h{1pt}W^{\h{0.5pt}q}_{v,	\varphi_k,\p^+B^+_\rho} 
			+\rho \h{0.5pt}
			M_1 ^{s}\h{0.5pt}\sigma^2\h{0.5pt}r_k^2	+\rho^3\h{0.8pt}r_k^2.
		\end{align*}
        We can then apply \eqref{1380} to obtain
		\begin{align*}
			&\mathcal{E}^*_{\overline{v},B_{\rho-2s\sigma}^{+}}
			\lesssim_{\,\Omega\,} \h{1pt} \epsilon \h{0.5pt}\rho\h{1pt} \mathcal{E}^*_{v,\h{0.5pt}\p^+B^+_\rho}
			+\rho^{-1}\epsilon^{-\mu}\h{1pt}
			W^{\h{0.5pt}q}_{v,\h{0.5pt}\varphi_k,\h{0.5pt}\p^+B^+_\rho}
			+r_k^2\h{0.8pt}\rho \left(\epsilon^{-2}
			+ \epsilon^{2-\mu}\right),\\[2mm]
			&W^{\h{0.5pt}q}_{\overline{v},	\varphi_k,B_{\rho-2s\sigma}^{+}} 
			\lesssim_{\,\Omega\,}   \rho \h{0.8pt} \epsilon^{2-\mu}\h{1pt}W^{\h{0.5pt}q}_{v,	\varphi_k,\p^+B^+_\rho} 
			+\rho^3 \h{0.5pt} r_k^2 \left(1+\epsilon^{4-\mu} \right). 
		\end{align*}We conclude the desired estimates in the proposition by adding the corresponding terms above. \vspace{0.4pc}
		
		\noindent\textbf{Case 2.} We assume that \eqref{assum of case 1} fails and define $\overline{v}(z) := v(\rho \h{0.5pt} \widehat{z}\h{1pt})$ for any $z \in B_\rho^+ \setminus \{0\}$. The assumption $$ \mathcal{E}^*_{v,\p^+B_{\rho}^{+}}\h{1pt} 
		W^{\h{0.5pt}q}_{v,\h{0.5pt}\varphi_k,\h{0.5pt}\p^+ B_\rho^+} \h{1pt}\leq\h{1pt} \delta^2\rho^2\epsilon^\mu
		\leq  \epsilon^{\mu-4} \h{0.5pt} \rho^{-2} \h{0.5pt} M_1 ^{2s-1} \left( W^{\h{0.5pt}q}_{v,\h{0.5pt}\varphi_k,\h{0.5pt}\p^+ B_\rho^+}\right) ^2 $$ and \eqref{1380} imply that 
        $$ \mathcal{E}^*_{v,\p^+B_{\rho}^{+}}  \leq \epsilon^{\mu-4}\rho^{-2} M_1 ^{2s-1}    W^{\h{0.5pt}q}_{v,\h{0.5pt}\varphi_k,\h{0.5pt}\p^+ B_\rho^+}
        \h{1pt}\leq\h{1pt}  M_1 \h{.5pt} \epsilon^{-\mu} \rho^{-2} \h{1pt}   W^{\h{0.5pt}q}_{v,\h{0.5pt}\varphi_k,\h{0.5pt}\p^+ B_\rho^+}.$$ Thus, $$
		\mathcal{E}^*_{\overline{v},B_{\rho}^{+}}
		\lesssim_{\,\Omega} \rho\h{1pt}\mathcal{E}^*_{v,\p^+ B_{\rho}^{+}}
		\lesssim_{\,\Omega} \rho^{-1}\epsilon^{-\mu} \h{1pt} W^{\h{0.5pt}q}_{v,\h{0.5pt}\varphi_k,\h{0.5pt}\p^+ B_\rho^+}.$$
        It also holds that
        $$
		W^{\h{0.5pt}q}_{\overline{v},	\varphi_k,B_{\rho }^{+}} 
		\lesssim_{\,\Omega\,} \rho \h{1pt}W^{\h{0.5pt}q}_{v,	\varphi_k,\p^+ B_{\rho }^{+}} 
		+\rho^3\h{0.5pt}r_k^2.$$
        The desired estimates in the proposition follow, choosing any $\mu>2$. Moreover, $$\overline{v}(\cdot)\in U_{\delta_0/2}\big(\mathbb{S}^2\cap T_{\varphi_k(\cdot)}\p \Omega\big) \h{15pt} \text{on $\p^0 B_{\rho}^+$, in the sense of trace,}$$ provided that $\delta>0$ and $r_k$ are sufficiently small. The proof is complete.
	\end{proof}

	\section{Bubbling analysis at boundary singularities}\label{bubbling analysis}

	We now establish the partial regularity and bubbling analysis of the minimizing harmonic map $u \in H_T^1(\Omega; \mathbb S^2)$. The interior partial regularity follows from standard techniques for minimizing harmonic maps with fixed Dirichlet boundary conditions \cite{SU82, HL87}. However, addressing boundary regularity for our problem requires modifying arguments from Dirichlet or free boundary problems. Unlike classical free boundary problems \cite{HL89free, DS89optimal}, where $u(x)\in S$ on $\p \Omega$ for a fixed submanifold $S\subseteq\mathbb{S}^2$, our setting involves varying free boundary conditions:  $u(x)\in T_x\p \Omega$ which depend on each location $x \in \p \Omega$. This dependence introduces a technical issue: constructing suitable comparison mappings that preserve the mixed boundary conditions, incorporating both the tangential and Dirichlet ones.

	Our main result demonstrates that the singular set is discrete and finite. In Section \ref{cpt of resc}, we establish the strong $H^1$-convergence of the rescaled map of $u$ to the tangent map. Our approach is partly inspired by the overall strategy of \cite{SU82}, but non-trivial modifications are required to accommodate the tangential boundary conditions. Roughly speaking, as in \cite{SU82}, it suffices to show uniform convergence outside the singular set, since the energy is small near the singular set with zero one-dimensional Hausdorff measure. The uniform convergence can be established by using the partial regularity in Lemma \ref{lem Partial Regularity}. Next, we construct a suitable energy comparison mapping to show that the normalized energy is small. In settings with fixed free boundary conditions  studied in \cite{DS89, HL89free}, the comparison mapping is typically constructed by extending values from the boundary constantly along the flat boundary. However, this extension does not preserve the tangential conditions in our setting. To address this, we carefully construct an alternative correction mapping that maintains the tangential conditions. The tangent map at the boundary singularity is studied in Section \ref{sin str}. It is shown to be a half bubble with a hedgehog or an anti-hedgehog structure, up to a planar rotation.

	\subsection{Compactness of rescaled maps}\label{cpt of resc}
	In this section, we construct a correction map to adjust the boundary values of the extension obtained in Proposition \ref{lem ext from sphere to ball}, ensuring that the tangential conditions are satisfied. We then compare the resulting map with the rescaled map to demonstrate that the normalized energy is indeed small. Applying Lemma \ref{lem Partial Regularity}, we establish uniform convergence outside the singular set, leading to the strong $H^1$-convergence.

	\begin{lem} 
		\label{lem compact of rescaled map}
		Suppose $\widetilde{a}\in \p^0 B^+_{R_0/5}$, $\big\{r_k\big\}_{k \h{0.8pt}\in\h{0.8pt}\mathbb{N}}
        \subseteq(0,1)$ is a sequence satisfying $\displaystyle\lim_{k \to \infty}r_k=0$, and $u$ is a Dirichlet energy minimizer in the space $H_T^1(\Omega; \mathbb S^2)$ with $\widetilde{u}$ defined in \eqref{def transformed u}. \vspace{0.2pc}
        
        If on $B^+_{2}$, we define $\widetilde{u}_{\widetilde{a},r_k}(\cdot):=\widetilde{u}\left(\widetilde{a}+r_k \h{0.5pt}\cdot\right)$, then there is a $\widetilde{u}^{\h{.8pt}*} \in H^1(B_{2}^+; \mathbb{S}^2)$ such that \begin{align}\label{wk con}\widetilde{u}_{\widetilde{a},r_k} \to \widetilde{u}^{\h{.8pt}*}\quad
        \text{ weakly in $H^1\big(B_{2}^+; \mathbb{S}^2\big)$ as $k\to \infty$, up to a subsequence.}\end{align}
        Fixing the convergent subsequence obtained above, we define the concentration set by
		\begin{align*}
			\mathscr{S}(\overline{B^+_{1}})
			:=\left\{ y \in  \overline{B^+_{1}} : \lim_{\rho \h{.5pt}\to\h{.5pt} 0^+} \liminf_{k \h{0.5pt}\to\h{0.5pt} \infty} \h{1pt} 
			\dfrac{1}{\rho} \int_{\h{0.5pt}\overline{B_1^+} \h{1pt}\cap\h{1pt} B_\rho(y)} \big|\h{1pt}\nabla \widetilde{u}_{\widetilde{a},r_k}\h{1pt}\big|^2  >0 \right\}.
		\end{align*}Then, we have
        \begin{align*}
				&\mathrm{(1).}\h{5pt}\textup{$\mathscr{S}(\overline{B^+_{1}})$ is closed and  $\mathscr{H}^1\pigl( \mathscr{S}(\overline{B^+_{1}}) \pigr) =0$;}\\[1mm]
				&\mathrm{(2).}\h{5pt}\textup{$\widetilde{u}_{\widetilde{a},r_k} \to \widetilde{u}^{\h{.8pt}*}$ uniformly on any compact subset of $\overline{B^+_{1}} \setminus \mathscr{S}(\overline{B^+_{1}})$, as $k \to \infty$;}\\[1.5mm]
				&\mathrm{(3).}\h{5pt}\textup{$\widetilde{u}_{\widetilde{a},r_k} \to \widetilde{u}^{\h{.8pt}*}$ strongly in $H^1\big(B_{1}^+; \mathbb{S}^2\big)$, as $k \to \infty$.}
			\end{align*} 
	\end{lem}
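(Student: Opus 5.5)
The plan follows Schoen--Uhlenbeck, adapted to the tangential condition through Lemma \ref{lem eq of extended u} and Proposition \ref{lem ext from sphere to ball}.

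\textbf{Weak limit and concentration set.} For \eqref{wk con}, combine the boundary monotonicity formula of Scheven \cite{S06tangentplane}, transported by $\varphi$ using \eqref{ineq bdd of aij}, with the bound on $\widetilde u$ in $H^1(B^+_{R_0})$. Together these give
\[
\sup_k \int_{B_2^+}|\nabla \widetilde u_{\widetilde a,r_k}|^2 = \sup_k \frac{1}{r_k}\int_{B^+_{2r_k}(\widetilde a)}|\nabla\widetilde u|^2<\infty ,
\]
so a subsequence converges weakly in $H^1$, strongly in $L^2$, and a.e.

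For item (1), closedness of $\mathscr S(\overline{B_1^+})$ follows from monotonicity: the quantity $\rho^{-1}\int_{B_\rho}$ is almost monotone in $\rho$, so a small value at $y$ persists on a neighborhood of $y$. For $\mathscr H^1(\mathscr S)=0$, I would first use the $\varepsilon$-regularity of Lemma \ref{lem Partial Regularity}. It gives a constant $\varepsilon_1$ such that every point of $\mathscr S$ satisfies $\liminf_k \rho^{-1}\int_{B_\rho(y)}|\nabla \widetilde u_k|^2\ge\varepsilon_1$ for all small $\rho$. A Vitali covering then shows $\mathscr H^1_\delta(\mathscr S)\lesssim \varepsilon_1^{-1}\liminf_k \int_{U_\delta(\mathscr S)}|\nabla\widetilde u_k|^2$. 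Hence $\mathscr H^1(\mathscr S)<\infty$, and the measure $|\nabla\widetilde u_k|^2dy \rightharpoonup \mu$ satisfies $\mu\ge \varepsilon_1\,\mathscr H^1\llcorner\mathscr S$.

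To reach $\mathscr H^1(\mathscr S)=0$, the standard route is to show that the defect measure vanishes, i.e.\ item (3). That forces $\mu=|\nabla\widetilde u^*|^2dy$, which is absolutely continuous, so $\mathscr H^1(\mathscr S)=0$. I would therefore prove (2) and (3) first and deduce (1) at the end.

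\textbf{Uniform convergence off $\mathscr S$.} Take $y\notin\mathscr S$. Then $\rho^{-1}\int_{B_\rho(y)}|\nabla\widetilde u_k|^2<\varepsilon_1$ for some $\rho$ and infinitely many $k$, and by monotonicity for all large $k$ after shrinking $\rho$.

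For interior points, Lemma \ref{lem Partial Regularity} (or Schoen--Uhlenbeck) gives uniform $C^{\alpha}$ bounds on $B_{\rho/2}(y)$. For points on the flat boundary, reflect using Lemma \ref{lem eq of extended u}. The extensions $\mathcal U_k$ solve the elliptic system \eqref{eq. of widetilde of u, whole ball}, whose right-hand side is quadratic in the gradient by \eqref{J}, and they satisfy the energy bound \eqref{ineq. of widetilde of u whole ball<half ball}; here the term $C r^3$ rescales to $C r_k^2 r^3$. Small-energy regularity for such systems then gives uniform Hölder bounds. Arzelà--Ascoli yields uniform convergence on compact subsets of $\overline{B_1^+}\setminus\mathscr S$, which is item (2). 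Higher-order estimates also give local $H^1$ convergence away from $\mathscr S$.

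\textbf{Strong convergence: the main obstacle.} Item (3) reduces to showing there is no energy concentration near $\mathscr S$, and this comparison step is where I expect the real difficulty. The plan is as follows.

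1. Cover the compact set $\mathscr S\cap\overline{B_1^+}$ by finitely many small balls $B_{\rho_j}(y_j)$ with $\sum_j\rho_j<\eta$. At boundary centres, use half-balls.

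2. By Fubini, choose radii whose spheres have controlled data for $\widetilde u_k$. On these spheres $\widetilde u_k\to\widetilde u^*$ in $H^1$ of the sphere, and both the energy $\mathcal E^*$ and the quantity $W^{q}_{\cdot,\varphi_k,\cdot}$ of $\widetilde u_k-\widetilde u^*$ tend to zero. Take $q$ to be the average of $\widetilde u^*$.

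3. At interior balls, glue $\widetilde u^*$ to $\widetilde u_k$ with the Luckhaus/Schoen--Uhlenbeck extension, Lemma \ref{lem ext to cylinder}.

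4. At boundary half-balls, use Proposition \ref{lem ext from sphere to ball} with $v$ an interpolation between $\widetilde u_k$ and $\widetilde u^*$ on a thin shell. Hypotheses (b)--(d) of the proposition hold for $k$ large, because $r_k\to0$ and $W\to0$.

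5. The extension only satisfies $\overline v\in U_{\delta_0/2}(\mathbb S^2\cap T_{\varphi_k}\partial\Omega)$ on the flat part. Correct it by composing with the projection $\Pi$ on a layer near the flat boundary, via the characteristics-projection $P$ of Lemma \ref{lem existence of pi}, with a cutoff. This gives an admissible tangential competitor. The correction costs energy controlled by $W_{\overline v,\varphi_k}$ and $r_k^2$, using item (3) of the proposition and Lemma \ref{lem. regularity of proj. and dist}.

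6. Minimality of $\widetilde u_k$ (after rescaling, with respect to the energy with coefficients $a_{jl}(\widetilde a+r_k\,\cdot)$) then gives
\[
\limsup_k\int_{B_1^+}|\nabla\widetilde u_k|^2\le\int_{B_1^+}|\nabla\widetilde u^*|^2+C\eta .
\]

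7. Letting $\eta\to0$ and combining with weak lower semicontinuity yields strong convergence, and hence item (1).

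The delicate point is step 4: verifying assumption (d) of Proposition \ref{lem ext from sphere to ball}, and choosing $\epsilon$ so that the right-hand sides of its items (2)--(3) are $o(\rho)$ uniformly over the covering.
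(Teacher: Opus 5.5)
There is a genuine gap, and it sits exactly where the work of the lemma lies.

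\textbf{The step in item (2) fails.} You pass from ``$\rho^{-1}\int_{B_\rho(y)}|\nabla\widetilde u_k|^2$ small for infinitely many $k$'' to ``small for all large $k$'' by monotonicity. Lemma \ref{lem. energy mono.} and Lemma \ref{lem int energy mono} compare different radii for one fixed $k$; they say nothing across different $k$. Since $\mathscr S$ is defined through $\liminf_k$, the condition $y\notin\mathscr S$ only gives $\varepsilon$-regularity along some subsequence. Concentration at $y$ along another subsequence is not excluded a priori.

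\textbf{Your proof of item (3) is circular.} Step 1 needs a cover of $\mathscr S$ with $\sum_j\rho_j<\eta$. That is already $\mathscr H^1(\mathscr S)=0$, which you planned to deduce from (3).

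\textbf{Steps 2--4 also fail as written.} Weak convergence plus Fubini gives, on good spheres, only $L^2$-convergence and bounded tangential energy. It does not give vanishing tangential energy of $\widetilde u_k-\widetilde u^*$. Moreover, Proposition \ref{lem ext from sphere to ball} is not a gluing lemma between $\widetilde u_k$ and $\widetilde u^*$. By its item (2), the extension has energy $o(\rho)$ only when the boundary data are $L^2$-close to a constant $q$ and to the tangent circles. No Luckhaus-type interpolation respecting the moving constraint $T_{\varphi_k(\cdot)}\partial\Omega$ is available, as the paper itself stresses. A smaller point: at flat-boundary points you do not need the reflection, since Lemma \ref{lem Partial Regularity} applies directly to the minimizer. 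Small energy alone does not give H\"older bounds for a general quadratic-growth system such as \eqref{eq. of widetilde of u, whole ball}.

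\textbf{The missing idea.} Use the extension pointwise, to upgrade small density of the limit into smallness for every large $k$. This is what the paper does in Part 2 of item (2).
\begin{enumerate}
\item Take $y^*$ with $\rho^{-1}\int_{B^+_\rho(y^*)}|\nabla\widetilde u^*|^2\to0$. This holds off $\mathscr S$ by lower semicontinuity, and at $\mathscr H^1$-a.e.\ point.
\item Strong $L^2$ convergence, the Poincar\'e inequality and the tangency of $\widetilde u^*$ to $T_{\varphi^{-1}(\widetilde a)}\partial\Omega$ give a good hemisphere of radius $\rho_k\in[3\rho_0/4,\rho_0]$. On it, for every large $k$, $W^{q^k}\lesssim\varepsilon\rho_k^2$ and $W_{\cdot,\varphi_k}\lesssim\rho_k^2(\varepsilon+r_k^2)$, where $q^k$ is the mean of $\widetilde u_k$.
\item Proposition \ref{lem ext from sphere to ball} then gives an extension of energy $\lesssim\rho_k\big(\epsilon+\epsilon^{-\mu}(\varepsilon+r_k^2)+\dots\big)$.
\item The maps $\Phi,\Psi$ built from $\Pi$, on a shell of relative width $1-\gamma=\epsilon^\mu$, turn this extension into an admissible competitor.
\item Minimality puts $\widetilde u_k$ below the threshold of Lemma \ref{lem Partial Regularity} for all large $k$. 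This yields (2), and, applied at $\mathscr H^1$-a.e.\ point, $\mathscr H^1(\mathscr S)=0$.
\end{enumerate}
Once (1) and (2) hold, (3) needs no comparison on the covering balls. Monotonicity gives $\int_{B^+_{\tau_i}(x^i)}|\nabla\widetilde u_k|^2\lesssim\tau_i$ uniformly in $k$, with reflected balls handled by \eqref{ineq. of widetilde of u whole ball<half ball}. Off the cover, test \eqref{eq. of widetilde of u, whole ball} with $(\widetilde u_k-\widetilde u_l)\psi$, and use \eqref{J} together with the uniform convergence.
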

	\begin{proof}
		Defining \begin{align}\label{defn of sacal}\text{$a^k_{ij}\left(\cdot\right):=a_{ij}\left(\widetilde{a}+r_k \h{0.5pt}\cdot\right)$ \h{15pt}and \h{15pt} $\widetilde{\mathcal{E}}^{\h{0.7pt}k}_{v,B_{r}^+(z)}:=\int_{B_{r}^+(z)}a^k_{ij}\h{1pt}\p_{y_i}v\cdot\p_{y_j}v$,\h{15pt}for any $B_{r}^+(z) \subseteq B_{2}^+$},\end{align} we compute that $$
		\frac{1}{r}\int_{B_{r}^+}\big|\nabla \widetilde{u}_{\widetilde{a},r_k}\big|^2 
		\lesssim_{\,\Omega\,}
		\frac{1}{r}\h{1.5pt}\widetilde{\mathcal{E}}_{\widetilde{u}_{\widetilde{a},r_k},B_{r}^+}^k
		=\frac{1}{rr_k} \h{1.5pt}
		\widetilde{\mathcal{E}}_{\widetilde{u},B_{rr_k}^+(\widetilde{a})}
		\lesssim_{\,\Omega\,} \frac{1}{rr_k}\int_{B_{rr_k}^+(\widetilde{a})}\big|\h{0.5pt}\nabla \widetilde{u}\h{0.5pt}\big|^2, \h{15pt}\text{for any $r\in(\h{0.5pt}0,2\h{1pt}]$.}$$ 
		By the energy monotonicity in Lemma \ref{lem. energy mono.}, it turns out that		\begin{align}
			\sup_{k\h{0.8pt}\in\h{0.8pt}\mathbb{N}}\h{1.5pt}\dfrac{1}{r}\int_{B_{r}^+}\big|\nabla \widetilde{u}_{\widetilde{a},r_k}\big|^2 
			\lesssim_{\,\Omega\,} 
			\R{1} + \dfrac{1}{\R{1}} \int_{B_{\R{1}}^+(\widetilde{a})}\big|\nabla \widetilde{u}\big|^2 
			\h{1pt}\lesssim_{\,\Omega\,}\h{1pt} 1, \h{15pt}\text{
		for any $r\in(\h{.5pt}0,2\h{1pt}]$. }
			\label{ineq. uniform energy bdd of rescaled map}
		\end{align}Thus, there is a limiting map $\widetilde{u}^{\h{.8pt}*} \in H^1\big(B_{2}^+; \mathbb{S}^2\big)$ such that \eqref{wk con} holds.\vspace{0.2pc}

        In the remainder of the proof, we prove Items (1)-(3) in the lemma. \vspace{0.4pc}
		
		\noindent{\bf Proof of Item (1):} The closedness of $\mathscr{S}(\overline{B^+_{1}})$ can be proved by the boundary partial regularity Lemma \ref{lem Partial Regularity} in the Appendix, and a similar partial regularity result in \cite{LW08} for interior points. In addition, we can find an $\varepsilon_0^*>0$ sufficiently small such that
		\begin{align*}
			\mathscr{S}(\overline{B^+_{1}})
			=\left\{ y \in  \overline{B^+_{1}} : \lim_{\rho \h{1pt}\to \h{1pt}0^+} \liminf_{k \h{1pt}\to\h{1pt} \infty} \h{1.5pt} 
			\dfrac{1}{\rho} \int_{\h{0.5pt}\overline{B_1^+} \cap \h{1pt}B_\rho^+(y)} \big|\nabla \widetilde{u}_{\widetilde{a},r_k}\big|^2  >4\varepsilon^*_0 \h{1pt}\right\}.
		\end{align*}		
		Let $\big\{B_{5\delta}(x^i)\h{0.5pt}\big\}_{i=1}^{N}$ be a finite cover of $\mathscr{S}(\overline{B^+_{1}})$, where $\delta\in(0,R_1)$ and $\big\{x^i\big\}_{i=1}^{N} \subseteq \mathscr{S}(\overline{B^+_{1}})$. Using the Vitali covering lemma, we can assume that $\big\{B_{\delta}(x^i)\big\}_{i=1}^{N}$ is a disjoint family. 
		In addition, there is a radius $\rho_0>0$ depending on $\big\{x^i\big\}_{i=1}^{N}$ and $\varepsilon_0^*$ such that
		$$ \liminf_{k \h{1pt}\to \h{1pt} \infty} \h{1pt} 
		\frac{1}{\rho} \int_{B_\rho^+(x^i)} \big|\nabla \widetilde{u}_{\widetilde{a},r_k}\big|^2
		\geq 2\varepsilon_0^*, \h{15pt}\text{for any $i=1,2,\ldots,N$ and $\rho \in \big(\h{0.5pt}0,\rho_0\h{0.5pt}\big]$. }$$
		Therefore, for some $k_0 \in \mathbb{N}$ depending on $\varepsilon_0^*$ and $\rho$, it holds
		$$\dfrac{1}{\rho}\int_{B^+_{\rho}(x^i)}\big|\nabla \widetilde{u}_{\widetilde{a},r_{k_0}}\big|^2\geq \varepsilon_0^*, \h{15pt}\text{for any $i=1,2,\ldots,N$.}$$
		By scaling and the energy inequality in Lemma \ref{lem int energy mono}, it turns out, for any $\rho^* \in \big[\h{0.5pt}\rho,R_1\h{0.5pt}\big)$,  that
		\begin{align*}
			&\dfrac{1}{\rho}\int_{B^+_{\rho}(x^i)}
			\big|\nabla \widetilde{u}_{\widetilde{a},r_{k_0}}\big|^2
			=\dfrac{1}{r_{k_0}\h{0.5pt}\rho}\int_{B^+_{r_{k_0}\h{0.5pt}\rho}\left(\h{0.5pt}\widetilde{a}+r_{k_0}x^i\h{0.2pt}\right)}\big|\nabla \widetilde{u} \big|^2 \\[1.5mm]
			&\h{40pt}\lesssim_{\,\Omega\,} r_{k_0}\h{0.5pt}\rho^* + \dfrac{1}{r_{k_0}\h{0.5pt}\rho^*}\int_{B^+_{r_{k_0}\rho^*}\left(\widetilde{a}+r_{k_0}x^i\right)} \big|\nabla \widetilde{u} \big|^2
			 = r_{k_0} \h{0.5pt}\rho^* + \dfrac{1}{\rho^*}
			\int_{B^+_{\rho^*}(x^i)}
			\big|\nabla \widetilde{u}_{\widetilde{a},r_{k_0}} \big|^2.
		\end{align*}
		We sum  the above estimate from $i = 1$ to $N$, and set $\rho^*=\delta \in \left(\rho, R_1\right)$. As $\big\{B_{\delta}(x^i)\big\}_{i=1}^{N}$ is disjoint, we obtain
		$$N\varepsilon_0^* \h{1pt}\leq\h{1pt}\dfrac{1}{\rho}\int_{\bigcup_i B^+_{\rho}(x^i)} \big|\nabla \widetilde{u}_{\widetilde{a},r_{k_0}}\big|^2
		\h{1pt}\lesssim_{\,\Omega\,} N\delta +  \dfrac{1}{\delta}
		\int_{\bigcup_i B^+_{\delta}(x^i)}
		\big|\nabla \widetilde{u}_{\widetilde{a},r_{k_0}}\big|^2.
		$$
		Let $\delta = \sigma \h{0.5pt}\varepsilon_0^*$, where $\sigma$ is small with the smallness depending only on the domain $\Omega$. It then follows from the above estimate that
		\begin{align*}
			N\varepsilon_0^*
			\h{1pt}\lesssim_{\,\Omega\,}  \dfrac{1}{\delta}
			\int_{\bigcup_i B^+_{\delta}(x^i)}
			\big|\nabla \widetilde{u}_{\widetilde{a},r_{k_0}}\big|^2
			\h{1pt}\leq \h{1pt}		\dfrac{1}{\delta}	\int_{B^+_1}
			\big|\nabla \widetilde{u}_{\widetilde{a},r_{k_0}}\big|^2, \h{10pt}\text{which implies $\displaystyle\lim_{\delta \h{0.5pt}\to \h{0.5pt} 0^+}\mathscr{H}^3\left(\h{1pt}\bigcup_i \h{1pt} B^+_{5\delta}(x^i)\right)=0$.}
		\end{align*}
		Here we use $\nabla \widetilde{u}_{\widetilde{a},r_{k_0}} \in L^2\big(B^+_1\big)$. Hence, the first inequality above deduces $\mathscr{H}^1\pigl( \mathscr{S}(\overline{B^+_{1}}) \pigr) =0$. \vspace{0.4pc}
		
		\noindent{\bf Proof of Item (2):} Fixing $y^* \in \p^0B_1^+ \setminus \mathscr{S}(\overline{B^+_{1}})$, we prove the uniform convergence of $\big\{\widetilde{u}_{\widetilde{a},r_k} \big\}$ in a neighborhood of $y^*$. The proof is similar when $y^*$ is an interior point. \vspace{0.2pc}
        
        For any  $\varepsilon>0$, there is $\rho_0=\rho_0\left(y^*,\varepsilon\right) \in (0,\frac{1}{2})$ such that
		\begin{align}
			\dfrac{C}{\rho^3} \int_{B_\rho^+(y^*)} \big|\h{1pt}\widetilde{u}^{\h{.8pt}*}-M_{B_\rho^+(y^*)}(\widetilde{u}^{\h{.8pt}*}) \h{1pt}\big|^2 
			\leq\dfrac{1}{\rho} \int_{B_\rho^+(y^*)} \big|\h{1pt}\nabla \widetilde{u}^{\h{.8pt}*} \h{1pt}\big|^2  
			< \varepsilon \h{15pt}\text{for any $\rho \in (\h{0.5pt}0,\rho_0\h{0.5pt}]$.}
			\label{4170}
		\end{align}
		Here, $C>0$ is a universal constant induced by the Poincar\'e inequality. Recalling $\eps{5} $ and $\R{7} $ given in Lemma \ref{lem Partial Regularity}, we show in the following that $$\widehat{\mathcal{E}}^*_{\rho_0/2,\h{1.5pt}y^*;+}(\widetilde{u}_{\widetilde{a},r_k}) =\widehat{\mathcal{E}}^*_{r_k\h{0.2pt}\rho_0\h{0.2pt}/\h{0.2pt}2,\h{1.5pt}\widetilde{a}+r_ky^*;+}(\widetilde{u}) < \eps{5},$$ for any large enough $k\in \mathbb{N}$ with  $\widetilde{a}+r_k\h{0.5pt}y^*\in \p^0B^+_{R_0/4}$ and $r_k\h{0.5pt}\rho_0\in (0,\R{7}]$. The extension map constructed in Proposition \ref{lem ext from sphere to ball} will be used as a comparison map in the following argument. \vspace{0.4pc}
		
		\noindent{\bf Part 2.1. Verification of assumptions in Proposition \ref{lem ext from sphere to ball}:} By the compactness of the  Sobolev embedding and the trace operator, the weak convergence of $\big\{\widetilde{u}_{\widetilde{a},r_k}\big\}$ in \eqref{wk con} implies that $\widetilde{u}_{\widetilde{a},r_k} \to \widetilde{u}^{\h{.8pt}*}$ strongly in $L^2\big(B_{1}^+; \mathbb{S}^2\big)$ and $L^2\big(\p B_{1}^+; \mathbb{S}^2\big)$, as $k \to \infty$. Denoting by $q^k$ the average of $\widetilde{u}_{\widetilde{a},r_k}$ over $B_{\rho_0}^+(y^*)$, that is $$ q^k:=M_{B_{\rho_0}^+(y^*)}(\widetilde{u}_{\widetilde{a},r_k}),$$ we see that
		\begin{align}
			&W^{\h{.5pt}q^k}_{\widetilde{u}_{\widetilde{a},r_k},\,B_{\rho_0}^+(y^*)} \nonumber\\[1mm] &\h{10pt}\lesssim \rho_0^3 \left|\h{1pt} M_{B_{\rho_0}^+(y^*)}(\widetilde{u}^{\h{.8pt}*})
			-M_{B_{\rho_0}^+(y^*)}(\widetilde{u}_{\widetilde{a},r_k}) \h{1pt}
			\right| ^2  + \int_{B_{\rho_0}^+(y^*)} \left|\h{1pt} \widetilde{u}_{\widetilde{a},r_k}-\widetilde{u}^{\h{.8pt}*}\h{.5pt}\right| ^2
			+  \left|\h{1pt} \widetilde{u}^{\h{.8pt}*}-M_{B_{\rho_0}^+(y^*)}(\widetilde{u}^{\h{.8pt}*}) \h{1pt}\right| ^2 
			 \nonumber\\[1mm]
			&\h{10pt}\lesssim \int_{B_{\rho_0}^+(y^*)} \left|\h{1pt} \widetilde{u}_{\widetilde{a},r_k}-\widetilde{u}^{\h{.8pt}*} \h{.5pt}\right| ^2 
			+ \left|\h{1pt} \widetilde{u}^{\h{.8pt}*}-M_{B_{\rho_0}^+(y^*)}(\widetilde{u}^{\h{.8pt}*}) \h{1pt}\right| ^2  \lesssim \h{1pt}\varepsilon \h{0.5pt} \rho_0^3,
			\label{3713}
		\end{align}
		for any large enough $k\in \mathbb{N}$ depending on $\rho_0$ and $\varepsilon$. For the estimate of $	W_{\widetilde{u}_{\widetilde{a},r_k},\h{0.5pt}\varphi_k,\h{0.5pt}B_{\rho_0}^+(y^*)}$, we have
		\begin{align}
		W_{\widetilde{u}_{\widetilde{a},r_k},\h{0.5pt}\varphi_k,\h{0.5pt}B_{\rho_0}^+(y^*)} 
			&\lesssim_{\,\Omega\,} \int_{B_{\rho_0}^+(y^*)} 
			\left[ \textup{dist}_{\varphi^{-1}(\widetilde{a})}(\widetilde{u}^{\h{.8pt}*}(y)) \right] ^2
			+r^2_k+ \big|\h{1pt}\widetilde{u}_{\widetilde{a},r_k}(y)-\widetilde{u}^{\h{.8pt}*}(y) \h{1pt}\big|^2\h{1.5pt}\mathrm d\h{.5pt}y. 
			\label{4256}
		\end{align}
        By the definition of $\varphi_k$ in \eqref{defn of varphi_k} and the triangle inequality, it turns out $$\textup{dist}_{\varphi^{-1}(\widetilde{a})}\big(\widetilde{u}^{\h{.8pt}*}(y) \big)
		\lesssim_{\,\Omega\,} \textup{dist}_{\varphi_k(y)}\big(\widetilde{u}_{\widetilde{a},r_k}(y) \big)+r_k
		+ \big|\h{1pt}\widetilde{u}_{\widetilde{a},r_k}(y)-\widetilde{u}^{\h{.8pt}*}(y) \h{1pt}\big| \h{15pt}\text{on $\p^0B^+_{2}$. }$$ Since on $\p^0B^+_{2}$, it satisfies $\textup{dist}_{\varphi_k(\cdot)}\big(\widetilde{u}_{\widetilde{a},r_k}(\cdot) \big)=0$, and $\big|\h{1pt}\widetilde{u}_{\widetilde{a},r_k}(\cdot)-\widetilde{u}^{\h{.8pt}*}(\cdot) \h{1pt}\big|\to 0$ as $k \to \infty$, $\mathscr H^2$-almost everywhere, from the above estimate we induce that \begin{align}\label{tangential cond of widetldu}\textup{dist}_{\varphi^{-1}(\widetilde{a})}\big(\widetilde{u}^{\h{.8pt}*}(\cdot) \big)=0 \h{20pt}\text{$\mathscr H^2$-a.e. on $\p^0B^+_{2}$.}\end{align} Therefore, using the Poincar\'e type inequality, \eqref{4170}, and \eqref{3713}, we get
		\begin{align*}
			W_{\widetilde{u}_{\widetilde{a},r_k},\varphi_k,B_{\rho_0}^+(y^*)}
			\h{1pt}\lesssim_{\,\Omega\,} \rho_0^3 \h{1pt}r_k^2 + \rho_0^2\int_{B_{\rho_0}^+(y^*)} 
			\big|\h{.5pt} \nabla \widetilde{u}^{\h{.8pt}*}\big|^2
			+\int_{B_{\rho_0}^+(y^*)} \big|\h{1pt}\widetilde{u}_{\widetilde{a},r_k}-\widetilde{u}^{\h{.8pt}*} \h{1pt}\big|^2
			\h{1pt}\lesssim_{\,\Omega\,} \rho_0^3\left(\varepsilon+r_k^2\right),
		\end{align*}
		for any large enough $k\in \mathbb{N}$. \vspace{0.2pc}
        
        For $k\in\mathbb{N}$, we use \eqref{ineq. uniform energy bdd of rescaled map}, \eqref{3713}, and Fubini's theorem to find a $\rho_k\in [ \frac{3\rho_0}{4},\rho_0]$ such that
		\begin{align}
			\mathcal{E}^*_{\widetilde{u}_{\widetilde{a},r_k},\h{.5pt}\p^+B^+_{\rho_k}(y^*)}
			\h{0.5pt}\lesssim_{\,\Omega} 1,\h{20pt}
			W^{\h{.5pt}q^k}_{\widetilde{u}_{\widetilde{a},r_k},\h{.5pt}\p^+B^+_{\rho_k}(y^*)}
			\h{0.5pt}\lesssim_{\,\Omega} \varepsilon \h{1pt} \rho_k^2, \h{20pt}W_{\widetilde{u}_{\widetilde{a},r_k},\h{.5pt}\varphi_k,\h{.5pt}\p^+B^+_{\rho_k}(y^*)}
			\h{1pt}\lesssim_{\,\Omega} \rho_k^2\left(\varepsilon 
			+r_k^2 \right).
			\label{3850}
		\end{align}
		Meanwhile, $\rho_k$ can be chosen such that $\widetilde{u}_{\widetilde{a},r_k}\in H^1\big(\p^+
		B_{\rho_k}^+(y^*);\mathbb{S}^2\big)$, and \begin{align}\label{tan at equator}\widetilde{u}_{\widetilde{a},r_k}(\cdot) \in  T_{\varphi_k(\cdot)}\p\Omega \h{20pt}\text{ on $\p B_{\rho_k}(y^*)\cap \big\{y_3=0\big\}$, in the sense of trace.}\end{align}
        
        Recall $\epsilon_0$, $\delta'$, $\eps{7}$, and $\mu$ in Proposition \ref{lem ext from sphere to ball}. In addition, we fix $\epsilon \in (0, \epsilon_0)$ and choose $\delta \in (0, \delta')$ such that \begin{align}\label{eps and mu rela}
        2\h{0.5pt}\epsilon^{2-\mu}\sqrt{\delta} 
        < \sqrt{\eps{7}}.
        \end{align} 
        Note that $r_k \to 0$ as $k \to \infty$. By \eqref{3850}, we can take $k$ large enough and get \begin{align*}
            r_k^2\h{1.5pt}\epsilon^{2-\mu}\left(r_k^2\left( \epsilon^{2-\mu} +\epsilon^{-2}\right)
			+ \mathcal{E}^*_{\widetilde{u}_{\widetilde{a},r_k},\h{.5pt}\p^+B^+_{\rho_k}(y^*)}  
            + \rho_k^{-2}\h{1.5pt}\epsilon^{-2}   \left(
			\epsilon^{2-\mu}  
			+  \epsilon ^{-2} \right)
			W^{\h{.5pt}q^k}_{\widetilde{u}_{\widetilde{a},r_k},\h{.5pt}\varphi_k,\h{.5pt}\p^+B^+_{\rho_k}(y^*)}\right) \h{1.5pt}\leq\h{1.5pt} \delta^2.
        \end{align*}In light of \eqref{eps and mu rela}, it turns out 
		\begin{align*}
            \epsilon^{2-\mu}\sqrt{\delta}
        +4 \h{0.5pt}\epsilon^{3-\mu}\h{1pt}r_k \h{1pt}\rho_k
				\leq \sqrt{\eps{7}}, \h{20pt}\text{for large $k$.}
		\end{align*}Still using the estimates in \eqref{3850}, we can take $\varepsilon$ small enough such that \begin{align*}\mathcal{E}^*_{\widetilde{u}_{\widetilde{a},r_k},\h{.5pt}\p^+B^+_{\rho_k}(y^*)} \h{1.5pt}
			W^{\h{.5pt}q^k}_{\widetilde{u}_{\widetilde{a},r_k},\h{.5pt}\varphi_k ,\h{.5pt}\p^+B^+_{\rho_k}(y^*)}\leq \delta^2\h{.5pt}\rho_k^2\h{1.5pt}\epsilon^\mu.\end{align*}
		Therefore, the last three estimates and Proposition \ref{lem ext from sphere to ball} induce that there exists an extension, denoted by $\overline{u}^k\in H^1\big(B^+_{\rho_k}(y^*);\mathbb{S}^2\big)$, such that $\overline{u}^k=\widetilde{u}_{\widetilde{a},r_k}$ on $\p^+B^+_{\rho_k}(y^*)$ in the sense of trace. However, Proposition \ref{lem ext from sphere to ball} only implies that \begin{align}\overline{u}^k(y) \in U_{\delta_0/2}\big(\h{1pt}\mathbb{S}^2 \cap T_{\varphi^{-1}\left(\widetilde{a}+r_k\h{0.3pt}y\right)}\p \Omega \h{1pt}\big),\h{20pt}\text{for $y \in \p^0 B^+_{\rho_k}(y^*)$ in the sense of trace.}\end{align}  We must construct a new map based on $\overline{u}^k$ with the same trace on $\p^+ B_{\rho_k}^{+}(y^*)$, but taking values in $T_{\varphi^{-1}\left(\widetilde{a}+r_k\h{0.3pt}y\right)}\p \Omega$ for $ y \in \p^0B^+_{\rho_k}(y^*)$, in the sense of trace. \vspace{0.4pc}
		
		\noindent{\bf Part 2.2. Construction of comparison map:} Let $\psi_* \in C^\infty\big(\h{0.5pt}\mathbb{S}^2;[0,1]\h{0.5pt}\big)$ be such that
\[
\psi_*(y)=
\begin{cases}
0, & \text{if} \h{5pt}y \in \mathbb{S}^2 \setminus U_{3\delta_0}\! \h{0.5pt}\big(\h{0.5pt}\mathbb{S}^2 \cap T_{\varphi^{-1}(\widetilde{a})}\partial\Omega\h{0.5pt}\big),\\[6pt]
1, & \text{if} \h{5pt} y \in \mathbb{S}^2 \cap U_{2\delta_0}\!\h{0.5pt}\big(\h{0.5pt}\mathbb{S}^2 \cap T_{\varphi^{-1}(\widetilde{a})}\partial\Omega\h{0.5pt}\big).
\end{cases}
\]
Recalling $x_0$ given at the beginning of Section \ref{bdry flatten}, we have $$ \mathbb{S}^2 \cap U_{3\delta_0}
		\big(\mathbb{S}^2 \cap T_{\varphi^{-1}(\widetilde{a})}\p\Omega\big) \subseteq U_{4\delta_0}\big(\mathbb{S}^2 \cap T_{x_0}\p\Omega\big),$$ which ensures, by Lemma \ref{lem. regularity of proj. and dist}, that $\Pi(y^0,p)$ is well-defined for any $$(y,p)\in\overline{ B^+_{r_k \rho_k}\big(\widetilde{a} + r_k y^*\big)} \times U_{4\delta_0}\big(\mathbb{S}^2 \cap T_{x_0}\p\Omega\big).$$  Denote 
		\begin{align}
        &\zeta^y:=y^* + \rho_k \h{0.5pt}\dfrac{y-y^*}{|\h{0.5pt} y-y^* |};      \h{37pt}\Pi^k(y,p):=\Pi\left(\widetilde{a}+r_k\h{0.5pt}y^0,p\right);\nonumber\\[1mm]
			&\Phi^{v,k}(y,z):= v(z) + \psi_*\pigl(v(z)\pigr) \Big[ \h{1pt}\Pi^k\pigl(y^0, v(z)\pigr)   -   v(z) \h{1pt}\Big];\nonumber\\[1mm] 
            &\Psi^{v,k}(y,z):=				v(z)+ \psi_*\pigl(v(z)\pigr)
			\Big[ \h{1pt}\Pi^k\Big( y^0, \Pi^k\left( z^0, v(z)\right) 
			\Big)  - \Pi^k\left( z^0, v(z)\right) \Big]. \label{def of Phi^(v,k)}
		\end{align}
		Moreover, we simply write $\Phi^{\overline{u}^k}=\Phi^{\overline{u}^k,k}$, $\Psi^{\overline{u}^k}=\Psi^{\overline{u}^k,k}$, and define for a given $\gamma\in (0,1)$ the map   
		\begin{equation}
			u^{\gamma,k}(y) := 
			\begin{cases} 
				\dfrac{\rho_k - |y-y^*|}{(1-\gamma)\rho_k} \h{1pt}
				\Phi^{\overline{u}^k}(y,\zeta^y) 
				+ \dfrac{|y-y^*| - \gamma\rho_k}{(1-\gamma)\rho_k} \h{1pt}
				\Psi^{\overline{u}^k}(y,\zeta^y)
				& \text{on } B^+_{\rho_k}(y^*) \setminus B^+_{\gamma\h{0.2pt}\rho_k}(y^*), \\[15pt]
				\Phi^{\overline{u}^k}\left( y,y^*+ \gamma^{-1}(y-y^*)\right) 
				& \text{on } B^+_{\gamma\rho_k}(y^*).
			\end{cases}
			\label{def. of u^rho 2}
		\end{equation}  
		        
        We claim that $\widehat{u^{\gamma,k}}$ is well-defined on $B^+_{\rho_k}(y^*)$ when $k$ is taken large enough. First, we assume $y \in B_{\gamma\h{0.2pt}\rho_k}^+(y^*)$ and let $z = y^* + \gamma^{-1}(y - y^*)$. If $\psi_*\pigl(\overline{u}^k(z)\pigr) = 0$, then $\big| u^{\gamma, k} (y) \h{1pt}\big| = 1$. If $\psi_*\pigl(\overline{u}^k(z)\pigr)>0$, then for large $k$, it turns out that \begin{align}\label{incl of bar uk}\overline{u}^k(z)\in \mathbb{S}^2 \cap U_{3\delta_0}\big(\mathbb{S}^2\cap T_{\varphi^{-1}(\widetilde{a})}\p\Omega\big)\subseteq  U_{4\delta_0}\big(\mathbb{S}^2\cap T_{\varphi^{-1}\left(\widetilde{a}+r_k\h{0.2pt}y^0\right)}\p\Omega\big). \end{align} Hence $\big|u^{\gamma,k}(y)\h{1pt}\big|\geq 1-4\delta_0> \frac{3}{5}$. Assume $y \in B^+_{\rho_k}(y^*) \setminus B^+_{\gamma\h{0.5pt}\rho_k}(y^*)$ and let $z = \zeta^{y}$. If $\psi_*\pigl(\overline{u}^k(z)\pigr) = 0$, we still have $\big| u^{\gamma, k} (y) \h{1pt}\big| = 1$. If $\psi_*\pigl(\overline{u}^k(z)\pigr) > 0$, then \eqref{incl of bar uk} is still satisfied for large $k$. In addition, \begin{align}\label{set inclusion}\mathbb{S}^2\cap T_{\varphi^{-1}(\widetilde{a}+r_kz^0)}\p\Omega \subseteq U_{\delta_0/4}\big(\mathbb{S}^2\cap T_{\varphi^{-1}(\widetilde{a})}\p\Omega\big)\subseteq
		U_{\delta_0/2}\big(\mathbb{S}^2\cap T_{\varphi^{-1}(\widetilde{a}+r_k\h{0.2pt}y^0)}\p\Omega\big).\end{align} It also holds $\big|u^{\gamma,k}\big|\geq 1-4\delta_0>\frac{3}{5}$ on $B^+_{\rho_k}(y^*) \setminus B^+_{\gamma\h{0.5pt}\rho_k}(y^*)$.  Therefore, the map $\widehat{u^{\gamma,k}} \in H^1(B^+_{\rho_k}(y^*);\mathbb{S}^2)$ is well-defined for large $k$. \vspace{0.2pc}
		
		\sloppy In the remainder of this part, we verify the boundary conditions of $\widehat{u^{\gamma, k}}$.\vspace{0.2pc}
        
        On $\p^+ B^+_{\rho_k}(y^*)$, we have $ \zeta^y=y$. Hence
		$$
		\Pi^k\left( y^0, \Pi^k\pigl( \left(\zeta^y\right)^0, \overline{u}^k(\zeta^y)\pigr) 
		\right) 
		=\Pi^k\left( \left( \zeta^y\right)^0, \Pi^k\pigl( \left(\zeta^y\right)^0, \overline{u}^k(\zeta^y)\pigr) 
		\right) 
		=\Pi^k\left( \left( \zeta^y\right)^0, \overline{u}^k(\zeta^y)\right),$$ 
        leading to $$u^{\gamma,k}(y)=\overline{u}^k(\zeta^y)=\overline{u}^k(y)
		=\widetilde{u}_{\widetilde{a},r_k}(y) \h{15pt}\text{on $\p^+ B^+_{\rho_k}(y^*)$, in the sense of trace.}$$ 
        
        By \eqref{tan at equator} and \eqref{set inclusion},  $$\psi_*\pigl(\overline{u}^k(\zeta^y)\pigr)=1 \h{5pt}\text{and} \h{5pt} 
		\overline{u}^k(\zeta^y) = \Pi^k\left( \left( \zeta^y\right)^0, \overline{u}^k(\zeta^y)\right) \h{15pt}\text{on $\p^0 B^+_{\rho_k}(y^*)\setminus \p^0 B^+_{\gamma\h{0.2pt}\rho_k}(y^*)$,}$$ in the sense of trace. We then obtain $$\Psi^{\overline{u}^k}(y,\zeta^y) \in T_{\varphi^{-1}\left(\widetilde{a}+r_k\h{0.2pt}y\right)}\p \Omega \h{15pt}\text{on $\p^0 B^+_{\rho_k}(y^*)\setminus \p^0 B^+_{\gamma\h{0.2pt}\rho_k}(y^*)$, in the sense of trace.}$$ Using the fact that $\Pi^k\left( y^0, \overline{u}^k(\zeta^y)\right)\in T_{\varphi^{-1}(\widetilde{a}+r_ky)}\p \Omega$ deduces $\Phi^{\overline{u}^k}(y,\zeta^y) \in T_{\varphi^{-1}\left(\widetilde{a}+r_k\h{0.2pt}y\right)}\p \Omega$. Then it follows that $$\,u^{\gamma,k}(y) \in T_{\varphi^{-1}(\widetilde{a}+r_ky)}\p \Omega, \h{15pt}\text{for $y \in\p^0 B^+_{\rho_k}(y^*)\setminus \p^0 B^+_{\gamma\h{0.2pt}\rho_k}(y^*)$, in the sense of trace.}$$ 

On $\p^0 B^+_{\gamma\h{0.2pt}\rho_k}(y^*)$, we arrive at a similar result: $u^{\gamma,k}(y) \in T_{\varphi^{-1}\left(\widetilde{a}+r_k\h{0.2pt}y\right)}\p \Omega$, because the fact that $$\overline{u}^k(y) \in U_{\delta_0/2}\big(\mathbb{S}^2 \cap T_{\varphi_k(y)}\p \Omega\big)\subseteq U_{\delta_0}\big(\mathbb{S}^2\cap T_{\varphi^{-1}(\widetilde{a})}\p\Omega\big) \h{15pt}\text{for $y \in \p^0 B^+_{\rho_k}(y^*)$, in the sense of trace}$$  implies $\psi_*\left( \overline{u}^k\pigl(y^*+ \gamma^{-1}(y-y^*) \pigr)\right) =1$ on $\p^0 B^+_{\gamma\h{0.2pt}\rho_k}(y^*)$. \vspace{0.2pc}

From the above arguments, we obtain
        \begin{align*}\widehat{u^{\gamma,k}} &\in \mathscr{F}_{
			\widetilde{u}_{\widetilde{a},r_k},\,\varphi_k,\,B^+_{\rho_k}(y^*)}\\[1mm]
		&:=\Big\{v \in H^1(B^+_{\rho_k}(y^*);\mathbb{S}^2):
		v=\widetilde{u}_{\widetilde{a},r_k}\h{5pt} \text{on $\p^+ B^+_{\rho_k}(y^*)$ \h{8pt}and} \h{8pt}v\h{1pt}\big|_{\p^0 B^+_{\rho_k}(y^*)}\in T_{\varphi_k(\cdot)}\p\Omega\h{1pt}\Big\}.\end{align*}
		
		\noindent{\bf Part 2.3. Estimates of the energy of comparison map:} We introduce spherical coordinates $(r,\xi,\theta)$ centered at $y^*$, where $r$ is the radial variable. $\xi \in [\h{0.8pt}0, \pi\h{0.5pt}]$ is the polar angle. $\theta \in [\h{0.8pt}0, 2 \pi\h{0.5pt})$ is the azimuthal angle. \vspace{0.2pc}
        
        In the region $B^+_{\rho_k}(y^*) \setminus B^+_{\gamma\h{0.2pt}\rho_k}(y^*)$, the definition of $u^{\gamma,k}$ in \eqref{def. of u^rho 2} implies 
		\begin{align*}
			\p_r u^{\gamma,k} = 	 \dfrac{\Psi^{\overline{u}^k}(y,\zeta^y)
			-\Phi^{\overline{u}^k}(y,\zeta^y)}{(1-\gamma)\rho_k } +\dfrac{\rho_k-r}{(1-\gamma)\rho_k}
			\h{1pt}\p_{r}\pig[	\Phi^{\overline{u}^k}(y,\zeta^y)\pig]
			+ \dfrac{r - \gamma\rho_k}{(1-\gamma)\rho_k} \h{1pt}\p_{r}\pig[	\Psi^{\overline{u}^k}(y,\zeta^y)\pig].
		\end{align*}
        Simply denote by $w^k$ the $0$-homogeneous map $\overline{u}^k(\zeta^y)$. The triangle inequality implies
        \begin{align*}
			 \left| \p_r u^{\gamma,k}\right| 
			&\h{1.5pt}\leq\h{1.5pt} \dfrac{\psi_*\pigl(w^k\pigr)}{(1-\gamma)\rho_k} \h{1pt}	\bigg| \h{1pt} 			\Pi^k\pigl(y^0, w^k\pigr)
			- w^k 
			-   \Pi^k\Big( y^0, \Pi^k\left( \left(\zeta^y\right)^0, w^k\right) \Big) 
			+ \Pi^k\left( \left(\zeta^y\right)^0, w^k\right) 
			\bigg|\\[2mm]
			&\h{1.5pt}+\h{1.5pt}\psi_*\pigl(w^k\pigr) \h{1.5pt}
			\bigg| \h{1pt}\dfrac{\rho_k-r}{(1-\gamma)\rho_k} \h{1pt}
			\p_{y_j}\Pi^k\left( y^0, w^k\right) \p_{r}y_j
			+ \dfrac{r - \gamma\h{0.2pt}\rho_k}{(1-\gamma)\rho_k}\h{1pt}
			\p_{y_j}\Pi^k\Big( y^0, \Pi^k\left( (\zeta^y)^0, w^k\right)\Big) \h{1.5pt} \p_{r}y_j \h{1pt}\bigg|.
		\end{align*}
		The last inequality in \eqref{3850} implies that
		\begin{align*}
			&\int_{B^+_{\rho_k}(y^*) \h{1pt}\setminus \h{1pt} B^+_{\gamma\h{0.2pt}\rho_k}(y^*)}
\bigg|\h{1pt}\Pi^k\pigl(y^0, w^k\pigr)
			- w^k -   \Pi^k\Big( y^0, \Pi^k\left( \left(\zeta^y\right)^0, w^k\right) \Big) 
			+ \Pi^k\left( \left(\zeta^y\right)^0, w^k\right) 
			\bigg|^2\\	
			&\h{75pt}\lesssim_{\,\Omega\,} \int_{B^+_{\rho_k}(y^*) \h{1pt}\setminus\h{1pt} B^+_{\gamma\h{0.2pt}\rho_k}(y^*)}	\bigg|\h{1pt}w^k 
			- \Pi^k\left( \left(\zeta^y\right)^0, w^k\right) 
			\bigg|^2 \h{1.5pt}\lesssim_{\,\Omega\,}\h{1.5pt}\rho_k^3 \left(1-\gamma\right)\left(\varepsilon 
			+r_k^2 \right).
		\end{align*} 
		Using the last estimate and Item (1) in Lemma \ref{lem. regularity of proj. and dist} then yield
		\begin{align}
			\int_{B^+_{\rho_k}(y^*) \h{1pt}\setminus\h{1pt} B^+_{\gamma\h{0.2pt}\rho_k}(y^*)}\left|\h{1pt} \p_r u^{\gamma,k}\h{1pt}\right|^2  \,\lesssim_{\,\Omega\,}\,&  \dfrac{\rho_k \left(\varepsilon 
				+r_k^2 \right)}{ (1-\gamma)} 
			+ \left(1-\gamma\right)\rho_k^3\h{1pt}r_k^2.
			\label{4407}
		\end{align}	
        
		Differentiate with respect to the polar angle $\xi$. It turns out that 
		\begin{align*}
			\p_{\xi} \left[	\Phi^{\overline{u}^k}(y,\zeta^y)\right]
			=\,&\psi_*\pigl(w^k\pigr)
			\left(\p_\xi y \cdot\nabla_y\Pi^k\pigl(y^0, p\pigr) + \p_\xi w^k \cdot \nabla_p \Pi^k\pigl(y^0, p\pigr)\right)\bigg|_{p \h{1pt}=\h{1pt} w^k} \nonumber\\[1.5mm]
			&+ \left(\nabla \psi_* \h{1pt}\big|_{w^k} \cdot 
			\p_{\xi} w^k \right) 
    			\left(\Pi^k\pigl(y^0, w^k\pigr) - w^k \right) 
			+ \left[ 1-\psi_*\pigl(w^k\pigr)\right] \p_{\xi}w^k.
		\end{align*}
        As $\psi_*$ is smooth, we use the definition of $\Pi^k$ and Item (1) in Lemma \ref{lem. regularity of proj. and dist} to obtain
		\begin{align}
			\left|\h{1pt}\p_{\xi} \left[\h{1pt}	\Phi^{\overline{u}^k}(y,\zeta^y)\right]\h{1pt}\right|
			\h{1.5pt}\lesssim_{\,\Omega\,}  r\h{0.5pt}r_k + \big|\h{1pt}\p_\xi w^k \big|\h{15pt}\text{on $B^+_{\rho_k}(y^*) \setminus B^+_{\gamma\h{0.2pt}\rho_k}(y^*)$}.
			\label{3783}
		\end{align}
        
		We also compute
		\begin{align*}
			&\p_\xi \left[\h{1pt}	\Psi^{\overline{u}^k}(y,\zeta^y) \h{1pt}\right] = \p_{\xi}w^k +\p_{\xi}w^k \cdot \nabla \psi_*\h{.5pt}\big|_{w^k}   
			\left[\h{1pt} \Pi^k\Big( y^0, \Pi^k\left( \left(\zeta^y\right)^0, w^k\right) \Big) 
			- \Pi^k\left( \left(\zeta^y\right)^0, w^k\right) \h{1pt}
			\right]\nonumber\\[1.5mm] 
			&\h{30pt} +\psi_*\pigl(w^k\pigr)\bigg[ \p_\xi y \cdot\nabla_y \Pi^k\left( y^0, p \right) + \p_\xi \Pi^k\left( \left(\zeta^y\right)^0, w^k\right) \cdot \nabla_p \Pi^k\left(y^0, p\right) - \p_\xi \Pi^k\left( \left(\zeta^y\right)^0, w^k\right) \bigg], 
		\end{align*}where $p = \Pi^k\left( \left(\zeta^y\right)^0, \h{1pt}w^k\right)$.
        Similarly to \eqref{3783}, we deduce that
        \begin{align}
			\left|\h{1pt}\p_\xi \left[\h{1pt}	\Psi^{\overline{u}^k}(y,\zeta^y)\h{1pt}\right]\h{1pt}\right|
			\h{1pt}\lesssim_{\,\Omega\,} 
			r_k\left(r+\rho_k\right)+ \big|\h{1pt}\p_\xi w^k\h{1pt}\big|.
			\label{4417}
		\end{align}Similar estimates hold if we replace the $\partial_\xi$-derivative in \eqref{3783}-\eqref{4417} by the $\partial_\theta$-derivative. That is \begin{align}\label{partial theta}
        &\left|\h{1pt}\p_{\theta} \left[\h{1pt}	\Phi^{\overline{u}^k}(y,\zeta^y)\right]\h{1pt}\right|
			\h{1.5pt}\lesssim_{\,\Omega\,}  r\h{0.5pt}r_k\h{0.5pt}\sin \xi + \big|\h{1pt}\p_\theta w^k \big|,\nonumber\\[1.5mm]
            &\left|\h{1pt}\p_\theta \left[\h{1pt}	\Psi^{\overline{u}^k}(y,\zeta^y)\h{1pt}\right]\h{1pt}\right|
			\h{1pt}\lesssim_{\,\Omega\,} 
			r_k\left(r+\rho_k\right) \sin \xi + \big|\h{1pt}\p_\theta w^k\h{1pt}\big|, \h{25pt}\text{on $B^+_{\rho_k}(y^*) \setminus B^+_{\gamma\h{0.2pt}\rho_k}(y^*)$}.
        \end{align}Using \eqref{3783}-\eqref{partial theta}, together with the first estimate in \eqref{3850}, we obtain
		\begin{align}
			 &\int_{B^+_{\rho_k}(y^*) \h{1pt}\setminus\h{1pt} B^+_{\gamma \h{0.2pt}\rho_k}(y^*)}\dfrac{1}{r^2}
			\left|\h{1pt} \p_\xi u^{\gamma,k}\h{.5pt}\right|^2
			+\dfrac{1}{r^2\sin^2\xi}
			\left|\h{1pt} \p_\theta u^{\gamma,k} \h{.5pt}\right|^2 \nonumber\\[1.5mm] 
            &\h{20pt}\lesssim_{\,\Omega\,}
			\left(1-\gamma\right)r_k^2\h{1pt}\rho_k^3 
			+\int_{B^+_{\rho_k}(y^*) \h{1pt}\setminus\h{1pt} B^+_{\gamma\h{0.2pt}\rho_k}(y^*)}\dfrac{1}{r^2}
			\left|\h{1pt} \p_\xi w^k \h{.5pt}\right|^2
			+\dfrac{1}{r^2\sin^2\xi}
			\left|\h{1pt} \p_\theta w^k\h{0.5pt}\right|^2  \nonumber\\[1.5mm] 
            &\h{20pt}\lesssim_{\,\Omega\,}\left(1-\gamma\right)\rho_k\left(1 + r_k^2\h{0.5pt} \rho_k^2 \h{0.5pt}\right). 
			\label{3818}
		\end{align}
        
		Fixing $y \in B^+_{\gamma\h{0.5pt}\rho_k}(y^*)$, we define $z = y^* + \gamma^{-1}\left(y - y^*\right)$. Moreover, $\overline{u}^k$ is a map on $z$. Thus, \begin{align*}
		    \p_{y_j} u^{\gamma, k} = \gamma^{-1} \p_{z_j} \overline{u}^k &+ \gamma^{-1}\p_{z_j}\overline{u}^k \cdot \nabla \psi_*\h{1pt}\big|_{\overline{u}^k} \Big[ \h{1pt}\Pi^k\pigl(y^0, \overline{u}^k\pigr)   -  \overline{u}^k \h{1pt}\Big] \\[1.5mm]
            &+ \psi_*\pigl(\overline{u}^k\pigr) \Big[ \h{1pt}\p_{y_j} \Pi^k\pigl(y^0, p\pigr) + \gamma^{-1}\p_{z_j} \overline{u}^k \cdot \nabla_p \Pi^k\pigl(y^0, p\pigr)  -   \gamma^{-1} \p_{z_j} \overline{u}^k \h{1pt}\Big],
		\end{align*}where $p = \overline{u}^k$. Through the change of variables, we deduce that
		\begin{align}
			\int_{  B^+_{\gamma\h{0.2pt}\rho_k}(y^*)}\big|\h{.5pt} \nabla_y u^{\gamma,k}\h{.5pt}\big|^2
			&\lesssim_{\,\Omega\,} r_k^2\h{0.5pt}\gamma^3\h{0.5pt}\rho_k^3 + \gamma^{-2} \int_{  B^+_{\gamma\rho_k}(y^*)} \big|\h{.5pt} \nabla_z\overline{u}^k \h{.5pt}\big|^2 \h{1pt}\lesssim\h{1pt} r_k^2\h{0.5pt}\gamma^3 \h{0.5pt}\rho_k^3
			+\gamma\int_{  B^+_{\rho_k}(y^*)} 
			\big|\h{.5pt} \nabla\overline{u}^k \h{.5pt}\big|^2.
			\label{3771}
		\end{align}
        
		Recall the minimality of $\widetilde{u}_{\widetilde{a},r_k}$ and the fact that $\widehat{u^{\gamma,k}} \in \mathscr{F}_{ \widetilde{u}_{\widetilde{a},r_k},\,\varphi_k,\,B^+_{\rho_k}(y^*)}$. By \eqref{4407}, \eqref{3818}-\eqref{3771}, and the rescaled energy in \eqref{defn of sacal}, it follows that
		\begin{align*}
			\widetilde{\mathcal{E}}^{\h{0.7pt}k}_{ \widetilde{u}_{\widetilde{a},r_k},\h{0.5pt}B_{\rho_k}^{+}(y^*)} \,\leq\,\widetilde{\mathcal{E}}^{\h{0.7pt}k}_{\widehat{u^{\gamma,k}},\h{0.5pt}B^+_{\rho_k}(y^*)}
			\lesssim_{\,\Omega\,} 
			\dfrac{\rho_k \left(\varepsilon 
				+r_k^2 \right)}{ (1-\gamma)} 
			+\left(1-\gamma\right) \rho_k \left( 1 + \rho_k^2\h{1pt}r_k^2\right) 
			+r_k^2\h{1pt} \rho^3_k
			+ \int_{  B^+_{\rho_k}(y^*)} 
			\big|\h{0.5pt} \nabla\overline{u}^k\h{0.5pt}\big|^2.
		\end{align*} 
		Recall $\epsilon$ and $q^k$ given in Part 2.1 of this proof. By Items (2) in Proposition \ref{lem ext from sphere to ball},
        and \eqref{3850}, 
		\begin{align*}
			\mathcal{E}^*_{\overline{u}^k,\h{0.5pt}B^+_{\rho_k}(y^*)} &\lesssim_{\,\Omega\,} \rho_k\h{1.5pt}\epsilon\h{1.5pt} \mathcal{E}^*_{\widetilde{u}_{\widetilde{a},r_k},\h{.5pt}\p^+B^+_{\rho_k}(y^*)}
			+\rho_k^{-1}\epsilon^{-\mu}\h{1pt} W^{q^k}_{\widetilde{u}_{\widetilde{a},r_k},\h{0.5pt}	\varphi_k,\h{0.5pt}\p^+B^+_{\rho_k}(y^*) }
			+\rho_k\h{1pt}r_k^2\left( \epsilon^{-1}\dfrac{ \log \epsilon}{\log \alpha} +\epsilon^{-2}
			+ \epsilon^{2-\mu} \right) \nonumber\\[2mm] 
			&\lesssim_{\,\Omega\,} \rho_k \h{1pt} \epsilon 
			+\rho_k\h{1pt}\epsilon^{-\mu}\left(\varepsilon+r_k^2\right)
			+\rho_k \h{1pt}r_k^2\left( \epsilon^{-1}\dfrac{ \log \epsilon}{\log \alpha} +\epsilon^{-2}
			+ \epsilon^{2-\mu} \right). 
		\end{align*}
		Thus, if we let $1 - \gamma = \epsilon^\mu$, then it satisfies
		\begin{align*}
			\rho_k^{-1}\mathcal{E}^*_{ \widetilde{u}_{\widetilde{a},r_k},\h{0.5pt}B_{\rho_k}^{+}(y^*)} &\h{1pt}\lesssim_{\,\Omega}\h{1pt} \dfrac{\varepsilon 
				+r_k^2 }{\epsilon^\mu} 
			+\epsilon^\mu\left( 1 + \rho_k^2\h{1pt}r_k^2\right) 
			+r_k^2\h{1pt}\rho^2_k  + \epsilon 
			+ r_k^2\left(  \epsilon^{-2}
			+ \epsilon^{2-\mu} \right). 
		\end{align*} 
		Note that $\rho_k\in [ \frac{3\rho_0}{4},\rho_0]$. We first choose $\epsilon$ small. Then we take $\varepsilon$ small and $k$ large. Hence,
        $$\widehat{\mathcal{E}}^*_{3\rho_0/4, \h{1pt}y^*; \h{0.5pt}+}\left(\widetilde{u}_{\widetilde{a},r_k}\right) < \eps{5}.$$ Here $\eps{5}$ is the small constant in Lemma \ref{lem Partial Regularity}. Applying Lemma \ref{lem Partial Regularity}, we find an $\alpha_0 \in (0, 1)$ so that 
		\begin{align*}
\left[\h{1pt}\widetilde{u}_{\widetilde{a},r_k}\h{0.2pt}\right]_{\alpha_0, \h{1pt} \overline{B_{3\rho_0/32}^+(y^*)}} \leq C_{\Omega}, \h{20pt}\text{for any large enough $k\in \mathbb{N}$.}
		\end{align*}
		 Here, $[\h{1pt}\cdot\h{1pt}]_{\alpha_0, U}$ is the Hölder semi-norm on $U$ with exponent $\alpha_0$. By the Arzelà–Ascoli theorem, $$\widetilde{u}_{\widetilde{a},r_k} \to \widetilde{u}^{\h{.8pt}*} \h{15pt}\text{ uniformly on $\overline{B_{3\rho_0/32}^+(y^*)}$, as $k\to \infty$.}$$

		\noindent{\bf Proof of Item (3).} Note that Items (1) and (2) also hold on $\overline{B^+_{3/2}}$. Hence, for any $\varepsilon>0$, there is a finite cover, denoted by $\big\{B_{\tau_i/4}(x^i)\big\}_{i=1}^{N}$, 
		of $\mathscr{S}(\overline{B^+_{3/2}})$ such that $\sum^N_{i=1}\tau_i \leq \varepsilon$. The energy monotonicity in Lemma \ref{lem int energy mono} yields 
		\begin{align*}
			\int_{B^+_{\tau_i}(x^i)}
			\big|\h{0.5pt}\nabla \widetilde{u}_{\widetilde{a},r_k} \h{0.5pt}\big|^2
			=\dfrac{1}{r_k}\int_{B^+_{r_k\tau_i}\left(\widetilde{a} + r_k \h{0.2pt} x^i\right)} \big|\h{0.5pt}\nabla \widetilde{u}\h{0.5pt}\big|^2
			\h{1.5pt}\lesssim_{\,\Omega\,} \tau_i\left( R_2 + \dfrac{1}{R_2}\int_{B^+_{R_2}\left(\widetilde{a} + r_k \h{0.2pt} x^i\right)} \big|\h{0.5pt}\nabla \widetilde{u}\h{0.5pt}\big|^2
			\right)
			\h{1.5pt}\lesssim_{\,\Omega\,} \tau_i,
		\end{align*}
		which implies
		\begin{align}
			\int_{\bigcup_iB^+_{\tau_i}\left(x^i\right)}
			\big|\h{1pt}\nabla \widetilde{u}_{\widetilde{a},r_k}\h{.5pt}\big|^2\h{1pt}\leq\h{1pt}\sum^N_{i=1}	\int_{B^+_{\tau_i}\left(x^i\right)}
			\big|\h{0.5pt}\nabla \widetilde{u}_{\widetilde{a},r_k} \h{0.5pt}\big|^2
			\h{1pt}\lesssim_{\,\Omega\,} \varepsilon, \h{15pt}\text{for large $k \in \mathbb N$.}
			\label{3280}
		\end{align}
		
		Recall the extension of $\widetilde{u}$ from $B^+_{R_0}$ to $B_{R_0}$ in \eqref{def ext of tilde u}. After rescaling, we still use $\widetilde{u}_{\widetilde{a},r_k}$ to denote the map $\mathcal{U}\left(\widetilde{a} + r_k\h{0.5pt}\cdot\right)$. For large $k$, it is an extension of $\widetilde{u}_{\widetilde{a},r_k}$ from $B^+_2$ to $B_2$. \vspace{0.2pc}
        
        Let $\psi \in C_c^\infty\left(B_{3/2};[\h{0.5pt}0, 1\h{0.5pt}]\h{0.5pt}\right)$ be a function with the compact support contained in $$U_3:= \left(B_{3/2} \setminus \h{1pt} \bigcup_i \overline{B^+_{\tau_i/2}(x^i)}\right) \h{1pt}\bigcap \h{1pt} \left(B_{3/2} \setminus \h{1pt} \bigcup_i \h{1pt}\left\{y:y^\star\in \overline{B^+_{\tau_i/2}(x^i)}\h{1pt}\right\} \right).$$ We then define, for large $k, l \in \mathbb N$, the map $\Psi :=\left(\widetilde{u}_{\widetilde{a},r_k}
		-\widetilde{u}_{\widetilde{a},r_l} \right)\psi$. We use $\widetilde{a} + r_k \h{0.2pt}y$ and $\widetilde{a} + r_l\h{0.2pt}y$ to change the variables in equation \eqref{eq. of widetilde of u, whole ball} and then subtract the two resulting equations. Applying the estimate \eqref{J} in Lemma \ref{lem eq of extended u} and using $\Psi$ as the test function, we deduce that  
		\begin{align*}
			&\int_{B_{3/2}}\psi\h{1.5pt}\overline{a}_{ij}^{\h{1pt}k}\h{1.5pt}\p_{y_i} \pig[\h{0.5pt}\widetilde{u}_{\widetilde{a},r_k}
			-\widetilde{u}_{\widetilde{a},r_l} \pig]\cdot
			\p_{y_j}\pig[\h{0.5pt}\widetilde{u}_{\widetilde{a},r_k}
			-\widetilde{u}_{\widetilde{a},r_l} \pig]
			+ \psi \h{1.5pt}\pig[\h{0.5pt}\overline{a}_{ij}^{\h{1pt}k}-\overline{a}_{ij}^{\h{1pt}l}\h{0.5pt}\pig] \h{1.5pt}
			\p_{y_i}  \widetilde{u}_{\widetilde{a},r_l} \cdot
			\p_{y_j}\pig[\widetilde{u}_{\widetilde{a},r_k}
			-\widetilde{u}_{\widetilde{a},r_l} \pig]\\[1mm]
			&\h{15pt}+\int_{B_{3/2}} \overline{a}_{ij}^{\h{1pt}k}\h{1.5pt}\p_{y_j}\psi \h{1.5pt}\p_{y_i}\pig[\h{0.5pt}\widetilde{u}_{\widetilde{a},r_k}
			-\widetilde{u}_{\widetilde{a},r_l} \h{0.5pt}\pig]\cdot
			\pig[\h{0.5pt}\widetilde{u}_{\widetilde{a},r_k}
			-\widetilde{u}_{\widetilde{a},r_l} \h{0.5pt}\pig]
			+\pig[\overline{a}_{ij}^{\h{1pt}k}-\overline{a}_{ij}^{\h{1pt}l}\pig] \h{1pt} \p_{y_j}\psi\h{1.5pt}
			\p_{y_i} \widetilde{u}_{\widetilde{a},r_l} \cdot
			\pig[\h{0.5pt}\widetilde{u}_{\widetilde{a},r_k}
			-\widetilde{u}_{\widetilde{a},r_l} \pig]\\[1mm]
			&\h{15pt}\lesssim_{\,\Omega\,}
			r_k^2 + r_l^2 + \int_{B_{3/2}^+}
			\Big(\pig|\nabla \widetilde{u}_{\widetilde{a},r_k}\pig|^2
			+\pig|\nabla \widetilde{u}_{\widetilde{a},r_l} \pig|^2\Big)
			\pig(\h{1pt}\big|\h{0.5pt}\Psi \h{0.5pt}\big| + \big| \Psi(\h{0.5pt}\cdot^\star)\h{0.5pt}\big|\h{1.5pt}\pig).
		\end{align*}Here, $\overline{a}_{ij}^{\h{1pt}k} (\cdot):= \overline{a}_{ij}(\widetilde{a}+r_k \h{0.5pt}\cdot)$ with $\overline{a}_{ij}$ defined in Lemma \ref{lem eq of extended u}. By the energy bound \eqref{ineq. uniform energy bdd of rescaled map}, estimate \eqref{ineq. of widetilde of u whole ball<half ball}, H\"{o}lder's inequality and the definition of the extension map in \eqref{defn of ex}, it turns out that
		\begin{align*}
			\int_{U_3}  \psi \h{1pt}\pig|\h{0.5pt}\nabla\widetilde{u}_{\widetilde{a},r_k}
			-\nabla\widetilde{u}_{\widetilde{a},r_l}\pigr|^2
			\h{1pt}\lesssim_{\,\Omega,\h{1pt}\psi\,} r_k^2 + r_l^2 + 
			\sup_{B_{3/2}} \,\sup_{i, \,j}\, \pig|\h{1pt}\overline{a}_{ij}^{\h{1pt}k}
			-\overline{a}_{ij}^{\h{1pt}l}\h{0.5pt}\pig|
			+\sup_{U_3} \pig|\h{1pt}\widetilde{u}_{\widetilde{a},r_k}
			-\widetilde{u}_{\widetilde{a},r_l} \pig|.
		\end{align*}
		Letting $$U_4:= \left(B_{3/2} \setminus \h{1pt} \bigcup_i \overline{B^+_{\tau_i}\left(x^i\right)} \right) \h{1pt}
		\bigcap \h{1pt} \left(B_{3/2} \setminus \h{1pt} \bigcup_i \h{1pt}\left\{y:y^\star\in  \overline{B^+_{\tau_i}\left(x^i\right)} \h{1pt}\right\}\right)\subseteq U_3,$$ we choose $\psi$ such that $\psi \equiv 1$ in $B_1 \cap U_4$. Together with \eqref{3280}, we obtain
		\begin{align*}
			\int_{B_{1}} \pig|\h{1pt}\nabla\widetilde{u}_{\widetilde{a},r_k}
			-\nabla\widetilde{u}_{\widetilde{a},r_l} \h{0.5pt}\pigr|^2
			&\lesssim \int_{B_1 \setminus U_4}
			\pig|\h{1pt}\nabla \widetilde{u}_{\widetilde{a},r_k}\h{0.5pt}\pigr|^2+
			\pig|\h{1pt}\nabla \widetilde{u}_{\widetilde{a},r_l}\h{0.5pt}\pigr|^2 + 
			\int_{B_1 \h{1pt} \cap \, U_4} \pig|\h{1pt}\nabla\widetilde{u}_{\widetilde{a},r_k}
			-\nabla\widetilde{u}_{\widetilde{a},r_l} \h{0.5pt}\pigr|^2\\[1mm]
			&\lesssim_{\,\Omega\,} 	
			\varepsilon +  
			C_\psi\left(
			r_k^2 + r_l^2 + \sup_{B_{3/2}}\h{1pt}\sup_{i,\,j}\h{1pt}\pig|\overline{a}_{ij}^{\h{1pt}k}
			-\overline{a}_{ij}^{\h{1pt}l}\pig|
			+\sup_{U_3} \pig|\h{1pt}\widetilde{u}_{\widetilde{a},r_k}
			-\widetilde{u}_{\widetilde{a},r_l}\h{0.5pt}\pigr|\right).
		\end{align*}The desired result in Item (3) is obtained by taking $k$, $l$ to $\infty$ and taking $\varepsilon$ to $0$, successively.  
	\end{proof}

	The limiting map of the rescaled maps obtained in Lemma \ref{lem compact of rescaled map} is referred to as the tangent map in the literature. In the following, we characterize this tangent map. By choosing a suitable coordinate frame and $\varphi$ for the domain transformation, we assume that $a_{ij}(\widetilde{a})=\delta_{ij}$ for any $i, j = 1, 2, 3$.   
	\begin{lem}[\bf Minimizing tangent maps] \label{lem prop of tangent map}
		The tangent map $\widetilde{u}^{\h{.8pt}*}$ obtained in Lemma \ref{lem compact of rescaled map} is homogeneous of degree zero. Moreover, there is $\sigma\in[\h{0.5pt}\frac{1}{4}, \frac{1}{2}\h{0.5pt}]$ such that $\widetilde{u}^{\h{.8pt}*}$ minimizes the standard Dirichlet energy in the configuration space: $$\mathscr{F}_{\widetilde{u}^{\h{.8pt}*},\,\varphi^{-1}(\widetilde{a}),\,B_\sigma^+}:=\Big\{v \in H^1(B_\sigma^+;\mathbb{S}^2):v=\widetilde{u}^{\h{.8pt}*}\h{5pt} \text{on $\p^+B_\sigma^+$ \h{5pt}and} \h{10pt}v\h{0.5pt}\big|_{\p^0B^+_\sigma}\in T_{\varphi^{-1}(\widetilde{a})}\p\Omega\h{1pt}\Big\}.$$
	\end{lem}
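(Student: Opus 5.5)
We prove the two assertions in the reverse order: first minimality, then $0$-homogeneity via a monotonicity identity for the limit. Throughout, write $u_k:=\widetilde{u}_{\widetilde{a},r_k}$. Recall that $u_k$ minimizes $\widetilde{\mathcal{E}}^{\h{0.7pt}k}$ in $\mathscr{F}_{u_k,\,\varphi_k,\,B^+_\sigma}$, and that $a^k_{ij}\to\delta_{ij}$ uniformly on $B_2^+$, with $|a^k_{ij}-\delta_{ij}|\lesssim_\Omega r_k$. By Lemma \ref{lem compact of rescaled map}(3), $u_k\to\widetilde{u}^{\h{.8pt}*}$ strongly in $H^1(B_1^+)$. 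Using Fubini, we choose $\sigma\in[\frac14,\frac12]$, along a further subsequence, such that:
\begin{itemize}
\item $\int_{\p^+B^+_\sigma}|\nabla u_k|^2+|\nabla\widetilde{u}^{\h{.8pt}*}|^2$ stays bounded;
\item $\int_{\p^+B^+_\sigma}|\nabla^{\textup{tan}}(u_k-\widetilde{u}^{\h{.8pt}*})|^2+|u_k-\widetilde{u}^{\h{.8pt}*}|^2\to0$.
\end{itemize}
The second condition is possible because $\int_{1/4}^{1/2}\int_{\p^+B^+_r}|\nabla(u_k-\widetilde{u}^{\h{.8pt}*})|^2\,dr\to0$.

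\textbf{Minimality.} Fix $v\in\mathscr{F}_{\widetilde{u}^{\h{.8pt}*},\,\varphi^{-1}(\widetilde{a}),\,B^+_\sigma}$. We build competitors $v_k\in\mathscr{F}_{u_k,\varphi_k,B^+_\sigma}$ with
\[
\limsup_k \mathcal{E}^*_{v_k,B^+_\sigma}\le \mathcal{E}^*_{v,B^+_\sigma}.
\]
Then
\[
\mathcal{E}^*_{\widetilde{u}^{\h{.8pt}*},B^+_\sigma}=\lim_k\widetilde{\mathcal{E}}^{\h{0.7pt}k}_{u_k,B^+_\sigma}\le\limsup_k\widetilde{\mathcal{E}}^{\h{0.7pt}k}_{v_k,B^+_\sigma}\le\mathcal{E}^*_{v,B^+_\sigma}.
\]

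The construction is the interpolation used in Part 2.2 of the proof of Lemma \ref{lem compact of rescaled map}, and it proceeds in three steps.
\begin{enumerate}
\item On the shell $B^+_\sigma\setminus B^+_{\gamma\sigma}$, interpolate radially between the $0$-homogeneous extension of $u_k|_{\p^+B^+_\sigma}$ and that of $\widetilde{u}^{\h{.8pt}*}|_{\p^+B^+_\sigma}$, using $\frac{\sigma-|y|}{(1-\gamma)\sigma}$ as in \eqref{def. of u^rho 2}.
\item On $B^+_{\gamma\sigma}$, place $v(\gamma^{-1}\cdot)$.
\item Correct the flat boundary by composing with $\Phi^{\cdot,k}$, i.e. with the projection $\Pi^k(y^0,\cdot)$ cut off by $\psi_*$, exactly as in \eqref{def of Phi^(v,k)}, and finally normalize.
\end{enumerate}

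Since $v$ and $\widetilde{u}^{\h{.8pt}*}$ take values in $T_{\varphi^{-1}(\widetilde{a})}\p\Omega$ on the flat part (cf. \eqref{tangential cond of widetldu}), and this plane differs from $T_{\varphi_k(y)}\p\Omega$ by $O(r_k)$, Lemma \ref{lem. regularity of proj. and dist} makes the correction well defined. It changes the energy by $O(r_k^2)$, in the same way as \eqref{3783}--\eqref{3771}. On $\p^+B^+_\sigma$ the trace of $u_k$ is tangent to $T_{\varphi_k}$, so the correction fixes it, and $v_k$ agrees with $u_k$ there.

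The shell energy is bounded by
\[
C\,\frac{\int_{\p^+B^+_\sigma}|u_k-\widetilde{u}^{\h{.8pt}*}|^2}{1-\gamma}+C(1-\gamma).
\]
Near the equator the interpolant must stay close to $\mathbb{S}^2$ so that it can be normalized. This holds because $\sup_{\p^+B^+_\sigma}|u_k-\widetilde{u}^{\h{.8pt}*}|\to0$, which follows from the one-dimensional Sobolev embedding on the slices (or from Lemma \ref{lem compact of rescaled map}(2) away from the finitely many bad points, which we avoid by the choice of $\sigma$). Letting $k\to\infty$ and then $\gamma\to1$ gives the claim.

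\textbf{Homogeneity.} We use the boundary monotonicity formula (Lemma \ref{lem. energy mono.}) for $\widetilde{u}$. For a.e. $0<s<t$ it reads
\[
\frac1t\widetilde{\mathcal{E}}_{\widetilde{u},B^+_t(\widetilde{a})}-\frac1s\widetilde{\mathcal{E}}_{\widetilde{u},B^+_s(\widetilde{a})}\ \ge\ c\int_{B^+_t\setminus B^+_s}\frac{|\p_r\widetilde{u}|^2}{r}-Ct.
\]
The left side tends to $0$ along $s=r_k\tau_1$, $t=r_k\tau_2$, since the normalized energy has a limit as $r\to0$. Rescaling and using strong $H^1$ convergence, we obtain
\[
\int_{B^+_{\tau_2}\setminus B^+_{\tau_1}}\frac{|\p_r\widetilde{u}^{\h{.8pt}*}|^2}{|y|}=0
\]
for all $0<\tau_1<\tau_2\le1$. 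Hence $\p_r\widetilde{u}^{\h{.8pt}*}=0$, i.e. $\widetilde{u}^{\h{.8pt}*}$ is $0$-homogeneous.

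\textbf{Main obstacle.} The expected main obstacle is the competitor construction near the equator circle $\p^+B^+_\sigma\cap\{y_3=0\}$. There, three constraints must hold simultaneously: the Dirichlet data of $u_k$, the moving tangent planes $T_{\varphi_k}$, and the fixed plane $T_{\varphi^{-1}(\widetilde{a})}$. Each correction must be shown to cost only $o(1)$ energy. This would be handled via the uniform smallness of $|u_k-\widetilde{u}^{\h{.8pt}*}|$ on the chosen sphere together with the $O(r_k)$ Lipschitz bounds on $\Pi$.
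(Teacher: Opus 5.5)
Your homogeneity argument and your Fubini choice of $\sigma$ are fine; they match Part 1 and \eqref{4777} of the paper. The gap is in Step 3 of the competitor. There you apply $\Phi^{\cdot,k}$ from \eqref{def of Phi^(v,k)}, i.e.\ $w\mapsto w+\psi_*(w)\bigl[\Pi^k(y^0,w)-w\bigr]$, to the whole interpolated map. This fails in two ways.

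\emph{Admissibility.} The trace of $u_k$ is tangent to $T_{\varphi_k(\cdot)}\p\Omega$ only on the equator $\p B_\sigma\cap\{y_3=0\}$, not on the open hemisphere $\p^+B^+_\sigma$, which lies in the interior where $u_k$ is unconstrained. Take any $y\in\p^+B^+_\sigma$ where $u_k(y)$ lies within $3\delta_0$ of the horizontal great circle but off $\mathbb S^2\cap T_{\varphi_k(y)}\p\Omega$. There $\Phi$ moves $u_k(y)$ by an amount of order $\delta_0$, not $r_k$. Hence $v_k\neq u_k$ on $\p^+B^+_\sigma$ and $v_k\notin\mathscr F_{u_k,\varphi_k,B^+_\sigma}$.

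\emph{Energy.} The estimates \eqref{3783}--\eqref{3771} hold only up to a multiplicative constant. For example, \eqref{3771} bounds $\int|\nabla u^{\gamma,k}|^2$ by $C_\Omega\bigl(r_k^2\rho_k^3+\gamma\int|\nabla\overline u^k|^2\bigr)$. That suffices for the $\varepsilon$-smallness in Lemma \ref{lem compact of rescaled map}, but not for the sharp $\limsup_k\mathcal E^*_{v_k,B^+_\sigma}\le\mathcal E^*_{v,B^+_\sigma}$ that minimality requires. Wherever $v(\gamma^{-1}\cdot)$ takes values in $\{\psi_*>0\}$ off the circle, $\Phi$ displaces $v$ by $O(\delta_0)$. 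There the terms $(\nabla\psi_*\cdot\nabla v)(\Pi^k-v)$ and $\nabla_p\Pi^k\,\nabla v-\nabla v$ are of order $|\nabla v|$. Since $\delta_0$ is fixed, this error does not vanish as $k\to\infty$. You are conflating the $O(r_k)$ displacement on the flat boundary, where $v$ is already tangent to $T_{\varphi^{-1}(\widetilde a)}\p\Omega$, with the displacement in the interior.

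What is missing is a correction that is exactly the identity on $\p^+B^+_\sigma$ and is $C^1$-close to the identity, with deviation $O(r_k)$ in both values and derivatives. The paper builds such corrections from differences of projections onto nearby circles, one for each end of the shell.
\begin{enumerate}
\item On $B^+_{\gamma\sigma}$ and at the inner radius it uses $\Upsilon^{v,k}$ from \eqref{def of Upsilon^(v,k)}. This adds to $v$ only $\psi_*(v)\bigl[\Pi^k(y^0,\Pi(\widetilde a,v))-\Pi(\widetilde a,v)\bigr]=O(r_k)$, with $O(r_k)$ derivatives, and it maps $\mathbb S^2\cap T_{\varphi^{-1}(\widetilde a)}\p\Omega$ into $T_{\varphi_k(y)}\p\Omega$. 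This is what yields the limit $\gamma\int_{B^+_\sigma}|\nabla v|^2$.
\item At the outer radius it uses $\Psi^{u_k,k}(y,\sigma\widehat y)$. This equals $u_k$ on $|y|=\sigma$ because $\Pi^k(y^0,\Pi^k(y^0,\cdot))=\Pi^k(y^0,\cdot)$. It is tangent on the flat part because $u_k(\sigma\widehat y)$ is already tangent at $(\sigma\widehat y)^0$ there.
\end{enumerate}
The two ends are corrected differently before the radial interpolation, which your ``interpolate, then correct'' order cannot reproduce. A smooth rotation field $R_k(y^0)\in\mathrm{SO}(3)$ carrying $T_{\varphi^{-1}(\widetilde a)}\p\Omega$ onto $T_{\varphi_k(y)}\p\Omega$, with $|R_k-\mathbf I_3|+|\nabla R_k|\lesssim r_k$, would also work.

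A minor point: the one-dimensional Sobolev embedding does not give $\sup_{\p^+B^+_\sigma}|u_k-\widetilde u^{*}|\to0$, since $\p^+B^+_\sigma$ is two-dimensional. Your alternative via Lemma \ref{lem compact of rescaled map}(2) is valid, however: $\mathscr H^1(\mathscr S)=0$ implies that for a.e.\ $\sigma$ the closed hemisphere misses $\mathscr S$.
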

	\begin{proof} We divide the proof into 3 parts. \vspace{0.2pc}\\
		\noindent{\bf Part 1.\h{4pt}$0$\h{0.2pt}-\h{0.2pt}homogeneity:}
		The energy monotonicity in Lemma \ref{lem. energy mono.} implies that $$\lim_{r\to0^+} r^{-1}\int_{B^+_{r}(\widetilde{a})}
		\big|\h{1pt}\nabla\widetilde{u} \h{1pt}\big|^2 = L^*,\h{15pt}\text{for some $L^* \geq 0$.}$$ In addition, still by Lemma \ref{lem. energy mono.}, 
		$$  
		L^* + \int^R_0\dfrac{e^{\C{2}  \tau}}{2\tau} \h{1.5pt}\mathrm d \tau\int_{\p^+ B^+_{\tau}(\widetilde{a})} \big|\h{0.5pt} \p_r \widetilde{u} \h{0.5pt}\big|^2  \h{1.5pt}\mathrm d\h{.5pt}\mathscr{H}^2  
		\h{1.5pt}\leq\h{1.5pt} \C{3}\h{0.5pt}R + 
		\dfrac{e^{\C{2} R}}{R}\int_{B^+_{R}(\widetilde{a})} \big|\h{0.5pt}\nabla\widetilde{u} \h{0.5pt}\big|^2 \h{15pt}\text{for any $R\in(0,\R{1}]$.}$$
		Replacing $R$ with $r_k$ and rescaling, we have
		$$  
		L^* + \int^1_0\dfrac{e^{\C{2} r_k \tau}}{2\tau} \h{1.5pt}\mathrm d \tau
		\int_{\p^+ B^+_{\tau}} \big|\h{0.5pt} \p_r \widetilde{u}_{\widetilde{a},r_k}\h{0.5pt}\big|^2  \h{1.5pt}\mathrm d\h{.5pt}\mathscr{H}^2 
		\h{1.5pt}\leq \h{1.5pt} \C{3}\h{0.5pt}r_k + 
		\dfrac{e^{\C{2} r_k}}{r_k}\int_{B^+_{r_k}(\widetilde{a})} \big|\h{0.5pt}\nabla\widetilde{u} \h{0.5pt}\big|^2.$$
		By Lemma \ref{lem compact of rescaled map}, the rescaled maps $\widetilde{u}_{\widetilde{a},r_k}$ strongly converge in $H^1(B^+_1;\mathbb{S}^2)$ to the tangent map $\widetilde{u}^{\h{.8pt}*}$. We therefore take $k \to \infty $ in the last estimate and deduce that
		$$\int^1_0\frac{\mathrm d \tau}{2\tau}\int_{\p^+ B^+_{\tau}} \big|\h{0.5pt} \p_r \widetilde{u}^{\h{.8pt}*} \h{0.5pt}\big|^2  \h{1.5pt}\mathrm d\h{.5pt}\mathscr{H}^2 = 0,$$ implying the $0$-homogeneity of $\widetilde{u}^*$. \vspace{0.4pc}
		
		\noindent{\bf Part 2. Uniform convergence of $\widetilde{u}_{\widetilde{a},r_k}$ away from the origin:} The rescaled map $\widetilde{u}_{\widetilde{a},r_k}$ solves
		\begin{equation}\label{2781}
			\int_{B^+_{1}} a_{ij}^k\h{1.5pt}\p_{y_i} \widetilde{u}_{\widetilde{a},r_k} \cdot
			\p_{y_j} \psi
			-a_{ij}^k\h{1.5pt}\p_{y_i} \widetilde{u}_{\widetilde{a},r_k} \cdot
			\p_{y_j} \widetilde{u}_{\widetilde{a},r_k} \left(\widetilde{u}_{\widetilde{a},r_k}
			\cdot\psi\right)=0,  
		\end{equation}
		for any $\psi \in H^1\left(B^+_{1};\mathbb{R}^3\right)
		\h{1pt}\bigcap\h{1pt} L^\infty\left(B^+_{1};\mathbb{R}^3\right)\h{1pt}\bigcap\h{1pt} \pig\{\h{1pt}\psi:\psi \h{1pt}\big|_{\p^0B^+_{1}}\in T_{\varphi_k(\cdot)}\p\Omega \h{6pt}\text{and}\h{6pt}\psi\h{1pt}\big|_{\p^+B^+_{1}}=0\h{1pt}\pig\}$.\vspace{0.2pc}
        
        In addition, we recall \eqref{tangential cond of widetldu}. It turns out $$\widetilde{u}^{\h{.8pt}*} \in \mathbb{S}^2 \cap T_{\varphi^{-1}(\widetilde{a})}\p\Omega  \h{20pt}\text{$\mathscr H^2$-a.e. on $\p^0B^+_{1}$.}$$ 
        By choosing a suitable coordinate frame, we assume that $T_{\varphi^{-1}(\widetilde{a})}\p\Omega = \big\{y_3=0\big\}$. For any 
        $$\phi\in C^1\left(\overline{B^+_{1}};\mathbb{R}^3\right)\h{1pt}\bigcap\h{1pt}\pig\{\psi:\psi\h{1pt}\big|_{\p^+B^+_{1}}=0\h{6pt} \text{and} \h{6pt} \psi_3\h{1pt}\big|_{\p^0B^+_{1}}=0\pig\},$$ 
        we find a sequence $\big\{\phi^k\big\}_{k\h{0.5pt}\in \h{0.5pt}\mathbb{N}}$ such that $\phi^k \to \phi$ in $C^1\left(\overline{B^+_{1}};\mathbb{R}^3\right)$ as $k \to \infty$ with $$\phi^k\in C^\infty\left(\overline{B^+_{1}};\mathbb{R}^3\right) \h{1pt}\bigcap \h{1pt} \pig\{\psi:\psi\h{1pt}\big|_{\p^+B^+_{1}}=0 \h{6pt}\text{and}\h{6pt}\psi \h{1pt}\big|_{\p^0B^+_{1}}\in T_{\varphi_k(\cdot)}\p\Omega\h{1pt}\pig\}.$$ 
        Substitute $\phi^k$ for $\psi$ in \eqref{2781} and reorganize the resulting equation. It turns out that
		\begin{align*}
			& \int_{B^+_{1}} \p_{y_i} \widetilde{u}^{\h{.8pt}*} \cdot
			\p_{y_i} \phi 
			-
			\big|\h{0.5pt}\nabla\widetilde{u}^{\h{.8pt}*}\h{0.5pt}\big|^2 \h{1.5pt} \widetilde{u}^{\h{.8pt}*} \cdot\phi  
			= \int_{B^+_{1}} \left(\delta_{ij}-a_{ij}^k\right)
			\p_{y_i} \widetilde{u}^{\h{.8pt}*} \cdot
			\p_{y_j} \phi 
			-\left(\delta_{ij}-a_{ij}^k\right)
			\p_{y_i} \widetilde{u}^{\h{.8pt}*} \cdot
			\p_{y_j} \widetilde{u}^{\h{.8pt}*} \left(\widetilde{u}^{\h{.8pt}*} \cdot\phi\right)  \\[1.5mm]
			&\h{63pt}+\int_{B^+_{1}}  a_{ij}^k \h{1.5pt} 
			\p_{y_i} \left(\widetilde{u}^{\h{.8pt}*} -\widetilde{u}_{\widetilde{a},r_k}\right)\cdot
			\p_{y_j} \phi 
			- a_{ij}^k\left(
			\p_{y_i} \widetilde{u}^{\h{.8pt}*} \cdot
			\p_{y_j} \widetilde{u}^{\h{.8pt}*} \h{1.5pt} \widetilde{u}^{\h{.8pt}*}
			-\p_{y_i} \widetilde{u}_{\widetilde{a},r_k} \cdot
			\p_{y_j} \widetilde{u}_{\widetilde{a},r_k} \h{1.5pt}\widetilde{u}_{\widetilde{a},r_k}  \right)
			\cdot\phi  \\[1.5mm]
			&\h{63pt}+\int_{B^+_{1}}  a_{ij}^k \h{1.5pt}
			\p_{y_i}  \widetilde{u}_{\widetilde{a},r_k} \cdot
			\p_{y_j} \left(\phi -\phi^k \right) 
			- a_{ij}^k \h{1.5pt}
			\p_{y_i} \widetilde{u}_{\widetilde{a},r_k} \cdot
			\p_{y_j} \widetilde{u}_{\widetilde{a},r_k} \h{1.5pt}\widetilde{u}_{\widetilde{a},r_k}  
			\cdot\left(\phi -\phi^k \right).
		\end{align*}
		In light of the consequences in Lemma \ref{lem compact of rescaled map}, we pass $k \to \infty$ in the last equality and deduce that
		\begin{align*}
			 \int_{B^+_{1}} \p_{y_i} \widetilde{u}^{\h{.8pt}*} \cdot
			\p_{y_i} \phi 
			-
			\big|\h{0.5pt}\nabla\widetilde{u}^{\h{.8pt}*} \h{0.5pt}\big|^2 \left(\widetilde{u}^{\h{.8pt}*} \cdot\phi \right)  = 0.
		\end{align*}
		
		We extend $\widetilde{u}^{\h{.8pt}*} = \big(\widetilde{u}^{\h{.8pt}*}_1, \widetilde{u}^{\h{.8pt}*}_2, \widetilde{u}^{\h{.8pt}*}_3 \big)^\top$ from $B_1^+$ to $B_1$ such that $\widetilde{u}^{\h{.8pt}*}_1$, $\widetilde{u}^{\h{.8pt}*}_2$ are even and $\widetilde{u}^{\h{.8pt}*}_3$ is odd across the plane $\big\{y_3=0\big\}$. Denote the extension by $\widetilde{\mathcal{U}}^*$ and choose $\Phi \in C^\infty_0\big(B_{1};\mathbb{R}^3\big)$. Direct computations yield
		 \begin{align*}
			\int_{B_1} \p_{y_i} \widetilde{\mathcal{U}}^* \cdot
			\p_{y_i} \Phi
			-
			\big|\h{.5pt}\nabla \widetilde{\mathcal{U}}^*\h{.5pt}\big|^2    \h{1.5pt}\widetilde{\mathcal{U}}^* \cdot\Phi  
			=\int_{B^+_{1}} \p_{y_i} \widetilde{u}^{\h{.8pt}*} \cdot
			\p_{y_i} \widetilde{\Phi} 
			-
			\big|\h{.5pt}\nabla\widetilde{u}^{\h{.8pt}*}\h{.5pt}\big|^2 \h{1.5pt}\widetilde{u}^{\h{.8pt}*} \cdot\widetilde{\Phi}=0,
		\end{align*}
		where $\widetilde{\Phi}_j = \Phi_j+\Phi_j (\cdot^\star)$ for $j=1,2$ and $\widetilde{\Phi}_3 = \Phi_3-\Phi_3 (\cdot^\star)$. We take $\Phi=\Phi_1(r)\Phi_2(\omega)$, where $\Phi_1 \in C^\infty_c\big((0,1);\mathbb{R}\big)$ satisfies $\displaystyle \int_0^1\Phi_1 \neq 0$. $\Phi_2 \in C^\infty\big(\mathbb{S}^{2};\mathbb{R}^3\big)$ is arbitrary. It follows from the last equation and the $0$-homogeneity of $\widetilde{\mathcal U}^*$ that \begin{align*}
		    \int_{\mathbb S^2} \nabla_{\omega}\h{1pt} \widetilde{\mathcal{U}}^* \cdot
			\nabla_{\omega} \Phi_2
			-
			\big|\h{.5pt}\nabla_\omega\h{1pt} \widetilde{\mathcal{U}}^*\h{.5pt}\big|^2    \h{1.5pt}\widetilde{\mathcal{U}}^* \cdot\Phi_2 = 0. 
		\end{align*} The inner product above is defined in terms of the spherical metric. We then conclude that $\widetilde{\mathcal{U}}^*$ is a weak harmonic map on $\mathbb{S}^2$, which is smooth on $\mathbb S^2$ by \cite{H90}. \vspace{0.2pc}
		
		Let $z_0 \in \overline{B^+_{1/2}} \setminus B^+_{1/4}$. There is $r_0 \in \big(0, \frac{1}{8}\big)$, depending only on $\Omega$ and $z_0$, such that 
        $$\frac{1}{r_0}\int_{B^+_{r_0}(z_0)} \big|\h{0.5pt}\nabla \widetilde{u}^{\h{.8pt}*} \h{0.5pt}\big|^2 < \frac{\eps{5}}{4}.$$ 
        Item (3) in Lemma \ref{lem compact of rescaled map} implies that 
		$$\frac{1}{r_0}\int_{B^+_{r_0}(z_0)} \big|\h{0.5pt}\nabla \widetilde{u}_{\widetilde{a},r_k} \h{0.5pt}\big|^2
		\h{1pt}\leq\h{1pt}\dfrac{\eps{5}}{2} + \frac{2}{r_0}\int_{B^+_{r_0}(z_0)} \big|\h{0.5pt}\nabla \widetilde{u}^{\h{.8pt}*} \h{0.5pt}\big|^2
		 \h{1pt}<\h{1pt} \eps{5}\h{20pt}\text{for large  $k\in \mathbb{N}$.}$$
		 By Lemma \ref{lem Partial Regularity} and standard interior $\varepsilon$-regularity, we deduce, using the Arzelà–Ascoli theorem, that $\widetilde{u}_{\widetilde{a},r_k} \to \widetilde{u}^{\h{.8pt}*}$ uniformly on $\overline{B^+_{r_0/8}(z_0)}$, which further infers $\widetilde{u}_{\widetilde{a},r_k} \to \widetilde{u}^{\h{.8pt}*}$ uniformly on $\overline{B^+_{1/2}} \setminus B^+_{1/4}$ as $z_0$ is arbitrary.\vspace{0.4pc}  
		
		\noindent{\bf Part 3. Energy minimizing property:} \sloppy In light of Item (3) in Lemma \ref{lem compact of rescaled map}, there is a $\sigma \in [\h{0.5pt}\frac{1}{4}, \frac{1}{2}\h{0.5pt}]$ and a subsequence, still denoted by $\big\{ \widetilde{u}_{\widetilde{a}, r_k}\big\}$, such that
		\begin{align}
			& \mathcal{E}^*_{\widetilde{u}_{\widetilde{a},r_k} - \widetilde{u}^*,\h{1pt}\p^+B^+_{\sigma }} + 
			\int_{\p^+B^+_{\sigma}}  \pig|\widetilde{u}^{\h{.8pt}*}
			-\widetilde{u}_{\widetilde{a},r_k}\pigr|^2 \longrightarrow 0 \h{15pt}\text{as $k \to \infty$.}
			\label{4777}
		\end{align}
		Meanwhile, for large $k$, the H\"{o}lder regularity of $\widetilde{u}_{\widetilde{a}, r_k}$ and $\widetilde{u}^*$ on $\overline{B^+_{1/2}}\setminus B^+_{1/4}$ infers that  $$\widetilde{u}_{\widetilde{a},r_k}(\cdot) \in T_{\varphi_k(\cdot)}\p\Omega  \h{15pt}\text{and}\h{15pt} \widetilde{u}^{\h{.8pt}*}(\cdot) \in T_{\varphi^{-1}(\widetilde{a})}\p\Omega  \h{20pt}\text{on $\p B_{\sigma}\cap\big\{y_3=0\big\}$.}$$ Recalling $\psi_*$, $\Pi^k $, $\Psi^{v,k}$ defined in the proof of Lemma \ref{lem compact of rescaled map}, Part 2.2, we introduce\begin{align}\label{def of Upsilon^(v,k)}
		    \Upsilon^{v,k}\left(y,z\right) := v(z) + \psi_*\big(v(z)\big)
		\left[\h{1pt} \Pi^k\left(y^0, \Pi\big(\widetilde{a}, v(z)\big) \right) 
		-\Pi\left(\widetilde{a}, v(z)\right)\h{1pt}\right]
		\end{align} and
		\begin{equation}
			v^{\gamma,k}(z) := 
			\begin{cases} 
				\dfrac{\sigma  - |z |}{(1-\gamma)\sigma } \h{1pt}
				\Upsilon^{\widetilde{u}^{\h{.8pt}*},\h{1pt}k}\left(z,\sigma \widehat{z}\h{1pt}\right)
				+ \dfrac{|z | -  \gamma\h{0.4pt}\sigma }{(1-\gamma)\sigma } \h{1pt}
				\Psi^{\widetilde{u}^{\h{.8pt}k}}\left(z,\sigma \h{0.2pt}\widehat{z} \h{1pt}\right)
				& \text{if } z \in B^+_{\sigma } \setminus B^+_{\gamma\h{0.2pt}\sigma }, \\[5mm]
				\Upsilon^{v^*,k}\left( z, \gamma^{-1}z\h{1pt}\right) 
				& \text{if } z \in B^+_{\gamma\h{0.2pt}\sigma }.
			\end{cases}
			\label{def. of u^rho 3}
		\end{equation} Here, $\gamma \in (\frac{1}{2}, 1)$. The map $\widetilde{u}_{\widetilde{a},r_k}$ is simply denoted by $\widetilde{u}^{\h{.8pt}k}$. $\Psi^{\widetilde{u}^{\h{.8pt}k}}$ is a simple notation for $\Psi^{\widetilde{u}^{\h{.8pt}k}, k}$. In addition, $v^*$ is a map arbitrarily chosen from  $\mathscr{F}_{\widetilde{u}^{\h{.8pt}*},\,\varphi^{-1}(\widetilde{a}),\,B_\sigma^+}$. The uniform convergence of  $\big\{\widetilde{u}_{\widetilde{a},r_k} \big\}$ obtained in Part 2 and arguments similar to those in the proof of Lemma \ref{lem compact of rescaled map},  Part 2.2, deduce that for large $k$, the normalized map of $v^{\gamma,k}$ is well-defined and satisfies $$\widehat{v^{\gamma,k}} \in \mathscr{F}_{\widetilde{u}^{\h{.8pt}k},\,\varphi_k,\,B_\sigma^+}  :=\Big\{v \in H^1(B_\sigma^+;\mathbb{S}^2):v=\widetilde{u}^{\h{.8pt}k}\h{5pt} \text{on $\p^+B_\sigma^+$ \h{5pt}and} \h{10pt}v\h{0.5pt}\big|_{\p^0B^+_\sigma}\in T_{\varphi_k(\cdot)}\p\Omega\h{1pt}\Big\}.$$
		
        
        We consider the energy of $\widehat{v^{\gamma, k}}$. Without ambiguity, we still denote by $(r,\xi,\theta)$ the spherical coordinates centered at the origin.
        
        First, we compute the following radial derivative at $z \in B^+_{\sigma} \setminus \overline{B^+_{\gamma\h{0.3pt}\sigma}}$:  
		\begin{align*}
			\p_r v^{\gamma,k} = \dfrac{\Psi^{\widetilde{u}^{\h{.8pt}k}}\left(z,\sigma\widehat{z}\h{1pt}\right)
			-\Upsilon^{\widetilde{u}^{\h{.8pt}*},k}\left(z,\sigma\widehat{z}\h{1pt}\right)}{(1-\gamma)\sigma } 
			+\dfrac{\sigma-|z|}{(1-\gamma)\sigma}\h{1.5pt}
			\p_{r}\left[	\Upsilon^{\widetilde{u}^{\h{.8pt}*},k}\left(z,\sigma\widehat{z}\h{1pt}\right)\h{1pt}\right]  
			+ \dfrac{|z| - \gamma\h{0.2pt}\sigma}{(1-\gamma)\sigma}\h{1.5pt}\p_{r}\left[	\Psi^{\widetilde{u}^{\h{.8pt}k}}\left(z,\sigma\widehat{z}\h{1pt}\right)\h{1pt}\right]. 
		\end{align*}
Using the definitions of $\Psi^{\widetilde{u}^{\h{.8pt}k}}$ and $\Upsilon^{\widetilde{u}^{\h{.8pt}*},k}$ in \eqref{def of Phi^(v,k)} and \eqref{def of Upsilon^(v,k)}, we have
        \begin{align*}
			\big|\h{1pt} \p_r v^{\gamma,k} \h{1pt} \big| &\,\leq\, \dfrac{\left|\Psi^{\widetilde{u}^{\h{.8pt}k}}\left(z,\sigma\widehat{z}\h{1pt}\right)
			-\Upsilon^{\widetilde{u}^{\h{.8pt}*},k}\left(z,\sigma\widehat{z}\h{1pt}\right)\h{1pt}\right|}{(1-\gamma)\sigma } \nonumber\\[1.5mm] 
			&\,+ \,\left|\h{1pt}\p_r z \cdot  
			\nabla_z \Pi^k\left(z^0, \Pi\big(\widetilde{a}, \widetilde{u}^{\h{.8pt}*}\left(\sigma\widehat{z}\h{1pt}\right)\big) \h{1pt} \right) \h{1pt} \right| + \left|\h{1pt}  
			\p_r z \cdot \nabla_z \Pi^k\left(z^0, \Pi^k\left(\left(\sigma\widehat{z}\h{1pt}\right)^0, \widetilde{u}^{\h{.8pt}k}\left(\sigma\widehat{z}\h{1pt}\right)\right) \right) \h{1pt}  \right|.
		\end{align*}
		By Item (1) in Lemma \ref{lem. regularity of proj. and dist}, it follows that
		\begin{align*}
			\left|   \Psi^{\widetilde{u}^{\h{.8pt}k}}\left(z,\sigma \widehat{z}\h{1pt}\right) - \Upsilon^{\widetilde{u}^{\h{.8pt}*},k}\left(z,\sigma \widehat{z}\h{1pt}\right) \h{1pt} \right| \,\lesssim_{\,\Omega}\,  &\left| \h{1pt}\widetilde{u}^{\h{.8pt}k}\left(\sigma \widehat{z}\h{1pt}\right) - \widetilde{u}^{\h{.8pt}*}\left(\sigma \widehat{z}\h{1pt}\right) \h{1pt}\right| + 
			\left|\h{1pt} \Pi^k\pigl(z^0, \Pi\pigl(\widetilde{a}, \widetilde{u}^{\h{.8pt}*}(\sigma \widehat{z}\h{1pt})\pigr) \pigr) 
			-\Pi\pigl(\widetilde{a}, \widetilde{u}^{\h{.8pt}*}(\sigma \widehat{z}\h{1pt})\pigr) \h{1pt} \right|  \\[1.5mm]
			& \h{-11pt}+\left|  
			\h{1pt}\Pi^k\left( z^0, \Pi^k\left( (\sigma \widehat{z}\h{1pt})^0, \widetilde{u}^{\h{.8pt}k}(\sigma \widehat{z}\h{1pt})\right) 
			\right)  
			- \Pi^k\left( (\sigma \widehat{z}\h{1pt})^0, \widetilde{u}^{\h{.8pt}k}(\sigma \widehat{z}\h{1pt})\right) \h{1pt} \right|\\[1.5mm]
			&\h{-22pt}\,\lesssim_{\,\Omega}\, r_k
			+ \left| \h{1pt}\widetilde{u}^{\h{.8pt}k}\left(\sigma \widehat{z}\h{1pt}\right) - \widetilde{u}^{\h{.8pt}*}\left(\sigma \widehat{z}\h{1pt}\right) \h{1pt}\right|.
		\end{align*} 
		Therefore, 
		$$
		\big|\h{1pt} \p_r v^{\gamma,k} \h{1pt}\big| 
		\,\lesssim_{\,\Omega}\, \frac{r_k
			+ \left| \h{1pt}\widetilde{u}^{\h{.8pt}k}\left(\sigma \widehat{z}\h{1pt}\right) - \widetilde{u}^{\h{.8pt}*}\left(\sigma \widehat{z}\h{1pt}\right) \h{1pt}\right|}{(1-\gamma)\sigma } \h{20pt}\text{on $B^+_{\sigma} \setminus \overline{B^+_{\gamma\h{0.3pt}\sigma}},$} 
            $$ 
            which, using \eqref{4777}, infers that
		\begin{align}
			\int_{B^+_{\sigma} \setminus B^+_{\gamma\h{0.2pt}\sigma}}\big|\h{1pt} \p_r v^{\gamma,k}\h{1pt}\big|^2
			\,\lesssim_{\,\Omega}\, \frac{r_k^2}{1-\gamma } + \frac{1}{1-\gamma} 
			\int_{\p^+B^+_{\sigma}}  \pig|\h{.5pt}\widetilde{u}^{\h{.8pt}k} - \widetilde{u}^{\h{.8pt}*}
			\h{.5pt}\pigr|^2
			\longrightarrow 0 \h{20pt}\text{as $k \to \infty$.}
			\label{4848}
		\end{align}
        
		Denoting by $\widetilde{U}^*$ the $0$-homogeneous map $\widetilde{u}^*\left(\sigma \widehat{z}\h{1pt}\right)$, we have the following estimate on $B^+_{\sigma} \setminus B^+_{\gamma\h{0.2pt}\sigma}$:
		\begin{align*}
			& \left|\h{1pt}\p_{\xi} \left[\h{1pt}	\Upsilon^{\widetilde{u}^{\h{.8pt}*},k}\left(z,\sigma \widehat{z}\h{1pt}\right)\right]\h{1pt}\right| \,\leq\,\left|\h{1pt}\p_{\xi}\widetilde{U}^{\h{.8pt}*} 
			+ \p_\xi \widetilde{U}^* \cdot \nabla \psi_* \h{1pt}\big|_{\widetilde{U}^*}
			\left[\h{1pt} \Pi^k\left( z^0, \Pi\pigl( \widetilde{a}, \widetilde{U}^{\h{.8pt}*}\pigr) \right) 
			- \Pi\pigl( \widetilde{a}, 
			\widetilde{U}^{\h{.8pt}*}\pigr)\h{1pt} 
			\right] \h{1pt}\right|\\[1.5mm]
			&\h{10pt} 
			+ \bigg|\h{1pt} \p_\xi z \cdot \nabla_{z}\Pi^k\left( z^0, p \right) +  \p_\xi \Pi\pigl( \widetilde{a}, 
			\widetilde{U}^{\h{.8pt}*}\pigr) \cdot \nabla_p \Pi^k\left( z^0, p \right) - \p_\xi \widetilde{U}^{\h{0.8pt}*} \cdot \nabla_q \Pi\big(\h{0.5pt}\widetilde{a}, q\big) \h{1pt}\bigg| 
			\,\lesssim_{\,\Omega}\, 
			r_k|z|+ \pigl|\h{1pt}\p_\xi \widetilde{U}^{\h{.8pt}*} \h{.5pt}\pigr|. 
		\end{align*}Here, we evaluate at $p=\Pi \big(\h{0.5pt}\widetilde{a}, \h{1pt}\widetilde{U}^{*} \big)$ and $q=\widetilde{U}^{\h{0.8pt}*}$. Similarly, \begin{align*}
			& \left|\h{1pt}\p_{\theta} \left[\h{1pt}	\Upsilon^{\widetilde{u}^{\h{.8pt}*},k}\left(z,\sigma \widehat{z}\h{1pt}\right)\right]\h{1pt}\right| 
			\,\lesssim_{\,\Omega}\, 
			r_k|z| \sin \xi + \pigl|\h{1pt}\p_\theta \widetilde{U}^{\h{.8pt}*} \h{.5pt}\pigr| \h{20pt}\text{on $B^+_{\sigma} \setminus B^+_{\gamma\h{0.2pt}\sigma}$}. 
		\end{align*}
		 If we denote by $\widetilde{u}^{\h{.8pt}k}$ the $0$-homogeneous map $\widetilde{u}^{\h{.8pt}k} \left(\sigma \widehat{z}\h{1pt}\right)$, then the same arguments for \eqref{4417}-\eqref{partial theta} yield the following:\begin{align*}&\left|\h{1pt}\p_\xi \left[\h{1pt}	\Psi^{\widetilde{u}^{\h{.8pt}k}}\left(z, \sigma\h{0.2pt}\widehat{z}\h{1pt}\right)\h{1pt}\right]\h{1pt}\right|
			\h{1pt}\lesssim_{\,\Omega\,} 
			r_k\left(\h{0.5pt} |z | + \sigma\right)+ \big|\h{1pt}\p_\xi \widetilde{U}^{\h{0.8pt}k}\h{.5pt}\big|,\nonumber\\[1.5mm]
            &\left|\h{1pt}\p_\theta \left[\h{1pt}	\Psi^{\widetilde{u}^{\h{.8pt}k}}\left(z, \sigma\h{0.2pt}\widehat{z}\h{1pt}\right)\h{1pt}\right]\h{1pt}\right|
			\h{1pt}\lesssim_{\,\Omega\,} 
			r_k\left(\h{0.5pt}|z| + \sigma\right) \sin \xi + \big|\h{1pt}\p_\theta \widetilde{U}^{\h{0.8pt}k}\h{.5pt}\big|, \h{25pt}\text{on $B^+_{\sigma} \setminus B^+_{\gamma\h{0.2pt}\sigma}$}. \end{align*}
		Hence, \eqref{4777} and the above estimates deduce that
		\begin{align}
			&\int_{B^+_{\sigma} \setminus B^+_{\gamma\sigma}}
			\dfrac{1}{r^2 }\left| \p_\xi v^{\gamma,k}\right|^2
			+	\dfrac{1}{r^2\sin^2\xi}\left| \p_\theta v^{\gamma,k}\right|^2 \,\lesssim_{\,\Omega}\, 	r_k^2 + \left(1 - \gamma\right) \sup_{k \h{1pt}\in\h{1pt}\mathbb N} \mathcal{E}^*_{\widetilde{u}_{\widetilde{a},r_k},\h{1pt}\p^+B^+_{\sigma }}\longrightarrow 0,
			\label{4887}
		\end{align} as we pass $k \to \infty$ and $\gamma \to 1^-$.

        On $B^+_{\gamma\h{0.2pt}\sigma}$, we observe that
		\begin{align*}
			\p_{z_j}\Big[\h{1pt}\Upsilon^{v^*,k}\left( z,\gamma^{-1}z\right)\h{.5pt}\Big] &=   \gamma^{-1}\p_{j}v^*\h{1pt}\Big|_{\gamma^{-1} z}+ \gamma^{-1} \p_{j}v^* \h{1pt}\Big|_{\gamma^{-1} z} \cdot \nabla_q \psi_*   
			\left( \Pi^k\left(z^0, p \right) 
			- p \right) + \psi_*
            \h{1pt}
			\p_{z_j} \Pi^k\pigl(z^0, p \pigr)  \nonumber\\[2mm]
			&+ \gamma^{-1}\psi_* \left[ \h{1pt} \left(\p_j v^* \Big|_{\gamma^{-1}z} \cdot \nabla_q \Pi \h{0.5pt}\pigl(\widetilde{a}, q\pigr) \right) \cdot \nabla_p \Pi^k\pigl(z^0, p \pigr) 
			- \p_j v^* \h{1pt}\Big|_{\gamma^{-1}z} \cdot \nabla_q \Pi \h{0.5pt}\pigl( \widetilde{a}, q \pigr) \h{1pt} \right].
		\end{align*}
        Here, $p$ is evaluated at $\Pi\pigl(\widetilde{a}, v^*\left(\gamma^{-1} z \right)\pigr)$. $\psi_* = \psi_*(q)$ with $q$ evaluated at $v^*\left(\gamma^{-1} z \right)$. Note that $$ \Pi^k \big(z^0, p\big) \to p = \Pi\pigl(\widetilde{a}, v^*\left(\gamma^{-1} z \right)\pigr) \h{20pt}\text{ uniformly on $B_{\gamma\h{0.2pt}\sigma}^+$, as $k \to \infty$.}$$
        In addition, we also find that  $$ \left(\p_j v^* \Big|_{\gamma^{-1}z} \cdot \nabla_q \Pi \h{0.5pt}\pigl(\widetilde{a}, q\pigr) \right) \cdot \nabla_p \Pi^k\pigl(z^0, p \pigr) \to \gamma \h{1pt}\p_{z_j} \Pi \pigl(\widetilde{a}, \Pi \pigl(\widetilde{a}, v^*\left(\gamma^{-1} z \right) \pigr) \pigr) = \p_j v^* \h{1pt}\Big|_{\gamma^{-1}z} \cdot \nabla_q  \Pi \pigl(\widetilde{a}, q \pigr), $$ uniformly on $B_{\gamma\h{0.2pt}\sigma}^+$, as $k \to \infty$. Therefore, 
		\begin{align*}
			\lim_{k \,\to\, \infty}
			\int_{B^+_{\gamma\sigma }}
			a^k_{ij} \h{2pt}\p_i v^{\gamma, k}  \cdot \p_j v^{\gamma, k}
			 =  \gamma^{-2}
			\int_{B^+_{\gamma \h{0.2pt}\sigma }} 
			\big|\h{1pt}\nabla v^*\h{0.5pt}\big|^2\big(\gamma^{-1}z\big)  \h{2pt}\mathrm d\h{.5pt}z =\gamma
			\int_{B^+_{\sigma }} \big|\h{0.5pt}\nabla v^* \h{0.5pt}\big|^2.		\end{align*}
		By this convergence, together with \eqref{4848}-\eqref{4887}, it follows that
		\begin{align*}
			\lim_{\gamma\to 1^-}	\lim_{k\to \infty}	\int_{B^+_{\sigma } }a^k_{ij}
			\h{2pt}\p_{y_i} \widehat{v^{\gamma,k}}\cdot
			\p_{y_j} \widehat{v^{\gamma,k}}
			=	\int_{B^+_{\sigma }} 
			\big|\h{0.5pt}\nabla v^* \h{0.5pt}\big|^2.
		\end{align*}
		Since both $\widetilde{u}^{\h{.8pt}k}$ and $\widehat{v^{\gamma,k}}$ belong to $\mathscr{F}_{\widetilde{u}^{\h{.8pt}k},\,\varphi_k,\,B_\sigma^+}$, the desired result is obtained by the minimality of $\widetilde{u}^{\h{.8pt}k}$ and Fatou's lemma. 
	\end{proof}

	Following the standard dimension reduction arguments and higher-order estimates, Item (3) in Theorem \ref{thm main result} follows. 

\subsection{Singularity structure}\label{sin str}
 
	We now prove Item (4) in Theorem \ref{thm main result}, with regard to the structure of the boundary singularities.

    Suppose $u^{*}$ is a $0$-homogeneous map on $B^+_1$ minimizing the Dirichlet energy over  \[
	  \mathscr F_{u^{*}, \h{1pt}B^+_1}
	  :=
	  \Bigl\{
	  v\in H^1(B_1^+; \mathbb{S}^2):
	  v=u^{*}\ \text{on }\partial^+B_1^+,\;
	  v_3=0\ \text{on }\partial^0B_1^+
	  \Bigr\}.
	  \] We extend $u^{*}_1$, $u^{*}_2$ evenly, and $u^{*}_3$ oddly with respect to $y_3$ and still denote the extension by $u^{*}$. Then we have 
      \begin{lem}\label{lem deg<4} The topological degree of $u^{*} : \p B_1 \to \mathbb S^2$ satisfies $ \big|\deg u^{*} \big| \leq 2$.
        \end{lem}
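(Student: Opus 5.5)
The plan is a Brezis--Coron--Lieb type comparison argument adapted to the half ball. We compare the minimizer $u^{*}$ with competitors obtained by re-centering its $0$-homogeneous structure at points of the flat boundary, and average over the residual $\mathrm{SO}(2)$ symmetry.

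\emph{Step 1 (structure and energy of $u^*$).} By Part 2 of the proof of Lemma \ref{lem prop of tangent map}, the even--even--odd extension of $u^{*}$ restricted to $\partial B_1$ is a smooth harmonic map $\mathbb S^2\to\mathbb S^2$. It is therefore $\pm$-holomorphic, so $|\nabla_\omega u^*|^2 = 2|J_\omega u^*|$ pointwise and $\int_{\mathbb S^2}|\nabla_\omega u^*|^2 = 8\pi|d|$ with $d=\deg u^*$. The reflection symmetry gives half of this on the upper hemisphere. Since $u^*$ is $0$-homogeneous,
\[
\int_{B_1^+}|\nabla u^*|^2=\int_0^1\mathrm dr\int_{\partial^+B_1^+}|\nabla_\omega u^*|^2=4\pi|d|.
\]
We also record that $u^*$ maps the equator $\{\omega_3=0\}$ into the equator of $\mathbb S^2$, because $u_3^*$ is odd.

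\emph{Step 2 (competitors).} For $a\in \partial^0B_1^+$ with $|a|=t<1$, let $\pi_a:\overline{B_1}\setminus\{a\}\to\mathbb S^2$ send $x$ to the intersection of the ray from $a$ through $x$ with $\partial B_1$. Set $v_a:=u^*\circ\pi_a$. Then $v_a=u^*$ on $\partial^+B_1^+$, and since $a$ lies in the plane $\{y_3=0\}$, $\pi_a$ maps $\partial^0B_1^+$ into the equator. Hence $(v_a)_3=0$ there and $v_a\in\mathscr F_{u^*,B_1^+}$. Only centers in the flat plane are admissible here, and this is exactly where the $\mathrm{SO}(3)$ freedom of \cite{BCL86} degenerates to $\mathrm{SO}(2)$. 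A change of variables along rays from $a$ gives
\[
\int_{B_1^+}|\nabla v_a|^2=\int_{\partial^+B_1^+}|\nabla_\omega u^*|^2(\omega)\,K(a,\omega)\,\mathrm d\mathscr H^2_\omega .
\]
Here $K(a,\omega)=\int_0^{\ell(a,\omega)} \frac{\mathrm ds}{s^2}\cdot(\text{Jacobian factor})$ is explicit, with $\ell(a,\omega)=|\omega-a|$ (the BCL kernel is $K(a,\omega)=\frac{1-|a|^2}{|\omega-a|}$ up to normalization). Minimality yields $4\pi|d|\le \int |\nabla_\omega u^*|^2K(a,\cdot)$ for every such $a$.

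\emph{Step 3 (averaging).} Average over $a=R_\theta(t,0,0)$ with $\theta\in[0,2\pi)$. The averaged kernel $\overline K_t(\omega)$ depends only on $\omega_3\in[0,1]$, and using $|\nabla_\omega u^*|^2=2|J_\omega u^*|$ gives
\[
4\pi|d|\le \int_{\partial^+B_1^+}2|J_\omega u^*|\,\overline K_t(\omega_3)\,\mathrm d\mathscr H^2 .
\]
To bring in the degree quantitatively, we use the fact that the energy density of a degree-$d$ rational map cannot concentrate too much near the poles $\omega_3=\pm1$. More precisely, we split $\int 2|J|\,\overline K_t$ by comparing $\overline K_t$ with its maximum in the degenerate variable $\omega_3\in[-1,1]$. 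We bound the part where $\overline K_t$ is large using the covering multiplicity of $u^*$: each value of $\mathbb S^2$ is attained at most $|d|$ times, so the Jacobian mass on any region is at most $4\pi$ times the number of sheets that can lie there. Expanding in $t\to0^+$, $\overline K_t = 1 + t^2 k(\omega_3)+O(t^3)$, so the first-order condition reads $\int 2|J|\,k(\omega_3)\ge0$. Since $k$ is negative except near the equator, and the Jacobian mass is forced away from the equator once $|d|$ is large, this condition should fail for $|d|\ge3$, giving $|d|\le2$.

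The main obstacle is Step 3. It requires computing $\overline K_t$ (or its $t^2$ coefficient $k$) explicitly, and proving a sharp lower bound on the Jacobian mass of $u^*$ away from the equator in terms of $|d|$. With only $\mathrm{SO}(2)$ available we cannot rotate the mass to a favorable position as BCL do, so we will instead take the worst case over $\omega_3$. I expect this to give exactly the threshold $|d|\le 2$ rather than $|d|\le1$, consistent with the later need for a separate argument in the case $d=\pm2$.
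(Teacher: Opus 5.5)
Your Step~3 does not merely lack details: the competitor family $v_a=u^*\circ\pi_a$ is too weak to bound the degree at all. Because $u^*|_{\mathbb S^2}$ is conformal, the change of variables in your Step~2 can be done exactly. Write $x=a+s(\omega-a)$. Then $\mathrm dx=s^2(1-a\cdot\omega)\,\mathrm ds\,\mathrm d\mathscr H^2_\omega$ and $|\nabla v_a|^2=s^{-2}|\nabla_\omega u^*|^2\bigl(1+\frac{|a|^2-(a\cdot\omega)^2}{2(1-a\cdot\omega)^2}\bigr)$. Hence
\[
\int_{B_1}|\nabla v_a|^2-\int_{B_1}|\nabla u^*|^2=-\,a\cdot N(u^*)+\int_{\mathbb S^2}|\nabla_\omega u^*|^2\,\frac{|a|^2-(a\cdot\omega)^2}{2(1-a\cdot\omega)}\,\mathrm d\mathscr H^2_\omega ,
\]
and for $a$ in the flat plane the half-ball energies are half of these. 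The last term is nonnegative.

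So this family yields only $N(u^*)=0$, which is exactly how the paper uses it (Lemma \ref{lem energy of moving centre} and Step~1 of Lemma \ref{lem: deg 2 explicit form}). Once $N(u^*)=0$, every $v_a$ has energy at least that of $u^*$, whatever the degree. In particular, your $\mathrm{SO}(2)$-averaged $t^2$-coefficient is $k(\omega_3)=\tfrac14(1+\omega_3^2)>0$, not negative away from the equator. The condition $\int 2|J|\,k\ge 0$ is therefore vacuous.

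A concrete obstruction: the map given by $z\mapsto z^d$ in the chart $\widetilde\Pi_{-e_3}$ is admissible, since it sends the equator to itself and has the right reflection symmetry. It has $N=0$ by its rotational and reflection symmetry, so it passes all your tests for every $d$. Moreover, its energy density at the equator is $2d^2$, against an average of $2d$. So the energy concentrates near the equator as $d$ grows, which is the opposite of your heuristic that the Jacobian mass is forced away from it.

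The missing idea is to vary the map in the target rather than in the domain. The paper uses the Brezis--Coron--Lieb dilation $\phi_R=\widetilde\Pi_{e_1}\bigl(\theta(|x|)^{-1}\,\widetilde\Pi_{e_1}^{-1}\circ R u^*\bigr)$. The pole $e_1$ is chosen on the equator, so the dilation preserves the upper half plane and the real line, and hence the tangential constraint on $\partial^0B_1^+$. This gives the stability inequality $(\pi m_*/2)^{1/2}\le F_{f_R}$ of \cite[(7.44)]{BCL86}.

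Averaging over rotations $R$ about the $y_3$-axis produces the closed-form kernel $\widetilde{\mathcal I}_1(s,b_3)$, which depends only on the target variable $b_3$. The paper then bounds $\int_0^1\widetilde{\mathcal I}_1(s,b_3)\,s^{1/2}\,\mathrm ds$ by its value at $b_3=0$, namely $\tfrac14+\tfrac\pi8$, and obtains $m_*\le 1+\tfrac\pi2<3$. The ``maximum over the degenerate variable'' belongs to this target-side argument, not to the domain re-centering.
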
 
\begin{proof} We divide the proof into 3 steps. \\[2mm]   \noindent{\bf Step 1. Energy comparison and stability inequality:}
Denote by $\widetilde \Pi_{e_1}$ the stereographic projection from $e_1 := (1, 0, 0)^\top$. That is  
\begin{align}\label{def stereo proj e_1}
	\widetilde \Pi_{e_1}(z) := \frac{1}{|z|^2 + 1}\left(\h{1pt}|z|^2-1,2 z_1,2 z_2\h{0.5pt}\right)^\top \qquad \text{for $z=z_1+i z_2$.}
\end{align}
	Moreover, we define $\widetilde{\Pi}_{e_1}(\infty) := e_1$. Let $$\mathbb{H}:= \big\{z\in\mathbb{C}:z_2>0 \h{1pt}\big\} \cup \big\{\infty\big\} \h{15pt}\text{and}\h{15pt} \p \mathbb{H}:= \big\{z\in\mathbb{C}:z_2=0 \h{1pt}\big\}\cup \big\{\infty\big\}.$$ Then, it turns out that
	 	\begin{align}\label{inclusion}
	 	\widetilde \Pi_{e_1}\big(\big\{z_2 > 0 \big\}\big)=\mathbb{S}^2\cap \big\{y_3>0\big\}
	 	\h{15pt} \text{and} \h{15pt}
	 	\widetilde \Pi_{e_1}(\p \mathbb{H})=\mathbb{S}^2\cap \big\{y_3=0\big\}.
	 	\end{align}  
Given a small parameter \(\varepsilon\in(0,1)\), we fix a smooth function $\theta$ on $[\h{0.2pt}0, 1\h{0.2pt}]$ that satisfies
	 	\[
	 	\theta \equiv 0 \quad \text{on}\h{3pt}  [\h{0.5pt}0,\varepsilon\h{0.5pt}],\qquad \theta > 0 \quad
	 	 \text{on} \h{3pt}(\h{0.5pt}\varepsilon,1\h{0.5pt}], \qquad \theta(1)=1.
	 	\] Letting $R\in \mathrm{SO}(3)$ be an arbitrary constant rotation about the $y_3$-axis, we then define the comparison map $\phi_R$ as follows:
	 	\[
	 	\phi_R(x)
	 	:=
	 	\widetilde \Pi_{e_1}\left(
	 	\frac{1}{\theta\big(\h{0.5pt}|x|\h{0.5pt}\big)}\,\widetilde \Pi_{e_1}^{-1}\circ R\h{.5pt}u^{*}\!\left(x\right)
	 	\right)
	 	\qquad \text{for }x\in B_1.
	 	\]
	 	When $\theta\big(\h{0.5pt}|x|\h{0.5pt}\big)=0$, we set $\phi_R(x)=\widetilde \Pi_{e_1}(\infty)=e_1$. \vspace{0.2pc}
        
        Since \(\theta(1)=1\), it holds
	 	$\phi_R = R \h{.5pt} u^{*}$ on \(\partial^+ B_1^+\). By the second equality in \eqref{inclusion}, the image of $
	 	\widetilde \Pi_{e_1}^{-1}\circ R \h{0.5pt} u^{*}$ is equal to $\p \mathbb{H}$ when the map is restricted on $\p^0 B^+_1 \setminus \big\{0\big\}$. Still by the second equality in \eqref{inclusion}, the image of $\phi_R$ is equal to $\mathbb{S}^2\cap \big\{y_3=0 \big\}$ when $\phi_R$ is restricted on $\p^0 B_1^+ \setminus \big\{0\big\}$, verifying that $\phi_R\in \mathscr F_{R \h{0.5pt}u^{*}, \h{1pt}B_1^+}$. \vspace{0.2pc} 
        
       Define $m_*:= \big|\deg u^{*} \big|$ and $f_R:= \widetilde \Pi_{e_1}^{-1}\circ R\h{.5pt}u^{*} \circ \widetilde \Pi_{e_1}$. By comparing the energies between $R u^*$ and $\phi_R$, the following stability inequality holds: \begin{align}\label{sta ine}
      \left(\dfrac{\pi m_*}{2}\right)^{\frac{1}{2}} \h{1pt}\leq\h{1pt}  \h{1pt}F_{f_R}, \h{15pt}\text{where} \h{3pt} F_{f_R} := \int_0^1\!\mathrm d\h{.5pt}s\left\{\h{1pt}\int_{\mathbb{R}^2}
	\dfrac{\big|\h{1pt}f_R(z) \h{1pt}\big|^2\,\,\mathrm d\h{.5pt}z_1\,\mathrm d\h{.5pt}z_2}{\big(s^2+ \big|\h{1pt}f_R(z) \h{1pt}\big|^2\h{1pt}\big)^2 \left(1+|z|^2\right)^2}\h{1pt}\right\}^{\!\frac{1}{2}}.
       \end{align} See \cite[(7.44)]{BCL86} for the details of the proof of this stability inequality. Note that we fix $\theta$ as in \cite{BCL86} specifically and take $\varepsilon$ to $0$. \vspace{0.4pc}
       
\noindent\textbf{Step 2. Integration over $\mathrm{SO}(2)$:} We identify the group of all the rotations about the $y_3$-axis with $\mathrm{SO}(2)$. The Haar measure on $\mathrm{SO}(2)$ is denoted by $\,\mathrm d\h{.5pt}h$. Integrate the inequality in \eqref{sta ine} over $\mathrm{SO}(2)$ and apply Cauchy's inequality. It can be deduced that
\begin{equation*}\label{eq:minkowski}
 \left(\dfrac{\pi  m_* }{2}\right)^{\frac{1}{2}}
 \,\le\, \int_{\mathrm{SO}(2)} F_{f_R}\,\mathrm d\h{.5pt}h_R
\le\int_0^1 \mathrm d\h{.5pt}s \!\, \left\{\int_{\mathbb{R}^2}
\frac{\,\mathrm d\h{.2pt}z_1\,\mathrm d\h{.2pt}z_2}{\left(1+|z|^2\right)^2}\h{1pt}\mathcal I_1\pigl(s, u^{*} \circ \widetilde \Pi_{e_1}(z)\pigr)\right\}^{\!\frac{1}{2}},
\end{equation*}
where
\begin{equation}\label{eq:I-def}
\mathcal I_1(s,b):=\int_{\mathrm{SO}(2)} \dfrac{\big|\h{1pt}\widetilde \Pi_{e_1}^{-1}\big(R\h{0.5pt}b\big)\h{0.5pt} \big|^2}
{\pig[s^2+\big|\h{1pt}\widetilde \Pi_{e_1}^{-1}\big(R\h{0.5pt}b\big)\h{0.5pt}\big|^2\pigr]^2} \,\,\mathrm d\h{.5pt}h_R\qquad
\text{for  $b\in\mathbb{S}^2$ and $s\in(0,1)$.}
\end{equation}
Applying Cauchy's inequality again, we then obtain
	\begin{align}\label{3066}
m_* 
		&\leq
		\frac{4}{\pi}\int_0^1s^{\frac{1}{2}} \h{2pt}\mathrm d\h{.5pt}s\int_{\mathbb{R}^2}
		\frac{\,\mathrm d\h{.5pt}z_1\,\mathrm d\h{.5pt}z_2}{\left(1+|z|^2\right)^2} \h{1.5pt}\mathcal I_1\pigl(s, u^{*} \circ\widetilde \Pi_{e_1}(z) \pigr).
	\end{align}
By \eqref{def stereo proj e_1}, it satisfies $\big|\h{1pt}\widetilde \Pi_{e_1}^{-1}(y)\h{0.5pt}\big|^2=\frac{1+y_1}{1-y_1}$ for any $y\in\mathbb{S}^2$.
Therefore, $\mathcal I_1(s,b)$ is equal to
\begin{align*}
\int_{\mathrm{SO}(2)}
\frac{1- \big(R\h{0.2pt}b\big)_1^2}{\left[\h{1pt}s^2+1+(1-s^2)\big(R\h{0.2pt}b\big)_1\right]^2}
\,\mathrm d\h{.5pt}h_R =
\frac{1}{\pi}\int_0^{\pi}
\frac{1-\left(1-b_3^2\h{0.5pt}\right)\cos^2\theta}{\left[\h{1pt}s^2+1 +\left(1-s^2\right)\sqrt{1-b_3^2}\h{1pt}\cos\theta\h{1pt}\right]^2}
\,\mathrm d\h{.5pt}\theta.
\end{align*}  Standard integrals yield the closed form: 
\begin{align}\label{eq:I-closedform}
\mathcal I_1(s,b)=\widetilde{\mathcal I}_1\left(s, b_3 \h{1pt}\right):=
	\frac{1}{\left(1-s^2\right)^2}
	\left[
	\frac{
		2\left(1+s^2\right)\left(2s^2+(1-s^2)^2\h{1pt}b_3^2\h{1pt}\right)
	}{
		\left(4s^2+(1-s^2)^2\h{1pt}b_3^2\h{1pt}\right)^{\frac{3}{2}}
	}
	-1
	\right].
\end{align}
We claim that 
\begin{align}\label{3097}
		\max_{b_3\h{1pt}\in \h{1pt}[-1,1\h{0.5pt}]} \h{2pt}\int_0^1 \widetilde{\mathcal I}_1\left(s, b_3 \right) s^{\frac{1}{2}} \,\mathrm d\h{.5pt}s
		\h{1.5pt}\leq\h{1.5pt}
		\frac14+\frac{\pi}{8}.
\end{align}
     Applying the last estimate to \eqref{3066} implies
	\[
	\begin{aligned}
		m_* 
		&\leq
		\frac{4}{\pi}  
		\left(\frac14+\frac{\pi}{8}\right)
		\int_{\mathbb{R}^2}
		\frac{\,\mathrm d\h{.5pt}z_1\,\mathrm d\h{.5pt}z_2}{(1+|z|^2)^2}
			= 1 + \frac{\pi}{2},
	\end{aligned}
	\] 
	which proves the lemma. \vspace{0.4pc}

\noindent{\bf Step 3. Proof of \eqref{3097}:} Define
		\[
		A_{b_3,\h{1pt}s}:=\frac{|\h{0.5pt}b_3\h{0.5pt}|\left(1-s^2\right)}{2s}.
		\] 
		A direct computation shows that $\widetilde{\mathcal I}_1(s, b_3)
			-\widetilde{\mathcal I}_1(s,0)$ is equal to 
		\[
		\begin{aligned}
			\frac{1}{\left(1-s^2\right)^2}
			\left[
			\frac{
				2\left(1+s^2\right)\cdot 2s^2\left(1+2A_{b_3,s}^2\right)
			}{
				\left(4s^2\left(1+A_{b_3,\h{1pt}s}^2\right)\right)^{\frac{3}{2}}
			}
			-\frac{2 \left(1+s^2\right)\left(2s^2\right)}{\left(4s^2\right)^{\frac{3}{2}}}
			\right] =
			\frac{1+s^2}{2s\left(1-s^2\right)^2}
			\Big( g\big(A_{b_3,s}\big)
			-1
			\Big), 
		\end{aligned}
		\]
	where $
		g(x):=
		\dfrac{1+2x^2}{(1+x^2)^{\frac{3}{2}}}$. Thus,
		\[
		\begin{aligned}
			&\int_0^1
			\left[\, \widetilde{\mathcal I}_1(s, b_3 )-\widetilde{\mathcal I}_1(s,0)
			\right] s^{\frac{1}{2}}
			\h{1.5pt}\mathrm d\h{.5pt}s 
			=
			\int_0^1
			\frac{(1+s^2)}{2s^{\frac{1}{2}}\left(1-s^2\right)^2}
			\Big(g\big(A_{b_3,s}\big)-1\Big)
			\,\mathrm d\h{.5pt}s. 
		\end{aligned}
		\]
	Note that
		\[
		\frac{\mathrm d\h{.5pt}A_{b_3,s}}{\mathrm d\h{.5pt}s}
		=
		-\frac{|b_3|\left(1+s^2\right)}{2s^2}\,\leq\, 0 \h{20pt}\text{which induces} \h{6pt} \frac{(1+s^2)}{2s^{\frac{1}{2}}\left(1-s^2\right)^2}\,\mathrm d\h{.5pt}s
		=
		-\frac{|b_3|}{4}\frac{s^{-\frac{1}{2}}}{A_{b_3,s}^2}\,\mathrm d\h{.5pt}A_{b_3,s}.
		\]Use this change of variable and let $B_{b_3,r}:=\dfrac{\sqrt{r^2+b_3^2}-r}{|b_3|}$. It turns out that 
		\[
		\begin{aligned}
			&\int_0^1
			\left[\, \widetilde{\mathcal I}_1(s, b_3 )-\widetilde{\mathcal I}_1(s,0)
			\right] s^{\frac{1}{2}}
			\h{1.5pt}\mathrm d\h{.5pt}s 
			=	\frac{|b_3|}{4}
			\int_0^\infty
			\frac{g(r)-1}{r^2}
			B_{b_3,r}^{- \frac{1}{2}} 
			\,\mathrm d\h{.5pt}r. 
		\end{aligned}
		\]
        Since we have 
        $$\dfrac{\mathrm d\h{.5pt}}{\mathrm d\h{.5pt}r}
        \frac{\sqrt{1+r^2}-1}{r\sqrt{1+r^2}}
        =\frac{1}{r^2}
		\left[
		\frac{1+2r^2}{(1+r^2)^{\frac{3}{2}}}
		-1
		\right]
        =\frac{g(r)-1}{r^2},
		$$ 
        integrating by parts yields
        \begin{align*}
			&\int_0^1
			\left[\, \widetilde{\mathcal I}_1(s, b_3 )-\widetilde{\mathcal I}_1(s,0)
			\right] s^{\frac{1}{2}}
			\h{1.5pt}\mathrm d\h{.5pt}s =	- \frac{| b_3 |^{\frac{1}{2}}}{8}
            \int_0^\infty
			 \frac{\sqrt{1+r^2}-1}{r\sqrt{1+r^2}}\h{1pt}
			 \left(\frac{1}{\sqrt{r^2 + b_3^2}} + \frac{r}{r^2 + b_3^2}\right)^{\frac{1}{2}}\h{1pt}\mathrm d r.
		\end{align*}
        It turns out that
        $$\int_0^1
			\widetilde{\mathcal I}_1(s, b_3) \h{1pt} s^{\frac{1}{2}}   
			\h{1.5pt}\mathrm d\h{.5pt}s
            \h{1pt}\leq\h{1pt} 
           \int_0^1
			\widetilde{\mathcal I}_1(s,0) \h{1pt} s^{\frac{1}{2}}  
			\h{1.5pt}\mathrm d\h{.5pt}s 
            \h{1pt}=\h{1pt}	\frac{1}{2}\int_0^1
			\frac{\mathrm d\h{.5pt}s}{s^{\frac{1}{2}}(1+s)^2} =\frac14+\frac{\pi}{8}. $$
    The proof is complete. 
\end{proof}

To rule out the degree 2 bubbles and prove the radial symmetry of the degree 1 bubble, we need the following first-order expansion of the energy $\mathcal E^*_{U_{\lambda p},\h{0.5pt}B_1}$ with respect to the parameter $\lambda$. Here, $U_{\lambda p}$ is a perturbation of $u^*$ constructed by translating the singularity of $u^*$ away from the origin.     
\begin{lem}[\bf Moving-center variation]\label{lem energy of moving centre}  For any $ p \in \mathbb{S}^2$, $\lambda\in(\h{0.5pt}0, \frac{1}{2}\h{1pt}]$ and $x\in \overline{B_1} \setminus \big\{\lambda p\big\}$, we define $\Phi_{\lambda p}(x)$ to be the unique point on $\mathbb S^2$, taking the form
	\[
	\Phi_{\lambda p}(x):=\lambda p+t\left(x-\lambda p \right),\quad \text{for some $t>0$}.
	\] Then the map $$U_{\lambda p}(x):=u^{*}\circ\Phi_{\lambda p}(x),\quad\quad\text{where $x\in B_1 \setminus \big\{\lambda p\big\}$}$$ 
    satisfies $U_{\lambda p}=u^{*}$ on $\p B_1$. In addition, we have  
	\begin{align}\label{energy dec}
	\Big|\h{1pt} \mathcal{E}^*_{U_{\lambda p},\h{0.5pt}B_1} - \mathcal{E}^*_{u^{*},\h{0.5pt}B_1} 
    +\lambda \h{1pt} p\cdot N(u^{*})\h{1pt}\Big|
	\h{1.5pt}\lesssim \h{1.5pt} \lambda^2 \h{1pt}\mathcal{E}^*_{u^{*},\h{0.5pt}B_1}. 
	\end{align}
Here, $N(u^{*}):= \displaystyle \int_{\mathbb{S}^2} \big|\h{1pt}\nabla^{\textup{tan}} u^{*}(y) \h{1pt}\big|^2\, y \,\, \mathrm d\h{.5pt}\mathscr{H}^2_y$ is the center of mass of $\big|\h{.5pt}\nabla^{\textup{tan}} u^{*}\h{.5pt}\big|^2$.
 \end{lem}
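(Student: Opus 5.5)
The plan is to compute the energy of $U_{\lambda p}$ exactly by a change of variables, using the $0$-homogeneity of $u^*$, and then expand in $\lambda$. Since $u^*$ is $0$-homogeneous, $U_{\lambda p}(x)=u^*\bigl(\widehat{x-\lambda p}\bigr)$: the point $\Phi_{\lambda p}(x)$ lies on the ray from $\lambda p$ through $x$, and $u^*$ is constant along rays from the origin, which gives $U_{\lambda p}(x) = \omega\bigl(\Phi_{\lambda p}(x)\bigr)$ with $\omega:=u^*|_{\mathbb S^2}$. On $\partial B_1$ we have $t=1$, so $\Phi_{\lambda p}(x)=x$ and $U_{\lambda p}=u^*$ there. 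Note, however, that $U_{\lambda p}$ is \emph{not} the translate $u^*(\cdot-\lambda p)$. It is the $0$-homogeneous extension, centred at $\lambda p$, of the boundary data $\omega$ transported along rays from $\lambda p$.

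I would use polar coordinates centred at $\lambda p$: write $x=\lambda p+r\,\eta$ with $\eta\in\mathbb S^2$ and $0<r<\rho(\eta)$. Here $\rho(\eta)$ is the positive root of $|\lambda p+\rho\eta|=1$, namely
\[
\rho(\eta)=-\lambda\, p\cdot\eta+\sqrt{1-\lambda^2+\lambda^2(p\cdot\eta)^2}.
\]
Along each ray, $U_{\lambda p}=\omega\circ T(\eta)$, where $T(\eta):=\lambda p+\rho(\eta)\eta\in\mathbb S^2$ is a diffeomorphism of $\mathbb S^2$. Hence
\[
\mathcal E^*_{U_{\lambda p},B_1}=\int_{\mathbb S^2}\rho(\eta)\,\bigl|\nabla^{\textup{tan}}(\omega\circ T)(\eta)\bigr|^2\,\mathrm d\mathscr H^2_\eta .
\]
Setting $\lambda=0$ gives $T=\mathrm{id}$, $\rho\equiv1$ and $\mathcal E^*_{u^*,B_1}=\int_{\mathbb S^2}|\nabla^{\textup{tan}}\omega|^2$.

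Next I change variables $y=T(\eta)$. Two-dimensional Dirichlet energy is conformally invariant, so the integral $\int|\nabla(\omega\circ T)|^2$ changes only through the non-conformal part of $DT$. Concretely, I would write
\[
\bigl|\nabla(\omega\circ T)\bigr|^2(\eta)=\mathrm{tr}\bigl(DT\,DT^{\top}\,\nabla\omega^{\top}\nabla\omega\bigr)\big|_{T(\eta)},
\]
and expand $T(\eta)=\eta+\lambda\,(p-(p\cdot\eta)\eta)+O(\lambda^2)$ and $\rho(\eta)=1-\lambda\,p\cdot\eta+O(\lambda^2)$. The first-order vector field $X(\eta)=p-(p\cdot\eta)\eta$ is the conformal (Möbius) field on $\mathbb S^2$. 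Since its symmetric derivative is pure trace, $\nabla^{\textup{tan}}X=-(p\cdot\eta)\,\mathrm{Id}$, the first-order contributions from $DT\,DT^\top$ and from the Jacobian $J_{T^{-1}}$ cancel in two dimensions. What survives at order $\lambda$ is only the weight $\rho\circ T^{-1}=1-\lambda\,p\cdot y+O(\lambda^2)$. This yields
\[
\mathcal E^*_{U_{\lambda p},B_1}=\int_{\mathbb S^2}\bigl(1-\lambda\,p\cdot y\bigr)|\nabla^{\textup{tan}}\omega|^2(y)\,\mathrm d\mathscr H^2_y+R_\lambda,
\]
and the first two terms are exactly $\mathcal E^*_{u^*,B_1}-\lambda\,p\cdot N(u^*)$.

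For the remainder, every second-order coefficient is a smooth function of $(\lambda,\eta)$ that is uniformly bounded for $\lambda\le\frac12$; this holds because $\rho$ is bounded between $\frac12$ and $\frac32$ and $T$ is uniformly bi-Lipschitz. Therefore $|R_\lambda|\lesssim\lambda^2\int_{\mathbb S^2}|\nabla^{\textup{tan}}\omega|^2=\lambda^2\mathcal E^*_{u^*,B_1}$, which is \eqref{energy dec}. Finiteness of the energies is automatic, since $U_{\lambda p}$ is $0$-homogeneous about an interior point and $\omega$ is smooth by Lemma \ref{lem prop of tangent map}.

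The main obstacle is the bookkeeping in the second step: verifying that the $O(\lambda)$ terms from $DT$ and from the Jacobian cancel exactly, leaving only the $\rho$-weight. I would check this in the cleanest way, by writing $T$ to first order as the flow of the conformal field $X$ and using the 2D identity $\int|\nabla(\omega\circ F)|^2=\int|\nabla\omega|^2$ for conformal $F$. That identity lets me discard all first-order terms except those coming from $\rho$, and then I would bound the second-order terms crudely as described above.
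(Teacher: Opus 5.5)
Your argument is correct and is essentially the expansion in $\lambda$ that the paper has in mind; the paper omits the proof and refers to the case $p=e_3$ in Appendix B of \cite{BCL86}. Your key observation is that the first-order field $X=p-(p\cdot\eta)\eta$ is conformal, so the non-conformal part of $T$ is $O(\lambda^2)$ and only the weight $\rho\circ T^{-1}=1-\lambda\,p\cdot y+O(\lambda^2)$ survives at order $\lambda$; this is precisely what makes the expansion and the $\lambda^2\,\mathcal{E}^*_{u^*,B_1}$ remainder bound work.

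One slip to fix: your opening formula $U_{\lambda p}(x)=u^*\bigl(\widehat{x-\lambda p}\bigr)$ is false, since by $0$-homogeneity it is the translate $u^*(x-\lambda p)$, which you yourself say $U_{\lambda p}$ is not. It should read $U_{\lambda p}(\lambda p+r\eta)=\omega\bigl(T(\eta)\bigr)$, and that is the formula the rest of your computation actually uses.
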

The above lemma shows that if the center of mass is non-zero, then there exists a $p \in \mathbb R^3$ such that we can strictly reduce the energy by translating the singularity along the direction $p$. The proof of this lemma is simply to expand the energy $\mathcal{E}^*_{U_{\lambda p},\h{0.5pt}B_1}$ with respect to $\lambda$. We omit it here. The reader may refer to \cite[B. Proof of Theorem 7.3]{BCL86}, in which a special case where $p = e_3$ is discussed.\vspace{0.2pc}

In the next two lemmas, we prove $|\deg u^*| \neq 2$. Firstly, we verify two standard forms for $u^*$, up to rotations.

            \begin{lem}\label{lem: deg 2 explicit form}
        If $\deg u^* = 2$, then either one of the following holds:\begin{itemize}
            \item[$\mathrm{(1).}$] There exist $c\in \mathbb{C}$ with $|\h{0.5pt}c\h{0.5pt}| \neq 1$ and a rotation $R\in \mathrm{SO}(3)$ about the $y_3$-axis such that
        
        $$\widetilde \Pi_{-e_3}^{-1}\circ R\h{.5pt}u^* \circ \widetilde \Pi_{-e_3}(z) 
= \dfrac{z^2-c^2}{1-\overline{c}^2 z^2} \h{20pt}\text{for any $z \in \mathbb C \cup\{\infty\}$.}$$ Here, $\widetilde \Pi_{-e_3}$ is the stereographic projection with respect to the south pole $- e_3$. \vspace{0.2pc}
\item[$\mathrm{(2).}$] There exist $\alpha_0, \gamma_0 \in \mathbb{R}$ with $\alpha_0 \neq 0$ and a rotation $R\in \mathrm{SO}(3)$ about the $y_3$-axis such that \begin{align*} 
\widetilde \Pi_{-e_3}^{-1}\circ R\h{.5pt}u^* \circ \widetilde \Pi_{-e_3}(e^{i \h{0.5pt} \gamma_0} z) = \dfrac{w(z) - i \h{0.5pt} \alpha_0}{w(z) + i \h{0.5pt}\alpha_0},\h{20pt}\text{where $w(z) := z + \frac{1}{z}$.}
\end{align*}
        \end{itemize} 
      \end{lem}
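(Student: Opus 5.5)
The plan is to use that the extended map $u^*:\mathbb S^2\to\mathbb S^2$ (even in $y_1,y_2$, odd in $y_3$) is a smooth harmonic map of degree $2$ (smoothness by \cite{H90}, as in Lemma \ref{lem prop of tangent map}). Harmonic maps $\mathbb S^2\to\mathbb S^2$ are $\pm$-holomorphic, and since $\deg u^*=2>0$, the conjugated map $f:=\widetilde\Pi_{-e_3}^{-1}\circ u^*\circ\widetilde\Pi_{-e_3}$ is a rational function of degree $2$. Under $\widetilde\Pi_{-e_3}$ the equator $\{y_3=0\}$ of both domain and target corresponds to the unit circle $|z|=1$. The reflection symmetry $u^*(y^\star)=(u^*(y))^\star$ becomes $f(1/\bar z)=1/\overline{f(z)}$, and the tangential condition says $f$ maps $\{|z|=1\}$ into itself. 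So $f$ is a degree $2$ rational map commuting with inversion in the unit circle, i.e.\ a Blaschke-type quotient: either a finite Blaschke product $e^{i\beta}\prod_{j=1}^2\frac{z-a_j}{1-\bar a_j z}$ (with $|a_j|\ne1$, allowing $|a_j|>1$), or a product of such factors. Rotations $R$ about the $y_3$-axis act on the target as $w\mapsto e^{i\beta}w$, and rotations of the domain act as $z\mapsto e^{i\gamma}z$; both are available, since the admissible class is invariant under them.

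The key step is to reduce the freedom in $f$ using the moving-center variation, Lemma \ref{lem energy of moving centre}. Since $u^*$ is minimizing and the translated maps $U_{\lambda p}$ are admissible competitors when $p$ is horizontal (a horizontal shift preserves $y_3=0$ and the even/odd symmetry, so the restriction to $B_1^+$ stays in $\mathscr F_{u^*,B_1^+}$), \eqref{energy dec} forces $p\cdot N(u^*)=0$ for all $p\perp e_3$. The vertical component vanishes automatically by the odd symmetry in $y_3$. Hence $N(u^*)=0$, i.e.\ the energy density $|\nabla^{\rm tan}u^*|^2=2|f'|^2/(1+|f|^2)^2\cdot(1+|z|^2)^2/4$ (pulled back) has vanishing center of mass. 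I would then write $f$ up to the rotations above in a normalized form. The zero set of $f$ is symmetric with respect to the circle, so there are two cases. In the first, both zeros lie on the same side, giving $f=e^{i\beta}\frac{(z-a_1)(z-a_2)}{(1-\bar a_1z)(1-\bar a_2z)}$. In the second, the zeros lie on opposite sides, or equivalently $f$ has fixed-point-free structure, and after conjugation it takes the form involving $w(z)=z+1/z$.

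In the first case I would compute the center of mass $N$ explicitly, or via the conformal invariance of the Dirichlet energy: $N$ is the first moment of the pullback of the area form under $f$. I would show that $N_1=N_2=0$ forces $a_1=-a_2$ after rotating $z$. This gives $f=e^{i\beta}\frac{z^2-c^2}{1-\bar c^2z^2}$, and absorbing $e^{i\beta}$ via $R$ yields form (1). In the second case the same moment condition should reduce to a symmetric configuration that a Möbius change of variables brings to $\frac{w-i\alpha_0}{w+i\alpha_0}$ with $\alpha_0\neq0$ real (reality coming from $|f|=1$ on $|z|=1$, where $w$ is real), giving form (2).

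I expect the main obstacle to be this center-of-mass computation. Evaluating $N(u^*)$ for a general two-parameter Blaschke-type map and showing that its vanishing pins down the symmetric configuration is messy. I would try using the symmetry $z\mapsto -z$: rotate so that the candidate map is even and $N$ vanishes by symmetry, then prove that $N$ is a monotone (nonzero) function of the asymmetry parameter, for instance by differentiating along a one-parameter family.
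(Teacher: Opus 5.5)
Your setup agrees with the paper's Steps 1--2. Harmonicity, $\deg u^*=2$ and the reflection symmetry reduce $\widetilde \Pi_{-e_3}^{-1}\circ R\,u^*\circ\widetilde \Pi_{-e_3}$ to $B(z)=\frac{z-a_1}{1-\overline{a_1}z}\,\frac{z-a_2}{1-\overline{a_2}z}$. The moving-center variation with horizontal $p$, together with the odd symmetry in $y_3$, gives $N(u^*)=0$.

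The whole content of the lemma is the next step, deducing the two normal forms from $N_1=N_2=0$, and there you have no argument. Checking that $N$ vanishes for maps even in $z$ is only the easy direction. ``Monotonicity of $N$ along a one-parameter family'' cannot give the converse: even after the domain rotation, $(a_1,a_2)$ carries three real parameters, and a nonvanishing derivative along one curve says nothing about the global zero set of $N$.

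Your case split is also not the one the moment condition distinguishes; the relevant dichotomy is $|a_1a_2|\neq1$ versus $|a_1a_2|=1$. Configurations with zeros on opposite sides of the circle and $|a_1a_2|\neq1$ (say $a_1=\frac12$, $a_2=3$) must be excluded, and you do not address them. In the family $|a_1a_2|=1$ the relevant symmetry is not $z\mapsto-z$ at all, so the evenness heuristic gives no guidance for form (2). Two smaller points: it is zeros and poles that are mirror images in the circle, not the zero set itself, and ``fixed-point-free structure'' has no meaning here.

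The missing idea is a sign argument built on the deck involution of $B$. For $S=a_1+a_2$, $P=a_1a_2$ with $|P|\neq1$, put $\Lambda=\frac{S-P\overline{S}}{1-|P|^2}$ and $\iota(z)=\frac{\Lambda-z}{1-\overline{\Lambda}z}$. Then $B\circ\iota=B$ and $\iota\circ\iota=\mathrm{id}$. Substituting $w=\iota(z)$ in the moment integral and averaging gives
$N_1=8\int_{\mathbb R^2}\mathrm{Re}\bigl(\frac{z}{1+|z|^2}+\frac{\iota(z)}{1+|\iota(z)|^2}\bigr)\frac{|B'|^2}{(1+|B|^2)^2}\,\mathrm{d}z_1\,\mathrm{d}z_2$.
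After a domain rotation making $\Lambda\ge0$, the real part equals $\frac{\Lambda|1-z^2|^2}{(1+|z|^2)(|1-\Lambda z|^2+|\Lambda-z|^2)}\ge0$. Hence $N_1=0$ forces $\Lambda=0$, i.e.\ $S=P\overline S$, so $S=0$ because $|P|\ne1$. This is form (1), and it simultaneously rules out the opposite-side configurations with $|a_1a_2|\neq1$. Your $z\mapsto-z$ intuition is exactly the special case $\Lambda=0$ of this involution.

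When $|P|=1$ the involution degenerates to $z\mapsto P/z$. Normalize $b_1b_2=1$, $b_1=r_0e^{i\theta_0}$, and set $\alpha_0=r_0-r_0^{-1}$, $\beta_0=r_0+r_0^{-1}$. Using $z\mapsto-z$, the condition $N_1=0$ becomes $\beta_0\cos\theta_0\,(\alpha_0\sin\theta_0)^2$ times a strictly positive integral. Nondegeneracy excludes $r_0=1$ and $\sin\theta_0=0$, so $\cos\theta_0=0$, which is form (2).
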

      \begin{proof}The proof is divided into 4 steps. \vspace{0.2pc}

      \noindent\textbf{Step 1.} We claim that the center of mass $N(u^*) = 0$. Indeed, using the symmetry of $u^*$ on $B_1$, we know that $N_3(u^*) = 0$. If $N(u^*) \neq 0$, then the normalized vector $p = \widehat{N(u^*)}$ must be on the equatorial plane. Therefore, $U_{\lambda p}$ defined in Lemma \ref{lem energy of moving centre} has the same symmetry as $u^*$.  
      Since $u^*$ is a Dirichlet energy minimizer in the configuration space $\mathscr F_{u^{*}, \h{1pt}B^+_1}$, it induces that $ \mathcal{E}^*_{u^{*},\h{0.5pt}B_1} \leq \mathcal{E}^*_{U_{\lambda p},\h{0.5pt}B_1}$ for any $\lambda \in (0, \frac{1}{2} \h{0.5pt}]$. However, this contradicts \eqref{energy dec} for sufficiently small $\lambda$.   \vspace{0.2pc}

      \noindent\textbf{Step 2.} Note that $u^*$ is a harmonic map from $\mathbb S^2$ to $\mathbb S^2$. Moreover, $u^*$ maps the equator to itself. We can find two complex numbers $a_1$ and $a_2$ and a planar rotation $R \in \mathrm{SO}(3)$ about the $y_3$-axis such that 
\begin{align}\label{3290}
\widetilde \Pi_{-e_3}^{-1}\circ R\h{.5pt}u^* \circ \widetilde \Pi_{-e_3}(z) 
= B(z) := \dfrac{z-a_1}{1-\overline{a_1}z}\h{1pt}
\dfrac{z-a_2}{1-\overline{a_2}z} \h{20pt}\text{for any $z \in \mathbb C \cup\{\infty\}$.}
\end{align}
Through the change of variables, it follows from Step 1 that  
\begin{align} \label{3275 1}
N_j(R\h{0.5pt}u^{*})
=
16\int_{\mathbb R^2}
\dfrac{z_j}{1+|z|^2} \h{2pt} \dfrac{ |\h{.5pt}B'\h{.5pt}|^2}{\big(1+  |\h{.5pt}B \h{.5pt}|^2\h{1pt}\big)^2}   
\,\,\mathrm d\h{.5pt}z_1\,\mathrm d\h{.5pt}z_2=0, \h{15pt} j = 1, 2.
\end{align}Next, we identify all such $a_1$, $a_2 \in \mathbb{C}$. \vspace{0.4pc}

\noindent \textbf{Step 3.} We prove $a_1 + a_2 = 0$, using \eqref{3275 1} and the assumption that $|\h{0.5pt} a_1 a_2 \h{0.5pt}| \neq 1$. \vspace{0.2pc}

Given \(z \in \mathbb C\), we define 
\begin{align}\label{ito fun}\iota(z):=\dfrac{\pigl(S - P \h{0.5pt}\overline{S} \h{0.5pt}\pigr)
- \left(1-|P|^2\right)z}
{\big(1-|P|^2\big)
- \pigl(\overline{S} - \overline{P} \h{0.5pt}S\pigr)z}, \h{20pt}\text{where $P = a_1 a_2$, $S = a_1 + a_2$.}\end{align}
If $$\Lambda
:=\dfrac{S - P \h{.5pt}\overline{S}}
{1- |\h{1pt}P\h{1pt}|^2}$$ has modulus $1$, then $\left(1-|a_1|^2\right)\left(1-|a_2|^2\right)\big|\h{0.5pt}1-a_1\overline{a_2}\h{0.5pt}\big|^2=0$. Since $|\h{0.5pt}a_1a_2\h{0.5pt}|\neq 1$, we have $|\h{0.5pt}a_1\h{0.5pt}|=1$ or $|\h{0.5pt}a_2\h{0.5pt}|=1$. $B(z)$ is a degree-one Möbius map or a constant if $|\h{0.5pt}\Lambda\h{0.5pt}| = 1$. However, this is impossible. Therefore, $|\h{0.5pt}\Lambda\h{0.5pt}|\neq1$ and $\iota(z)$ is not a constant map. Furthermore, direct computations yield $B(\iota(z))=B(z)$, which induces $
B'\left(\iota(z)\right)\iota'(z)=B'(z)$.\vspace{0.2pc}

Changing the variables by letting $w=\iota(z)$ infers
\[
\int_{\mathbb R^2}\frac{z}{1+|z|^2}
\h{1pt}\dfrac{\big|\h{1pt}B'(z) \h{1pt}\big|^2}{\big(1+ \big|\h{1pt}B(z) \h{1pt}\big|^2\big)^2}
\, \mathrm d z_1 \mathrm d z_2
=
\int_{\mathbb R^2}
\frac{\iota^{-1}(w)}{1+ \big|\h{0.5pt} \iota^{-1}(w) \h{0.5pt}\big|^2} \h{2pt}
\dfrac{ \big|\h{1pt}B'(w) \h{1pt}\big|^2}{\big(1+ \big|\h{1pt}B(w)\h{1pt}\big|^2 \big)^2}
\, \mathrm d w_1 \mathrm d w_2.
\]
Note that $\iota\circ\iota(z)=z$. Therefore, 
\begin{align}\label{3315}
\int_{\mathbb R^2}\frac{z}{1+|z|^2}
\h{2pt}\dfrac{ \big|\h{1pt}B'(z)\h{1pt}\big|^2}{\big(1+ \big|\h{1pt}B(z)\h{1pt}\big|^2\h{0.5pt}\big)^2}
\, \mathrm d z_1 \mathrm d z_2
=
\int_{\mathbb R^2}
\frac{\iota(w)}{1+|\h{1pt}\iota(w)\h{1pt}|^2} \h{2pt}
\dfrac{\big|\h{1pt}B'(w) \h{1pt}\big|^2}{\big(1+ \big|\h{1pt}B(w)\h{1pt}\big|^2\big)^2}
\, \mathrm d w_1 \mathrm d w_2.
\end{align}From \eqref{3275 1}-\eqref{3315}, it turns out that
\begin{align}\label{com cen}
N_1(R u^{*})
=
8\int_{\mathbb R^2}
\textup{Re} \left(\dfrac{z}{1+|z|^2}   
+\dfrac{\iota(z)}{1+|\h{1pt}\iota(z)\h{1pt}|^2}   \right)
\dfrac{ \big|\h{1pt}B'(z)\h{1pt}\big|^2}{\big(1+\big|\h{1pt}B(z)\h{1pt}\big|^2\big)^2}   
\,\mathrm d\h{.5pt}z_1\,\mathrm d\h{.5pt}z_2 = 0.
\end{align}

For any $\gamma \in \mathbb R$, it satisfies
\begin{align}\label{rotation sy}e^{-2 \h{.5pt}i\h{.5pt}\gamma} B\big(e^{i\h{0.5pt}\gamma}z)
= e^{-2\h{.5pt}i\h{.5pt}\gamma}\h{2pt}\dfrac{e^{i\h{.5pt}\gamma}z-a_1}{1-\overline{a_1}\h{1pt}e^{i\h{.5pt}\gamma}\h{.2pt}z}
\h{2pt}\dfrac{e^{i\h{.5pt}\gamma}z-a_2}{1-\overline{a_2}\h{1pt}e^{i\h{.5pt}\gamma}\h{.2pt}z}
=\dfrac{z-e^{-i\h{.5pt}\gamma}\h{.5pt}a_1}{1-\overline{e^{-i\h{.5pt}\gamma}\h{.5pt}a_1}\h{.5pt} z}
\h{2pt}\dfrac{ z-e^{-i\h{.5pt}\gamma}\h{.5pt}a_2}{1-\overline{e^{-i\h{.5pt}\gamma}\h{.5pt}a_2}\h{.8pt} z}.\end{align}
Due to \eqref{3275 1}, the condition $ N( R u^{*})=0$ is invariant under complex rotation. By the last equality and this property of invariance, we can assume that  $$\Lambda
=\dfrac{S - P \h{.5pt}\overline{S}}
{1- |\h{1pt}P\h{1pt}|^2}
=\dfrac{a_1+a_2- a_1a_2\h{1pt}\big(\h{0.5pt}\overline{a_1}+\overline{a_2}\big)\h{0.5pt}}
{1-|\h{0.5pt}a_1 a_2\h{0.5pt}|^2}
\geq 0. $$
A direct computation shows that
\begin{align*}
\textup{Re}\left(
\frac{z}{1+|z|^2}
+
\frac{\iota(z)}{1+|\h{1pt}\iota(z)\h{1pt}|^2}
\right)
=
\textup{Re}\left(
\frac{z}{1+|z|^2}
+
\dfrac{\frac{\Lambda-z}{1-\Lambda z}}{1+\left|\frac{\Lambda-z}{1-\Lambda z}\right|^2}
\right)
=
\frac{
	\Lambda \h{1pt}\big|\h{.5pt}1-z^2 \h{.5pt}\big|^2
}{
	\big(1+|z|^2\big)
	\big(\h{1pt}
	|\h{0.5pt}1-\Lambda z\h{0.5pt}|^2+ |\h{0.5pt}\Lambda-z\h{0.5pt}|^2
	\big)
}.
\end{align*}
By this equality and \eqref{com cen}, we have $\Lambda = 0$, which implies $a_1 + a_2 = 0$. The proof of this step is complete by setting $a_1 = c$ for some $c \in \mathbb C$. Note that $|\h{0.5pt}c\h{0.5pt}|\neq 1$. Otherwise, $B(z)$ is constant.\vspace{0.4pc}

\noindent\textbf{Step 4.} Suppose that $|\h{0.5pt} a_1 a_2 \h{0.5pt}| = 1$. Using \eqref{3290} and \eqref{rotation sy}, we can find another planar rotation $R \in \mathrm{SO}(3)$ about the $y_3$-axis and $\gamma_0 \in \mathbb R$ such that \begin{align}\label{tild B} 
\widetilde \Pi_{-e_3}^{-1}\circ R\h{.5pt}u^* \circ \widetilde \Pi_{-e_3}(e^{i \h{0.5pt}\gamma_0}z) 
= e^{- 2\h{0.5pt}i\h{0.5pt}\gamma_0}B(e^{i \h{0.5pt} \gamma_0} z) = \widetilde{B}(z) := \dfrac{z-b_1}{1-\overline{b_1}z}\h{1pt}
\dfrac{z-b_2}{1-\overline{b_2}z},\h{15pt}\text{where $b_1 b_2 = 1$.}
\end{align} We write $b_1 = r_0\h{0.5pt}e^{i\h{0.5pt}\theta_0}$ and $b_2=r_0^{-1}e^{-i\h{0.5pt}\theta_0}$ for some $r_0>0$ and $\theta_0 \in[\h{0.5pt}0,2\pi\h{0.5pt})$. Thus, \eqref{3275 1} induces \begin{align}\label{cen 0}
	\int_{\mathbb R^2}
\dfrac{\alpha_0^2 \h{1.5pt}\sin^2 \theta_0 \h{1pt}\big|\h{0.5pt}1-z^2\h{0.5pt}\big|^2}
{\Big[\h{1pt} \big|z^2- \beta_0 \cos \theta_0 \h{1pt} z +1\big|^2 +\alpha_0^2 \h{1pt}\sin^2 \theta_0 \h{1pt}|z|^2 \h{1pt}\Big]^2} \h{2pt}   
\dfrac{z_j}{1+|z|^2}\,\,\mathrm d\h{.5pt}z_1\,\mathrm d\h{.5pt}z_2 = 0 \h{15pt}\text{for $j = 1, 2$.}
\end{align}
Here, $\alpha_0 = r_0 - r_0^{-1}$ and $\beta_0 = r_0 + r_0^{-1}$. Defining 
$$D_\pm(z):=  \big|z^2 \pm \beta_0\h{.5pt}\cos \theta_0\h{.5pt} z +1\big|^2 
+ \alpha_0^2\h{0.5pt}\sin^2 \theta_0 \h{0.5pt}|z|^2,$$ we then rewrite \eqref{cen 0} as follows: 
\begin{align*}
 \alpha_0^2\h{0.5pt}\sin^2 \theta_0\int_{\mathbb R^2}
\left(\dfrac{1}{D_-(z)^2}
-\dfrac{1}{D_+(z)^2}\right)   
\dfrac{z_j \h{0.5pt} \big|1-z^2\big|^2}{1+|z|^2}\,\mathrm d\h{.5pt}z_1\,\mathrm d\h{.5pt}z_2 = 0 \h{20pt}\text{for $j = 1, 2$.}
\end{align*}Direct computation shows that
$$ \dfrac{1}{D_-(z)^2}
-\dfrac{1}{D_+(z)^2}=4\beta_0\cos \theta_0 \h{1pt}z_1 \big(1+|z|^2\big) \h{1.5pt} \dfrac{D_+(z)+D_-(z)}{D_+(z)^2D_-(z)^2}.$$ Taking $j = 1$ then yields \begin{align*} 
\beta_0\cos\theta_0 \left(\alpha_0 \sin\theta_0 \right)^2
\int_{\mathbb R^2}
z_1^2 \, \big| 1-z^2 \big|^2 \, \frac{
D_+(z)+D_-(z) 
}
{D_+(z)^2D_-(z)^2}
\,\mathrm d z_1\,\mathrm d z_2 = 0,\end{align*} which implies $r_0 = 1$, $\sin \theta_0 = 0$, or $\cos \theta_0 = 0$. $r_0$ cannot be $1$. Otherwise, $b_1$ and $b_2$ are on the unit circle and $\widetilde{B}$ is a constant mapping. $\sin \theta_0$ cannot be 0. Otherwise, $\widetilde{B}$ is also a constant mapping by \eqref{tild B}. Hence, we can only have $\cos \theta_0 = 0$. The proof is complete.
 \end{proof}

In the next, we prove    \begin{lem} The degree of $u^{*}$ satisfies $ \big|\deg u^{*} \big| = 1$.\label{lem polynomial m=1}
        \end{lem}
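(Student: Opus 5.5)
By Lemma \ref{lem deg<4}, $|\deg u^*|\le 2$. We need to rule out $|\deg u^*|=2$ and $\deg u^*=0$; the degree $-2$ case reduces to degree $2$. For the reduction, compose $u^*$ with the reflection $y\mapsto (y_1,-y_2,y_3)$ in the domain. This preserves the half-ball, the flat boundary condition $v_3=0$ and the Dirichlet energy, and it reverses orientation. Degree $0$ is excluded separately: a $0$-homogeneous map of degree $0$ that is harmonic on $\mathbb S^2$ is constant, because degree-zero harmonic maps $\mathbb S^2\to\mathbb S^2$ are constant. A constant map is not singular at the boojum. So assume $\deg u^*=2$. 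By Lemma \ref{lem: deg 2 explicit form}, after a planar rotation $u^*$ is one of two explicit families: $B_c(z)=\frac{z^2-c^2}{1-\bar c^2z^2}$ with $|c|\ne1$, or $\frac{w-i\alpha_0}{w+i\alpha_0}$ with $w=z+z^{-1}$ and $\alpha_0\neq0$, both in the south-pole chart.

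The plan is to show that neither family is minimizing in $\mathscr F_{u^*,B_1^+}$. To do this we construct a competitor with strictly smaller energy that respects the symmetry: even in $y_1,y_2$ and odd in $y_3$ after extension, equivalently the image of the equator stays in the equator. The tool is a second-order version of the moving-center variation of Lemma \ref{lem energy of moving centre}. Step 1 already gives $N(u^*)=0$, so the first-order term vanishes. We therefore expand $\mathcal E^*_{U_{\lambda p},B_1}$ to order $\lambda^2$ for $p$ in the equatorial plane; such $p$ preserve the boundary condition. The goal is to show the quadratic form $Q(p)$ has a negative direction. I would compute the $\lambda^2$ coefficient as an explicit weighted integral. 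By 0-homogeneity, $|\nabla u^*|^2=r^{-2}e(\omega)$, and $Q(p)$ should take the form $\int_{\mathbb S^2}e(\omega)\,\big(a|p|^2+b(p\cdot\omega)^2\big)\,d\mathscr H^2$ with universal constants $a,b$. It should reduce to the second moment matrix $M=\int e(\omega)\,\omega\otimes\omega$. If this fails to be negative in the needed direction, I would add a dilation/Möbius perturbation of the profile.

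The main obstacle is this sign computation. A radial hedgehog must remain stable, which limits what the quadratic form can do. The degree-$2$ energy density, however, is concentrated anisotropically. In case (1) it lies near the zeros $\pm c$ and near the poles. In case (2) it lies near two points on the equator. So $M$ should not be isotropic. My first attempt is to evaluate $\operatorname{tr}_{P_0}M$ against $M_{33}$ using the explicit formulas, together with the symmetry $z\mapsto -z$ (respectively $z\mapsto 1/z$). If the translation variation alone is inconclusive, the fallback is the Brezis--Coron--Lieb stability inequality \eqref{sta ine}. There we would replace the $\mathrm{SO}(2)$-average of Lemma \ref{lem deg<4} by direct evaluation on the explicit $f_R$. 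The pointwise values $b_3=(R u^*)_3$ are now known, so the right-hand side $F_{f_R}$ can be computed or bounded. The aim is $F_{f_R}<(\pi m_*/2)^{1/2}=\sqrt{\pi}$ for a suitable $R$, which gives a contradiction. For case (1) with $|c|\to0$ or $\infty$, this reduces to a one-parameter family in which $|b_3|$ is large on most of the sphere. By Step 3 of Lemma \ref{lem deg<4}, $\int_0^1\widetilde{\mathcal I}_1(s,b_3)s^{1/2}\,ds$ strictly decreases in $|b_3|$, and this decrease is the source of the improvement over the constant $\frac14+\frac\pi8$. Getting uniform control over the whole parameter ranges $c$ and $\alpha_0$ is where I expect the real work.
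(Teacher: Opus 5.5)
Your reductions are fine: degree $-2$ goes to degree $2$ by the reflection $y\mapsto(y_1,-y_2,y_3)$, and degree $0$ is excluded because degree-zero harmonic maps $\mathbb S^2\to\mathbb S^2$ are constant. The proposal nevertheless does not prove the lemma, for two reasons.

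\textbf{Your main route cannot work.} Since $u^*|_{\mathbb S^2}$ is harmonic, hence conformal, the moving-center energy can be computed exactly. Write $x=\lambda p+t(\omega-\lambda p)$ with $t\in(0,1)$ and $\omega\in\mathbb S^2$, and let $e=|\nabla^{\textup{tan}}u^*|^2$. Then $dx=t^2(1-\lambda p\cdot\omega)\,dt\,d\mathscr H^2_\omega$ and $|\nabla U_{\lambda p}|^2=\frac{e(\omega)}{2t^2}\big(1+\frac{|\omega-\lambda p|^2}{(1-\lambda p\cdot\omega)^2}\big)$, whence
\[
\mathcal E^*_{U_{\lambda p},B_1}=\int_{\mathbb S^2}e(\omega)\Big(1-\lambda\,p\cdot\omega+\lambda^2\,\frac{|p|^2-(p\cdot\omega)^2}{2(1-\lambda\,p\cdot\omega)}\Big)\,d\mathscr H^2_\omega .
\]
Your ansatz for $Q$ has the right structure, with coefficients $\frac12$ and $-\frac12$. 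Thus $Q(p)=\frac12\big(|p|^2\operatorname{tr}M-p\cdot Mp\big)$, which is strictly positive whatever the anisotropy of $M$. In fact the remainder is nonnegative to all orders, so once $N(u^*)=0$ no translation of the singularity lowers the energy. This is why the paper uses the moving-center variation only to force $N(u^*)=0$ (Step 1 of Lemma \ref{lem: deg 2 explicit form}). Your ``dilation/M\"obius perturbation'' fallback is not admissible as stated, since the trace on $\partial^+B_1^+$ is fixed, and you do not construct it.

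\textbf{Your second fallback is the paper's route, but its decisive ingredients are missing.} They are exactly what you defer as ``the real work'', and the mechanism you suggest for them fails. Any estimate built on the $\mathrm{SO}(2)$-averaged quantity $\widetilde{\mathcal I}_1(s,b_3)$ breaks down in the degenerate regime. That regime is $|c|\to1$ in case (1), and $\alpha_0\to0$ or $|\alpha_0|\to\infty$ in case (2); it is not $|c|\to0,\infty$. There $B_c\to-c^2$ away from $\pm c,\pm1/\bar c$, so $u^*$ maps most of the sphere near one equatorial point. Hence $b_3\to0$ a.e., and the averaged bound tends to $1+\frac\pi2>2$, which does not exclude $m_*=2$.

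The paper instead fixes one rotation that sends this concentration point to the projection pole $e_1$, where the comparison map $\phi_R$ is cheap. With $c=\rho e^{i\gamma}$, $R^*=R_{\pi-2\gamma}R$ and $\zeta=e^{i\gamma}\frac{z-i}{z+i}$, one gets $f_{R^*}=i\frac{\rho^2+1}{\rho^2-1}\frac{\zeta^2-1}{\zeta^2+1}$. So $|f_{R^*}|$ carries the factor $\frac{\rho^2+1}{|\rho^2-1|}\ge1$. The substitution $t=\frac{|\rho^2-1|}{\rho^2+1}s$ then makes the integrand of $F_{f_{R^*}}$ independent of $\rho$ and shrinks the $t$-range into $(0,1)$. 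This bounds the whole family by the single integral in \eqref{contradiction}, evaluated to be $<1.5519<\sqrt\pi$. Case (2) reduces to the same integral, using $R$ if $|\alpha_0|\le2$ and $R_\pi R$ if $|\alpha_0|>2$.

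Without this choice of rotation, the rescaling that gives uniformity in the parameter, and the numerical value, the degree-$2$ case is not excluded.
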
 
\begin{proof}
 We only need to rule out the case in which $| \deg u^* | = 2$. Without loss of generality, we assume $\deg u^* = 2$. The following proof is divided into two cases, based on the two standard forms obtained in Lemma \ref{lem: deg 2 explicit form}.  \vspace{0.4pc}

\noindent \textbf{Case 1. Item (1) in Lemma \ref{lem: deg 2 explicit form}:} Write $c=\rho\h{0.5pt} e^{i\h{0.2pt}\gamma}$ and denote by $R_s \in \mathrm{SO}(3)$ the planar rotation by the angle $s$ about the $y_3$-axis. The standard form in Item (1) of Lemma \ref{lem: deg 2 explicit form} can be rewritten as follows
        $$\widetilde \Pi_{-e_3}^{-1}\circ R_{\pi-2\gamma}\h{.5pt} R\h{.5pt}u^* \circ \widetilde \Pi_{-e_3}\big(e^{i\gamma}z\big) 
        = -\dfrac{z^2-\rho^2}{1-\rho^2 z^2}.$$
Note that $\widetilde \Pi_{-e_3}^{-1}\circ \widetilde \Pi_{e_1}(z)=\dfrac{\overline{z}+i}{\overline{z}-i}$ and $\widetilde \Pi_{e_1}^{-1}\circ \widetilde \Pi_{-e_3}(z)=-i\h{0.8pt}\dfrac{\overline{z}+1}{\overline{z}-1}$. It turns out that
        \begin{align*}&\widetilde \Pi_{e_1}^{-1} \circ R_{\pi-2\gamma}\h{0.5pt} R\h{.5pt}u^* \circ \widetilde \Pi_{-e_3}\big(e^{i\gamma}z\big)\\[1mm] &\h{15pt}=\widetilde \Pi_{e_1}^{-1}
        \circ \widetilde \Pi_{-e_3}\circ
        \left(\h{1pt} \widetilde \Pi_{-e_3}^{-1}\circ R_{\pi-2\gamma}\h{0.5pt} R\h{.5pt}u^* \circ \widetilde \Pi_{-e_3}\right)\big(e^{i\gamma}z\big) =  i\h{1pt}\dfrac{\rho^2+1}{\rho^2 - 1}\h{2pt}
\dfrac{\overline{z}^2-1}{\overline{z}^2+1}.\end{align*}
Here, we use $\rho \neq 1$. Consequently,
	  \begin{align} \label{3454}
	  	\widetilde \Pi_{e_1}^{-1}\circ R_{\pi-2\gamma}\h{0.5pt}Ru^*
	  	\circ\widetilde \Pi_{e_1}(z) 
	  	&= \widetilde \Pi_{e_1}^{-1}\circ R_{\pi-2\gamma}\h{0.5pt}Ru^*
	  	\circ \widetilde \Pi_{-e_3} \circ \left(\widetilde \Pi^{-1}_{-e_3}\circ\widetilde \Pi_{e_1}(z) \right) \\[1.5mm]
	  	&=\widetilde \Pi_{e_1}^{-1}\circ R_{\pi-2\gamma}\h{0.5pt}Ru^*
	  	\circ \widetilde \Pi_{-e_3}
	  	\left(
	  	e^{i\gamma}\h{1pt}
	  	e^{-i\gamma}\h{1pt}\frac{\overline{z}+i}{\overline{z}-i}
	  	\right)
	  	=
	  	i\h{1pt}\frac{\rho^2+1}{\rho^2 - 1}
	  	\h{2pt}\frac{
	  		 \zeta(z)^2-1
	  	}{
	  		\zeta(z)^2+1
	  	}. \nonumber
	  \end{align}The function $\zeta$ is given by $ \zeta(z) := e^{i\gamma}\h{1pt}\dfrac{z-i}{z+i}.$ \vspace{0.2pc}

Denoting by $R^*$ the rotation matrix $R_{\pi-2\gamma}\h{0.5pt}R$, we estimate $F_{f_{R^*}}$ in \eqref{sta ine}. Through the change of variables $\zeta = \zeta(z)$ and $t = \frac{|\h{.5pt} \rho^2 - 1\h{.5pt}|}{\rho^2 + 1} \h{1pt}s$, it follows that
\begin{align}\label{3471}
F_{f_{R^*}} \leq \int_0^{1}\!\mathrm d\h{.5pt}t\left\{\h{1pt}\int_{\mathbb{R}^2}
	\dfrac{
	  		 \left|\h{0.5pt} \zeta^2 + 1 \h{0.5pt}\right|^{2} \left|\h{0.5pt} \zeta^2-1\h{0.5pt}\right|^2}{\left( \h{1pt}t^2  \left|\h{0.5pt} \zeta^2 + 1 \h{0.5pt}\right|^{2} +  
	  		\left|\h{0.5pt} \zeta^2 - 1 \h{0.5pt}\right|^{2}\h{1pt}\right)^2 }\h{1pt} \h{2pt}\frac{\mathrm d\h{.5pt}\zeta_1\,\mathrm d\h{.5pt}\zeta_2}{\big(1+|\h{0.5pt}\zeta\h{0.5pt}|^2\big)^2}\right\}^{\!\frac{1}{2}}.
\end{align}
Here, we also use $ \frac{|\h{0.5pt}\rho^2 - 1 \h{0.5pt}|}{\rho^2 + 1} \leq1$. 
Let $\zeta = r\h{0.5pt}e^{i\theta}$. Direct calculations yield
 \begin{align*}
 	\left|\h{0.5pt}\zeta^2 - 1\h{0.5pt}\right|^2 \left|\h{0.5pt}\zeta^2 + 1\h{0.5pt}\right|^2
 	=\left(r^4+1\right)^2-4\h{0.5pt}r^4\cos^2 2\theta 
 \end{align*}
 and
  \begin{align*} 
 	t^2 \left|\h{0.5pt}\zeta^2 + 1 \h{0.5pt}\right|^2+ \left|\h{0.5pt}\zeta^2 - 1\h{0.5pt}\right|^2
 	&=\left(t^2+1\right)\left(r^4+1\right)+2\left(t^2-1\right)r^2\cos2\theta. 
 \end{align*}
Therefore, 
 \[
\mathcal I_2\left(t,r\right) := \int_0^{2\pi}
 \frac{\left|\h{0.5pt}\zeta^2 + 1\h{0.5pt}\right|^2 \left|\h{0.5pt}\zeta^2 - 1\h{0.5pt}\right|^2}
	  {\Big(\h{1pt}t^2 \left|\h{0.5pt}\zeta^2+1\h{0.5pt}\right|^2+\left|\h{0.5pt}\zeta^2-1\h{0.5pt}\right|^2\Big)^2}	\,\,\mathrm d\h{.5pt}\theta
 =\int_0^{2\pi}\frac{\left(r^4+1\right)^2-4\h{0.5pt}r^4\cos^2\phi}{\Big[ \left(t^2+1\right)\left(r^4 + 1\right)+2\left(t^2-1\right)r^2\cos \phi\h{1pt}\Big]^2}\,\,\mathrm d \phi.
 \]
For any $t\in(0,1)$, it satisfies $\left(t^2+1\right)^2\left(r^4 + 1\right)^2>4\left(t^2-1\right)^2r^4$. Set
 \begin{equation*} 
 	D:=\sqrt{\left(t^2+1\right)^2\left(r^4+1\right)^2-4\left(t^2-1\right)^2r^4}.
 \end{equation*}
We can write $\mathcal I_2\left(t, r\right)$ in a closed form as follows:
 \begin{align*}
 	\mathcal I_2\left(t,r\right)=2\pi\left[\h{1pt}\frac{\left(t^2+1\right)\left(r^4+1\right)^3}{D^3} -\frac{1}{\left(t^2-1\right)^2}\left(1-\frac{2\left(t^2+1\right)\left(r^4+1\right)}{D}+\frac{\left(t^2+1\right)^3\left(r^4+1\right)^3}{D^3}\right)\h{1pt}\right].
 \end{align*}
Thus, the inequality in \eqref{sta ine} implies that 
	\begin{align}\label{contradiction}1.7724 \h{.5pt}<\h{.5pt}\sqrt{\pi}
	\h{1pt}\le\h{1pt} \int^1_0\mathrm d\h{.5pt}t \h{1pt}
    \left\{ \int_0^{\infty}\frac{r}{\left(r^2+1\right)^2}\,\,\mathcal I_2\left(t,r\right)\h{1pt}\mathrm d \h{.5pt} r\right\}^{\frac{1}{2}}\h{0.5pt}<\h{0.5pt}1.5519.
\end{align}
We arrive at a contradiction. The standard form in Item (1) of Lemma \ref{lem: deg 2 explicit form} is ruled out.\vspace{0.4pc}

 \noindent \textbf{Case 2. Item (2) in Lemma \ref{lem: deg 2 explicit form}:} We derive the same estimate as in \eqref{3471}. Hence, we can arrive at the same contradiction as in \eqref{contradiction}.\vspace{0.2pc}
 
 Using the notation in Item (2) of Lemma \ref{lem: deg 2 explicit form}, we compute, similarly to \eqref{3454}, the following representation of $f_R$:
	  \[
	  f_R(z):=\widetilde \Pi_{e_1}^{-1}\circ R u^*
	  \circ \widetilde \Pi_{e_1}(z)
	  =-\dfrac{1}{\alpha_0}\h{1pt} w \left(e^{i\h{0.2pt} \gamma_0}
      \h{1.5pt}\dfrac{z-i}{z+i}\right).
	  \]
      From \eqref{sta ine} and applying the change of variables $ \zeta=e^{i\h{0.2pt}\gamma_0}\h{1.5pt}\dfrac{z-i}{z+i}$, we obtain
\begin{align*}
F_{f_R} 
    &= \int_0^1\!\mathrm d\h{.5pt}s\left\{\h{1pt}\int_{\mathbb{R}^2}
	\dfrac{\alpha_0^{-2}
	  		\big|\h{.5pt}w(\zeta)\h{.5pt}\big|^2}{\left(s^2+ \alpha_0^{-2}
	  		\big|\h{0.5pt}w(\zeta)\h{0.5pt}\big|^2\h{1pt}\right)^2 }\h{2pt}\frac{\mathrm d\h{.5pt}\zeta_1\,\mathrm d\h{.5pt}\zeta_2}{\big(1+|\h{0.5pt}\zeta\h{0.5pt}|^2\big)^2}\right\}^{\!\frac{1}{2}}.
\end{align*}
If we further change the variables by $\zeta= \dfrac{\upsilon-i}{\upsilon+i}$, then $w(\zeta)=\dfrac{\upsilon-i}{\upsilon+i}+\dfrac{\upsilon+i}{\upsilon-i}=2\h{0.5pt}\dfrac{\upsilon^2-1}{\upsilon^2+1}$. Hence,
\begin{align*}
F_{f_R} 
    &= \int_0^1\!\mathrm d\h{.5pt}s\left\{\h{1pt}\int_{\mathbb{R}^2}
	\dfrac{\frac{4}{\alpha_0^2} \h{1pt}\big| \upsilon^2+1 \big|^2 
	  		\big|\upsilon^2-1 \big|^2}{\left(s^2 \h{1pt} \big| \upsilon^2+1 \big|^2 + \frac{4}{\alpha_0^2}
	  		\h{1pt}\big| \upsilon^2-1 \big|^2\h{1pt}\right)^2 }\h{2pt}\frac{\mathrm d\h{.5pt}\upsilon_1\,\mathrm d\h{.5pt}\upsilon_2}{\big(1+|\upsilon|^2\big)^2}\right\}^{\!\frac{1}{2}}.
\end{align*}
If $|\alpha_0|\leq 2$, then \eqref{3471} is valid with $R^*$ replaced with $R$. The contradiction in \eqref{contradiction} still follows.\vspace{0.2pc}

We are left to study the case \(|\alpha_0|>2\). Note that
	\[
	\widetilde \Pi_{-e_3}^{-1}\circ R_\pi R u^*
	\circ \widetilde \Pi_{-e_3}(e^{i\gamma_0}z)
	=
	-\frac{w(z)-i\alpha_0}{w(z)+i\alpha_0}.
	\]
	It then turns out that
	\[
\begin{aligned}
\widetilde \Pi_{e_1}^{-1}\circ R_\pi R u^*
\circ \widetilde \Pi_{-e_3}(e^{i\gamma_0}z)
=\frac{\alpha_0}{\overline{w(z)}},
\end{aligned}\] which furthermore induces   	\[
	f_{R_\pi R}(z):=\widetilde \Pi_{e_1}^{-1}\circ R_\pi R u^*
\circ \widetilde \Pi_{e_1}(z)
	=
	 \frac{\alpha_0}{
	w\Big(
	e^{i\h{0.3pt}\gamma_0}\h{1pt}\frac{z-i}{z+i}
	\Big)}.
	\]
     From \eqref{sta ine} and applying the change of variables $ \zeta=e^{i\gamma_0}\dfrac{z-i}{z+i}$, we obtain
\begin{align*}
F_{f_{R_\pi R}} 
    &= \int_0^1\!\mathrm d\h{.5pt}s\left\{\h{1pt}\int_{\mathbb{R}^2}
	\dfrac{\alpha_0^2
	  		\h{0.5pt}\big|\h{0.5pt}w(\zeta)\h{0.5pt}\big|^{-2}}{\Big(s^2+ \alpha_0^2
	  		\h{.5pt}\big|\h{.5pt}w(\zeta)\h{.5pt}\big|^{-2}\h{1pt}\Big)^2 }\h{1pt} \h{2pt}\frac{\mathrm d\h{.5pt}\zeta_1\,\mathrm d\h{.5pt}\zeta_2}{\big(1+|\h{.5pt}\zeta\h{.5pt}|^2\big)^2}\right\}^{\!\frac{1}{2}}.
\end{align*}
If we further change the variables by  $\zeta= \dfrac{\upsilon-1}{\upsilon+1}$, then $w(\zeta) =\dfrac{\upsilon-1}{\upsilon+1}+\dfrac{\upsilon+1}{\upsilon-1}=2\h{1pt}\dfrac{\upsilon^2+1}{\upsilon^2-1}$. Hence,
\begin{align*}
F_{f_{R_\pi R}}  
    = \int_0^1\!\mathrm d\h{.5pt}s\left\{\h{1pt}\int_{\mathbb{R}^2}
	\dfrac{\frac{\alpha_0^2}{4}\h{0.5pt}\big|\h{0.5pt}\upsilon^2+1 \big|^2\h{0.5pt}
	  		\big|\h{0.5pt} \upsilon^2-1 \h{0.5pt} \big|^2}{\Big(s^2\h{0.5pt}\big|\h{0.5pt}\upsilon^2+1\h{0.5pt}\big|^2 + \frac{\alpha_0^2}{4} \h{0.5pt}
	  		\big|\h{0.5pt} \upsilon^2-1 \h{0.5pt}\big|^2\h{1pt}\Big)^2 }\h{1pt}\h{2pt}\frac{\mathrm d\h{.5pt}\upsilon_1\,\mathrm d\h{.5pt}\upsilon_2}{\big(1+|\upsilon|^2\big)^2}\right\}^{\!\frac{1}{2}}.
\end{align*}
As $|\alpha_0|> 2$, we then obtain the same contradiction as in \eqref{contradiction}. The proof is complete. \end{proof}
    
It remains to prove that $u^{*}$ is a rotation of $\widehat y$ if $| \deg u^* | = 1$.

\begin{lem}\label{lem:nonrotation-moment}
	Suppose that $| \deg u^{*} | = 1$. Then 
\[
u^{*}(y)= \mathrm{sign}\left(\deg u^*\right) R \h{1pt} \widehat y
\qquad\text{for all }y\in B_1 \setminus \big\{0\big\}.
\]Here, $R$ is some constant rotation in $\mathrm{SO}(3)$ about the $y_3$-axis. $\mathrm{sign}$ is the sign function. 
\end{lem}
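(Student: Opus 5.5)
We first record what the earlier lemmas give us. By Part 2 of the proof of Lemma \ref{lem prop of tangent map}, the extended map $u^*$ restricted to $\p B_1$ is a smooth harmonic map $\mathbb S^2\to\mathbb S^2$. By the classical result of Eells--Wood/Lemaire, a harmonic map of degree $\pm1$ is, in stereographic coordinates, a M\"obius transformation or its conjugate. Replacing $u^*$ by $-u^*$ when $\deg u^*=-1$, we may assume $\deg u^*=1$. The map $-u^*$ still satisfies the equatorial constraint and the even--even--odd symmetry, since $-u^*_3$ is odd and $-u^*_1,-u^*_2$ are even. Moreover $u^*$ preserves the equator, because $u^*_3=0$ on $\p^0B_1^+$, and it commutes with the reflection $y\mapsto y^\star$. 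Writing $f:=\widetilde\Pi_{-e_3}^{-1}\circ u^*\circ\widetilde\Pi_{-e_3}$, we get a M\"obius map $f$ that maps the unit circle to itself and commutes with $z\mapsto 1/\overline z$. Since the degree is $+1$ and the upper hemisphere is mapped to itself, we obtain
\[
f(z)=e^{i\gamma}\,\frac{z-a}{1-\overline a z},\qquad |a|<1,\ \gamma\in\mathbb R .
\]
The alternative $|a|>1$ would interchange the two hemispheres; that case can be absorbed into the same form by composing with $1/z$ and the reflection, or it is excluded by orientation.

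The goal is to show $a=0$. The plan is to follow Step 1 of Lemma \ref{lem: deg 2 explicit form}. By the symmetry $N_3(u^*)=0$, so if $N(u^*)\neq0$ the direction $p=\widehat{N(u^*)}$ lies in the equatorial plane. Then $U_{\lambda p}$ keeps the reflection symmetry, and its restriction to $B_1^+$ lies in $\mathscr F_{u^*,B_1^+}$. Lemma \ref{lem energy of moving centre} would then contradict minimality for small $\lambda$. Hence $N(u^*)=0$. We then compute $N$ explicitly for $f$ as above: the energy density pulled back to $\mathbb R^2$ is $\dfrac{4|f'|^2}{(1+|f|^2)^2}$. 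Rotating by $e^{i\gamma}$, which is an $\mathrm{SO}(2)$ rotation about the $y_3$-axis, and using the rotation equivariance as in \eqref{rotation sy}, we may assume $a=r\ge0$ is real.

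With $a=r$ real, the energy density equals the spherical Jacobian of the Möbius map. Hence
\[
N(u^*)=2\int_{\mathbb S^2} y\,\mathrm d(f^{-1})_\#\sigma ,
\]
where $\sigma$ is the round measure. Changing variables $w=f(z)$, this is twice the integral over $\mathbb S^2$ of $\Pi_{-e_3}\!\left(\frac{w+r}{1+rw}\right)$ against $\sigma$. This gives the center of mass of the pushforward of the uniform measure under the hyperbolic translation by $r$ along the real axis. Its first component is a strictly monotone function of $r$ that vanishes only at $r=0$. One can verify this by an explicit integral analogous to the $\Lambda$-computation in Step 3 of Lemma \ref{lem: deg 2 explicit form}: the real part of the integrand has the sign of $r$ times a positive quantity. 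Therefore $N_1=0$ forces $r=0$, and so $f(z)=e^{i\gamma}z$, that is $u^*(y)=R\,\widehat y$ on $\p B_1$ for a rotation $R$ about the $y_3$-axis. By $0$-homogeneity, $u^*(y)=R\widehat y$ on $B_1\setminus\{0\}$; undoing the sign normalization gives the factor $\mathrm{sign}(\deg u^*)$.

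The main obstacle is the bookkeeping in the second paragraph. One has to justify that the symmetry forces the Möbius form with real parameter after rotation, and handle orientation in the $-1$ case, where the map is anti-holomorphic in the chart $\widetilde\Pi_{-e_3}$. Then one must check the sign of the center-of-mass integral; this is the step most likely to need a careful explicit computation.
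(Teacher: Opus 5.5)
Your route is the paper's: the degree-one harmonic map on $\partial B_1$ is a M\"obius map in the chart $\widetilde\Pi_{-e_3}$. Then $N(u^*)=0$ follows from the moving-center variation exactly as in Step~1 of Lemma~\ref{lem: deg 2 explicit form}, and an explicit center-of-mass computation kills the M\"obius parameter. For that last step you only sketch monotonicity. The paper makes it rigorous by pairing $z$ with $-z$, multiplying by $\overline{z_0}$ and taking real parts, which reduces $N(Ru^*)=0$ to $\int_{\mathbb R^2}\big(\mathrm{Re}(z\overline{z_0})\big)^2 w(z)\,\mathrm dz=0$ for a positive weight $w$.

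\textbf{The gap.} It is the sentence ``since the degree is $+1$ and the upper hemisphere is mapped to itself'', together with the claim that the alternative is excluded by orientation or can be absorbed. The symmetry only says that $u^*$ preserves the equator, so it either preserves or swaps the hemispheres, and orientation does not decide which. Take $Q=\mathrm{diag}(1,-1,-1)$, the rotation by $\pi$ about the $y_1$-axis ($z\mapsto 1/z$ in your chart). It is orientation preserving, keeps the plane $\{y_3=0\}$, and commutes with $y\mapsto y^\star$. Hence $v\mapsto Qv$ is an energy-preserving bijection of $\mathscr F_{\widehat y,B_1^+}$ onto $\mathscr F_{Q\widehat y,B_1^+}$. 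Since $\widehat y$ is minimizing on $B_1^+$ (even--even--odd extension plus Brezis--Coron--Lieb \cite{BCL86}), $u^*=Q\widehat y=(y_1,-y_2,-y_3)/|y|$ is a $0$-homogeneous minimizer of degree $+1$ with $N(u^*)=0$. Its third component is $-y_3/|y|$, so it is not $R\widehat y$ for any rotation $R$ about the $y_3$-axis.

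Your own normalization runs into the same problem. The degree $-1$ minimizer $(y_1,-y_2,y_3)/|y|$, itself not of the form $-R\widehat y$, becomes after $u^*\mapsto -u^*$ a degree $+1$ map that swaps the hemispheres. Composing with $1/z$ does not absorb the case either: it only gives $Qu^*=R\widehat y$, i.e.\ $u^*=QR\widehat y$.

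\textbf{Consequence.} This step cannot be closed, because on the hemisphere-swapping branch the stated conclusion is false. Finite $|a|>1$ is harmless: the same center-of-mass identity gives $N\neq 0$. That is how the paper disposes of $|z_0|>1$, since its weight carries the factor $(1-|z_0|^2)^2>0$. But the limit $a=\infty$, $f(z)=e^{i\gamma}/z$, survives with $N=0$. The paper's normal form $\frac{z-z_0}{1-\overline{z_0}z}$, $|z_0|\neq 1$, has the same blind spot, since it omits exactly $z_0=\infty$. What your argument actually proves, and the paper's once $e^{i\gamma}/z$ is added, is
\[
u^*=\pm O\,\widehat y,\qquad O\in\mathrm{O}(3),\quad Oe_3=e_3 .
\]
That is, $u^*$ is $\pm\widehat y$ up to rotations about the $y_3$-axis and reflections in vertical planes, and the sign is no longer fixed by $\deg u^*$.
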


\begin{proof} By the symmetry of $u^*$, there exists $z_0 \in \mathbb C$ with $|z_0| \neq 1$ and a planar rotation $R \in \mathrm{SO}(3)$ about the $y_3$-axis such that 
\begin{align*} 
\widetilde \Pi_{-e_3}^{-1}\circ R\h{.5pt}u^* \circ \widetilde \Pi_{-e_3}(z) 
= \dfrac{z-z_0}{1-\overline{z_0}z}\h{20pt}\text{for any $z \in \mathbb C$.}
\end{align*}Since $N(u^*) = 0$, as in \eqref{3275 1}, we have \begin{align*} 
\int_{\mathbb R^2}
\dfrac{z}{1+|z|^2} \h{2pt} \dfrac{\mathrm d\h{.5pt}z_1\,\mathrm d\h{.5pt}z_2}{\big(\h{1pt}|\h{0.5pt}1-\overline{z_0}z\h{0.5pt}|^2+  |\h{.5pt}z - z_0 \h{.5pt}|^2\h{1pt}\big)^2} = - \int_{\mathbb R^2} \dfrac{z}{1+|z|^2} \h{2pt} \dfrac{\mathrm d\h{.5pt}z_1\,\mathrm d\h{.5pt}z_2}{\big(\h{1pt}|\h{0.5pt}1 + \overline{z_0}z\h{0.5pt}|^2+  |\h{.5pt}z + z_0 \h{.5pt}|^2\h{1pt}\big)^2}    = 0.
\end{align*}Here, we also use $|z_0| \neq 1$. Thus, \begin{align*}
    \int_{\mathbb R^2}
\dfrac{z\h{1pt}\mathrm{Re}\left(z \h{0.5pt}\overline{z_0}\right)}{1+|z|^2} \h{2pt} \dfrac{ |\h{0.5pt}1 + \overline{z_0}z\h{0.5pt}|^2+  |\h{.5pt}z + z_0 \h{.5pt}|^2 + |\h{0.5pt}1-\overline{z_0}z\h{0.5pt}|^2+  |\h{.5pt}z - z_0 \h{.5pt}|^2 }{\big(\h{1pt}|\h{0.5pt}1-\overline{z_0}z\h{0.5pt}|^2+  |\h{.5pt}z - z_0 \h{.5pt}|^2\h{1pt}\big)^2 \big(\h{1pt}|\h{0.5pt}1 + \overline{z_0}z\h{0.5pt}|^2+  |\h{.5pt}z + z_0 \h{.5pt}|^2\h{1pt}\big)^2} \h{2pt} \mathrm d\h{.5pt}z_1\,\mathrm d\h{.5pt}z_2 = 0.
\end{align*} Multiply the identity by $\overline{z_0}$ and take the real part. It turns out that $\mathrm{Re}\left( z \h{0.5pt}\overline{z_0} \right) \equiv 0$ for any $z \in \mathbb C$, which induces $z_0 = 0$ by taking $z = z_0$. The proof is finished. 
\end{proof}

\noindent\textbf{Acknowledgments:} We are grateful to Prof. Eugene C. Gartland for insightful clarification of the nondimensionalization issue. The authors also thank Prof. Radu Ignat and Prof. Nils Schopohl for pointing out important references. H. M. Tai is partially supported by the Australian Research Council Discovery Project DP240100781. He also thanks Prof. Yong Yu for the kind invitation to The Chinese University of Hong Kong in January 2026. Y. Yu is partially supported by Hong Kong RGC grants Nos. 14303723, 14306622, and 14304821.

        \appendix

    \section{Energy monotonicity and partial regularity}
Energy monotonicity at the boundary under tangential boundary conditions is proved in the more general form of \cite[Theorem 4.2]{S06tangentplane}. For the reader’s convenience, we restate it here:
    \begin{lem}[\bf Energy monotonicity on boundary]\label{lem. energy mono.}
		There are $\C{2}, \h{1pt}\C{3}>0$ and $\R{1}\in (0,R_0]$, depending only on $\Omega$, such that for any Dirichlet energy minimizer $u$ in $H_T^1(\Omega; \mathbb S^2)$, we have
		$$ \int^R_r\dfrac{e^{\C{2} \tau}}{2\tau} \h{1.5pt}\mathrm d \tau\int_{\p^+ B^+_{\tau}(y_0)} \big|\h{1pt}\p_r \widetilde{u} \h{1pt}\big|^2  \h{1.5pt}\mathrm d\mathscr{H}^2
		+\dfrac{e^{\C{2} r}}{r}\int_{B^+_{r}(y_0)} \big|\h{1pt}\nabla\widetilde{u} \h{1pt}\big|^2
        \leq 
		\dfrac{e^{\C{2} R}}{R}\int_{B^+_{R}(y_0)} \big|\h{1pt}\nabla\widetilde{u}\h{1pt}\big|^2
		+\C{3}(R-r).$$ Here, $\widetilde{u}$ is defined in \eqref{def transformed u}. Moreover, $y_0 \in \p^0B^+_{R_0}$ is arbitrary. $r,R \in (0,\R{1}]$ satisfy $ r\leq R$. 
	\end{lem}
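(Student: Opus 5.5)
We would follow the classical Schoen--Uhlenbeck/Price monotonicity argument for the weighted energy $\widetilde{\mathcal E}_{\widetilde u, B_r^+(y_0)}$. The two inputs are a domain (inner) variation, i.e. a stationarity identity, and the boundary geometry encoded in $a_{ij}$. In the flattened coordinates, $\widetilde u$ minimizes $\int a_{jl}\,\partial_j v\cdot\partial_l v$ subject to $v(y)\in T_{\varphi^{-1}(y)}\partial\Omega$ on $\partial^0 B^+$.

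The first step is to perform inner variations $\widetilde u_t(y)=\widetilde u(y+tX(y))$. The vector field $X$ must be tangent to $P_0$ on $\partial^0B^+$, for instance $X(y)=\eta(|y-y_0|)(y-y_0)$ with $y_0\in\partial^0B^+$. These variations do not automatically preserve the constraint, since $\widetilde u_t(y)$ lies in $T_{\varphi^{-1}(y+tX)}\partial\Omega$ rather than $T_{\varphi^{-1}(y)}\partial\Omega$. We therefore correct them by composing with the smooth family of rotations $Q(y,y')\in \mathrm{SO}(3)$ carrying $T_{\varphi^{-1}(y')}\partial\Omega$ to $T_{\varphi^{-1}(y)}\partial\Omega$. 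Concretely, the rotation taking $\nu(\varphi^{-1}(y'))$ to $\nu(\varphi^{-1}(y))$ gives admissible competitors $Q(y,y+tX)\widetilde u(y+tX)$. Alternatively, one could use the projection $\Pi$ of Lemma \ref{lem. regularity of proj. and dist}, or the reflected map $\mathcal U$ and its equation from Lemma \ref{lem eq of extended u}.

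The second step is to differentiate at $t=0$. This yields the usual Pohozaev-type identity
\[
\int_{B_r^+(y_0)}\Big(a_{ij}\partial_i\widetilde u\cdot\partial_j\widetilde u\,\mathrm{div}X-2a_{ij}\partial_i\widetilde u\cdot\partial_k\widetilde u\,\partial_jX_k+X_k\partial_k a_{ij}\,\partial_i\widetilde u\cdot\partial_j\widetilde u\Big)=\mathcal R ,
\]
where $\mathcal R$ collects the terms coming from $\partial_t Q$. These error terms are $O(|X|)$ times $|\nabla\widetilde u|$, because $\nabla_{y'}Q$ is bounded by the $C^2$ norm of $\partial\Omega$. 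We bound them by $C\int_{B_r^+}|y-y_0|(|\nabla\widetilde u|^2+1)$.

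The third step is to let $\eta$ approximate $\mathbf 1_{[0,r]}$. After normalizing so that $a_{ij}(y_0)=\delta_{ij}$, we use $|a_{ij}(y)-a_{ij}(y_0)|\le C|y-y_0|$. The identity then becomes
\[
\frac{d}{dr}\Big(\frac1r\int_{B_r^+}|\nabla\widetilde u|^2\Big)\ge \frac{2}{r}\int_{\partial^+B_r^+}|\partial_r\widetilde u|^2-C\Big(\frac1r\int_{B_r^+}|\nabla \widetilde u|^2\Big)-C,
\]
up to replacing the Euclidean energy by the $a$-energy, which is comparable by \eqref{ineq bdd of aij}. Note that the term $-C$ is the one responsible for $C_3(R-r)$. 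Multiplying by $e^{C_2 r}$ and integrating from $r$ to $R$ gives the stated inequality. The discrepancy between $|\nabla|$ and the $a$-norm near $y_0$ is absorbed into the exponential weight and the linear term, after shrinking $R_1$.

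The main obstacle is justifying the stationarity identity with controlled error, i.e. showing that the constraint-correcting rotations generate only the lower-order terms $\mathcal R$. The key point is that the constant $C_3$ must be independent of $u$; this should follow because $\mathcal R\lesssim \int |X|(1+|\nabla\widetilde u|^2)$. As a fallback, we would cite the general tangent-plane constraint result in \cite[Theorem 4.2]{S06tangentplane}, from which the lemma follows directly.
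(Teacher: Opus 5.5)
The paper gives no argument for this lemma; it quotes \cite[Theorem 4.2]{S06tangentplane}, which is your fallback. Your main line is therefore a genuinely different, self-contained route. It uses inner variations made admissible by composing with a rotation field $Q(y,y')$; define $Q$ through the normals at $\varphi^{-1}(y^0)$ and $\varphi^{-1}((y')^0)$ so that it makes sense off $P_0$. The strategy is sound. The competitor satisfies the constraint exactly, and the domain map preserves the half-ball and $P_0$, so no terms on $\p^0B^+$ arise. The citation, by contrast, covers general families of boundary constraints at no cost. Of your alternatives, the reflected equation of Lemma \ref{lem eq of extended u} is not usable: it is an outer-variation identity and does not yield the domain-variation identity for maps that are merely $H^1$.

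Two points need correcting.

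\emph{The bound on $\mathcal R$.} The estimate $\mathcal R\lesssim\int|X|(1+|\nabla\widetilde u|^2)$, which is how you dispose of the step you call the main obstacle, is false. Since $Q(y,y)=I$, the only contribution of $Q$ to the first variation is $2\int a_{ij}\big(\p_{y_i}(A_kX^k)\,\widetilde u\big)\cdot\p_{y_j}\widetilde u$, where $A_k:=\p_{y'_k}Q(y,y')\big|_{y'=y}$. Differentiating $Q(y,y')\,\nu(\varphi^{-1}((y')^0))=\nu(\varphi^{-1}(y^0))$ gives $A_k\nu=-\p_{y_k}(\nu\circ\varphi^{-1})\neq0$ wherever $\p\Omega$ is curved. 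So $\mathcal R$ contains $2\int a_{ij}\,\p_{y_i}X^k\,(A_k\widetilde u)\cdot\p_{y_j}\widetilde u$, of size $|\nabla X|\,|\nabla\widetilde u|$. As $\eta\to\mathbf 1_{[0,r]}$ this produces $\int_{B_r^+}|\nabla\widetilde u|$ and a sphere term $\approx 2r\int_{\p^+B_r^+(y_0)}(\omega_kA_k\widetilde u)\cdot\p_r\widetilde u$, with $\omega:=(y-y_0)/|y-y_0|$.

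The argument survives. First, $\int_{B_r^+}|\nabla\widetilde u|\lesssim r\int_{B_r^+}|\nabla\widetilde u|^2+r^2$. Second, by Cauchy--Schwarz the sphere term is at most $r\int_{\p^+B_r^+}|\p_r\widetilde u|^2+Cr^2\int_{\p^+B_r^+}|\nabla\widetilde u|^2+Cr^3$. This is absorbed into half of the good term, which is allowed since the statement only keeps $\frac{1}{2\tau}$. The rest goes into a factor $1+Cr$ in front of $r\frac{d}{dr}\int_{B_r^+}|\nabla\widetilde u|^2$.

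\emph{The normalization.} You cannot arrange $a_{ij}(y_0)=\delta_{ij}$ for arbitrary $y_0$ in the fixed chart, because $a_{13}=-\p_{x_1}h$ and $a_{23}=-\p_{x_2}h$. You must re-flatten at $\varphi^{-1}(y_0)$, as the paper itself does at $\widetilde a$ before Lemma \ref{lem prop of tangent map}. This is essential, not cosmetic. If $a(y_0)\neq\mathbf I_3$, the Pohozaev identity over Euclidean balls contains $2r\int_{\p^+B_r^+}\p_V\widetilde u\cdot\p_r\widetilde u$, where $V$ is the part of $(a-\mathbf I_3)\omega$ tangent to the sphere. This term has size $|a(y_0)-\mathbf I_3|$ relative to the leading terms, not $O(r)$, so it cannot be absorbed into $e^{C_2r}$ and $C_3(R-r)$. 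The comparability \eqref{ineq bdd of aij} you invoke only gives the inequality up to multiplicative constants. In the adapted chart, $|a-\mathbf I_3|\le C|y-y_0|$ is what lets you pass between the $a$-energy and the Euclidean energy at cost $e^{\pm Cr}$.
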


    	We now combine the energy monotonicity result above with the standard interior monotonicity from \cite[\h{0.5pt}Proposition 2.4 or (2.5)\h{1pt}]{SU82} to establish an energy inequality for balls whose centers do not lie on the boundary.  Note that this is not a monotonicity formula owing to the presence of a factor greater than one on the right-hand side. Nonetheless, it is sufficient for our analysis.
	\begin{lem}[\bf  Energy inequality] 
		There are $\C{4}>0$ and $\R{2}\in (0,\R{1}]$, depending only on $\Omega$, such that for any Dirichlet energy minimizer $u$ in $H_T^1\big(\Omega; \mathbb S^2\big)$ with $\widetilde{u}$ defined in \eqref{def transformed u}, it holds that
		$$\dfrac{1}{r}\left(\int_{B^+_r(a)} \big|\h{1pt}\nabla \widetilde{u} \h{1pt}\big|^2 +\int_{B^+_r(a^\star)} \big|\h{1pt}\nabla \widetilde{u} \h{1pt}\big|^2  \right)
		\,\leq\, \C{4}R + \dfrac{\C{4}}{R}\left(\int_{B^+_R(a)} \big|\h{1pt}\nabla \widetilde{u} \h{1pt}\big|^2 +\int_{B^+_R(a^\star)} \big|\h{1pt}\nabla \widetilde{u} \h{1pt}\big|^2  \right)
		.$$ Here, $a \in B_{R_0}$ is arbitrary. The positive numbers $r,R$ satisfy $r\leq R\leq \R{2}$.
		\label{lem int energy mono}
	\end{lem}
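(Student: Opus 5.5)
The plan is to derive the inequality from the two monotonicity statements available: the boundary monotonicity of Lemma \ref{lem. energy mono.}, applied at points of $\p^0B^+_{R_0}$, and the interior monotonicity of \cite[Proposition 2.4]{SU82}, applied to balls contained in $B^+_{R_0}$. For the interior case I would use either the equation \eqref{eq. of widetilde of u, whole ball} of the extended map $\mathcal{U}$, or simply the fact that $\widetilde{u}$ minimizes the energy $\widetilde{\mathcal{E}}$ with smooth, uniformly elliptic coefficients $a_{jl}$ on interior balls. The latter gives a monotonicity of the form $r^{-1}\int_{B_r(b)}|\nabla\widetilde u|^2\lesssim_\Omega R+R^{-1}\int_{B_R(b)}|\nabla \widetilde u|^2$ whenever $B_R(b)\subset B^+_{R_0}$. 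By the symmetry of the left-hand side under $a\mapsto a^\star$, I may assume $a_3\ge 0$; then $B^+_r(a^\star)\subseteq B^+_r(a)$, so it suffices to bound $r^{-1}\int_{B_r^+(a)}|\nabla\widetilde u|^2$. Set $d:=a_3=\textup{dist}(a,P_0)$ and write $a^0$ for the foot point.

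I would split into three cases according to the size of $d$ relative to $r$ and $R$.

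\emph{Case 1: $d\ge R/2$.} Then $B_{R/2}(a)\subset B^+_{R_0}$, and interior monotonicity from $r$ up to $R/2$ gives the bound, since $B_{R/2}(a)\subseteq B_R^+(a)$. If $r>R/2$ the estimate is trivial with $\C{4}\ge 2$.

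\emph{Case 2: $d\le r$.} Then $B^+_r(a)\subseteq B^+_{2r}(a^0)$. Apply Lemma \ref{lem. energy mono.} at $a^0$ from radius $2r$ up to radius $R/2$, and use $B^+_{R/2}(a^0)\subseteq B^+_R(a)$ because $d\le r\le R/2$. (When $r>R/4$ the estimate is again trivial.) The exponential factors $e^{\C{2}\tau}$ are bounded by constants once $\R{2}\le \R{1}$.

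\emph{Case 3: $r<d<R/2$.} This is the one step needing chaining. First use interior monotonicity on $B_r(a)\subset B_d(a)\subset B^+_{R_0}$ to pass from $r$ to $d$, with constant $C$. Then $B_d(a)\subseteq B^+_{2d}(a^0)$, and Lemma \ref{lem. energy mono.} passes from $2d$ to $R/2$. Finally $B^+_{R/2}(a^0)\subseteq B^+_R(a)$. Composing the two inequalities yields the constant $\C{4}$; the multiplicative factors exceeding one explain why the result is only an inequality rather than a monotonicity formula. Additive errors are of order $d+R\lesssim R$.

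The main point needing care is the interior monotonicity for the variable-coefficient energy $\widetilde{\mathcal E}$ near, but not touching, $P_0$, with constants uniform in $a$. The standard Schoen–Uhlenbeck argument with $a_{ij}$ frozen at the center and $C^1$ control of $a_{ij}$ gives $e^{Cr}$-type weights and an additive $Cr$ term, uniformly in the center. This is precisely what \cite[(2.5)]{SU82} provides, so I would cite it after the change of variables to make $a_{ij}(a)=\delta_{ij}$. The final choice is $\R{2}\le \R{1}$, small enough that all balls used stay inside $B_{R_0}$.
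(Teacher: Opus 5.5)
Your proof is correct and follows the paper's argument. It uses the same three cases on $a_3$ against $r$ and $R$: interior monotonicity from \cite[Proposition 2.4]{SU82} when $a_3$ is large, boundary monotonicity at $a^0$ via $B^+_r(a)\subseteq B^+_{2r}(a^0)$ when $a_3\le r$, and chaining through radius $a_3$ in between. Your observation that $B^+_r(a^\star)\subseteq B^+_r(a)$ for $a_3\ge 0$ is a small simplification of the paper's separate treatment of the reflected ball via $\tau=(r^2-a_3^2)^{1/2}$.

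There is one minor slip in your Case 3. The boundary monotonicity step from radius $2d$ up to $R/2$ requires $2d\le R/2$, i.e.\ $d\le R/4$, which your range $r<d<R/2$ does not guarantee. Either put the Case 1 threshold at $R/4$, as the paper does, or note that for $R/4<d<R/2$ the bound $d^{-1}\int_{B_d(a)}|\nabla\widetilde u|^2\le 4R^{-1}\int_{B^+_R(a)}|\nabla\widetilde u|^2$ is immediate.
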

	\begin{proof}
		It suffices to prove the result for $a\in B^+_{R_0}$. Since the above estimate follows if $r > \frac{R}{4}$, we consider only the case where $r \leq \frac{R}{4}$. The remaining proof is divided into 3 cases, depending on the relationship between $r$, $R$, and the location of $a = (a_1, a_2, a_3)$. \vspace{0.4pc}
		
		\noindent{\bf Case 1. $a_3 \geq \frac{R}{4}$:}
		In this case, we have $B^+_{r}(a^\star)=\varnothing$ and $B_r(a) \subset B_{R/4}(a)\subset B^+_{R}(a)$. The desired estimate can be obtained by the standard result of interior regularity in \cite[Proposition 2.4]{SU82}. \\[2mm]
		\noindent{\bf Case 2. $a_3 < \frac{R}{4}$ and $a_3\leq r$:}
		Taking $\tau:=\big(r^2-a_3^2\big)^{\frac{1}{2}}$, we have $$B^+_{r}(a)\subset B^+_{2r}(a^0) \subseteq B^+_{R/2}(a^0) \subset B^+_{R}(a) \h{15pt} \text{and} \h{15pt} B_r^+(a^\star) \subset B_{\tau}^+(a^0)\subseteq B^+_{R/4}(a^0)\subset B^+_{R}(a^\star).$$ Lemma \ref{lem. energy mono.} then implies
		\begin{align}
			\dfrac{1}{r}\int_{B^+_r(a)} \big|\h{1pt}\nabla \widetilde{u} \h{1pt} \big|^2  
			\h{1pt}\leq\h{1pt} \dfrac{1}{r} \int_{B^+_{2r}(a^0)} \big|\h{1pt}\nabla \widetilde{u}  \h{1pt}\big|^2   
			&\h{1pt}\lesssim_{\,\Omega,R_1\,} R  + \dfrac{2}{R} \int_{B^+_{R/2}(a^0)} \big|\h{1pt}\nabla \widetilde{u}  \h{1pt}\big|^2    \h{1pt}\lesssim\h{1pt}    R + \dfrac{1}{R} \int_{B^+_{R}(a)} \big|\h{1pt}\nabla \widetilde{u} \h{1pt} \big|^2, 
			\label{1416}
		\end{align}
		and
		\begin{align*}
			\dfrac{1}{r}\int_{B^+_r(a^\star)} \big|\h{1pt}\nabla \widetilde{u}  \h{1pt}\big|^2  
			\h{1pt}\leq\h{1pt} \dfrac{1}{\tau} \int_{B^+_{\tau}(a^0)} \big|\h{1pt}\nabla \widetilde{u}  \h{1pt}\big|^2   
			&\h{1pt}\lesssim_{\,\Omega,R_1\,} R+ \dfrac{4}{R} \int_{B^+_{R/4}(a^0)} \big|\h{1pt}\nabla \widetilde{u}  \h{1pt}\big|^2 \h{1pt}\lesssim_{\,\Omega,R_1\,}  R + \dfrac{1}{R} \int_{B^+_{R}(a^\star)} \big|\h{1pt}\nabla \widetilde{u}  \h{1pt}\big|^2.
		\end{align*}The estimate in this lemma still holds by adding the above. \\[2mm]
		\noindent{\bf Case 3. $r < a_3 < \frac{R}{4}$:}
		As $r < a_3$, we have $$B_{r}^+(a)=B_{r}(a)\subset B_{a_3}(a)= B_{a_3}^+(a).$$ Applying the interior energy inequality \cite[Proposition 2.4]{SU82} deduces
		\begin{align*}
			\dfrac{1}{r}\int_{B^+_r(a)} \big|\h{1pt}\nabla \widetilde{u} \h{1pt} \big|^2  
			\h{1pt}\lesssim_{\,\Omega,R_1}\, a_3 + \dfrac{1}{a_3}\int_{B_{a_3}^+(a)}\left|\nabla \widetilde{u}  \right|^2.
		\end{align*}
		Since $a_3< \frac{R}{4}$, we substitute $r=a_3$ in \eqref{1416} and obtain 
		\begin{align*}
			\dfrac{1}{a_3}\int_{B^+_{a_3}(a)} \big|\h{1pt}\nabla \widetilde{u} \h{1pt}  \big|^2  
			\,\lesssim_{\,\Omega,R_1}\,   R + \dfrac{1}{R} \int_{B^+_{R}(a)} \big|\h{1pt}\nabla \widetilde{u}  \h{1pt}\big|^2.
		\end{align*}The desired estimate in the lemma also follows from the last two estimates.
	\end{proof}

    The following partial regularity theorem can be found in \cite[\h{0.5pt}Theorem 5.3\h{1pt}]{S06tangentplane}.

    	\begin{lem}[\bf Partial regularity]\label{lem Partial Regularity}
		There are $\alpha_0\in(0,1)$, $\eps{5}\in(0,\eps{4}]$, $\C{14}>0$ and $\R{7}\in(0,\R{6}]$ depending only on $\Omega$ such that for any $z \in \p^0B_{R_0/4}^+$, any $R\in(0,\R{7}]$ and any Dirichlet energy minimizer $u$ in $H_T^1(\Omega; \mathbb S^2)$, if it satisfies $\widehat{\mathcal{E}}^*_{R,\h{1pt} z \h{0.5pt};\h{0.5pt}+}\left(\widetilde{u}\right) < \eps{5}$, then it holds that $$\|\h{1pt}\widetilde{u} \h{1pt}\|_{C^{\alpha_0}\pig(\overline{B_{R/8}^+(z)}\pig)}\leq \C{14}.$$ Here, $\widetilde{u}$ is defined in \eqref{def transformed u}.
	\end{lem}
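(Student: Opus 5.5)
The plan is to prove a Campanato-type energy decay at every point of $\overline{B^+_{R/4}(z)}$ and then conclude with Morrey's Dirichlet growth theorem, applied to the reflected map $\mathcal U$ from \eqref{def ext of tilde u}. Concretely, the target is: there exist $\theta\in(0,\tfrac14)$ and $\eps{5}$, depending only on $\Omega$, such that for every $y\in \p^0 B^+_{R_0/4}$ and $\rho\le \R{7}$,
\[
\widehat{\mathcal{E}}^*_{\rho,\h{0.5pt}y;\h{0.5pt}+}(\widetilde u)<\eps{5}
\quad\Longrightarrow\quad
\widehat{\mathcal{E}}^*_{\theta\rho,\h{0.5pt}y;\h{0.5pt}+}(\widetilde u)\le \tfrac12\,\widehat{\mathcal{E}}^*_{\rho,\h{0.5pt}y;\h{0.5pt}+}(\widetilde u)+C_\Omega\,\rho .
\]
I would obtain this decay from a blow-up (contradiction) argument.

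\textbf{Blow-up setup.} Suppose the decay fails along a sequence of minimizers $u_k$, points $y_k$, radii $\rho_k$, and energies $\eta_k^2:=\widehat{\mathcal{E}}^*_{\rho_k,y_k;+}(\widetilde u_k)\to0$, with $\rho_k\ll\eta_k^2$ (otherwise the term $C_\Omega\rho$ already absorbs the failure). Rescale to $B_1^+$ and set
\[
w_k:=\frac{\widetilde u_k(y_k+\rho_k\,\cdot)-M_k}{\eta_k},
\]
where $M_k$ is the average. The sequence $w_k$ is bounded in $H^1(B_1^+)$, so it converges weakly to some $w$. From the Euler--Lagrange equation \eqref{2781} and the tangential constraint, I expect:
\begin{itemize}
\item[(i)] $w$ is harmonic, because the quadratic term $|\nabla \widetilde u_k|^2\widetilde u_k$ is $O(\eta_k)$ after division by $\eta_k$, and $a^k_{ij}\to\delta_{ij}$;
\item[(ii)] on $\{y_3=0\}$, $w$ satisfies a linear mixed condition obtained by linearizing the constraint $u\in\mathbb S^2\cap T_{\varphi(\cdot)}\p\Omega$ at the limit point $\lim M_k\in \mathbb S^2\cap P_0$: the normal component and the component along $\lim M_k$ vanish (Dirichlet), and the remaining tangential component satisfies a Neumann condition. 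The $O(\rho_k)$ variation of the tangent planes is negligible because $\rho_k/\eta_k\to0$.
\end{itemize}
Extending $w$ by even/odd reflection across $\{y_3=0\}$ then makes it harmonic in $B_1$. It therefore obeys the linear decay $\theta^{-1}\int_{B_\theta}|\nabla w|^2\le C\theta^2\int_{B_1}|\nabla w|^2$, and choosing $\theta$ small gives a factor much smaller than $\tfrac12$.

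\textbf{Main obstacle: strong convergence $w_k\to w$ in $H^1_{\mathrm{loc}}$.} This is needed to contradict the assumed failure of decay, and I expect it to be the hardest step. I would argue by energy comparison, as in Lemma \ref{lem compact of rescaled map}, Part 2.2, but at the linearized scale.
\begin{itemize}
\item Pick a good radius $\sigma\in(\tfrac12,\tfrac34)$ by Fubini, so that on $\p^+ B^+_\sigma$ the traces of $w_k$ converge to those of $w$ in $L^2$ and the tangential energies are bounded.
\item Interpolate on a thin shell $A^+_{\gamma\sigma,\sigma}$ between $M_k+\eta_k w_k$ and $M_k+\eta_k w$, in the style of Lemma \ref{lem ext to annulus} and Proposition \ref{lem ext from sphere to ball}.
\item Project onto $\mathbb S^2$, and correct the flat boundary using $\Pi^k$, $\Phi^{v,k}$, $\Psi^{v,k}$ from \eqref{def of Phi^(v,k)} so that the tangential condition is restored.
\item The correction errors are $O(\eta_k^2+\rho_k)$ in energy, since $\mathrm{dist}_x(\cdot)$ is controlled quadratically by the linearized components and Lemma \ref{lem. regularity of proj. and dist} gives Lipschitz control in $x$. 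Dividing by $\eta_k^2$ and letting $\gamma\to1$, minimality of $\widetilde u_k$ yields $\limsup_k\int_{B^+_\sigma}|\nabla w_k|^2\le\int_{B^+_\sigma}|\nabla w|^2$, which gives strong convergence.
\end{itemize}

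\textbf{Interior points.} For points $a$ with $a_3>0$, I would combine the boundary decay with the interior $\varepsilon$-regularity of \cite{SU82}, exactly as in the case split of Lemma \ref{lem int energy mono}. First choose $\eps{5}$ small so that Lemma \ref{lem int energy mono} propagates smallness of the normalized energy from $B^+_R(z)$ to all balls $B_r(a)\cap B^+$ with $a\in B^+_{R/4}(z)$ and $r\le R/4$. Then:
\begin{itemize}
\item if $a_3\le r$, use the boundary decay centered at $a^0$;
\item if $a_3>r$, use interior decay down to scale $a_3$ and boundary decay above that scale.
\end{itemize}

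\textbf{Conclusion.} Iterating the decay yields
\[
\frac1r\int_{B_r(a)\cap B^+}|\nabla\widetilde u|^2\le C\left(\frac rR\right)^{2\alpha_0}
\]
for all $a\in\overline{B^+_{R/8}(z)}$ and $r\le R/8$. By \eqref{ineq. of widetilde of u whole ball<half ball}, the same bound holds for $\mathcal U$ on full balls. Morrey's lemma then gives the bound on $\|\widetilde u\|_{C^{\alpha_0}(\overline{B^+_{R/8}(z)})}$, with the $L^\infty$ part trivial since $|\widetilde u|=1$.
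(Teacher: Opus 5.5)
\paragraph{Comparison with the paper.}
The paper does not prove this lemma. It imports it from \cite[Theorem 5.3]{S06tangentplane}, where boundary regularity is established for point-dependent constraints such as $u(x)\in T_x\partial\Omega$. Your proposal is therefore a genuinely different, self-contained route: Schoen--Uhlenbeck-type excess decay via a linearized blow-up, reflection of the limit, and Morrey's lemma. Its overall architecture is sound after three routine adjustments.
\begin{itemize}
\item To get $\rho_k/\eta_k^2\to0$, the constant in front of $\rho$ must depend on $\theta$: split according to whether $\rho\le \widehat{\mathcal E}^*_{\rho,y;+}(\widetilde u)/K$ or not. A fixed $C_\Omega$ only gives $\rho_k\lesssim\eta_k^2$.
\item With $M_k$ the mean, the normal component of $w$ is only \emph{constant} on $\{y_3=0\}$. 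Recenter at $M_k':=\Pi(y_k,M_k)$, which lies on the tangent circle and satisfies $|M_k-M_k'|=O(\eta_k+\rho_k)$, to obtain homogeneous Dirichlet data.
\item $a(y_\infty)$ is in general not the identity. The reflection must be preceded by a linear change of variables that preserves $\{y_3=0\}$ and turns the conormal derivative into $\partial_{y_3}$.
\end{itemize}

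\paragraph{The gap: strong convergence.}
This step is where the whole difficulty of the lemma lies, and as written it does not close. To contradict the failure of decay you need a competitor with energy at most $\eta_k^2\big(\int_{B^+_\sigma}|\nabla w|^2+o(1)\big)$. Hence every error must be $o(\eta_k^2)$; errors of size $O(\eta_k^2+\rho_k)$, as you claim, are of the same order as the quantity being compared and give nothing.

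The tools you invoke also do not produce such a competitor.
\begin{itemize}
\item Near the tangent circle $\psi_*\equiv1$, so $\Phi^{v,k}$ is the nearest-point projection onto $\mathbb S^2\cap T\partial\Omega$ throughout. It annihilates the normal component $\beta$ of $w$. The shell interpolation between $\Phi$ and $\Psi$ (as in Part 2.2 of Lemma \ref{lem compact of rescaled map}) then costs about $(1-\gamma)^{-1}\eta_k^2\int_{\partial^+B^+_\sigma}|\beta_k|^2$, which is not small relative to $\eta_k^2$.
\item Proposition \ref{lem ext from sphere to ball} and Lemma \ref{lem ext to annulus} are one-map extension results with non-sharp constants (e.g.\ the factor $\epsilon^{-\mu}$). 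They can bound the energy by a multiple of the data, but they cannot give $\limsup_k\int_{B^+_\sigma}|\nabla w_k|^2\le\int_{B^+_\sigma}|\nabla w|^2$.
\end{itemize}

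What is actually needed is the following.
\begin{itemize}
\item[(1)] An inner competitor $\pi\big(M_k'+\eta_k(\alpha\,\tau_k+\beta\,n(y_k))\big)$, where $\pi(p)=p/|p|$, $\tau_k=M_k'\times n(y_k)$, and $\alpha,\beta$ are the components of $w$. It satisfies the \emph{frozen} constraint at $y_k$ exactly on the flat part.
\item[(2)] A passage from the frozen plane to the true planes $T_{\varphi^{-1}(y_k+\rho_k y)}\partial\Omega$ by a small shift of the type $\Upsilon^{v,k}$ from Part 3 of Lemma \ref{lem prop of tangent map}, with $\Pi(y_k,\cdot)$ in place of $\Pi(\widetilde a,\cdot)$. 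Its energy cost is $O(\rho_k\eta_k+\rho_k^2)=o(\eta_k^2)$ precisely because $\rho_k\ll\eta_k^2$. The same shift is applied to the outer data.
\item[(3)] A two-map interpolation on the shell with the sharp bound
$C\eta_k^2\big[(1-\gamma)(\mathcal E^*_{w_k,\partial^+B^+_\sigma}+\mathcal E^*_{w,\partial^+B^+_\sigma})+(1-\gamma)^{-1}\int_{\partial^+B^+_\sigma}|w_k-w|^2\big]$.
It must keep values uniformly near $\mathbb S^2$ although $w_k$ has no $L^\infty$ control on the sphere, and it must preserve the frozen constraint on the flat part.
\end{itemize}
Item (3) is a Luckhaus-type lemma compatible with reflection across the frozen plane, for instance using a grid symmetric about the equator so that the construction is equivariant. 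The paper itself remarks that Luckhaus-type constructions do not adapt trivially to the tangential condition. Freezing makes this feasible in the blow-up regime, but it is the real content of the step, and your sketch does not supply it.

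\paragraph{A remark on the conclusion.}
Your Morrey step yields $R^{\alpha_0}[\widetilde u]_{C^{\alpha_0}(\overline{B^+_{R/8}(z)})}\le C$, not an $R$-independent bound. The literal $R$-independent statement cannot hold: take $z$ at distance $KR$ from a boojum with $K$ large and let $R\to0$. The scale-invariant form is what Lemma \ref{lem compact of rescaled map} actually uses, since it applies the lemma to rescaled maps at a fixed scale.
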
 
    
	\cm{
		By direct computation, the tangent map $\widetilde{u}^{\h{.8pt}*}$ solves
		$$	\int_{B^+_{1}} a_{ij}(\widetilde{a})\p_{y_i} \widetilde{u}^{\h{.8pt}*}(y) \cdot
		\p_{y_j} \psi (y)
		-a_{ij}(\widetilde{a})
		\p_{y_i} \widetilde{u}^{\h{.8pt}*}(y) \cdot
		\p_{y_j} \widetilde{u}^{\h{.8pt}*}(y) \widetilde{u}^{\h{.8pt}*}(y) \cdot\psi (y) dy=0$$
		for any $\psi\in  H^1(B^+_{1};\mathbb{R}^3)
		\cap L^\infty(B^+_{1};\mathbb{R}^3)  $ such that $\psi(y)\big|_{\p^+B^+_{1}}=0$ and $\psi(y) \in T_{\varphi^{-1}(\widetilde{a})}\p\Omega$ for $y\in \p^0B^+_{1}$ in the sense of trace.

		Thus, we have
		$$ \dfrac{1}{\sin\xi}\p_\xi (\sin\xi \p_\xi \widetilde{u}^{\h{.8pt}*})
		+ \dfrac{1}{\sin\xi}  \p_\theta^2 \widetilde{u}^{\h{.8pt}*}
		+\left( |\p_\xi \widetilde{u}^{\h{.8pt}*}|^2
		+\dfrac{1}{\sin^2\xi}|\p_\theta \widetilde{u}^{\h{.8pt}*}|^2\right)\widetilde{u}^{\h{.8pt}*}=0 $$
		on $B^+_1$ and $\widetilde{u}^{\h{.8pt}*}(\xi,\theta)\in T_{\varphi^{-1}(\widetilde{a})}\p\Omega$ when $\xi=\pi/2$ for any $\theta\in [0,2\pi)$.
		As 
		$$\int^{2\pi}_0\int^{\pi/2}_{0} \left( |\p_\xi \widetilde{u}^{\h{.8pt}*}|^2		+\dfrac{1}{\sin^2\xi}|\p_\theta \widetilde{u}^{\h{.8pt}*}|^2\right)\sin\xi d\xi d\theta < \infty$$
		Then $\p_\theta \widetilde{u}^{\h{.8pt}*}(\pi/2,\theta)=-y_2\p_{y_1}\widetilde{u}^{\h{.8pt}*}(\pi/2,\theta)
		+y_1\p_{y_2}\widetilde{u}^{\h{.8pt}*}(\pi/2,\theta)=0$ for $\theta\in [0,2\pi)$.

		$\p_\xi \widetilde{u}^{\h{.8pt}*} =y_3\cos\theta \p_{y_1}\widetilde{u}^{\h{.8pt}*} 
		+y_3\sin\theta\p_{y_2}\widetilde{u}^{\h{.8pt}*} 
		-\sqrt{y_1^2+y_2^2}\p_{y_3}\widetilde{u}^{\h{.8pt}*}$ for $\theta\in [0,2\pi)$.
		
		If $T_{\varphi^{-1}(\widetilde{a})}\p\Omega=\{z=0\}$, then $\widetilde{u}^{\h{.8pt}*}_3(\pi/2,\theta)=0$ and we can extend it oddly such that  $\mathscr{U}_3(\xi,\theta)=\widetilde{u}^{\h{.8pt}*}_3(\xi,\theta)\mathbbm{1}_{\xi\in[0,\pi/2]}-\widetilde{u}^{\h{.8pt}*}_3(-\xi,\theta)\mathbbm{1}_{\xi\in[\pi/2,\pi]}$}

		\cm{
			\section{Convergence of $Q_L$}

			\begin{lem}
				Assume that the minimizer $Q_L$ strongly converges to $Q^0=s_0\left(n^0 \otimes n^0 - \frac{1}{3} I\right)$ in $H^1(\Omega;\mathcal{S}_0)$. If $K$ is a compact subset of $\Omega$ containing no singularity of $Q^0$, then $Q_L$ converges uniformly to $Q^0$ in $K$.
			\end{lem}

			\begin{lem}
				Assume that the minimizer $Q_L$ strongly converges to $Q^0=s_0\left(n^0 \otimes n^0 - \frac{1}{3} I\right)$ in $H^1(\Omega;\mathcal{S}_0)$. For any $x_0 \in \p\Omega$ and $r>0$ such that $\overline{\Omega} \cap B_{r}(x_0)$ contains no singularity of $Q^0$, then $Q_L$ converges uniformly to $Q^0$ in $\overline{\Omega} \cap B_{r}(x_0)$.
			\end{lem}
		}

			\bibliographystyle{abbrv} 
			\bibliography{bio}

		\end{document}